\theoremstyle{plain}
\newtheorem{thm}{Theorem}[section]
\newtheorem{cor}{Corollary}[section]
\newtheorem{lem}{Lemma}[section]
\newtheorem{prop}{Proposition}[section]
\newtheorem{step}{Step}[section]
\theoremstyle{remark}
\theoremstyle{definition}
\newtheorem{defn}{Definition}[section]
\newtheorem{rem}{Remark}[section]
\title{Equivariant Asymptotics of Szeg\"{o} kernels under Hamiltonian $U(2)$ actions}
\author{Andrea Galasso and Roberto Paoletti\footnote{\noindent{\bf Address:}
Dipartimento di Matematica e Applicazioni, Universit\`a degli Studi
di Milano Bicocca, Via R. Cozzi 55, 20125 Milano,
Italy; {\bf e-mail}: andrea.galasso@unimib.it, roberto.paoletti@unimib.it }}
\date{}
\begin{document}
\maketitle

\begin{abstract}
 Let $M$ be  complex projective manifold, and $A$ a positive line bundle on it. 
Assume that a compact and connected Lie group $G$ acts on $M$ in a Hamiltonian manner, and that this action linearizes
to $A$. Then there is an associated unitary representation of $G$ on the associated algebro-geometric
Hardy space. If the moment map is nowhere vanishing, the isotypical component are all finite dimensional;
they are generally not spaces of sections of some power of $A$. One is then led to study the local and global asymptotic
properties the isotypical component associated to a weight $k \, \boldsymbol{ \nu }$, when $k\rightarrow 
+\infty$. In this paper, part of a series dedicated to this general theme, we consider the case $G=U(2)$.  
\end{abstract}

\section{Introduction}

In many interesting and natural situations, 
an Hamiltonian action of a Lie group $G$ on a Hodge manifold can be linearized to a polarizing
positive line bundle; when this happens, there
is an induced unitary representation of $G$ on a certain Hardy space, intrinsically related to the holomorphic
structure of the line bundle. 
One is then led
to investigate the decomposition of the latter Hardy space into isotypical components over the irreducible
representations of $G$, and how this decomposition reflects the geometry of the underlying action.
In particular, if the corresponding moment map is never vanishing,
then all the isotypical components are finite-dimensional. 

For example, in the very special case where
$G=S^1$ acts trivially on $M$ and the moment map is taken to be $\Phi_G=1$, the corresponding
isotypical components
are (naturally isomorphic to) the spaces of global holomorphic sections of powers of $A$. 
In general, however, the isotypical components in point don't correspond to subspaces of holomorphic sections 
of some higher tensor power of the polarizing line bundle; in other words, they generally  
split non-trivially under the
structure $S^1$-action on $X$.

From the point of view of geometric quantization, 
the most appropriate heuristic framework for the present discussion is 
the setting of \lq homogeneous\rq \, quantization treated in \cite{guillemin-sternberg hq} (and 
of course \cite{boutet-guillemin}). 
In fact, a motivation for the present analysis is to revisit the general theme of \cite{guillemin-sternberg hq}  
in the specific context
of Toeplitz quantization (in the sense of \cite{boutet-guillemin}) by means of the approach to algebro-geometric Szeg\"{o}
kernels developed in \cite{zelditch-theorem-of-Tian}, \cite{bsz}, \cite{sz};
this circle of ideas is ultimately based on the microlocal theory of the Szeg\"{o} kernel as an FIO developed in \cite{boutet-sjostraend}.

In this work, we shall consider the case $G=U(2)$,
and focus on the asymptotics of the isotypical components
pertaining to a given \textit{ladder representation}, in the 
terminology of \cite{guillemin-sternberg hq}. In other words, we shall fix a ray in weight space, 
and study the asymptotic behavior of the isotypes when the representation drifts to infinity along the ray.
When $G$ is a torus, this problem was studied in \cite{pao-IJM}, \cite{pao-loa}, \cite{camosso};
the case $G=SU(2)$ is the object of \cite{gal-pao}. 
To make this more precise,
it is in order to set the geometric stage in detail.

Let $M$ be a connected $d$-dimensional complex projective manifold, with complex structure $J$.
Let $(A,h)$ be a positive line bundle on $M$; in other words, $A$ is an holomorphic ample line bundle on $M$, 
$h$ is an Hermitian
metric on $A$, and the curvature form of the unique covariant derivative
$\nabla$  on $A$ compatible with both the complex and Hermitian structures
has the form $\Theta= -2\,\imath \,\omega$, where $\omega$ is a K\"{a}hler form on $M$. 
We shall denote by $\rho$ the corresponding Riemannian structure on $M$, given by
\begin{equation}
 \label{eqn:riemannian structure}
\rho_m (v,w) := 
\omega_m\big(J_m(v), \,w\big) \quad 
(m\in M,\, v,w\in T_mM).
\end{equation}

If $A^\vee\supset X\stackrel{\pi}{\rightarrow} M $ is the unit circle bundle in the dual of $A$, 
then $\nabla$ naturally corresponds to a connection 1-form $\alpha$ on $X$,
such that $\mathrm{d}\alpha = 2\,\pi^*(\omega)$. Hence $(X,\alpha)$ is a contact manifold.

We shall adopt 
\begin{equation}
 \label{eqn:volume forms}
\mathrm{d}V_M : = \frac{1}{d!} \, \omega ^{\wedge d} \quad \mathrm{and} \quad
\mathrm{d}V_X : = \frac{1}{ 2\pi }\,\alpha \wedge \pi^* \left(\mathrm{d}V_M \right)
\end{equation}
as volume forms on $M$ and $X$, respectively;
integration will always be meant with respect to the corresponding densities.

Furthermore, $\alpha$ determines an invariant splitting of the tangent bundle of $X$ as
\begin{equation}
 \label{eqn:vertical horizontal tbs}
TX=\mathcal{V} (X / M )\oplus \mathcal{H}(X/M),
\end{equation}
where $\mathcal{V} (X / M ) : = \ker ( \mathrm{ d } \pi )$
is the \textit{vertical} tangent bundle, and $\mathcal{H}(X/M) := \ker ( \alpha )$
is the \textit{horizontal} tangent bundle. Given $V\in \mathfrak{X}(M)$ (the Lie algebra
of smooth vector fields on $M$), we shall denote by $V^\sharp\in \mathfrak{X}(X)$ its horizontal lift to $X$.
If the vector field $ \partial / \partial \theta \in
\mathfrak{X}(X)$ is the 
generator of the structure $S^1$-action, then 
$ \partial _\theta $ spans 
$\mathcal{V} (X / M )$, and 
$\langle \alpha , \partial _\theta \rangle =1$.

The holomorphic structure on $M$, pulled-back to  
$\mathcal{H}(X/M) $, endows $X$ with a CR structure. Explicitly,
the complex structure $J$ on $M$ naturally lifts to a vector bundle endomorphism of $TX$, 
also denoted by $J$, such that $J (\partial_\theta)=0$ and 
\begin{equation}
 \label{eqn:JonX}
J\left(\upsilon^\sharp\right)= J(\upsilon)^\sharp \quad \big(\upsilon \in \mathfrak{X}(M)\big).
\end{equation}

The corresponding
Hardy space $H(X)\subset L^2(X)$ encapsulates the holomorphic structure of $A$ and
its tensor powers. 
%
The corresponding orthogonal projector and its distributional kernel 
are called, respectively, the \textit{Szeg\"{o} projector} and the \textit{Szeg\"{o} kernel} of $X$; they will
be denoted 
\begin{equation}
\label{eqn:Pi and kernel}
 \Pi : L^2(X) \rightarrow H( X ), \quad \Pi (\cdot, \cdot ) \in \mathcal{D}'( X\times X ).
\end{equation}

Consider the unitary group $U(2)$, and its Lie algebra $\mathfrak{ u } ( 2 )$, the space of skew-Hermitian 
$2\times 2$ matrices; in the following,
we shall set $G = U ( 2 )$ and $\mathfrak{ g } = \mathfrak{ u } ( 2 )$
for notational convenience. The standard invariant scalar product 
$\langle \beta _1 , \beta _2 \rangle_{\mathfrak{g}} : = \mathrm{trace} \left( \beta _1 \, \overline{ \beta }_2 ^t \right)$
yields a unitary isomorphism $ \mathfrak{ g } \cong \mathfrak{ g }^\vee $ intertwining the 
adjoint and coadjoint representations of $G$.

Suppose given an holomorphic Hamiltonian action 
$ \mu : G\times M \rightarrow M $ on the K\"{a}hler manifold
$( M , J , 2\,\omega )$, 
with moment map
$ \Phi_G : M \rightarrow \mathfrak{ g }^\vee \cong \mathfrak{g} $. 
For every $\xi \in \mathfrak{ g }$,
let $ \xi _M \in \mathfrak{X} ( M )$ be its associated vector field on $M$.
Then 
\begin{equation}
 \label{eqn:contact lift xi}
\xi_X : = \xi _M ^\sharp - \langle \Phi_G, \xi \rangle \, \partial _\theta
\end{equation}
is a contact vector field on $(X,\alpha)$ \cite{k}, and the map $\xi \mapsto \xi_X$
is an infinitesimal action of $\mathfrak{g}$ on $(X, \alpha)$.

We shall assume that \textit{the latter infinitesimal action can be integrated to an action of
$G$ on $X$}, i.e. that $\mu$ lifts to an action
$\widetilde{\mu} : G\times X\rightarrow X$ preserving the contact and CR structures. 
Then pull-back of functions, given by $g\cdot s := \widetilde{\mu}_{g^{-1}}^* ( s )$, 
is a unitary representation of $G$ on $L^2(X)$ leaving $H(X) \subset L^2( X )$
invariant. This yields a unitary representation
\begin{equation}
 \label{eqn:unitary representation H}
\widehat{\mu} : G \rightarrow U \big ( H(X) \big ).
\end{equation}

By the Theorem of Peter and Weyl (\cite{b-td}, \cite{st-gtp}), $H(X)$ decomposes as a Hilbert space direct sum of
finite-dimensional irreducible representations of $G$. The latter are in 1:1 correspondence with the pairs $\boldsymbol{ \nu } = (\nu _1, \nu_2 )$ of 
integers satisfying $\nu_1 > \nu _2$ \cite{var}; namely, $\boldsymbol{\nu}$ corresponds to 
the irreducible representation 
\begin{equation}
 \label{eqn:explicit irreducible representation}
V_{ \boldsymbol{\nu} } : = { \det } ^{ \nu_2 } \otimes \mathrm{Sym}^{\nu_1 - \nu_2 -1} \left( \mathbb{C} ^2 \right);
\end{equation}
the restriction of its character $\chi_{\boldsymbol{\nu}}$ to the standard torus $T\leqslant G$  
is given by 
\begin{equation}
 \label{explicit character}
\chi_{\boldsymbol{\nu}}:
\begin{pmatrix}
 t_1&0\\
0&t_2
\end{pmatrix}
\mapsto 
\frac{t_1^{\nu_1}\,t_2^{\nu_2}-t_1^{\nu_2}\,t_2^{\nu_1}}{t_1-t_2}.
\end{equation}

 Therefore, there is an equivariant unitary isomorphism
\begin{equation*}
 H(X) \cong \bigoplus _{ \nu_1>\nu_2 } H(X)_{ \boldsymbol{ \nu } },
\end{equation*}
where $H(X)_{ \boldsymbol{ \nu } } \subseteq H(X)$ is the $\boldsymbol{ \nu }$-isotypical component.  
Correspondingly, 
\begin{equation}
 \label{eqn:equivariant szego decomp}
\Pi = \sum _{ \nu_1 > \nu_2} \Pi _{ \boldsymbol{ \nu } },
\end{equation}
where $\Pi _{ \boldsymbol{ \nu } } :L^2(X) \rightarrow H(X)_{ \boldsymbol{ \nu } }$ is the orthogonal projector
(recall (\ref{eqn:Pi and kernel})).

In general, $ H(X)_{ \boldsymbol{ \nu } }$ 
may well be infinite dimensional; however, if $\mathbf{0}\not \in \Phi_G(M)$ then 
$\dim \big( H(X)_{ \boldsymbol{ \nu } } \big) < +\infty$ for every $\boldsymbol{ \nu }$ (see \S 2 of \cite{pao-IJM}). 
In this case,
each $\Pi _{ \boldsymbol{ \nu } }$ is a smoothing operator, with a distributional kernel  
\begin{equation}
 \label{eqn:equivariant szego kernel component}
\Pi_{ \boldsymbol{ \nu } } (\cdot, \cdot) \in \mathcal{C}^\infty (X \times X ).
\end{equation}

In particular,
\begin{equation}
 \label{eqn:dimension trace}
\dim H(X)_{ \boldsymbol{ \nu } } = \int _X \Pi_{ \boldsymbol{ \nu } } ( x, x ) \, \mathrm{d}V_X (x).
\end{equation}

Let us fix a weight $ \boldsymbol{ \nu } \in \mathbb{Z}^2 \setminus \{\mathbf{ 0 }\}$, and 
look at the concentration behavior of $\Pi_{ k \boldsymbol{ \nu } } (\cdot, \cdot)$ when $k\rightarrow +\infty$.
The Abelian analogue of this problem was studied in \cite{pao-IJM} and \cite{pao-loa}.  

\begin{defn}
 \label{defn:coadjoint orbit and cone}
If $ \boldsymbol{ \nu } \in \mathbb{Z}^2 $, let 
$$
D_{  \boldsymbol{ \nu } }
:=
\begin{pmatrix}\nu_1 & 0\\
0&\nu_2\end{pmatrix}.
$$
Let us introduce the following loci.
\begin{enumerate}
 \item $\mathcal{ O }_{ \boldsymbol{ \nu } }\subset \mathfrak{g}$ is
the (co)adjoint orbit of $\imath \, D_{  \boldsymbol{ \nu } }$;
\item $
\mathcal{C} (\mathcal{ O }_{ \boldsymbol{ \nu } } )
:=
\mathbb{R}_+\cdot \mathcal{ O }_{ \boldsymbol{ \nu } }
$
is the cone over $\mathcal{ O }_{ \boldsymbol{ \nu } }$;
\item in $M$ and $X$, respectively, we have the inverse images
$$
M^G_{\mathcal{ O }_{ \boldsymbol{ \nu } }}
: = \Phi _G^{ -1 }\big( \mathcal{C} (\mathcal{ O }_{ \boldsymbol{ \nu } } ) \big),
\quad 
X^G_{\mathcal{ O }_{ \boldsymbol{ \nu } }}
:= \pi^{-1} \left( M^G_{\mathcal{ O }_{ \boldsymbol{ \nu } }} \right).
$$
\end{enumerate}
We shall occasionally write $\mathcal{ O }$ in place of $\mathcal{ O }_{ \boldsymbol{ \nu } }$.
Finally, let us define $\mathcal{C}^\infty$ functions 
$$
m\in M^G_{\mathcal{O}_{ \boldsymbol{ \nu }}}\mapsto h_m\,T\in G/T,\quad 
m\in M^G_{\mathcal{O}_{ \boldsymbol{ \nu }}}\mapsto \lambda_{\boldsymbol{\nu}}(m)\in (0,+\infty)
$$
by the equality
\begin{equation}
 \label{eqn:lambda hm}
\Phi_G(m)=\imath\,\lambda_{\boldsymbol{\nu}}(m)\,h_m D_{\boldsymbol{\nu}}\, h_m^{-1}.
\end{equation}

\end{defn}

Our first result is the following.

\begin{thm}
\label{thm:rapid decrease fixed}
Assume that $\mathbf{0}\not\in \Phi_G(M)$, and $\Phi_G$ is transverse to $\mathcal{C} (\mathcal{ O }_{ \boldsymbol{ \nu } } )$. 
Let us define the $G\times G$-invariant subset of $X\times X$
$$
\mathcal{Z}_{\boldsymbol{ \nu }}:=
\left\{ ( x , y ) \in X^G_{\mathcal{ O }_{ \boldsymbol{ \nu } }}\times X^G_{\mathcal{ O }_{ \boldsymbol{ \nu } }}\, : \, 
y \in G \cdot x \right\}.
$$
Then, uniformly on
compact subsets of
$(X\times X) \setminus \mathcal{Z}_{\boldsymbol{ \nu }}$,
we have 
$$
\Pi_{ k\boldsymbol{ \nu } } (x ,y )= O\left( k^{ -\infty} \right).
$$
\end{thm}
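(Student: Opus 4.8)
The plan is to realize $\Pi_{k\boldsymbol{\nu}}(x,y)$ as an oscillatory integral with large parameter $k$, to prove that away from $\mathcal{Z}_{\boldsymbol{\nu}}$ the associated phase has no real stationary points, and then to invoke the non-stationary phase principle; this parallels the strategy of \cite{pao-IJM}, \cite{pao-loa} in the Abelian case and of \cite{gal-pao} for $SU(2)$. Since $\mathbf{0}\notin\Phi_G(M)$, the operator $\Pi_{k\boldsymbol{\nu}}$ is smoothing (see (\ref{eqn:equivariant szego kernel component})), and the Peter--Weyl projector gives, on the level of Schwartz kernels,
$$
\Pi_{k\boldsymbol{\nu}}(x,y)=\dim(V_{k\boldsymbol{\nu}})\int_G\overline{\chi_{k\boldsymbol{\nu}}(g)}\;\Pi\big(\widetilde{\mu}_{g^{-1}}(x),\,y\big)\,\mathrm{d}g ,\qquad \dim(V_{k\boldsymbol{\nu}})=k\,(\nu_1-\nu_2).
$$
As the integrand is not a class function in $g$, I would apply the Weyl integration formula $\int_G f\,\mathrm{d}g=\tfrac12\int_T|t_1-t_2|^2\big(\int_G f(g t g^{-1})\,\mathrm{d}g\big)\,\mathrm{d}t$, obtaining
$$
\Pi_{k\boldsymbol{\nu}}(x,y)=\frac{k(\nu_1-\nu_2)}{2}\int_T\overline{\chi_{k\boldsymbol{\nu}}(t)}\,|t_1-t_2|^2\left(\int_{G}\Pi\big(\widetilde{\mu}_{g\,t^{-1}g^{-1}}(x),\,y\big)\,\mathrm{d}g\right)\mathrm{d}t .
$$
Writing $t_j=e^{\imath\vartheta_j}$ and using (\ref{explicit character}), one has $\overline{\chi_{k\boldsymbol{\nu}}(t)}\,|t_1-t_2|^2=\big(e^{-\imath k(\nu_1\vartheta_1+\nu_2\vartheta_2)}-e^{-\imath k(\nu_2\vartheta_1+\nu_1\vartheta_2)}\big)\big(e^{\imath\vartheta_1}-e^{\imath\vartheta_2}\big)$, so the character supplies, in each of the two summands, a phase linear in $k$ whose $\vartheta$-gradient is $-k\,w\boldsymbol{\nu}$ with $w$ in the Weyl group $W$, the remaining factor being harmless amplitude; and since the inner integrand depends on $g$ only through $g t^{-1}g^{-1}$, that integral may be taken over $G/T$.

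Next I would insert the Boutet de Monvel--Sj\"ostrand description of the Szeg\"o kernel (\cite{boutet-sjostraend}, in the form of \cite{zelditch-theorem-of-Tian}, \cite{bsz}),
$$
\Pi(x',y)=\int_0^{+\infty}e^{\imath u\,\psi(x',y)}\,s(x',y,u)\,\mathrm{d}u \quad (\text{mod } \mathcal{C}^\infty),
$$
where $\operatorname{Im}\psi\ge 0$ vanishes exactly on the diagonal, $\psi(x',x')=0$, $\mathrm{d}_{x'}\psi(y,y)=-\alpha_y$, and $s$ is a classical symbol of order $d$ in $u$; the $\mathcal{C}^\infty$-remainder is smooth in all variables and is killed to $O(k^{-\infty})$ by the oscillatory $\vartheta$-integration, so it may be dropped, as can the part of the $u$-integral with $u$ in a fixed compact set. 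After the rescaling $u=k\,\sigma$, each summand becomes an oscillatory integral over $(\vartheta,g,\sigma)\in T\times(G/T)\times(0,+\infty)$ with phase $k\,\Psi$, where
$$
\Psi=\Psi(\vartheta,g,\sigma;x,y)=\sigma\,\psi\big(\widetilde{\mu}_{g t^{-1}g^{-1}}(x),\,y\big)-\big\langle w\boldsymbol{\nu},\vartheta\big\rangle ,
$$
and with amplitude a classical symbol in $k$, uniformly for $(x,y)$ in a compact set. Using that $\operatorname{Im}\psi$ is bounded below off the diagonal (and that $\mathbf 0\notin\Phi_G(M)$ keeps the relevant values of $\sigma$ in a compact range), the non-compact $\sigma$-integration is reduced by the usual cut-offs to an estimate with $\sigma$ confined to a compact interval.

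The crux is the stationary point analysis of $\Psi$ in $(\vartheta,g,\sigma)$. A real critical point forces $\operatorname{Im}\Psi=\sigma\operatorname{Im}\psi\big(\widetilde{\mu}_{g t^{-1}g^{-1}}(x),y\big)=0$, hence, as $\sigma>0$, $\widetilde{\mu}_{g t^{-1}g^{-1}}(x)=y$; in particular $y\in G\cdot x$. Putting $x'=\widetilde{\mu}_{g t^{-1}g^{-1}}(x)=y$ and using $\mathrm{d}_{x'}\psi(y,y)=-\alpha_y$ together with $\langle\alpha,\xi_X\rangle=-\langle\Phi_G\circ\pi,\xi\rangle$, which is immediate from (\ref{eqn:contact lift xi}), the equations $\partial_\vartheta\Psi=0$ and $\partial_g\Psi=0$ — after invoking the equivariance of $\Phi_G$ and the $\operatorname{Ad}_T$-invariance of the diagonal Cartan subalgebra $\mathfrak{t}$ — become: $\Phi_G\big(\mu_{g^{-1}}(\pi(x))\big)\in\mathfrak{t}$, with diagonal entries equal to $\sigma^{-1}w\boldsymbol{\nu}$ up to the overall sign fixed by the normalization $\mathrm{d}_{x'}\psi=-\alpha$ (and $\sigma>0$). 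Hence $\Phi_G\big(\mu_{g^{-1}}(\pi(x))\big)=\sigma^{-1}\,\imath\,D_{w\boldsymbol{\nu}}\in\mathbb{R}_+\cdot\mathcal{O}_{\boldsymbol{\nu}}=\mathcal{C}(\mathcal{O}_{\boldsymbol{\nu}})$, and since $\mathcal{C}(\mathcal{O}_{\boldsymbol{\nu}})$ is $\operatorname{Ad}(G)$-invariant this gives $\Phi_G(\pi(x))\in\mathcal{C}(\mathcal{O}_{\boldsymbol{\nu}})$, i.e. $x\in X^G_{\mathcal{O}_{\boldsymbol{\nu}}}$; combined with $y\in G\cdot x$ this yields $y\in X^G_{\mathcal{O}_{\boldsymbol{\nu}}}$ as well, so $(x,y)\in\mathcal{Z}_{\boldsymbol{\nu}}$. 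Therefore, for $(x,y)$ in a compact subset of $(X\times X)\setminus\mathcal{Z}_{\boldsymbol{\nu}}$ the phase $\Psi$ has no real critical point, and one checks that $|\nabla_{(\vartheta,g,\sigma)}\operatorname{Re}\Psi|+\operatorname{Im}\Psi$ is bounded below there uniformly; iterated integration by parts (the non-stationary phase lemma for phases with $\operatorname{Im}\ge 0$), combined with the symbol bounds and the compact $\sigma$-reduction, gives $\Pi_{k\boldsymbol{\nu}}(x,y)=O(k^{-\infty})$ uniformly, which is the assertion. The transversality of $\Phi_G$ to $\mathcal{C}(\mathcal{O}_{\boldsymbol{\nu}})$ is used to ensure that $X^G_{\mathcal{O}_{\boldsymbol{\nu}}}$ and $\mathcal{Z}_{\boldsymbol{\nu}}$ are smooth and that the constants are uniform.

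The main obstacle is precisely this critical point computation: differentiating the contact action through the conjugation $g t^{-1}g^{-1}$, tracking the $\operatorname{Ad}_T$-invariances and the moment map equivariance so as to turn $\partial_g\Psi=0$ into ``$\Phi_G(\mu_{g^{-1}}\pi(x))$ is diagonal'', pinning down the sign so that one lands in $\mathcal{C}(\mathcal{O}_{\boldsymbol{\nu}})$ rather than its opposite, and handling the Weyl-group sum over $w$ together with the exceptional locus $t_1=t_2$ (where, fortunately, the amplitude factor $e^{\imath\vartheta_1}-e^{\imath\vartheta_2}$ vanishes); alongside the routine but delicate bookkeeping needed to make the non-stationary phase estimate uniform over the non-compact $\sigma$-range and to discard the smoothing remainder of $\Pi$.
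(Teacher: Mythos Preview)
Your argument is essentially correct, but it proceeds along a genuinely different route from the paper's. You work directly with the Boutet de Monvel--Sj\"ostrand oscillatory representation of $\Pi$, insert it into the Weyl--integration formula, and analyse the resulting phase $\Psi(\vartheta,g,\sigma;x,y)$ by (non-)stationary phase. The critical-point computation you outline is right: at a real critical point one has $\widetilde{\mu}_{gt^{-1}g^{-1}}(x)=y$; the $\vartheta$--gradient forces $\Phi_T\big(\mu_{g^{-1}}(m_x)\big)\in\mathbb{R}_+\cdot\imath\,\boldsymbol{\nu}$; and for $t$ regular, the $G/T$--gradient forces $\mathrm{Ad}_{g^{-1}}\Phi_G(m_x)$ to be diagonal (since $(\mathrm{Ad}_{t^{-1}}-1)$ is invertible on $\mathfrak{t}^\perp$), whence $\Phi_G(m_x)\in\mathcal{C}(\mathcal{O}_{\boldsymbol{\nu}})$. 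The singular locus $t_1=t_2$ is indeed compensated by the vanishing amplitude factor $e^{\imath\vartheta_1}-e^{\imath\vartheta_2}$; making the non-stationary phase estimate uniform there requires a little extra care (one typically integrates by parts in $\vartheta$ alone near that locus), but this is routine.

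The paper takes a shorter microlocal route that sidesteps all of this bookkeeping. Rather than analysing each $\Pi_{k\boldsymbol{\nu}}$ separately, it introduces the \emph{ladder} distribution $\chi_L:=\sum_{k\ge 1} d_{k\boldsymbol{\nu}}\,\chi_{k\boldsymbol{\nu}}\in\mathcal{D}'(G)$ and the total projector $\Pi_L=\sum_k\Pi_{k\boldsymbol{\nu}}$. By the Guillemin--Sternberg theorem, $\chi_L$ is a Lagrangian distribution on $G$ with wave front in $\Lambda_L$; functorial wave-front calculus (pull-back along $(g,x,y)\mapsto(\widetilde{\mu}_{g^{-1}}(x),y)$, multiplication by $\overline{\chi_L}$, push-forward along $G\times X\times X\to X\times X$) then yields $SS(\Pi_L)\subseteq\mathcal{Z}_{\boldsymbol{\nu}}$ directly, using only $\mathbf{0}\notin\Phi_G(M)$. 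On any compact $K\subset(X\times X)\setminus\mathcal{Z}_{\boldsymbol{\nu}}$ the kernel $\Pi_L$ is therefore $\mathcal{C}^\infty$, and after Weyl integration $\Pi_{k\boldsymbol{\nu}}(x,y)$ becomes the $k\boldsymbol{\nu}$-th Fourier coefficient on $T$ of a smooth function of $(t,x,y)$, hence $O(k^{-\infty})$ uniformly on $K$.

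In short: your direct FIO/stationary-phase approach works and is exactly the machinery the paper reserves for the sharper, rescaled estimates of Theorems~\ref{thm:rapid decrease slow}--\ref{thm:outer dimension estimate}; for the present qualitative statement the paper instead leverages the Lagrangian structure of $\chi_L$ to get the singular-support inclusion in one stroke, avoiding the $g$-critical-point computation and the $t_1=t_2$ degeneration altogether. Your remark that transversality is needed for uniformity is not quite accurate here: the paper's argument uses only $\mathbf{0}\notin\Phi_G(M)$; transversality enters later, in the geometry of $M^G_{\mathcal{O}_{\boldsymbol{\nu}}}$ and the rescaled asymptotics.
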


\begin{cor}
 \label{cor:rapid decrease fixed}
Uniformly on compact subsets of $X\setminus X^G_{\mathcal{ O }_{ \boldsymbol{ \nu } }}$, we have
$$\Pi_{ k\boldsymbol{ \nu } } (x ,x )= O\left( k^{ -\infty} \right)
\quad \mathrm{for}\quad k\rightarrow +\infty$$
\end{cor}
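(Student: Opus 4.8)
The plan is to obtain the statement as an immediate consequence of Theorem~\ref{thm:rapid decrease fixed}, by restricting the latter to the diagonal of $X\times X$. The key elementary observation is that, by construction, $\mathcal{Z}_{\boldsymbol{\nu}}\subseteq X^G_{\mathcal{O}_{\boldsymbol{\nu}}}\times X^G_{\mathcal{O}_{\boldsymbol{\nu}}}$: a pair $(x,y)$ can lie in $\mathcal{Z}_{\boldsymbol{\nu}}$ only when \emph{both} $x$ and $y$ belong to $X^G_{\mathcal{O}_{\boldsymbol{\nu}}}$. In particular, whenever $x\in X\setminus X^G_{\mathcal{O}_{\boldsymbol{\nu}}}$ the diagonal point $(x,x)$ automatically belongs to $(X\times X)\setminus\mathcal{Z}_{\boldsymbol{\nu}}$.

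Next I would transfer the uniformity statement. Since $\mathbf{0}\notin\Phi_G(M)$ and the coadjoint orbit $\mathcal{O}_{\boldsymbol{\nu}}$ is compact, one has $\Phi_G^{-1}\big(\mathcal{C}(\mathcal{O}_{\boldsymbol{\nu}})\big)=\Phi_G^{-1}\big(\overline{\mathcal{C}(\mathcal{O}_{\boldsymbol{\nu}})}\big)$, so $M^G_{\mathcal{O}_{\boldsymbol{\nu}}}$ is closed in $M$ and hence $X^G_{\mathcal{O}_{\boldsymbol{\nu}}}=\pi^{-1}\big(M^G_{\mathcal{O}_{\boldsymbol{\nu}}}\big)$ is closed in $X$; thus $X\setminus X^G_{\mathcal{O}_{\boldsymbol{\nu}}}$ is open and the notion of ``compact subset'' of it is unambiguous. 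Given any compact $K\subseteq X\setminus X^G_{\mathcal{O}_{\boldsymbol{\nu}}}$, its image $\Delta_K:=\{(x,x):x\in K\}$ under the continuous diagonal embedding $X\to X\times X$ is compact, and by the previous paragraph $\Delta_K\cap\mathcal{Z}_{\boldsymbol{\nu}}=\emptyset$; hence $\Delta_K$ is a compact subset of $(X\times X)\setminus\mathcal{Z}_{\boldsymbol{\nu}}$. Applying Theorem~\ref{thm:rapid decrease fixed} to $\Delta_K$ yields $\Pi_{k\boldsymbol{\nu}}(x,x)=O(k^{-\infty})$ uniformly for $x\in K$ as $k\to+\infty$, which is the claim.

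The main point of this argument is simply the set-theoretic remark of the first paragraph; there is no genuine analytic obstacle, since all the work has already been done in Theorem~\ref{thm:rapid decrease fixed}. The only step requiring a moment's care is the verification that $X^G_{\mathcal{O}_{\boldsymbol{\nu}}}$ is closed, so that restricting to compact subsets of $X\setminus X^G_{\mathcal{O}_{\boldsymbol{\nu}}}$ and then passing to the corresponding compact subsets of $(X\times X)\setminus\mathcal{Z}_{\boldsymbol{\nu}}$ via the diagonal is legitimate.
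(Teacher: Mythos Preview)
Your proposal is correct and is precisely the intended argument: the paper states the corollary without proof, as an immediate specialization of Theorem~\ref{thm:rapid decrease fixed} to the diagonal, using exactly the set-theoretic observation that $\mathcal{Z}_{\boldsymbol{\nu}}\subseteq X^G_{\mathcal{O}_{\boldsymbol{\nu}}}\times X^G_{\mathcal{O}_{\boldsymbol{\nu}}}$. Your added remark on the closedness of $X^G_{\mathcal{O}_{\boldsymbol{\nu}}}$ is a harmless clarification.
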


The hypothesis of Theorem \ref{thm:rapid decrease fixed} imply that $M^G_{\mathcal{ O }_{ \boldsymbol{ \nu } }}$
is a compact and smooth real hypersurface of $M$.
Our next step will be to clarify the geometry of $M^G_{\mathcal{ O }_{ \boldsymbol{ \nu } }}$. To this end,
we need to introduce some further loci related to the action.

\begin{defn}
 \label{defn:definition of Dnu}
Let
\begin{equation}
 \label{eqn:defn of MGnu}
M^G_{ \boldsymbol{ \nu } }
: = \Phi_G^{ -1 }\big( \imath\,\mathbb{ R }_+ \cdot D_{  \boldsymbol{ \nu } } \big),
\quad 
X^G_{ \boldsymbol{ \nu } } :=
\pi ^{-1}\left( M^G_{ \boldsymbol{ \nu } } \right) .
\end{equation}
\end{defn}

\begin{rem}
 \label{rem:relation MGnuO}
Obviously, $M^G_{ \boldsymbol{ \nu } }\subseteq M^G_{\mathcal{ O }_{ \boldsymbol{ \nu } }}$. 
Under the assumptions of Theorem \ref{thm:rapid decrease fixed},
$M^G_{ \boldsymbol{ \nu } }$ is a compact submanifold of $M$, of real codimension $3$.
Clearly, $M^G_{\mathcal{ O }_{ \boldsymbol{ \nu } }}= G \cdot M^G_{ \boldsymbol{ \nu } }$
by the equivariance of $\Phi_G$ (given a $G$-space $Z$, and a subset $Z_1\subseteq Z$,
we shall denote by $G\cdot Z_1$ the $G$-saturation of $Z_1$ in $Z$).
\end{rem}

Let $T\leqslant G$ be the standard maximal torus of unitary diagonal matrices,
and let $\mathfrak{t}$ be its Lie algebra. Thus $\mathfrak{t}$ is the space of skew-Hermitian diagonal matrices,
and is also $T$-equivariantly identified with the coalgebra $\mathfrak{ t }^\vee$.
In obvious manner $T\cong S^1\times S^1$ and $\mathfrak{t} \cong \imath \, \mathbb{ R }^2$.
We shall alternatively think of elements of $\mathfrak{t}$ either as vectors or as matrices, depending on the context.

Given the isomorphisms $\mathfrak{g}^\vee \cong \mathfrak{g}$ and $\mathfrak{t}^\vee \cong \mathfrak{ t }$,
the restriction epimorphism $\mathfrak{ g } ^\vee \rightarrow \mathfrak{ t } ^\vee$ corresponds to the diagonal map 
\begin{equation}
 \label{eqn:diagonal map}
\mathrm{diag} : \mathfrak{ g } \rightarrow \imath\, \mathbb{ R }^2, \quad 
\imath \, \begin{pmatrix}
 a &  z\\
\overline{z} & b
\end{pmatrix}
\mapsto
\imath \,
\begin{pmatrix}
 a \\
b
\end{pmatrix}
\quad
 (a , b \in \mathbb{ R },\, z\in \mathbb{ C }).
\end{equation}
The action of $T$ on $M$ induced by restriction of $\mu$ is also Hamiltonian, with moment map
\begin{equation}
 \label{eqn:moment map T}
\Phi _T =\mathrm{diag}\circ \Phi_G:M \rightarrow \mathfrak{t}.
\end{equation}

Let us introduce the loci 
\begin{equation}
 \label{eqn:defn di MTnu}
M^T_{ \boldsymbol{ \nu }} := \Phi_T^{-1} \left(\mathbb{ R }_+\cdot \imath\,\boldsymbol{ \nu }\right),
\quad 
X^T_{ \boldsymbol{ \nu }} : = \pi^{-1} \left(M^T_{ \boldsymbol{ \nu }}\right)
\end{equation}
Let us assume that $\mathbf{0}\not\in \Phi_T(M)$ and that $\Phi_T$ is transverse to $\mathbb{ R }_+\cdot \imath\,\boldsymbol{ \nu }$;
then $M^T_{ \boldsymbol{ \nu }}$  is a compact smooth 
real hypersurface of $M$.
Since
$M^G_{ \boldsymbol{ \nu } } \subseteq M^T_{ \boldsymbol{ \nu } }$, we have
$M^G_{\mathcal{ O }_{ \boldsymbol{ \nu } }} \subseteq G\cdot M^T_{ \boldsymbol{ \nu }}$.

In \S \ref{sctn:construction of Upsilon}, we shall construct a vector field 
$\Upsilon=\Upsilon_{\mu,\boldsymbol{ \nu } }$ tangent to $M$ along $M^G_{\mathcal{ O }_{ \boldsymbol{ \nu } }}$,
naturally associated to the action and the weight,
which is nowhere vanishing and everywhere normal to $M^G_{\mathcal{ O }_{ \boldsymbol{ \nu } }}$.

\begin{thm}
 \label{thm:geometry locus}
Let us assume that:
\begin{enumerate}
 \item $\Phi_G:M\rightarrow \mathfrak{g}$ and $\Phi_T:M\rightarrow \mathfrak{t}$ are both transverse to
$\mathbb{R}_+ \cdot \imath\,D_{\boldsymbol{ \nu }}$;
\item $\mathbf{0}\not\in \Phi_T(M)$ (hence also $\mathbf{0}\not\in \Phi_G(M)$);
\item $M^G_{ \boldsymbol{ \nu }}\neq \emptyset$ 
(equivalently, $M^G_{ \mathcal{ O }_{ \boldsymbol{ \nu }}}\neq \emptyset$);
\item $\nu_1+\nu_2\neq 0$.
\end{enumerate}

Then 
\begin{enumerate}
 \item $M^G_{\mathcal{ O }_{ \boldsymbol{ \nu } }}$ is a connected and orientable smooth hypersurface in 
$M$, and separates $M$ in two connected components: the \lq outside \rq \, $A := M\setminus G\cdot M^T_{ \boldsymbol{ \nu }}$ and 
the \lq inside\rq \, $B := G\cdot M^T_{ \boldsymbol{ \nu }} \setminus M^G_{\mathcal{ O }_{ \boldsymbol{ \nu } }}$;

\item the normal bundle to $M^G_{\mathcal{ O }_{ \boldsymbol{ \nu } }}$ in $M$ is the real line sub-bundle
of $\left.TM\right|_{ M^G_{\mathcal{ O }_{ \boldsymbol{ \nu } }} }$ spanned by $\Upsilon$; 

\item
$\Upsilon$ is \lq outer\rq\, oriented if $\nu_1+\nu_2>0$ and \lq inner\rq\, oriented if $\nu_1+\nu_2<0$;

\item $M^G_{\mathcal{O}_{\boldsymbol{ \nu }}}\cap M^T_{{\boldsymbol{ \nu }}} =M^G_{{\boldsymbol{ \nu }}}$, and the two 
hypersurfaces meet tangentially along $M^G_{{\boldsymbol{ \nu }}}$.
\end{enumerate}

\end{thm}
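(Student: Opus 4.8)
The plan is to transcribe everything into the coadjoint geometry of $\mathfrak{g}=\mathfrak{u}(2)$ and then combine the connectedness theorems for moment maps with one compactness observation. For $\xi\in\mathfrak{g}$ set $\tau(\xi):=-\imath\,\mathrm{tr}(\xi)\in\mathbb{R}$ (the coordinate on the centre) and let $\delta(\xi)\geq0$ be the gap between the two eigenvalues of the Hermitian matrix $-\imath\xi$, so that $\delta^{2}$ is a smooth $\mathrm{Ad}$-invariant quadratic function and the adjoint orbits are the common level sets of $(\tau,\delta)$; on the positive Weyl chamber $\mathfrak{t}_{+}=\{\imath\,\mathrm{diag}(a,b):a\geq b\}$ the pair $(\tau,\delta)$ is the affine chart $(a+b,\,a-b)$. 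Put $k:=(\nu_{1}-\nu_{2})/(\nu_{1}+\nu_{2})$, well defined by hypothesis (4) and with $\mathrm{sign}(k)=\mathrm{sign}(\nu_{1}+\nu_{2})$. Then $\mathcal{C}(\mathcal{O}_{\boldsymbol{\nu}})=\{\xi:\delta(\xi)=k\,\tau(\xi)>0\}$ is a smooth hypersurface of $\mathfrak{g}\setminus\{\mathbf{0}\}$ whose complement there has exactly two connected components, the solid cone $\mathcal{U}_{\mathrm{in}}:=\{\delta<k\tau\}$ and $\mathcal{U}_{\mathrm{out}}:=\{\delta>k\tau\}$, each meeting $\mathfrak{t}_{+}$ in a convex set (on the chamber $\delta=a-b$ is linear). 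A Schur--Horn argument for $2\times2$ Hermitian matrices gives $\mathrm{Ad}_{G}\big(\mathrm{diag}^{-1}(\mathbb{R}_{+}\,\imath\boldsymbol{\nu})\big)=\{\xi:\delta(\xi)\geq k\,\tau(\xi)>0\}$, so $M^{G}_{\mathcal{O}_{\boldsymbol{\nu}}}=\Phi_{G}^{-1}(\mathcal{C}(\mathcal{O}_{\boldsymbol{\nu}}))$ and $G\cdot M^{T}_{\boldsymbol{\nu}}=\Phi_{G}^{-1}\big(\{\delta\geq k\tau>0\}\big)$; moreover, since $\mathbf{0}\notin\Phi_{T}(M)$ the set $M^{T}_{\boldsymbol{\nu}}=\Phi_{T}^{-1}(\mathbb{R}_{\geq0}\,\imath\boldsymbol{\nu})$ is closed, hence compact, and so are $G\cdot M^{T}_{\boldsymbol{\nu}}$ and its $\Phi_{G}$-image $\Phi_{G}(M)\cap\{\delta\geq k\tau>0\}$.

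Next, the hypersurface, its complement, and orientability. Hypothesis (1) makes $\Phi_{G}$ transverse to $\mathcal{C}(\mathcal{O}_{\boldsymbol{\nu}})$ along $M^{G}_{\mathcal{O}_{\boldsymbol{\nu}}}$: transversality to the ray $\mathbb{R}_{+}\,\imath D_{\boldsymbol{\nu}}$ at a point of $M^{G}_{\boldsymbol{\nu}}$ already forces the image of $\mathrm{d}\Phi_{G}$ there not to lie in the hyperplane $T\mathcal{C}(\mathcal{O}_{\boldsymbol{\nu}})$, and this propagates over $M^{G}_{\mathcal{O}_{\boldsymbol{\nu}}}=G\cdot M^{G}_{\boldsymbol{\nu}}$ by equivariance; together with $\mathbf{0}\notin\Phi_{G}(M)$ this makes $M^{G}_{\mathcal{O}_{\boldsymbol{\nu}}}$ a compact smooth hypersurface and gives $M\setminus M^{G}_{\mathcal{O}_{\boldsymbol{\nu}}}=\Phi_{G}^{-1}(\mathcal{U}_{\mathrm{in}})\sqcup\Phi_{G}^{-1}(\mathcal{U}_{\mathrm{out}})$. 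Both summands are nonempty (at a point of $M^{G}_{\boldsymbol{\nu}}$, nonempty by hypothesis (3), transversality lets one move off $M^{G}_{\mathcal{O}_{\boldsymbol{\nu}}}$ to either side), and both of them, as well as $M^{G}_{\mathcal{O}_{\boldsymbol{\nu}}}$ itself, are connected: each is $\Phi_{G}^{-1}(G\cdot C)$ for a connected --- in fact convex, resp.\ a ray --- subset $C\subseteq\mathfrak{t}_{+}$, and $\Phi_{G}^{-1}(G\cdot C)$ is connected by the connectedness theorems for moment maps (the fibres of $\Phi_{G}$ over adjoint orbits are connected, by Atiyah--Guillemin--Sternberg and Kirwan, and $\Phi_{G}$ is proper, so the preimage of a connected subset of the chamber is connected). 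Hence $M\setminus M^{G}_{\mathcal{O}_{\boldsymbol{\nu}}}$ has \emph{exactly} two connected components. Orientability is then automatic, since $M$ carries its complex orientation and $\Upsilon$ is a global nowhere-vanishing normal field, so $M^{G}_{\mathcal{O}_{\boldsymbol{\nu}}}$ is two-sided.

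Identifying the two components with $A$ and $B$ amounts, by the first paragraph, to $\Phi_{G}(M)\subseteq\{k\tau>0\}$; here is the argument I would use. Suppose some $m'$ had $k\tau(\Phi_{G}(m'))\leq0$; join it to a point $m_{1}\in M^{G}_{\boldsymbol{\nu}}$ (where $k\tau(\Phi_{G}(m_{1}))=\lambda_{\boldsymbol{\nu}}(m_{1})(\nu_{1}-\nu_{2})>0$) by a path $\gamma$, and let $t^{*}$ be the largest parameter with $k\tau\circ\Phi_{G}\circ\gamma\leq0$ on $[0,t^{*}]$; continuity gives $k\tau(\Phi_{G}(\gamma(t^{*})))=0$, so $\xi^{*}:=\Phi_{G}(\gamma(t^{*}))$ is traceless and nonzero (as $\mathbf{0}\notin\Phi_{G}(M)$ and $k\neq0$), hence $\delta(\xi^{*})>0$; but then for parameters $s_{n}\downarrow t^{*}$ with $k\tau(\Phi_{G}(\gamma(s_{n})))>0$ one has $\delta(\Phi_{G}(\gamma(s_{n})))\to\delta(\xi^{*})>0$ and $k\tau(\Phi_{G}(\gamma(s_{n})))\to0$, so $\Phi_{G}(\gamma(s_{n}))\in\{\delta\geq k\tau>0\}$ while its limit $\xi^{*}$ is not --- contradicting the closedness of $\Phi_{G}(M)\cap\{\delta\geq k\tau>0\}$ established above. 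Therefore $\Phi_{G}(M)\subseteq\{k\tau>0\}$, whence $G\cdot M^{T}_{\boldsymbol{\nu}}=\Phi_{G}^{-1}(\{\delta\geq k\tau\})$; transversality identifies it near $M^{G}_{\mathcal{O}_{\boldsymbol{\nu}}}$ with a half-space, so its topological boundary is exactly $M^{G}_{\mathcal{O}_{\boldsymbol{\nu}}}$, and consequently $A=M\setminus G\cdot M^{T}_{\boldsymbol{\nu}}=\Phi_{G}^{-1}(\mathcal{U}_{\mathrm{in}})$ and $B=G\cdot M^{T}_{\boldsymbol{\nu}}\setminus M^{G}_{\mathcal{O}_{\boldsymbol{\nu}}}=\mathrm{int}(G\cdot M^{T}_{\boldsymbol{\nu}})=\Phi_{G}^{-1}(\mathcal{U}_{\mathrm{out}})$ are precisely the two connected components. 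I expect the combination of the non-abelian connectedness input and this compactness bookkeeping to be the main obstacle.

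Finally, $\Upsilon$ and conclusion (4). Statements (2)--(3) I would read off the construction of $\Upsilon$ in \S\,\ref{sctn:construction of Upsilon}: by construction $\Upsilon$ is tangent to $M$ along $M^{G}_{\mathcal{O}_{\boldsymbol{\nu}}}$ with nonzero transverse component, hence spans the (real line) normal bundle --- that is (2) --- and pairing $\mathrm{d}\Phi_{G}(\Upsilon)$ with the line normal to $\mathcal{C}(\mathcal{O}_{\boldsymbol{\nu}})$ shows that $\Upsilon$ points into $\Phi_{G}^{-1}(\mathcal{U}_{\mathrm{in}})=A$ precisely when $\nu_{1}+\nu_{2}>0$, which is (3). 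For (4): if $m\in M^{G}_{\mathcal{O}_{\boldsymbol{\nu}}}\cap M^{T}_{\boldsymbol{\nu}}$ then $\Phi_{G}(m)=\imath\,\lambda\,h\,D_{\boldsymbol{\nu}}\,h^{-1}$ with $\lambda>0$ and $\mathrm{diag}\big(h\,D_{\boldsymbol{\nu}}\,h^{-1}\big)=(s/\lambda)\,\boldsymbol{\nu}$ with $s>0$; taking traces and using $\nu_{1}+\nu_{2}\neq0$ forces $s=\lambda$, so $h\,D_{\boldsymbol{\nu}}\,h^{-1}$ is a Hermitian matrix with spectrum $\{\nu_{1},\nu_{2}\}$ and diagonal $(\nu_{1},\nu_{2})$; comparing determinants and using $\nu_{1}\neq\nu_{2}$ forces its off-diagonal entry to vanish, so $h\,D_{\boldsymbol{\nu}}\,h^{-1}=D_{\boldsymbol{\nu}}$ and $m\in M^{G}_{\boldsymbol{\nu}}$; the reverse inclusion being clear, $M^{G}_{\mathcal{O}_{\boldsymbol{\nu}}}\cap M^{T}_{\boldsymbol{\nu}}=M^{G}_{\boldsymbol{\nu}}$. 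For the tangency: at a point of $\imath\,\mathbb{R}_{+}\,D_{\boldsymbol{\nu}}$ the line normal to $\mathcal{C}(\mathcal{O}_{\boldsymbol{\nu}})$ in $\mathfrak{g}$ and the line normal to $\mathbb{R}_{+}\,\imath\boldsymbol{\nu}$ in $\mathfrak{t}$ are both spanned by $\eta:=\imath\,\mathrm{diag}(\nu_{2},-\nu_{1})\in\mathfrak{t}$, and since $\Phi_{T}=\mathrm{diag}\circ\Phi_{G}$ with $\mathrm{diag}$ the orthogonal projection $\mathfrak{g}\to\mathfrak{t}$ and $\eta\in\mathfrak{t}$, both $T_{m}M^{G}_{\mathcal{O}_{\boldsymbol{\nu}}}$ and $T_{m}M^{T}_{\boldsymbol{\nu}}$ equal the kernel of $v\mapsto\langle\mathrm{d}_{m}\Phi_{G}(v),\eta\rangle$ for $m\in M^{G}_{\boldsymbol{\nu}}$, so the two hypersurfaces are tangent there.
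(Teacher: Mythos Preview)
Your approach is largely correct and in several places cleaner than the paper's. You work throughout with the invariant coordinates $(\tau,\delta)$ and the single defining function $\delta-k\tau$ for the cone, which (via Schur--Horn) gives $G\cdot M^T_{\boldsymbol{\nu}}=\Phi_G^{-1}\{\delta\ge k\tau>0\}$ directly; the paper reaches an equivalent spectral criterion through the parameter $t(m,\boldsymbol{\nu})$ (its Proposition on the spectral characterization). Your compactness argument that $\Phi_G(M)\subset\{k\tau>0\}$ has no counterpart in the paper, and your direct proof via Kirwan's fibre-connectedness that $A$ and $B$ are each connected actually goes beyond what the paper writes out: the paper establishes $M^G_{\mathcal{O}}=\partial(G\cdot M^T_{\boldsymbol{\nu}})$ and the connectedness of $M^G_{\mathcal{O}}$, but leaves the connectedness of the two sides implicit. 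Your conormal argument for the tangency in (4) (both tangent hyperplanes are the kernel of $v\mapsto\langle\mathrm{d}_m\Phi_G(v),\imath D_{\boldsymbol{\nu}_\perp}\rangle$) is also tidier than the paper's separate Steps for the intersection and the tangency.

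There is, however, one genuine gap in your treatment of (2). You say $\Upsilon$ ``has nonzero transverse component'' \emph{by construction}, but the construction in \S\ref{sctn:construction of Upsilon} merely \emph{defines} $\Upsilon(m)=J_m\big(\boldsymbol{\rho}(m)_M(m)\big)$; nothing there shows transversality, let alone Riemannian normality. Your pairing computation for (3), namely $\langle\mathrm{d}_m\Phi_G(\Upsilon),\boldsymbol{\rho}(m)\rangle=\omega_m\big(\boldsymbol{\rho}(m)_M,J\boldsymbol{\rho}(m)_M\big)=\pm\|\boldsymbol{\rho}(m)_M(m)\|_m^{2}\neq0$, \emph{does} give $\Upsilon\notin T_mM^G_{\mathcal{O}}$, so $\Upsilon$ represents a nowhere-zero section of the normal \emph{quotient} bundle; but the theorem asserts that $\mathrm{span}(\Upsilon)$ is the Riemannian-orthogonal complement of $T M^G_{\mathcal{O}}$ inside $TM$. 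For that you need the paper's explicit computation (its Step identifying the normal space): one shows
\[
N_m\big(M^G_{\mathcal{O}}\big)=\big\{\,J_m(\eta_M(m)):\ \eta\perp\Phi_G(m)\ \text{and}\ [\eta,\Phi_G(m)]=0\,\big\},
\]
and the two conditions force $\eta\in\mathrm{span}\big(\boldsymbol{\rho}(m)\big)$. Without this, (2) as stated is not yet proved.
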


\begin{rem}
 \label{rem:description of B}
Let us clarify the meaning of the partition 
$M= A\dot{\cup} M^G_{\mathcal{O}_{\boldsymbol{ \nu }}} \dot{\cup} B$.
Clearly, $G\cdot M^T_{ \boldsymbol{ \nu }} = \overline{B}$, 
$A=\left(G\cdot M^T_{ \boldsymbol{ \nu }} \right) ^c$.
For any $m\in M$, let $\mathcal{O}_{\Phi(m)} := \Phi_G ( G \cdot m )$
be the coadjoint orbit of $\Phi_G ( m )$, and let $\lambda_1>\lambda_2$
be the eigenvalues of $-\imath\,\Phi_G(m)$;  
as follows either by direct verification or by invoking Horn's Theorem, the projection of $\mathcal{O}_{\Phi(m)}$
in $\mathfrak{t}$ is the segment $J_m$ joining 
$\imath\,
\begin{pmatrix}
 \lambda_1&\lambda_2
\end{pmatrix}^t$ 
and 
$\imath\,
\begin{pmatrix}
 \lambda_2&\lambda_1
\end{pmatrix}^t$.
Then we have:
\begin{enumerate}
 \item $m\in A$ if and only if the orthogonal projection of $\mathcal{O}_{\Phi(m)}$
in $\mathfrak{ t }$, $\mathrm{diag}( \mathcal{O}_{\Phi(m)} )$, is disjoint from 
$\imath\,\mathbb{R}_+ \cdot \boldsymbol{ \nu }$;
\item $m\in M^G_{\mathcal{O}_{\boldsymbol{ \nu }}}$ 
if and only if $\mathrm{diag}( \mathcal{O}_{\Phi(m)} )\cap (\imath\,\mathbb{R}_+ \cdot \boldsymbol{ \nu })$ is 
an endpoint of $J_m$;
\item $m\in B$ if and only if $\mathrm{diag}( \mathcal{O}_{\Phi(m)} )\cap (\imath\,\mathbb{R}_+ \cdot \boldsymbol{ \nu })$ 
is an interior point of $J_m$.
\end{enumerate}

\end{rem}

The next step will be to provide some more precise quantitative information 
on the rate of decay of $\Pi_{ k\boldsymbol{ \nu } }(\cdot,\cdot)$
on the complement of $\mathcal{Z}_{\boldsymbol{ \nu }}$. Namely, we shall show that $\Pi_{ k\boldsymbol{ \nu } }(x,y)$
is still rapidly decreasing when either $y\rightarrow G\cdot x$ at
a sufficiently slow rate, or when at least one of $x$ and $y$ belongs to the \lq outer\rq\, 
component $A$, and converges to $X^G_{\mathcal{ O }_{ \boldsymbol{ \nu } }}$
sufficiently slowly. 

Let us consider on $X$ the Riemannian structure which is uniquely determined by the following conditions:
\begin{enumerate}
\item (\ref{eqn:vertical horizontal tbs}) is an orthogonal direct sum;
\item $\pi:X\rightarrow M$ is a Riemannian submersion;
\item the $S^1$-orbits have unit length.
\end{enumerate}
The corresponding density is $\mathrm{d}V_X$.
Let $\mathrm{dist}_X :X\times X\rightarrow [0,+\infty)$ denote the associated distance function.


\begin{thm}
 \label{thm:rapid decrease slow}
In the situation of Theorem \ref{thm:rapid decrease fixed}, 
assume in addition that $G$ acts freely on $X^G_{\mathcal{O}}$. 
For any fixed $C,\,\epsilon >0$, we have
$\Pi_{ k\boldsymbol{ \nu } }(x,y) = O \left( k^{-\infty} \right)$
uniformly for 
\begin{equation}
 \label{eqn:controlled distance}
\max \left\{ \mathrm{dist}_X ( x , G \cdot y ) , \,
\mathrm{dist}_X \left ( x,  G \cdot X^T_{ \boldsymbol{ \nu } } \right) \right \}
\ge C \, k^{ \epsilon -1/2}.
\end{equation}

\end{thm}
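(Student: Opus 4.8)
The plan is to reduce the estimate to an oscillatory-integral analysis of the equivariant Szeg\"o kernel $\Pi_{k\boldsymbol{\nu}}(x,y)$, obtained by averaging the Szeg\"o kernel against the character $\chi_{k\boldsymbol{\nu}}$ of $V_{k\boldsymbol{\nu}}$. Concretely, one has
$$
\Pi_{k\boldsymbol{\nu}}(x,y)=\dim(V_{k\boldsymbol{\nu}})\int_G \overline{\chi_{k\boldsymbol{\nu}}(g)}\,\Pi\big(\widetilde{\mu}_{g^{-1}}(x),y\big)\,\mathrm{d}g,
$$
and after inserting the Boutet de Monvel--Sj\"ostrand description of $\Pi$ as an FIO with complex phase $\psi$ supported near the diagonal, together with the Weyl integration formula over $G=U(2)$ (so that the $G$-integral becomes an integral over $T\times G/T$ with the Weyl density $|t_1-t_2|^2$), one is left with an oscillatory integral over $X^{(\mathrm{scaling})}\times T\times G/T$ whose phase combines the rescaled Szeg\"o phase, the torus characters $t_1^{k\nu_1}t_2^{k\nu_2}$, and the moment-map data through the identity $\xi_X=\xi_M^\sharp-\langle\Phi_G,\xi\rangle\partial_\theta$. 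This is exactly the machinery already deployed in \cite{pao-IJM}, \cite{pao-loa}, \cite{gal-pao}, and I would invoke those normal forms rather than rederive them.

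The key steps, in order, are the following. \emph{First}, I would perform the integration in the torus variables $t=(t_1,t_2)\in T$: stationary phase in $t$ (or, equivalently, repeated non-isotropic integration by parts) localizes the contribution to those $g$-conjugates of $x$ at which $\Phi_T$ of the underlying base point lies on $\mathbb{R}_+\cdot\imath\,\boldsymbol{\nu}$; this is what forces concentration near $G\cdot X^T_{\boldsymbol{\nu}}=X^G_{\mathcal{O}}$ and produces, away from it, a gain of $k^{-\infty}$ — but only down to distances of order $k^{-1/2}$, which is the natural width of the stationary-phase Gaussian. \emph{Second}, I would integrate in $G/T$: the critical-point condition here pins the flag $h_m T$ to $h_{m}T$ via \eqref{eqn:lambda hm}, i.e. it forces the two points to have conjugate moment images, and the transversality hypothesis on $\Phi_G$ (Theorem \ref{thm:rapid decrease fixed}) guarantees the relevant Hessian is non-degenerate in the normal directions. \emph{Third}, the freeness of the $G$-action on $X^G_{\mathcal{O}}$ is used to rule out small stabilizers that would otherwise spoil the stationary-phase expansion and to identify the critical manifold with (a neighborhood of) the orbit $G\cdot x$; this is where hypothesis that $G$ act freely enters decisively. \emph{Fourth}, combining the two transverse Gaussian decays — one in the "radial" moment direction measured by $\mathrm{dist}_X(x,G\cdot X^T_{\boldsymbol{\nu}})$, one in the "orbit-transverse" direction measured by $\mathrm{dist}_X(x,G\cdot y)$ — one gets a bound of the form $\exp(-c\,k\,\delta^2)$ with $\delta$ the displayed maximum, uniformly on the relevant compacta. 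When $\delta\ge C k^{\epsilon-1/2}$ one has $k\,\delta^2\ge C^2 k^{2\epsilon}\to\infty$ faster than any power of $\log k$, hence the whole kernel is $O(k^{-\infty})$; this last arithmetic is where the precise threshold $\epsilon-1/2$ is forced.

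The main obstacle I anticipate is \emph{uniformity} of the stationary-phase remainder: the standard $O(k^{-\infty})$ from integration by parts in the non-stationary regime must be made uniform in the point $(x,y)$ as it approaches the critical set at rate $k^{\epsilon-1/2}$, which is precisely the borderline where the "phase gradient" is comparable to $k^{-1/2}$ and the naive non-stationary estimate degenerates. Handling this requires a second-microlocal / dyadic decomposition in the distance variable $\delta$: on the range $k^{\epsilon-1/2}\le\delta\lesssim 1$ one rescales by writing $\delta=k^{-1/2}s$ with $s\gtrsim k^{\epsilon}$, applies a Gaussian-type bound $e^{-cs^2}$ coming from the completed square in the stationary-phase model, and checks that the implied constants depend only on finitely many derivatives of the phase and amplitude on a fixed compact neighborhood of $X^G_{\mathcal{O}}$ — the latter being compact by the remark following Theorem \ref{thm:rapid decrease fixed} and by Theorem \ref{thm:geometry locus}. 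A secondary technical point is bookkeeping the two independent transversality directions simultaneously (they are transverse to each other by part (4) of Theorem \ref{thm:geometry locus}, since $M^G_{\mathcal{O}}$ and $M^T_{\boldsymbol{\nu}}$ meet only tangentially along $M^G_{\boldsymbol{\nu}}$), so that the Hessian of the combined phase splits as a direct sum and the two Gaussian factors genuinely multiply; this is routine once the normal form is set up but needs to be stated carefully to legitimize taking the maximum in \eqref{eqn:controlled distance} rather than the sum.
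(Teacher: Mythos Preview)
Your overall setup (Weyl integration, the FIO parametrix for $\Pi$, an oscillatory integral over $T\times G/T\times\mathbb{R}_+$) matches the paper, but the mechanism you invoke for the decay is not the one that actually works, and one structural reduction is missing.

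The paper does \emph{not} obtain Gaussian factors $e^{-ck\delta^2}$ from a stationary-phase expansion; it treats the two conditions in \eqref{eqn:controlled distance} \emph{separately}, each by iterated integration by parts in a different variable. For $\mathrm{dist}_X(x,G\cdot y)\ge Ck^{\epsilon-1/2}$ one integrates by parts in the radial parameter $u$, using $|\partial_u\Psi_{x,y}|=|\psi(x(g,\boldsymbol{\vartheta}),y)|\ge\Im\psi\ge D\,\mathrm{dist}_X(x(g,\boldsymbol{\vartheta}),y)^2$; each step gains $O(k^{-2\epsilon})$. For $\mathrm{dist}_X(x,G\cdot X^T_{\boldsymbol{\nu}})\ge Ck^{\epsilon-1/2}$ one integrates by parts in $\boldsymbol{\vartheta}$, after observing that $\mathrm{d}^{(\boldsymbol{\vartheta})}\Psi_{x,x}=-\imath u\,\Phi_T(\mu_{g^{-1}}(m_x))-\boldsymbol{\nu}+O(k^{\epsilon'-1/2})$ and that transversality of $\Phi_T$ to $\mathbb{R}_+\cdot\imath\boldsymbol{\nu}$ bounds this below by $\gtrsim k^{\epsilon-1/2}$. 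There is no imaginary part of the phase available in this second direction, so your claimed Gaussian $e^{-ck\delta^2}$ does not arise there; the $O(k^{-\infty})$ is produced by iteration, and the actual technical content of the proof is the bookkeeping showing that the derivatives of the cutoff (which involve $\partial_{\boldsymbol{\vartheta}}\,\mathrm{dist}_X(x(g,\boldsymbol{\vartheta}),x)$ and are not uniformly bounded) are controlled well enough that each step still gains a net $O(k^{-2\epsilon'})$.

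You also omit a step the paper relies on: the second condition is analyzed \emph{only on the diagonal} $x=y$ (Proposition~\ref{prop:diagonal case distance MGO}), and the general case is then recovered from the Schwarz inequality $|\Pi_{k\boldsymbol{\nu}}(x,y)|\le\sqrt{\Pi_{k\boldsymbol{\nu}}(x,x)\,\Pi_{k\boldsymbol{\nu}}(y,y)}$ together with the \emph{a priori} polynomial bound $\Pi_{k\boldsymbol{\nu}}(x,x)\le C_{\boldsymbol{\nu}}k^{d+1}$ of Lemma~\ref{lem:a priori bound}. This is why the ``max'' in \eqref{eqn:controlled distance} requires no Hessian-splitting argument: the two conditions are never analyzed jointly, so your concern in the final paragraph about the transversality of the two normal directions is unnecessary.
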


Let us clarify the meaning of Theorem \ref{thm:rapid decrease slow}. 
The closed loci
$\mathcal{R}_k\subset X\times X$ defined by  (\ref{eqn:controlled distance})
form a nested sequence $\mathcal{R}_1 \subseteq \mathcal{R}_2\subseteq \cdots$. 
For any fixed $C, \epsilon >0$ there exist
positive constants $C_j=C_j(C,\epsilon)>0$, $j=1,2,\ldots$, 
such that the following holds. Given any sequence
in $X\times X$ with 
$( x_k , y_k ) \in \mathcal{R}_k$ for $k=1,2,\ldots$, we have 
$$
\big | \Pi_{ k\boldsymbol{ \nu } }(x_k , y_k ) \big| 
\le C_j \, k^{ - j} 
$$
for every $k$.

In Theorems \ref{thm:border pointwise asymptotics} and \ref{thm:border rescaled asymptotics}
below, we shall consider the diagonal and near-diagonal asymptotic behavior of $\Pi_{ k\boldsymbol{ \nu } }$ 
along $X^G_{\mathcal{O}}$. In the setting of Theorem \ref{thm:geometry locus},
every $x\in X^G_{\mathcal{ O }_{ \boldsymbol{ \nu } }}$ has discrete stabilizer subgroup
in $X$. 
To simplify our exposition, we shall make the stronger assumption that $\widetilde{\mu}$ is actually
free along $X^G_{\mathcal{ O }_{ \boldsymbol{ \nu } }}$.
Before giving the statement, some further notation is needed.

\begin{defn}
 \label{defn:vector fields}
If $\xi\in \mathfrak{g}$, we shall denote by
$\xi_M\in \mathfrak{X}(M)$ and $\xi_X\in \mathfrak{X}(X)$ the vector fields
induced by $\xi$ on $M$ and $X$, respectively. If $\boldsymbol{\nu}\in \mathbb{Z}^2$, we have the vector fields
$(\imath\,D_{\boldsymbol{\nu}})_M$ 
and $(\imath\,D_{\boldsymbol{\nu}})_X$; similarly, for any $g\,T\in G/T$, we have the vector fields 
$\mathrm{Ad}_g(\imath\,D_{\boldsymbol{\nu}})_M$ and
$\mathrm{Ad}_g(\imath\,D_{\boldsymbol{\nu}})_X$. To simplify notation, we shall set\footnote{occasionally, we shall use the more precise notation $(\imath\,\boldsymbol{\nu})_M(m)$, but this
should cause no confusion, 
since we are making no explicit use of complexifications in this paper.}
$$
\boldsymbol{\nu}_M:=(\imath\,D_{\boldsymbol{\nu}})_M, \quad \boldsymbol{\nu}_X:=(\imath\,D_{\boldsymbol{\nu}})_X,
$$
and
$$
\mathrm{Ad}_g(\boldsymbol{\nu})_M:=\mathrm{Ad}_g(\imath\,D_{\boldsymbol{\nu}})_M, \quad 
\mathrm{Ad}_g(\boldsymbol{\nu})_X:=\mathrm{Ad}_g(\imath\,D_{\boldsymbol{\nu}})_X.
$$
Occasionally, we shall use the abridged notation $\xi (m)$ for $\xi_M(m)$, $\xi (x)$ for $\xi_X(x)$
with no further mention.
\end{defn}

\begin{defn}
Let $\|\cdot\|_m:T_mM\rightarrow \mathbb{R}$ and $\|\cdot\|_x:T_xX\rightarrow \mathbb{R}$
be the norm functions. If $\boldsymbol{\nu}=(\nu_1,\nu_2)\in \mathbb{Z}^2$, $\nu_1>\nu_2$,
let us set $\boldsymbol{\nu}_\perp:=(-\nu_2,\nu_1)$.
With the notation introduced in Definitions \ref{defn:coadjoint orbit and cone}
and \ref{defn:vector fields}, let us define a $\mathcal{C}^\infty$ function 
$
\mathcal{D}_{\boldsymbol{\nu}}: M^G_{\mathcal{ O }_{ \boldsymbol{ \nu } }}\rightarrow (0,+\infty)
$
by posing 
$$
\mathcal{D}_{\boldsymbol{\nu}} (m):= 
\frac{\|\boldsymbol{\nu}\|}{\left\|\mathrm{Ad}_{h_m}(\boldsymbol{\nu}_\perp)_M(m)\right\|_m}.
$$
\end{defn}

\begin{rem}
 Since by assumption $\widetilde{\mu}$ is locally free on $X^G_{\mathcal{ O }_{ \boldsymbol{ \nu } }}$, but not necessarily 
on $M^G_{\mathcal{ O }_{ \boldsymbol{ \nu } }}$, the latter definition warrants an explanation,
since it might happen that $\xi_M(m)=0$ for $\xi\in \mathfrak{g}$ not zero and $m\in M^G_{\mathcal{ O }_{ \boldsymbol{ \nu } }}$. 
However, if
$x\in X^G_{\mathcal{ O }_{ \boldsymbol{ \nu } }}$ and $m=\pi(x)$, then it follows from (\ref{eqn:contact lift xi})
and the definition of $h_m\,T$ that
$\mathrm{Ad}_{h_m}(\boldsymbol{\nu}_\perp)_X(x)=\mathrm{Ad}_{h_m}(\boldsymbol{\nu}_\perp)_M(m)^\sharp$, whence
$$
\left\|\mathrm{Ad}_{h_m}(\boldsymbol{\nu}_\perp)_M(m)\right\|_m=\|\mathrm{Ad}_{h_m}(\boldsymbol{\nu}_\perp)_X(x)\|_x>0.
$$
\end{rem}

Let us record one more piece of notation. If $V_3$ is the area of the unit sphere $S^3 \subseteq \mathbb{R}^4$,
let us set
$$
D_{ G/T } : = ( 2\,\pi )^{ -1/2 } \, V_3^{-1}.
$$

\begin{thm}
 \label{thm:border pointwise asymptotics}
Under the same hypothesis as in Theorem \ref{thm:geometry locus}, let us assume in addition that
$G$ acts freely on $X^G_{\mathcal{O}_{\boldsymbol{\nu}}}$. 
Then uniformly in $x\in X^G_{\mathcal{O}_{\boldsymbol{\nu}}}$ 
we have for $k\rightarrow +\infty$ 
an asymptotic expansion of the form
\begin{eqnarray*}
 \Pi_{ k\boldsymbol{ \nu } }(x,x) & \sim &
\frac{D_{ G/T }}{\sqrt{ 2 }} \, \frac{1}{ \| \Phi_G ( m ) \|^{ d +1/2 } }
\, \left( \frac{ k \, \| \boldsymbol{ \nu } \| }{ \pi } \right)^{ d -1/2 } \cdot \mathcal{D}_{\boldsymbol{\nu}} (m)
\\
&& \cdot\left[ 1 +\sum_{j \ge 1} k^{- j / 4}\, a_j (\boldsymbol{ \nu } , m ) \right].
\end{eqnarray*}

\end{thm}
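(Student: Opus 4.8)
The plan is to reduce the pointwise trace $\Pi_{k\boldsymbol{\nu}}(x,x)$ to an oscillatory integral over $G$ and then apply stationary phase, exploiting the geometry of $M^G_{\mathcal O_{\boldsymbol\nu}}$ established in Theorem \ref{thm:geometry locus}. First I would write the isotypical projector by integrating the group action against the character: using \eqref{explicit character} and Schur orthogonality,
\[
\Pi_{k\boldsymbol{\nu}}(x,x)=\dim(V_{k\boldsymbol\nu})\int_G \overline{\chi_{k\boldsymbol\nu}(g)}\,\Pi\big(\widetilde\mu_{g^{-1}}(x),x\big)\,\mathrm dg,
\]
and then insert the Boutet de Monvel--Sj\"ostrand microlocal description of $\Pi$ as an FIO with complex phase, in the form $\Pi(x,y)\sim\int_0^{+\infty} e^{\mathrm i\,t\,\psi(x,y)}\,s(x,y,t)\,\mathrm dt$ with $\psi$ the standard Szeg\"o phase (so that $\Im\psi\ge 0$, vanishing exactly on the diagonal of $X$ up to the $S^1$-action). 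This produces a $t$- and $g$-integral whose phase is $\Psi(t,g,x)=t\,\psi(\widetilde\mu_{g^{-1}}(x),x)-\langle\text{weight data}\rangle$; rescaling $t\mapsto k\,t$ turns $k$ into the large parameter. I would handle the Weyl-character denominator $t_1-t_2$ in \eqref{explicit character} by the usual trick of writing $\chi_{k\boldsymbol\nu}$ on $T$ as a difference of two exponentials and then integrating over $G=KT K$ (or using the Weyl integration formula), which is what ultimately produces the orbit $G/T$ and the factor $D_{G/T}=(2\pi)^{-1/2}V_3^{-1}$.

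Next I would localize. By Theorem \ref{thm:rapid decrease fixed} and Corollary \ref{cor:rapid decrease fixed} the kernel is $O(k^{-\infty})$ away from $\mathcal Z_{\boldsymbol\nu}$, so on the diagonal only $g$ with $\widetilde\mu_{g^{-1}}(x)$ close to the $S^1$-orbit of $x$ contribute; since $\widetilde\mu$ is assumed free along $X^G_{\mathcal O}$, this pins $g$ near the stabilizer-type locus, i.e.\ near a copy of $T$ sitting inside $G$ via $h_m$ (with $m=\pi(x)$ and $h_m$ as in \eqref{eqn:lambda hm}). I would then change variables $g=h_m\,\exp(\zeta)\,h_m^{-1}$ with $\zeta\in\mathfrak g$ small, split $\zeta$ into its $\mathfrak t$-part and its off-diagonal part, and expand the phase $\Psi$ to second order. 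The key computation is that the Hessian of the phase in the $\mathfrak t$-directions is governed by $\langle\Phi_G(m),\cdot\rangle$ — hence the factor $\|\Phi_G(m)\|^{-(d+1/2)}$ and the power $(k\|\boldsymbol\nu\|/\pi)^{d-1/2}$ — while the Hessian in the off-diagonal directions of $\mathfrak g/\mathfrak t$ is controlled by $\|\mathrm{Ad}_{h_m}(\boldsymbol\nu_\perp)_M(m)\|_m$, which is exactly the denominator in $\mathcal D_{\boldsymbol\nu}(m)$. Here the transversality hypotheses (1)--(3) and the condition $\nu_1+\nu_2\ne 0$ from Theorem \ref{thm:geometry locus} are what guarantee that this critical manifold is clean and the transverse Hessian nondegenerate, so stationary phase in the clean-intersection form applies.

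The main obstacle — and the reason for the unusual $k^{-j/4}$ expansion rather than the generic $k^{-j/2}$ — is that the phase is \emph{degenerate} along $M^G_{\mathcal O}$ in a controlled way: by part (4) of Theorem \ref{thm:geometry locus}, $M^G_{\mathcal O_{\boldsymbol\nu}}$ and $M^T_{\boldsymbol\nu}$ meet tangentially along $M^G_{\boldsymbol\nu}$, so in the directions $\mathrm{diag}(\mathcal O_{\Phi(m)})$ tangent to the cone the phase vanishes to higher order and one gets an Airy-type (rather than Gaussian) contribution, whose natural scaling variable is $k^{1/4}$. So after the Gaussian integration in the \lq\lq good'' directions I would be left with a one-dimensional integral with cubic degeneracy, to be analyzed by a $k^{1/4}$-rescaling; carrying out this rescaled integral, checking that the leading term is nonzero and collecting the normalization constants $D_{G/T}/\sqrt2$, $1/\sqrt2$ and the volume of $S^3$ is the delicate bookkeeping. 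Uniformity in $x\in X^G_{\mathcal O}$ follows because all the geometric quantities entering the phase and amplitude ($\Phi_G(m)$, $h_m$, $\lambda_{\boldsymbol\nu}(m)$, the relevant Hessians) depend smoothly on $x$ over the compact locus $X^G_{\mathcal O}$, so the stationary-phase remainder estimates are uniform.
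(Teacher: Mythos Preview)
Your overall architecture matches the paper's: Weyl integration over $T$ and $G/T$, insertion of the Boutet de Monvel--Sj\"ostrand parametrix, rescaling $u\mapsto ku$, localization of the $G/T$-integral near $h_mT$, and a stationary-phase computation. However, your diagnosis of the $k^{-1/4}$ mechanism is wrong, and this is the heart of the argument.

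The degeneracy is not cubic and the residual integral is not of Airy type. An Airy integral has cubic oscillatory phase and scales in $k^{-1/3}$, not $k^{-1/4}$. What actually happens in the paper is the following. After rescaling $\boldsymbol\vartheta\mapsto\boldsymbol\vartheta/\sqrt k$ and passing to an orthonormal basis $(\mathbf n_1(r),\mathbf n_2(r))$ adapted to $\kappa(r)=\mathrm{diag}\big(\mathrm{Ad}_{h(w)^{-1}}\Phi_G'(m)\big)$, one first does honest nondegenerate stationary phase in $(u,\zeta_1)$, and then a genuine Gaussian integral in $\zeta_2$. That $\zeta_2$-Gaussian carries a linear term $\sqrt k\,\mathfrak b(r)\,\zeta_2$ with $\mathfrak b(r)=\langle\boldsymbol\nu,\mathbf n_2(r)\rangle$, and the key computation (this is the analytic content of the tangency in Theorem~\ref{thm:geometry locus}(4)) is that $\mathfrak b(r)$ vanishes to \emph{second} order in the $G/T$-radial coordinate: $\mathfrak b(r)\sim -(\nu_1-\nu_2)(\nu_1+\nu_2)\|\boldsymbol\nu\|^{-1}\,r^2$. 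Completing the square in $\zeta_2$ therefore produces a \emph{real, decaying} exponent $-\tfrac12\,k\,\mathfrak b(r)^2/\mathfrak a(w)\sim -C\,k\,r^4$, and the remaining integral $\int_0^\delta(\cdots)\,e^{-Ckr^4}\,r\,\mathrm dr$ is a Laplace-type integral with quartic minimum at the endpoint $r=0$; the substitution $s=\sqrt k\,r^2$ is what generates the $k^{-1/4}$ grading. So the source of the fractional powers is a quartic real exponent in the $G/T$ variable, not a cubic imaginary one. Moreover the factor $\mathcal D_{\boldsymbol\nu}(m)$ arises from $\mathfrak a(0)^{-1/2}$, i.e.\ from the $\zeta_2$-Gaussian (a $\mathfrak t$-direction after the rotation), not from a Hessian in the off-diagonal $\mathfrak g/\mathfrak t$-directions as you suggest.

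Your intuition that the tangency of $M^G_{\mathcal O}$ and $M^T_{\boldsymbol\nu}$ along $M^G_{\boldsymbol\nu}$ is responsible for the degeneracy is morally right---that tangency is exactly what forces $\mathfrak b(r)$ to vanish to order $r^2$ rather than $r$---but the analytic mechanism you drew from it (cubic, oscillatory, Airy) is the wrong one.
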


We can refine the previous asymptotic expansion at a fixed diagonal point  
$(x,x)\in X^G_{\mathcal{O}_{\boldsymbol{\nu}}} \times X^G_{\mathcal{O}_{\boldsymbol{\nu}}}$
to an asymptotic expansion for near-diagonal rescaled displacements; however, for the sake of simplicity 
we shall restrict the directions of the displacements.

\begin{defn}
 \label{defn:evaluation subspace}
If $m\in M$, let 
$\mathfrak{g}_M(m)\subseteq T_mM$ be the image of the linear evaluation map 
$\mathrm{val}_m : \mathfrak{g} \rightarrow T_mM$, $\xi \mapsto \xi_M(m)$; also, let 
$\mathfrak{g}_M(m)^{\perp_\omega}\subseteq T_mM$  be its symplectic orthocomplement with respect
to $\omega_m$, and let $\mathfrak{g}_M(m)^{\perp_g}\subseteq T_mM$  be its Riemannian orthocomplement with respect
to $g_m$. Hence, 
$$
\mathfrak{g}_M(m)^{\perp_h} := \mathfrak{g}_M(m)^{\perp_\omega} \cap \mathfrak{g}_M(m)^{\perp_g}
\subseteq T_mM
$$
is the Hermitian othocomplement of the complex subspace generated by $\mathfrak{g}_M(m)$
with respect to $h_m := g_m -\imath \, \omega_m$.
\end{defn}

\begin{defn}
 \label{defn:psi2}
If $\mathbf{v}_1, \,\mathbf{v}_2\in T_mM$, following \cite{sz}
let us set 
\begin{equation}
 \label{eqn:defn of psi_2}
\psi_2 (\mathbf{v}_1, \,\mathbf{v}_2 ) :=
-\imath \,\omega_m  (\mathbf{v}_1, \,\mathbf{v}_2 ) -\frac{1}{2} \, \|\mathbf{v}_1 - \mathbf{v}_2\|_m ^2.
\end{equation}
Here $\|\mathbf{v}\|_m := g_m ( \mathbf{v} , \mathbf{v} )^{ 1 /2 }$. The same invariant can be introduced
in any Hermitian vector space. Given the choice of a system of Heisenberg local coordinates centered at 
$x\in X$ (\cite{sz}), there is built-in unitary isomorphism $T_m M  \cong \mathbb{ C } ^d$; with this
implicit, (\ref{eqn:defn of psi_2}) will be used with $\mathbf{v}_j\in \mathbb{C}^d$.
\end{defn}

The choice of Heisenberg local coordinates centered at $x\in X$ gives a meaning to the expression 
$x + ( \theta , \mathbf{ v })$ for $(\theta, \mathbf{v})\in (-\pi,\pi) \times \mathbb{R}^{2 d}$
with $\| \mathbf{v} \|$ of sufficiently small norm. When $\theta =0$, we shall write
$x+\mathbf{v}$.

\begin{thm}
 \label{thm:border rescaled asymptotics}
Let us assume the same hypothesis as in Theorem \ref{thm:border pointwise asymptotics}. 
Suppose $C>0$, $\epsilon \in (0, 1/6)$,
and if $x\in X$ let us set $m_x:= \pi(x)$.
Then, uniformly in $x\in X^G_{\mathcal{O}_{\boldsymbol{\nu}}}$ and $\mathbf{v}_1, \,\mathbf{v}_2
\in \mathfrak{g}_M(m_x)^{\perp_h}$ satisfying $\| \mathbf{v}_j \| \le C \, k^\epsilon$, we have
for $k\rightarrow +\infty$ an asymptotic expansion
\begin{eqnarray*}
\lefteqn{ \Pi_{ k\boldsymbol{ \nu } }\left(x + \frac{1}{\sqrt{ k }} \,\mathbf{v}_1 , x + \frac{1}{\sqrt{ k }} \,\mathbf{v}_2 \right) }\\
& \sim &
\frac{D_{ G/T }}{\sqrt{ 2 }} \, \frac{ e^{  \psi_2 (\mathbf{v}_1 , \mathbf{v}_2) /
 \lambda _{ \boldsymbol{\nu} } ( m_x ) }    }{ \| \Phi_G ( m_x ) \|^{ d +1/2 } }
\, \left( \frac{ k \, \| \boldsymbol{ \nu } \| }{ \pi } \right)^{ d -1/2 } \cdot \mathcal{D}_{\boldsymbol{\nu}} (m_x)
\\
&& \cdot\left[ 1 +\sum_{j \ge 1} k^{- j / 4}\, a_j (\boldsymbol{ \nu } , m_x; \mathbf{v}_1 , \mathbf{ v }_2 ) \right],
\end{eqnarray*}
where $a_j (\boldsymbol{ \nu } , m_x; \cdot , \cdot )$ is a polynomial function of degree $\le \lceil 3 j/2\rceil$.

\end{thm}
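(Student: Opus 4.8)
The plan is to reduce the computation of $\Pi_{k\boldsymbol{\nu}}(x',y')$, with $x'=x+\mathbf v_1/\sqrt k$ and $y'=x+\mathbf v_2/\sqrt k$, to an oscillatory integral over $G$ against the near-diagonal Szeg\"o kernel of Boutet de Monvel and Sj\"ostrand, and then to evaluate that integral by stationary phase. Concretely, one starts from the standard representation of the equivariant projector as
\begin{equation*}
\Pi_{k\boldsymbol{\nu}}(x',y')=\dim(V_{k\boldsymbol{\nu}})\int_G \overline{\chi_{k\boldsymbol{\nu}}(g)}\,\Pi\big(\widetilde\mu_{g^{-1}}(x'),y'\big)\,\mathrm dV_G(g),
\end{equation*}
and inserts the Weyl integration formula to rewrite the $G$-integral as an iterated integral over $G/T$ and over $T$. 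On $T$ one uses the explicit character formula (\ref{explicit character}) together with the microlocal description $\Pi(x,y)\sim\int_0^{+\infty}e^{\imath u\,\psi(x,y)}s(x,y,u)\,\mathrm du$; after rescaling $u\mapsto k u$ this produces a phase on $T\times(0,+\infty)$ whose stationary set, by the transversality hypotheses of Theorem \ref{thm:geometry locus} and the nonvanishing of $\Phi_T$, is cut out precisely along $X^T_{\boldsymbol{\nu}}$, giving a first stationary-phase reduction in the \lq\lq torus $+$ radial\rq\rq\ variables. This is exactly the mechanism already used in the abelian case (\cite{pao-IJM}, \cite{pao-loa}) and in \cite{gal-pao}, so I would follow that template.

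The second, outer stationary phase is over $G/T$: after the torus integration one is left with an integral over $gT\in G/T$ whose phase vanishes to the needed order only when $\mathrm{Ad}_g(\imath D_{\boldsymbol{\nu}})$ is \lq\lq aligned\rq\rq\ with $\Phi_G(m_x)$, i.e.\ when $gT=h_{m_x}T$ in the notation of Definition \ref{defn:coadjoint orbit and cone}. The freeness of $\widetilde\mu$ along $X^G_{\mathcal O_{\boldsymbol{\nu}}}$ guarantees that this critical point is nondegenerate in the directions transverse to $G\cdot x$, and the Hessian determinant there is what produces the factor $\mathcal D_{\boldsymbol{\nu}}(m_x)$ together with the universal constant $D_{G/T}$ built from $V_3$; the \lq\lq missing\rq\rq\ directions along the orbit $G\cdot x$ are exactly the reason the displacements $\mathbf v_1,\mathbf v_2$ are restricted to $\mathfrak g_M(m_x)^{\perp_h}$, so that no cancellation with the orbit directions occurs and one may legitimately freeze them. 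The near-diagonal scaling asymptotics of Shiffman and Zelditch (\cite{sz}) then contribute the Gaussian factor $e^{\psi_2(\mathbf v_1,\mathbf v_2)/\lambda_{\boldsymbol{\nu}}(m_x)}$, the scalar $\lambda_{\boldsymbol{\nu}}(m_x)$ entering through (\ref{eqn:lambda hm}) because the effective \lq\lq frequency\rq\rq\ at the stationary point is $u_\ast=\|\Phi_G(m_x)\|^{-1}\cdot(\text{const})$; tracking this dilation carefully is what yields the powers $\|\Phi_G(m_x)\|^{-(d+1/2)}$ and $(k\|\boldsymbol{\nu}\|/\pi)^{d-1/2}$. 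The constraint $\epsilon<1/6$ and the polynomial degree bound $\lceil 3j/2\rceil$ come from the standard bookkeeping of remainder terms in stationary phase with a large parameter and moving (but slowly growing) auxiliary parameters $\mathbf v_j$: each application of the Hessian inverse raises the polynomial degree by at most $3/2$, and one needs $k^{3\epsilon}/\sqrt k\to 0$ for the expansion to remain asymptotic, i.e.\ $k^{-1/2+3\epsilon}$ terms to be genuinely lower order.

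The expansion being in powers of $k^{-1/4}$ rather than $k^{-1/2}$ is a feature inherited from the non-generic (tangential) way $M^G_{\mathcal O_{\boldsymbol{\nu}}}$ and $M^T_{\boldsymbol{\nu}}$ meet along $M^G_{\boldsymbol{\nu}}$, as recorded in part (4) of Theorem \ref{thm:geometry locus}: the combined phase in the torus-and-$G/T$ variables is degenerate along that locus, so one is really doing a stationary phase with a partially cubic (or higher) critical behavior in one direction, which rescales the natural small parameter to $k^{-1/4}$. I expect the main obstacle to be precisely the careful analysis of this degenerate critical point of the total phase: one must split the $G/T\times T\times\mathbb R_+$ integration into the \lq\lq Morse\rq\rq\ directions (handled by ordinary stationary phase and producing the leading constant and Hessian factor) and the degenerate direction normal to $M^G_{\boldsymbol{\nu}}$ inside $M^G_{\mathcal O_{\boldsymbol{\nu}}}$, and then apply a one-dimensional stationary-phase-with-degenerate-critical-point lemma (or, equivalently, a further change of variables absorbing the cubic term) to get the $k^{-1/4}$-series. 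Verifying that the leading coefficient is nonzero — equivalently, that the degenerate direction carries a genuinely cubic and not higher vanishing — requires using hypothesis (4), $\nu_1+\nu_2\neq0$, in an essential way, just as in Theorem \ref{thm:geometry locus}; everything else is a (lengthy but routine) propagation of the Heisenberg-coordinate computations of \cite{sz} through the two nested stationary phases.
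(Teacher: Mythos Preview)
Your overall plan --- Weyl integration, the Boutet--Sj\"ostrand FIO representation of $\Pi$, and stationary phase --- is exactly the engine the paper runs on, and you correctly identify why the restriction $\mathbf v_j\in\mathfrak g_M(m_x)^{\perp_h}$ is imposed and why $\epsilon<1/6$ appears. But your description of the \emph{architecture} of the stationary-phase reduction, and of the source of the $k^{-1/4}$, differs from what the paper actually does in a way that matters.

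The paper does \emph{not} perform two nested stationary phases (first in $T\times\mathbb R_+$, then in $G/T$). Instead it rescales $\boldsymbol\vartheta\mapsto\boldsymbol\vartheta/\sqrt k$ at the outset, so that the oscillatory parameter becomes $\sqrt k$ rather than $k$; it then localizes in $G/T$ near $h_{m_x}T$ by a cutoff (not by stationary phase), passes to polar coordinates $w=re^{i\theta}$ on $G/T$, rotates $\boldsymbol\vartheta$ to a basis $(\zeta_1,\zeta_2)$ adapted to $\kappa(r)=\mathrm{diag}\,\mathrm{Ad}_{h(w)^{-1}}(\Phi_G'(m))$, and applies stationary phase only in the \emph{two} variables $(u,\zeta_1)$, where the critical point is nondegenerate of signature zero. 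The remaining $\zeta_2$-integral is an honest Gaussian, evaluated explicitly; what is left is an $r$-integral with exponent $-\tfrac{k}{2}\,\mathfrak b(r)^2/\mathfrak a(w)$, and the key structural fact is that $\mathfrak b(r)=\langle\boldsymbol\nu,\mathbf n_2(r)\rangle$ vanishes to \emph{second} order in $r$ (this is the analytic content of the tangential intersection in Theorem~\ref{thm:geometry locus}(4)), so the exponent behaves like $-C\,k\,r^4$. The substitution $s=\sqrt k\,r^2$ then localizes the $r$-integral at scale $r\sim k^{-1/4}$, and Taylor-expanding the various real-analytic coefficients in $r$ produces the $k^{-1/4}$ series. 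So the degeneracy is \emph{quartic}, not cubic; a genuinely cubic phase would give an Airy-type $k^{-1/3}$, not $k^{-1/4}$.

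For the rescaled version (your Theorem), the only change is that the Heisenberg expansion of $\psi\big(\widetilde\mu_{ge^{-i\boldsymbol\vartheta/\sqrt k}g^{-1}}(x_{1,k}),x_{2,k}\big)$ acquires an extra term $\Gamma(\boldsymbol\vartheta,gT,\mathbf v_j)$; the hypothesis $\mathbf v_j\in\mathfrak g_M(m_x)^{\perp_h}$ forces $\Gamma=\psi_2(\mathbf v_1,\mathbf v_2)-\tfrac12\boldsymbol\vartheta^tE(gT)\boldsymbol\vartheta$, so that $\psi_2$ decouples from the $(u,\boldsymbol\vartheta,gT)$ integration entirely and survives as the multiplicative factor $e^{u(0)\psi_2(\mathbf v_1,\mathbf v_2)}=e^{\psi_2(\mathbf v_1,\mathbf v_2)/\lambda_{\boldsymbol\nu}(m_x)}$. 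Everything else is literally the computation of Theorem~\ref{thm:border pointwise asymptotics} with this extra scalar carried along. If you try instead to run a genuine stationary phase over $G/T$, you will have to rediscover this quartic degeneracy in disguise, and the bookkeeping for the $\mathcal D_{\boldsymbol\nu}(m_x)$ factor (which in the paper comes from $\mathfrak a(0)=\lambda_{\boldsymbol\nu}(m)^{-1}\|\mathrm{Ad}_{h_m}(\boldsymbol\nu_\perp)_M(m)\|^2/\|\boldsymbol\nu\|$ in the $\zeta_2$-Gaussian, not from any Hessian on $G/T$) will be harder to track.
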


Furthermore, we shall provide an integral formula of independent interest for the
asymptotics of $\Pi_{ k\boldsymbol{ \nu } }(x',x')$ when $x'\rightarrow X^G_{\mathcal{O}_{\boldsymbol{\nu}}}$
at a \lq fast\rq\, pace from the \lq outside\rq \, 
(that is, $x'\in \overline{A}$ in the notation of Theorem \ref{thm:geometry locus})
(\S \ref{sctn:integral formula rescaled asymptotics}).
While the latter formula is a bit too technical to be described in this introduction, by global integration
it leads to a lower bound on $\dim H(X)_{ \boldsymbol{ \nu } }$ which can be stated in a compact form.
By (\ref{eqn:dimension trace}), with the notation of 
Theorem \ref{thm:geometry locus}, we have 
\begin{equation}
 \label{eqn:dimension trace split}
\dim H(X)_{ \boldsymbol{ \nu } } = \dim_{ in } H(X)_{ \boldsymbol{ \nu } }  + \dim_{ out } H(X)_{ \boldsymbol{ \nu } },
\end{equation}
where
$$
\dim_{out} H(X)_{ \boldsymbol{ \nu } } : = \int _A \Pi_{ \boldsymbol{ \nu } } ( x, x ) \, \mathrm{d}V_X (x),
$$ 
and similarly for $\dim_{ in } H(X)_{ \boldsymbol{ \nu } }$, with $A$ replaced by $B$.
Hence an asymptotic estimate for $\dim_{out} H(X)_{ k\,\boldsymbol{ \nu } }$ when $k\rightarrow +\infty$
implies an asymptotic lower bound for 
$\dim H(X)_{ k\, \boldsymbol{ \nu } } $. In Theorem \ref{thm:outer dimension estimate} below, 
we shall show that 
$\dim_{out} H(X)_{ k\,\boldsymbol{ \nu } } $ is given by an asymptotic expansion of descending fractional powers of
$k$, the leading power being $k^{ d-1 }$.

\begin{thm}
 \label{thm:outer dimension estimate}
Under the assumptions of Theorem \ref{thm:border pointwise asymptotics}, 
$\dim_{out} H(X)_{ k\,\boldsymbol{ \nu } } $ is given by an asymptotic expansion in descending powers
of $k^{1/4}$ as $k\rightarrow +\infty$, with leading order term
$$
\frac{1}{4} \, 
D_{ G/T } \, 
\,
\left(\frac{ k \, \|\boldsymbol{ \nu } \|}{ \pi }  \right)^{d -1 } \, 
\int_{M^G_{\mathcal{O} }}\,\frac{1}{ \|\Phi_G (m) \|^{d }}\cdot \mathcal{D}_{\boldsymbol{\nu}} (m)
\, \mathrm{ d }V_{M^G_{\mathcal{O} } }(m).
$$
\end{thm}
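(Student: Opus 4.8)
The plan is to obtain the estimate by integrating, over a shrinking one‑sided collar of $X^G_{\mathcal{O}}$, the scaling‑limit description of $\Pi_{k\boldsymbol{\nu}}$ furnished by the integral formula of \S\ref{sctn:integral formula rescaled asymptotics}. Starting from (\ref{eqn:dimension trace split}), which reads
\[
\dim_{out}H(X)_{k\boldsymbol{\nu}}=\int_{\pi^{-1}(A)}\Pi_{k\boldsymbol{\nu}}(x,x)\,\mathrm{d}V_X(x),
\]
the argument has three moves: (i) localize the integral to a collar of $X^G_{\mathcal{O}}$ inside $\pi^{-1}(A)$, of width $O\!\left(k^{\epsilon-1/2}\right)$, with error $O(k^{-\infty})$; (ii) introduce collar coordinates adapted to the normal field $\Upsilon$ and rescale the transverse variable by $\sqrt{k}$; (iii) substitute the integral formula, integrate out the transverse variable and the structure $S^1$‑fibres, and read off the expansion.

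For (i): since $\mathrm{dist}_X(x,G\cdot x)=0$ identically, Theorem \ref{thm:rapid decrease slow} — whose hypotheses (notably the freeness of $G$ on $X^G_{\mathcal{O}}$) are included in those of Theorem \ref{thm:border pointwise asymptotics} — gives $\Pi_{k\boldsymbol{\nu}}(x,x)=O(k^{-\infty})$ uniformly on $\{\mathrm{dist}_X(x,G\cdot X^T_{\boldsymbol{\nu}})\ge C\,k^{\epsilon-1/2}\}$, for any fixed $C,\epsilon>0$; we fix once and for all a small $\epsilon\in(0,1/6)$ so that the integral formula of \S\ref{sctn:integral formula rescaled asymptotics} will be applicable. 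By Remarks \ref{rem:relation MGnuO} and \ref{rem:description of B}, $X^G_{\mathcal{O}}\subseteq\overline{G\cdot X^T_{\boldsymbol{\nu}}}$ and $X^G_{\mathcal{O}}$ is exactly the part of the topological boundary of $\pi^{-1}(A)$; hence for $x\in\pi^{-1}(A)$ the quantities $\mathrm{dist}_X(x,G\cdot X^T_{\boldsymbol{\nu}})$ and $\mathrm{dist}_X(x,X^G_{\mathcal{O}})$ are comparable, and up to $O(k^{-\infty})$ the integral may be restricted to the collar $\mathcal{U}_k:=\{x\in\pi^{-1}(A):\mathrm{dist}_X(x,X^G_{\mathcal{O}})<C'k^{\epsilon-1/2}\}$.

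For (ii)–(iii): by Theorem \ref{thm:geometry locus} the field $\Upsilon$ of \S\ref{sctn:construction of Upsilon} is nowhere zero and normal to $M^G_{\mathcal{O}}$, and — by part (3) of that theorem — points into the outer component $A$ when scaled by $\mathrm{sgn}(\nu_1+\nu_2)$; since $X^G_{\mathcal{O}}=\pi^{-1}(M^G_{\mathcal{O}})$, the structure $S^1$‑orbits are tangent to it and $\pi$ is a Riemannian submersion, the horizontal lift $\Upsilon^\sharp$ (relative to (\ref{eqn:vertical horizontal tbs})) is a nowhere‑zero normal field along $X^G_{\mathcal{O}}$ in $X$. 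Its normal exponential map is a diffeomorphism $(x,t)\mapsto\gamma_x(t)$ of $X^G_{\mathcal{O}}\times[0,\delta)$ onto a one‑sided collar in $\overline{\pi^{-1}(A)}$, in which $\mathrm{d}V_X=\mathcal{J}(x,t)\,\mathrm{d}t\,\mathrm{d}V_{X^G_{\mathcal{O}}}(x)$ with $\mathcal{J}$ smooth and positive, and $\mathcal{U}_k$ corresponds to $t\in[0,C'k^{\epsilon-1/2})$ for $k\gg0$. Setting $t=\tau/\sqrt{k}$ and invoking the integral formula of \S\ref{sctn:integral formula rescaled asymptotics} — which, uniformly in $x\in X^G_{\mathcal{O}}$ and $0\le\tau\le C'k^{\epsilon}$, expresses $\Pi_{k\boldsymbol{\nu}}(\gamma_x(\tau/\sqrt{k}),\gamma_x(\tau/\sqrt{k}))$ as $\big(k\|\boldsymbol{\nu}\|/\pi\big)^{d-1/2}$ times the leading coefficient $\mathcal{A}(m)=\frac{D_{G/T}}{\sqrt{2}}\,\|\Phi_G(m)\|^{-(d+1/2)}\,\mathcal{D}_{\boldsymbol{\nu}}(m)$ of Theorem \ref{thm:border pointwise asymptotics} (with $m=\pi(x)$, so $\mathcal{F}(0;m)=1$), times a rapidly decreasing transverse profile $\mathcal{F}(\tau;m)$, times $[1+\sum_{j\ge1}k^{-j/4}\,b_j(\boldsymbol{\nu},m;\tau)]$ with $b_j$ polynomial in $\tau$ — one integrates. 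Using that $\int_0^\infty\tau^n\mathcal{F}(\tau;m)\,\mathrm{d}\tau<\infty$ for all $n$, that $\int_{C'k^\epsilon}^\infty=O(k^{-\infty})$, that $\mathcal{J}(x,\tau/\sqrt{k})$ may be Taylor‑expanded in powers of $k^{-1/2}$, and that $\int_{X^G_{\mathcal{O}}}(\cdots)\,\mathrm{d}V_{X^G_{\mathcal{O}}}=\int_{M^G_{\mathcal{O}}}(\cdots)\,\mathrm{d}V_{M^G_{\mathcal{O}}}$ for $S^1$‑invariant integrands (the fibres having unit length), one obtains an asymptotic expansion of $\dim_{out}H(X)_{k\boldsymbol{\nu}}$ in descending powers of $k^{1/4}$, with leading power $k^{d-1}$ and leading coefficient
\[
\left(\frac{k\,\|\boldsymbol{\nu}\|}{\pi}\right)^{d-1/2}\frac{1}{\sqrt{k}}\int_{M^G_{\mathcal{O}}}\mathcal{A}(m)\,\mathcal{J}(m,0)\left(\int_0^\infty\mathcal{F}(\tau;m)\,\mathrm{d}\tau\right)\mathrm{d}V_{M^G_{\mathcal{O}}}(m).
\]
Evaluating $\mathcal{J}(m,0)$ and the Gaussian‑type integral $\int_0^\infty\mathcal{F}(\tau;m)\,\mathrm{d}\tau$ — which, by (\ref{eqn:lambda hm}) and $\|D_{\boldsymbol{\nu}}\|=\|\boldsymbol{\nu}\|$, is a universal multiple of $\sqrt{\lambda_{\boldsymbol{\nu}}(m)}=\sqrt{\|\Phi_G(m)\|/\|\boldsymbol{\nu}\|}$ — one sees that this combines $\mathcal{A}(m)$ with the leftover factor $(\|\boldsymbol{\nu}\|/\pi)^{1/2}$ produced by $\big(k\|\boldsymbol{\nu}\|/\pi\big)^{d-1/2}/\sqrt{k}$ to give exactly $\tfrac14\,D_{G/T}\,\big(k\|\boldsymbol{\nu}\|/\pi\big)^{d-1}\,\|\Phi_G(m)\|^{-d}\,\mathcal{D}_{\boldsymbol{\nu}}(m)$, whence the claimed formula.

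The main obstacle is not this final integration, which is essentially bookkeeping once two points are granted: the uniformity of the error terms over the collar — so that, multiplied by $\mathrm{d}\tau/\sqrt{k}$ and integrated over $\mathcal{U}_k$, the $j$‑th correction stays $O(k^{d-1-j/4})$, which uses precisely the polynomial‑in‑$\tau$ control of the $b_j$ together with the width $O(k^{\epsilon-1/2})$ of $\mathcal{U}_k$ — and the rapid decay in $\tau$ of the transverse profile $\mathcal{F}$. Both are outputs of, and the real work lies in, the integral formula of \S\ref{sctn:integral formula rescaled asymptotics}: one must carry out the near‑diagonal Heisenberg‑coordinate analysis underlying Theorems \ref{thm:border pointwise asymptotics} and \ref{thm:border rescaled asymptotics} while letting the base point drift off $X^G_{\mathcal{O}}$ into the outer region $A$ at rate $\tau/\sqrt{k}$ with $\tau$ allowed to grow like $k^\epsilon$, keeping all remainder estimates uniform, and identifying the emergent transverse Gaussian whose variance is governed by $\lambda_{\boldsymbol{\nu}}$.
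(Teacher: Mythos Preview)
Your approach is essentially the paper's: localize to a one-sided collar of $X^G_{\mathcal{O}}$ via Theorems \ref{thm:rapid decrease fixed} and \ref{thm:rapid decrease slow}, parametrize by $\Upsilon$, rescale the normal variable by $\sqrt{k}$, and feed in the integral formula of \S\ref{sctn:integral formula rescaled asymptotics}. The one point where your description diverges from what \S\ref{sctn:integral formula rescaled asymptotics} actually provides is the packaging into a ``transverse profile $\mathcal{F}(\tau;m)$'': Propositions \ref{prop:partial statement}--\ref{prop:I_k after gaussian integrals} do not hand you a function of $\tau$ alone, but an integral over the $G/T$ coordinate $r$ in which $\tau$ and $r$ are coupled through the Gaussian exponent $-\tfrac12 k\,\mathfrak{f}_k(\tau,w)^2/\mathfrak{a}(w)$, with $\sqrt{k}\,\mathfrak{f}_k$ of the form $A_1 r^2\sqrt{k}+B_1\tau$. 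The paper therefore performs the $(r,\tau)$ integration jointly via the change of variables $a=k^{1/2}r^2 S_1(r)$, reducing to $\int_0^\infty\int_0^\infty e^{-\frac12(A_1a+B_1\tau)^2}\,da\,d\tau$; your $\mathcal{F}(\tau;m)$ would be what one gets by doing the $r$-integral first (an $\mathrm{erfc}$-type function), and then integrating in $\tau$---equivalent by Fubini, but not what \S\ref{sctn:integral formula rescaled asymptotics} literally delivers. With that caveat, the argument is the same.
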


Let us make some final remarks.

First, there is a wider scope for the results of this paper, 
since it builds on microlocal techniques that can be also applied in the almost complex symplectic setting. 
For the sake of simplicity, we have restricted our discussion to the complex projective setting; 
nonetheless, assuming the theory in \cite{sz} (which in turn builds on \cite{boutet-sjostraend} and \cite{boutet-guillemin}), 
the present results can be extended to the case where $M$ is a compact symplectic manifold with an integral symplectic form 
and a polarizing (or quantizing) line bundle $A$ on it. 
More precisely, given an Hamiltonian compact Lie group action on $M$ linearizing to $A$, 
one can find an invariant compatible almost complex structure, and then rely on the theory of generalized 
Szeg\"{o} kernels developed in \cite{sz} to extend the present arguments and constructions.

In closing, it seems in order to clarify further the relation of the present work to the general literature. 
The asymptotics of Bergman and Szeg\"{o} kernels have attracted significant interest in recent years, 
involving algebraic, complex and symplectic geometry, as well as harmonic analysis. 
Generally, the emphasis has been placed on the perspective of Berezin-Toeplitz quantization, 
where the parameter of the asymptotics is the index of the Fourier component with respect to the structure $S^1$-action. 
Natural variants include additional symmetries, stemming from a linearizable Hamiltonian Lie group action. 
It would be unreasonable for space reasons to give here an account of this body of work, 
but we refer to \cite{boutet-guillemin}, \cite{zelditch-theorem-of-Tian}, \cite{bsz}, \cite{sz}, 
\cite{mm}, \cite{mz}, \cite{schlichenmaier}, \cite{charles}
and references therein. For some interesting recent extensions
in the same spirit to a more abstract geometric setting, see \cite{hhl} and \cite{hh}.   

In particular, the microlocal approach of \cite{boutet-guillemin}, \cite{zelditch-theorem-of-Tian}, \cite{bsz}, \cite{sz}, 
of special relevance for the present work, is based of on the theory of the Szeg\"{o} kernel as a Fourier integral operator 
(see \cite{boutet-sjostraend}), and has been exploited in \cite{pao-adv}, \cite{pao-jsg0} to obtain local asymptotics in the 
$G$-equivariant Berezin-Toeplitz context. 

This said, the perspective of the present work is quite different, and closer in spirit to \cite{guillemin-sternberg hq},
inasmuch as the structure $S^1$-action remains in the background and does not play any privileged role in the asymptotics 
(except of course in defining the underlying geometry); rather, as in \cite{pao-IJM}, the additional symmetry is considered \textit{per se}, 
on the the same footing as the standard circle action in the usual TYZ expansion.  
As in the toric case \cite{pao-IJM}, this changes considerably the geometry of the asymptotics. 

The present work covers part of the PhD thesis of the first author at the University of Milano Bicocca.

\section{Examples}
\label{sctn:example}

\subsection{Example 1}
\label{scnt:esempio1}
Let $A$ be the hyperplane line bundle on $M=\mathbb{P}^3$; then the unit circle bundle $X\subseteq A^\vee\setminus (0)$
 may be identified with $S^7\subset \mathbb{C}^4\setminus\{\mathbf{0}\}$, 
and the projection $\pi:X\rightarrow\mathbb{P}^3$ with the Hopf map.

Consider the unitary representation of $G$ on $\mathbb{C}^4\cong \mathbb{C}^2\oplus\mathbb{C}^2$ given by
 \begin{equation}
  \label{eqn:defn of action}
A\cdot (Z,W)=(AZ,AW);
 \end{equation}
here $Z=(z_1,z_2)^t,\,W=(w_1,w_2)^t\in \mathbb{C}^2$.
 This linear action yields by restriction a contact action $\widetilde{\mu}:G\times S^7\rightarrow S^7$,
 and descends to an holomorphic action $\mu:G\times \mathbb{P}^3\rightarrow \mathbb{P}^3$. 
If $\omega_{FS}$ is the Fubini-Study form on $\mathbb{P}^3$, then $\mu$ is Hamiltonian with respect 
 to $2\,\omega_{FS}$. The moment map 
 is 
 \begin{equation}
  \label{eqn:PhiG}
  \Phi_G:[Z:W]\in \mathbb{P}^3\mapsto 
 \frac{\imath}{\|Z\|^2+\|W\|^2}\,[z_i\,\overline{z}_j+w_i\,\overline{w}_j]\in \mathfrak{g}.
 \end{equation}
Furthermore, $\widetilde{\mu}$ is the contact lift of $\mu$.

 From this, one can draw the following conclusions:
 
 \begin{lem}
  \label{lem:PhiG}
Under the previous assumptions, we have: 
\begin{enumerate}
 \item $-\imath\,\Phi_G([Z:W])$ is a convex linear combination of the orthogonal projections 
 onto the subspaces of $\mathbb{C}^2$ spanned by $Z$ and $W$, respectively;
 \item $-\imath\,\Phi_G([Z:W])$ has rank $2$ if and only
 if $Z$ and $W$ are linearly independent, rank $1$ otherwise;
 \item $\Phi_G(M)=\imath\,K$, where $K$ denotes the set of all positive semidefinite Hermitian matrices of
 trace $1$;
 \item the determinant of $-\imath\,\Phi_G([Z:W])$ is 
 $$
 \det\big(-\imath\,\Phi_G([Z:W])\big)=\frac{|Z\wedge W|^2}{(\|Z\|^2+\|W\|^2)^2},
 $$
where $Z\wedge W=z_1\,w_2-z_2\,w_1\in \mathbb{C}$;
 \item the eigenvalues of $-\imath\,\Phi_G([Z:W])$ are both real and given by
 $$
 \lambda_{1,2}([Z:W])=\frac{1}{2}\,\left(1\pm \sqrt{1-\frac{4\,|Z\wedge W|^2}{(\|Z\|^2+\|W\|^2)^2}}\right).
 $$
\end{enumerate}

 \end{lem}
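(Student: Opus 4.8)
The plan is to reduce every assertion to a single rewriting of (\ref{eqn:PhiG}). Denoting by $Z\,Z^*$ the rank-one positive semidefinite matrix $[z_i\,\overline z_j]$ (column vector $Z$ times row vector $Z^*=\overline Z^{\,t}$), and similarly for $W$, formula (\ref{eqn:PhiG}) says exactly
$$
-\imath\,\Phi_G([Z:W])=\frac{Z\,Z^*+W\,W^*}{\|Z\|^2+\|W\|^2}.
$$
All five items are then elementary consequences of this identity, to be taken in the order $(1),(2),(3),(4),(5)$.

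First I would observe that, for $Z\neq 0$, the matrix $\|Z\|^{-2}\,Z\,Z^*$ is the orthogonal projector of $\mathbb C^2$ onto $\mathbb C\,Z$ (it sends $v$ to $\|Z\|^{-2}\langle v,Z\rangle\,Z$), and likewise for $W$; writing $t:=\|Z\|^2\big/(\|Z\|^2+\|W\|^2)\in[0,1]$ then displays $-\imath\,\Phi_G([Z:W])$ as the convex combination $t\,p_Z+(1-t)\,p_W$ of these two projectors, which is (1), with the evident conventions when $Z$ or $W$ vanishes. For (2), since $Z\,Z^*$ and $W\,W^*$ are positive semidefinite the kernel of their sum is the intersection $(\mathbb C\,Z)^\perp\cap(\mathbb C\,W)^\perp=(\mathbb C\,Z+\mathbb C\,W)^\perp$, so the rank equals $\dim_{\mathbb C}(\mathbb C\,Z+\mathbb C\,W)$, which is $2$ iff $Z,W$ are linearly independent and $1$ otherwise (recall $(Z,W)\neq 0$ as it represents a point of $\mathbb P^3$). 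For (3), the rewriting makes $-\imath\,\Phi_G([Z:W])$ manifestly positive semidefinite, with trace $1$ because $\mathrm{tr}(Z\,Z^*)=\|Z\|^2$; hence $\Phi_G(M)\subseteq\imath\,K$. Conversely, given $H\in K$ I would diagonalize it as $\lambda_1\,e_1e_1^*+\lambda_2\,e_2e_2^*$ with $\{e_1,e_2\}$ orthonormal, $\lambda_i\ge 0$, $\lambda_1+\lambda_2=1$, and take $Z=\sqrt{\lambda_1}\,e_1$, $W=\sqrt{\lambda_2}\,e_2$, for which $\|Z\|^2+\|W\|^2=1$ and $Z\,Z^*+W\,W^*=H$, so $\Phi_G([Z:W])=\imath\,H$; hence $\Phi_G(M)=\imath\,K$.

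For (4), writing $N=[\,Z\mid W\,]$ for the $2\times 2$ matrix with columns $Z$ and $W$ gives $N\,N^*=Z\,Z^*+W\,W^*$, whence $\det(Z\,Z^*+W\,W^*)=|\det N|^2=|z_1w_2-z_2w_1|^2=|Z\wedge W|^2$, and dividing by $(\|Z\|^2+\|W\|^2)^2$ yields the stated formula. Finally (5) follows since $-\imath\,\Phi_G([Z:W])$ is a Hermitian $2\times 2$ matrix with trace $1$ and determinant as in (4): its eigenvalues are the roots of $\lambda^2-\lambda+\det\big(-\imath\,\Phi_G\big)=0$, i.e. $\tfrac12\big(1\pm\sqrt{1-4\det(-\imath\,\Phi_G)}\,\big)$, which is the claimed expression; the discriminant is $\ge 0$ because $|Z\wedge W|\le\|Z\|\,\|W\|\le\tfrac12(\|Z\|^2+\|W\|^2)$.

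None of this presents a genuine obstacle; the only step that uses anything beyond the opening rewriting is the reverse inclusion in (3), which is just the spectral theorem applied to the $2\times 2$ Hermitian matrix $H$.
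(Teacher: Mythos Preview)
Your proof is correct and complete. The paper does not give a proof of this lemma at all: it simply states the five items as consequences that ``one can draw'' from the explicit formula (\ref{eqn:PhiG}), and moves on. Your rewriting $-\imath\,\Phi_G([Z:W])=(Z\,Z^*+W\,W^*)/(\|Z\|^2+\|W\|^2)$ and the observation that this equals $N\,N^*/\mathrm{tr}(N\,N^*)$ with $N=[\,Z\mid W\,]$ is exactly the natural reading of that formula, so your argument is precisely the verification the authors are leaving to the reader.
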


Let us fix $\boldsymbol{\nu}\in \mathbb{Z}^2$ with 
$\nu_1>\nu_2\ge 0$. 
Let as above $\mathcal{O}_{\boldsymbol{\nu}}\subseteq \mathfrak{g}$ denote the coadjoint orbit of
$\imath\,D_{\boldsymbol{\nu}}$.
With $M=\mathbb{P}^3$, 
the locus $M^G_{\mathcal{O}_{\boldsymbol{\nu}}}=\Phi_G^{-1}(\mathbb{R}_+\cdot \mathcal{O}_{\boldsymbol{\nu}})$ is given by the condition
$$
\nu_2\,\lambda_1([Z:W])-\nu_1\,\lambda_2([Z:W])=0.
$$
In view of Lemma \ref{lem:PhiG}, this implies:

\begin{cor}
 \label{cor:MOG}
 Under the previous assumptions,
 $$
 M^G_{\mathcal{O}_{\boldsymbol{\nu}}}=\left\{[Z:W]\in \mathbb{P}^3\,:
 \,\frac{|Z\wedge W|}{\|Z\|^2+\|W\|^2}=\frac{\sqrt{\nu_1\,\nu_2}}{\nu_1+\nu_2}\right\}.
 $$
\end{cor}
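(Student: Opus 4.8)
The plan is to translate the defining condition of $M^G_{\mathcal{O}_{\boldsymbol{\nu}}}$ into an equation on the eigenvalues of $-\imath\,\Phi_G([Z:W])$, and then feed in the explicit trace and determinant formulae of Lemma \ref{lem:PhiG} to arrive at the stated identity.

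First I would note that, since $\mathcal{C}(\mathcal{O}_{\boldsymbol{\nu}}) = \mathbb{R}_+\cdot \mathcal{O}_{\boldsymbol{\nu}}$ consists exactly of the positive multiples of the Hermitian matrices conjugate to $\imath\,D_{\boldsymbol{\nu}}$, a point $m=[Z:W]$ lies in $M^G_{\mathcal{O}_{\boldsymbol{\nu}}}$ precisely when the ordered eigenvalues $\lambda_1\ge\lambda_2$ of $-\imath\,\Phi_G(m)$ are proportional to $(\nu_1,\nu_2)$ with a positive constant. By Lemma \ref{lem:PhiG}(3) we have $\lambda_1+\lambda_2 = \mathrm{trace}\big(-\imath\,\Phi_G(m)\big) = 1$, so the constant is forced to be $1/(\nu_1+\nu_2)$ (and $\nu_1+\nu_2>0$ since $\nu_1>\nu_2\ge 0$), i.e. $\lambda_j = \nu_j/(\nu_1+\nu_2)$, which is the condition $\nu_2\,\lambda_1 - \nu_1\,\lambda_2 = 0$ already recorded just before the statement. (When $\nu_2=0$ this is the rank one locus $Z\wedge W = 0$ of Lemma \ref{lem:PhiG}(2), and the computation below degenerates consistently.)

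Next I would combine the two elementary symmetric functions of the pair $(\lambda_1,\lambda_2)$. Since an unordered pair of reals is recovered from its sum and product, the condition $\lambda_j = \nu_j/(\nu_1+\nu_2)$ is equivalent to the single scalar equation $\lambda_1\,\lambda_2 = \nu_1\,\nu_2/(\nu_1+\nu_2)^2$, together with $\lambda_1+\lambda_2=1$ (always true) and the ordering $\lambda_1\ge\lambda_2$ (automatic from $\nu_1>\nu_2$ and $\lambda_j\ge 0$). By Lemma \ref{lem:PhiG}(4), $\lambda_1\,\lambda_2 = \det\big(-\imath\,\Phi_G([Z:W])\big) = |Z\wedge W|^2/(\|Z\|^2+\|W\|^2)^2$, so the condition becomes
$$
\frac{|Z\wedge W|^2}{(\|Z\|^2+\|W\|^2)^2} = \frac{\nu_1\,\nu_2}{(\nu_1+\nu_2)^2},
$$
and taking (nonnegative) square roots gives the claimed description.

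There is essentially no obstacle here; the only points needing a word of care are the bookkeeping of the proportionality constant via the trace normalization, the elementary remark that $\{\lambda_1,\lambda_2\}$ is determined by its sum and product — so that the scalar equation on the product is genuinely \emph{equivalent} to, not merely implied by, the eigenvalue condition — and checking that the ordering $\lambda_1\ge\lambda_2$ is compatible with $\nu_1>\nu_2$ so that no spurious component is picked up. Alternatively one can bypass Lemma \ref{lem:PhiG}(4) and substitute the explicit eigenvalues of Lemma \ref{lem:PhiG}(5) into $\nu_2\,\lambda_1 = \nu_1\,\lambda_2$, isolating the square root and squaring; this produces the same identity.
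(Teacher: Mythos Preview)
Your proof is correct and follows essentially the same route as the paper, which simply notes that the condition $\nu_2\,\lambda_1-\nu_1\,\lambda_2=0$ together with Lemma~\ref{lem:PhiG} yields the stated description. Your use of the trace normalization and the determinant formula (Lemma~\ref{lem:PhiG}(3),(4)) is a clean way to package the equivalence, and you correctly observe that the alternative via the explicit eigenvalues in Lemma~\ref{lem:PhiG}(5) gives the same result.
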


 Let us now consider transversality. 
By Lemma \ref{lem:transversality and locally free action} below (see also the discussion in \S 2 of \cite{pao-IJM}), $\Phi_G$ 
 is transverse to the ray $\mathbb{R}_+\cdot \imath\,D_{\boldsymbol{\nu}}$ in $\mathfrak{g}$ 
 if and only if $\widetilde{\mu}$ is locally free
 along $X^G_{\boldsymbol{\nu}}$ in (\ref{defn:definition of Dnu}) 
(that is, each $x\in X^G_{\boldsymbol{\nu}}$
has discrete stabilizer).
 
On the other hand, by (\ref{eqn:defn of action}) $\widetilde{\mu}$ is locally free at $(Z,W)\in S^7$
 if and only if $Z\wedge W\neq 0$, and this is equivalent to $\Phi([Z:W])$ having rank $2$; 
 this means that $-\imath\,\Phi_G([Z:W])$ has two positive eigenvalues. Thus we obtain the following.

\begin{cor}
 \label{cor:transversality for PhiG}
The following conditions are equivalent:
\begin{enumerate}
 \item $\Phi_G$ is
 transverse to $\mathbb{R}_+\cdot\imath\,D_{\boldsymbol{\nu}}$, and 
 $\Phi_G^{-1}(\mathbb{R}_+\cdot \imath\,D_{\boldsymbol{\nu}})\neq\emptyset$;
 \item $\Phi_G$ is
 transverse to $\mathcal{O}_{\boldsymbol{\nu}}$, and 
 $\Phi_G^{-1}(\mathbb{R}_+\cdot \mathcal{O}_{\boldsymbol{\nu}})\neq\emptyset$;
\item $\nu_1,\,\nu_2> 0$.
\end{enumerate}

\end{cor}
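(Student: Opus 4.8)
The plan is to prove Corollary \ref{cor:transversality for PhiG} by combining the general equivalence ``transversality $\Leftrightarrow$ local freeness'' (to be invoked from Lemma \ref{lem:transversality and locally free action} below) with the explicit computations already recorded in Lemma \ref{lem:PhiG} for $M=\mathbb P^3$. First I would establish the implication (1)$\Rightarrow$(3): assuming $\Phi_G$ is transverse to the ray $\mathbb R_+\cdot \imath\, D_{\boldsymbol\nu}$ and that $X^G_{\boldsymbol\nu}\neq\emptyset$, pick any $x=(Z,W)\in X^G_{\boldsymbol\nu}$; local freeness of $\widetilde\mu$ at $x$ forces $Z\wedge W\neq 0$ by the explicit description of the action (\ref{eqn:defn of action}), hence by Lemma \ref{lem:PhiG}(2) the matrix $-\imath\,\Phi_G([Z:W])$ has rank $2$, i.e.\ two strictly positive eigenvalues. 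Since $[Z:W]\in M^G_{\mathcal O_{\boldsymbol\nu}}$ means $-\imath\,\Phi_G([Z:W])$ lies on $\mathbb R_+\cdot \mathcal O_{\boldsymbol\nu}$, its eigenvalues are a positive multiple of $(\nu_1,\nu_2)$; positivity of both eigenvalues then gives $\nu_1,\nu_2>0$. (One also uses here that $\nu_1>\nu_2$ is a standing assumption.)

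Next I would prove (3)$\Rightarrow$(1) and (3)$\Rightarrow$(2) together. Assuming $\nu_1,\nu_2>0$, I first show $M^G_{\mathcal O_{\boldsymbol\nu}}\neq\emptyset$: by Lemma \ref{lem:PhiG}(3), $\Phi_G(M)=\imath K$ where $K$ is the full simplex of positive semidefinite Hermitian matrices of trace $1$; rescaling, the ray $\mathbb R_+\cdot \imath D_{\boldsymbol\nu}$ meets $\imath K$ precisely in the point $\imath\,(\nu_1+\nu_2)^{-1} D_{\boldsymbol\nu}$, which has positive entries, hence lies in the interior face structure of $K$ and is attained (explicitly, Corollary \ref{cor:MOG} exhibits the nonempty level set via $|Z\wedge W|/(\|Z\|^2+\|W\|^2)=\sqrt{\nu_1\nu_2}/(\nu_1+\nu_2)$, a value in $(0,1/2]$ when $\nu_1,\nu_2>0$, which is in the range of that function by Lemma \ref{lem:PhiG}(5)). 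Then, to get transversality, I observe that every $[Z:W]\in M^G_{\mathcal O_{\boldsymbol\nu}}$ has $\sqrt{\nu_1\nu_2}>0$, hence $Z\wedge W\neq 0$, hence $-\imath\,\Phi_G$ has rank $2$, hence $\widetilde\mu$ is locally free at the corresponding point of $X$; by Lemma \ref{lem:transversality and locally free action} this is exactly transversality of $\Phi_G$ to $\mathbb R_+\cdot\imath D_{\boldsymbol\nu}$ (respectively to $\mathcal O_{\boldsymbol\nu}$, since local freeness along $X^G_{\boldsymbol\nu}$ and along $X^G_{\mathcal O}$ are equivalent by $G$-equivariance). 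This closes the cycle of implications, and the equivalence of (1) and (2) also follows since both have been shown equivalent to (3).

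The only genuinely delicate point is the interface between the abstract criterion and the concrete model: I must make sure that ``$\widetilde\mu$ locally free at $(Z,W)$ $\iff$ $Z\wedge W\neq 0$'' is correctly justified from (\ref{eqn:defn of action}). This is elementary linear algebra: the stabilizer of $(Z,W)\in S^7$ in $U(2)$ is $\{A: AZ=Z,\ AW=W\}$; if $Z,W$ span $\mathbb C^2$ this forces $A=\mathrm{Id}$ (discrete, indeed trivial stabilizer), while if they are linearly dependent the common stabilizer contains a circle of unitary maps fixing the line they span, so the stabilizer is positive-dimensional. I would spell this out in one or two sentences. Everything else is bookkeeping with Lemma \ref{lem:PhiG} and the standing hypothesis $\nu_1>\nu_2$, together with the forward reference to Lemma \ref{lem:transversality and locally free action}; no new analytic input is needed.
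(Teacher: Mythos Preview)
Your proposal is correct and follows essentially the same route as the paper: the paper's argument (given in the paragraph immediately preceding the corollary) invokes Lemma~\ref{lem:transversality and locally free action} to equate transversality with local freeness of $\widetilde\mu$ on $X^G_{\boldsymbol\nu}$, and then observes from the explicit action~(\ref{eqn:defn of action}) that local freeness at $(Z,W)$ holds exactly when $Z\wedge W\neq 0$, i.e.\ when $-\imath\,\Phi_G([Z:W])$ has rank $2$. Your write-up is more explicit about the cycle of implications and the non-emptiness check via Corollary~\ref{cor:MOG}, but the substance is the same.
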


Let us now consider the restricted Hamiltonian action of $T$.  
Identifying $\mathfrak{t}$ with $\imath\,\mathbb{R}^2$, $\Phi_T:M\rightarrow\mathfrak{t}$ 
may be written:
 \begin{equation}
  \label{eqn:phiT}
  \Phi_T:[Z:W]\in \mathbb{P}^3\mapsto 
 \frac{\imath}{\|Z\|^2+\|W\|^2}\,\begin{pmatrix}
                                  |z_1|^2+|w_1|^2\\
                                  |z_2|^2+|w_2|^2
                                 \end{pmatrix}
\in \mathfrak{t}.
 \end{equation}
Thus we obtain
\begin{lem}
 \label{lem:PhiT}
Assume that $\nu_1>\nu_2\ge 0$; then:
\begin{enumerate}
 \item the image of $\Phi_T$ in $\mathfrak{t}\cong\imath\,\mathbb{R}^2$ is
$$
\Phi_T(M)=\imath\,\left\{\begin{pmatrix}
                           x\\
                           y
                          \end{pmatrix}:\,x+y=1,\,x,y\ge 0\right\};
$$
\item the locus $M^T_{\boldsymbol{\nu}}=\Phi_T^{-1}(\mathbb{R}_+\cdot \imath\,D_{\boldsymbol{\nu}})$ is given by
$$
M^T_{\boldsymbol{\nu}}=\left\{[Z:W]\in \mathbb{P}^3\,:\,\nu_2\,\left(|z_1|^2+|w_1|^2\right)=\nu_1\,\left(|z_2|^2+|w_2|^2\right)\right\};
$$
\item $\Phi_T$ is transverse to $\mathbb{R}_+\cdot\imath\,D_{\boldsymbol{\nu}}$ and $M^T_{\boldsymbol{\nu}}\neq \emptyset$ if and only if
$\nu_1,\,\nu_2>0$.
\end{enumerate}

\end{lem}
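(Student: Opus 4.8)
The plan is to read off all three assertions from the explicit formula (\ref{eqn:phiT}) for $\Phi_T$, invoking the transversality criterion Lemma \ref{lem:transversality and locally free action} only for part (3). Throughout I would write $a:=|z_1|^2+|w_1|^2$, $b:=|z_2|^2+|w_2|^2$ and $N:=\|Z\|^2+\|W\|^2=a+b>0$, so that $\Phi_T([Z:W])=\imath\,N^{-1}\,(a,b)^t$ with $a,b\ge 0$.

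For (1), I would observe that $N^{-1}(a,b)$ always lies on the standard $1$-simplex, and that every point of that simplex is attained, e.g.\ by $[(\sqrt{x},\sqrt{y}):0]$; this in particular records $\mathbf{0}\notin\Phi_T(M)$, which is needed later. For (2), I would impose $N^{-1}(a,b)=t\,(\nu_1,\nu_2)$ with $t>0$: summing the two components and using $a+b=N$ pins down $t=1/(\nu_1+\nu_2)$, which is positive precisely because of the standing hypothesis $\nu_1>\nu_2\ge 0$; substituting back and clearing denominators collapses the two scalar equations (given $a+b=N$) to the single relation $\nu_2\,a=\nu_1\,b$, and conversely that relation together with $a+b=N$ recovers $(a,b)=\frac{N}{\nu_1+\nu_2}\,(\nu_1,\nu_2)$, hence membership in the ray. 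Exhibiting one solution, say $[(\sqrt{\nu_1},\sqrt{\nu_2}):0]$ (or $[(1,0):0]$ when $\nu_2=0$), shows that $M^T_{\boldsymbol{\nu}}$ is never empty for $\nu_1>\nu_2\ge 0$.

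For (3), since $M^T_{\boldsymbol{\nu}}$ is automatically nonempty, the whole content is the transversality, and here I would apply Lemma \ref{lem:transversality and locally free action}: $\Phi_T$ is transverse to $\mathbb{R}_+\cdot\imath D_{\boldsymbol{\nu}}$ iff the restricted action $\widetilde{\mu}|_T$ is locally free along $X^T_{\boldsymbol{\nu}}=\pi^{-1}(M^T_{\boldsymbol{\nu}})\subset S^7$. The stabilizer of the linear $T$-action is immediate from (\ref{eqn:defn of action}): $(t_1,t_2)$ fixes $(z_1,z_2,w_1,w_2)$ iff $t_1=1$ whenever $(z_1,w_1)\neq(0,0)$ and $t_2=1$ whenever $(z_2,w_2)\neq(0,0)$, so the stabilizer is discrete exactly when $a>0$ and $b>0$. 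If $\nu_1,\nu_2>0$, then on $X^T_{\boldsymbol{\nu}}$ the relation $\nu_2 a=\nu_1 b$ with $a+b=1$ forces $a,b>0$, so the action is in fact free along $X^T_{\boldsymbol{\nu}}$ and $\Phi_T$ is transverse; if $\nu_2=0$, then $X^T_{\boldsymbol{\nu}}=\{z_2=w_2=0\}\cap S^7$ is pointwise fixed by $\{1\}\times S^1$, so local freeness — hence transversality — fails.

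The main obstacle is essentially nil: everything reduces to bookkeeping with (\ref{eqn:phiT}), the real work being carried by Lemma \ref{lem:transversality and locally free action}. The only place to stay alert is in (2): one must check that the multiplier $t$ comes out positive on its own — which it does, since $\nu_1+\nu_2>0$ — so that membership in the \emph{open} ray really is captured by the single quadratic equation, with no residual sign condition to impose.
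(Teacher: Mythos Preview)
Your proposal is correct and follows essentially the same route as the paper: parts (1) and (2) are read off directly from the explicit formula (\ref{eqn:phiT}), and part (3) is handled via the equivalence between transversality and local freeness of the $T$-action on $X^T_{\boldsymbol{\nu}}$, together with the explicit description of the $T$-stabilizers. The only cosmetic difference is that the paper cites \cite{pao-IJM} for the transversality criterion in the torus case, whereas you invoke Lemma~\ref{lem:transversality and locally free action} (which is stated for $G$, but whose argument applies verbatim to $T$).
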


\begin{proof}
The first two statements follow immediately from (\ref{eqn:phiT}). As to the third,
let us recall again that
$\Phi_T$ is transverse to 
$\mathbb{R}_+\cdot \imath\,D_{\boldsymbol{\nu}}$
if and only if the action of $T$ on $X^T_{\boldsymbol{\nu}}\subset S^7$ is locally free \cite{pao-IJM}.

On the other hand, $T$ acts locally freely at $(Z,W)\in S^7$ if and only if
 $Z$ and $W$ are neither both scalar multiples of $\mathbf{e}_1$, nor both scalar multiples of $\mathbf{e}_2$, 
where $(\mathbf{e}_1,\,\mathbf{e}_2)$ is the standard basis of $\mathbb{C}^2$. 
By 2), there are no points $(Z,W)$ of this form in $X^T_{\boldsymbol{\nu}}$
if and only if $\nu_2>0$.
\end{proof}

Hence if $\nu_1,\,\nu_2>0$, then both $\Phi_G$ and $\Phi_T$ are transverse to $\mathbb{R}_+\cdot \boldsymbol{\nu}$,
and $M^G_{ \boldsymbol{\nu}}\neq \emptyset$, 
$M^T_{ \boldsymbol{\nu}}\neq \emptyset$. For instance, 
$$
\left[\sqrt{\frac{\nu_1}{\nu_1+\nu_2}}\,\mathbf{e}_1:\,\sqrt{\frac{\nu_2}{\nu_1+\nu_2}}\,\mathbf{e}_2\right]
\in M^G_{ \boldsymbol{\nu}}\cap M^T_{ \boldsymbol{\nu}}.
$$
More generally, we have the following.

\begin{lem}
 \label{lem:intersection}
 For any $\boldsymbol{\nu}$,
 $ M^G_{ \boldsymbol{\nu}}\cap M^T_{ \boldsymbol{\nu}}
 =\Phi_G^{-1}\left\{\imath\,
 (\nu_1+\nu_2)^{-1}\,D_{\boldsymbol{\nu}}\right\}$.
\end{lem}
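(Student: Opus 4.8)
The plan is to reduce the intersection to a single one of the two loci and then pin down the relevant scalar parameter using the trace normalization of $\Phi_G$ recorded in Lemma~\ref{lem:PhiG}.

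First I would note that, since $\Phi_T=\mathrm{diag}\circ\Phi_G$ and $\mathrm{diag}(D_{\boldsymbol{\nu}})=\boldsymbol{\nu}$, every $m$ with $\Phi_G(m)\in\imath\,\mathbb{R}_+\cdot D_{\boldsymbol{\nu}}$ automatically satisfies $\Phi_T(m)\in\imath\,\mathbb{R}_+\cdot\boldsymbol{\nu}$. Hence $M^G_{\boldsymbol{\nu}}\subseteq M^T_{\boldsymbol{\nu}}$, so $M^G_{\boldsymbol{\nu}}\cap M^T_{\boldsymbol{\nu}}=M^G_{\boldsymbol{\nu}}$, and it suffices to identify $M^G_{\boldsymbol{\nu}}=\Phi_G^{-1}\big(\imath\,\mathbb{R}_+\cdot D_{\boldsymbol{\nu}}\big)$ with $\Phi_G^{-1}\{\imath\,(\nu_1+\nu_2)^{-1}D_{\boldsymbol{\nu}}\}$.

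The inclusion $\supseteq$ is immediate, because $(\nu_1+\nu_2)^{-1}>0$ (recall $\nu_1>\nu_2\ge 0$, so $\nu_1+\nu_2\ge 1$). For $\subseteq$, take $m\in M^G_{\boldsymbol{\nu}}$, so that $-\imath\,\Phi_G(m)=t\,D_{\boldsymbol{\nu}}$ for some $t>0$. By Lemma~\ref{lem:PhiG}(3) we have $\Phi_G(M)=\imath\,K$, in particular $\mathrm{trace}\big(-\imath\,\Phi_G(m)\big)=1$; taking traces in $-\imath\,\Phi_G(m)=t\,D_{\boldsymbol{\nu}}$ forces $t(\nu_1+\nu_2)=1$, i.e.\ $t=(\nu_1+\nu_2)^{-1}$. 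This gives $\Phi_G(m)=\imath\,(\nu_1+\nu_2)^{-1}D_{\boldsymbol{\nu}}$ and completes the equality.

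There is essentially no obstacle here: the whole argument rests on the normalization $\mathrm{trace}(-\imath\,\Phi_G)\equiv 1$ from Lemma~\ref{lem:PhiG}(3), which is exactly what rigidifies the ray $\imath\,\mathbb{R}_+\cdot D_{\boldsymbol{\nu}}$ to the single point $\imath\,(\nu_1+\nu_2)^{-1}D_{\boldsymbol{\nu}}$ inside $\Phi_G(M)$. In particular no transversality or local freeness hypothesis is used, so the statement holds for every $\boldsymbol{\nu}$ in the range under consideration; one may also observe in passing that $\imath\,(\nu_1+\nu_2)^{-1}D_{\boldsymbol{\nu}}\in\imath\,K$, consistently with Corollaries~\ref{cor:transversality for PhiG} and~\ref{cor:MOG}.
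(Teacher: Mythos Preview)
Your argument is correct for the statement as literally written, but it differs from the paper's proof, and the difference is worth noting. You observe directly that $M^G_{\boldsymbol{\nu}}\subseteq M^T_{\boldsymbol{\nu}}$ (since $\Phi_T=\mathrm{diag}\circ\Phi_G$), so the intersection is just $M^G_{\boldsymbol{\nu}}$, and then the trace normalization $\mathrm{trace}(-\imath\,\Phi_G)\equiv 1$ from Lemma~\ref{lem:PhiG}(3) pins down the unique scalar $t=(\nu_1+\nu_2)^{-1}$. The paper instead characterizes membership in $M^T_{\boldsymbol{\nu}}$ by writing $-\imath\,\Phi_G$ with prescribed diagonal $\boldsymbol{\nu}/(\nu_1+\nu_2)$ and an off-diagonal unknown $z$, characterizes membership in the other locus by the similarity class of $-\imath\,\Phi_G$, and then \emph{equates determinants} to force $z=0$.

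The reason for the discrepancy is that the paper's proof is really establishing the stronger claim $M^G_{\mathcal{O}_{\boldsymbol{\nu}}}\cap M^T_{\boldsymbol{\nu}}=\Phi_G^{-1}\{\imath\,(\nu_1+\nu_2)^{-1}D_{\boldsymbol{\nu}}\}$: note that its first line says ``$[Z:W]\in M^G_{\boldsymbol{\nu}}$ iff $-\imath\,\Phi_G([Z:W])$ is \emph{similar to} $D_{\boldsymbol{\nu}/(\nu_1+\nu_2)}$'', which is the defining condition for $M^G_{\mathcal{O}_{\boldsymbol{\nu}}}$, not $M^G_{\boldsymbol{\nu}}$; and Corollary~\ref{cor:intersection submanifold}(2) (as well as Step~\ref{step:intersection of hypersurfaces} later) uses precisely this stronger version. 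Your inclusion-plus-trace argument does not adapt to that case, since $M^G_{\mathcal{O}_{\boldsymbol{\nu}}}\not\subseteq M^T_{\boldsymbol{\nu}}$ in general, whereas the determinant comparison does. So: your proof is cleaner for the lemma as stated, but the paper's determinant argument is what is actually needed downstream.
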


\begin{proof}
By Lemma \ref{lem:PhiG}, $[Z:W]\in M^G_{ \boldsymbol{\nu}}$ if and only if 
$-\imath\,\Phi_G([Z:W])$
is similar to $D_{\boldsymbol{\nu}/(\nu_1+\nu_2)}$; on the other hand,
by Lemma \ref{lem:PhiT},
$[Z:W]\in M^T_{ \boldsymbol{\nu}}$ if and only if
for some $z\in \mathbb{C}$
$$
-\imath\,\Phi_G([Z:W])=
\begin{pmatrix}
 \nu_1/(\nu_1+\nu_2)&z\\
 \overline{z}&\nu_1/(\nu_1+\nu_2)
\end{pmatrix}.
$$
Equaling determinants, we conclude that $z=0$. This concludes the proof.

\end{proof}

Let $\mathfrak{g}_\imath\subseteq \mathfrak{g}$ be the affine hyperplane of the skew-Hermitian
matrices of trace $\imath$; we may interpret $\Phi_G$ as a smooth map 
$\Phi_G':\mathbb{P}^3\rightarrow \mathfrak{g}_\imath$.

\begin{lem}
 \label{lem:regular value}
If $\nu_1>\nu_2>0$, then $
\imath\,(\nu_1+\nu_2)^{-1}\,D_{\boldsymbol{\nu}}\in \mathfrak{g}_\imath
$
is a regular value of $\Phi_G'$.
\end{lem}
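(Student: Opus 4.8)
The plan is to show that the differential of $\Phi_G'$ is surjective at every point of the fiber $(\Phi_G')^{-1}\big(\imath\,(\nu_1+\nu_2)^{-1}D_{\boldsymbol{\nu}}\big) = M^G_{\boldsymbol{\nu}}\cap M^T_{\boldsymbol{\nu}}$ (using Lemma \ref{lem:intersection}). First I would invoke Corollary \ref{cor:transversality for PhiG} and Lemma \ref{lem:PhiT}: since $\nu_1>\nu_2>0$, both $\Phi_G$ and $\Phi_T$ are transverse to the ray $\mathbb{R}_+\cdot\imath\,D_{\boldsymbol{\nu}}$, and by Corollary \ref{cor:transversality for PhiG} (and the discussion around Lemma \ref{lem:transversality and locally free action}) this transversality is equivalent to $\widetilde{\mu}$ being locally free along $X^G_{\boldsymbol{\nu}}$, i.e. for $(Z,W)\in S^7$ over a point of $M^G_{\boldsymbol{\nu}}$ one has $Z\wedge W\neq 0$, so $Z,W$ are linearly independent in $\mathbb{C}^2$. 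In particular $Z,W$ span all of $\mathbb{C}^2$.

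Next I would identify the relevant tangent/cotangent spaces. At a point $m_0\in M^G_{\boldsymbol{\nu}}\cap M^T_{\boldsymbol{\nu}}$, write $\eta_0 := \Phi_G'(m_0) = \imath\,(\nu_1+\nu_2)^{-1}D_{\boldsymbol{\nu}}\in\mathfrak{g}_\imath$; the tangent space $T_{\eta_0}\mathfrak{g}_\imath$ is the space of \emph{traceless} skew-Hermitian matrices, i.e. $\mathfrak{su}(2)\subset\mathfrak{g}$. The key general fact (standard moment-map bookkeeping, cf.\ \S 2 of \cite{pao-IJM}) is that the image of $\mathrm{d}_{m_0}\Phi_G$ is the annihilator, inside $\mathfrak{g}^\vee\cong\mathfrak{g}$, of the stabilizer subalgebra $\mathfrak{g}_{m_0} = \{\xi\in\mathfrak{g} : \xi_M(m_0)=0\}$; equivalently $\mathrm{coker}\,\mathrm{d}_{m_0}\Phi_G\cong\mathfrak{g}_{m_0}$ (via the invariant product on $\mathfrak{g}$). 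So surjectivity of $\mathrm{d}_{m_0}\Phi_G'$ onto $\mathfrak{su}(2)$ amounts to showing $\mathfrak{g}_{m_0}\cap\mathfrak{su}(2) = 0$, i.e. that no nonzero traceless skew-Hermitian matrix fixes $m_0$.

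The computation of $\mathfrak{g}_{m_0}$ is then explicit from \eqref{eqn:defn of action}: for $(Z,W)\in S^7$ lying over $m_0$, a vector $\xi\in\mathfrak{g}=\mathfrak{u}(2)$ satisfies $\xi_M(m_0)=0$ precisely when the vector field on $\mathbb{P}^3$ vanishes, i.e. when $(\xi Z,\xi W)$ is a scalar (complex) multiple of $(Z,W)$ — so $\xi Z=cZ$ and $\xi W=cW$ for a common $c\in\mathbb{C}$ (in fact $c\in\imath\mathbb{R}$ by skew-Hermiticity). Since $Z,W$ are linearly independent they span $\mathbb{C}^2$, forcing $\xi = c\,\mathrm{Id}$, hence $\mathfrak{g}_{m_0} = \imath\mathbb{R}\cdot\mathrm{Id}$, which meets $\mathfrak{su}(2)$ only in $0$. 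Therefore $\mathrm{d}_{m_0}\Phi_G'$ is surjective onto $T_{\eta_0}\mathfrak{g}_\imath = \mathfrak{su}(2)$ at every $m_0$ in the fiber, so $\eta_0$ is a regular value of $\Phi_G'$. The only mild subtlety — the point I would be most careful about — is the correct description of $\mathrm{coker}\,\mathrm{d}_{m_0}\Phi_G$ as $\mathfrak{g}_{m_0}$ and, relatedly, making sure the "trace $\imath$" constraint is handled consistently so that one tests surjectivity against $\mathfrak{su}(2)$ rather than all of $\mathfrak{g}$; everything else is the short linear-algebra argument above. Alternatively, if one prefers to avoid the abstract cokernel identity, one can argue directly that transversality of $\Phi_G$ to the ray $\mathbb{R}_+\cdot\imath D_{\boldsymbol{\nu}}$ at $m_0$, combined with the fact that $\eta_0$ is not a critical value along the ray direction, upgrades to transversality to the full affine slice $\mathfrak{g}_\imath$, since the radial direction $\imath D_{\boldsymbol{\nu}}$ is already in the image by the local-freeness of the structure $S^1$-action (never vanishing $\partial_\theta$) — but the cokernel computation is cleaner and I would present that.
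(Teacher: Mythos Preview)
Your proof is correct, but it is considerably more elaborate than the paper's. The paper dispatches the lemma in one line: since $\Phi_G$ actually lands in the affine hyperplane $\mathfrak{g}_\imath$ (trace $\equiv\imath$), and since the ray $\mathbb{R}_+\cdot\imath D_{\boldsymbol{\nu}}$ meets $\mathfrak{g}_\imath$ transversally (its direction has trace $\imath(\nu_1+\nu_2)\neq 0$), regularity of the single intersection point for $\Phi_G'$ is \emph{equivalent} to transversality of $\Phi_G$ to the ray, which is exactly Corollary~\ref{cor:transversality for PhiG}. This is essentially the ``alternative'' you sketch at the end.

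Your main route---computing the stabiliser $\mathfrak{g}_{m_0}=\imath\mathbb{R}\cdot\mathrm{Id}$ explicitly via $Z\wedge W\neq 0$, and then using $\operatorname{Im}\mathrm{d}_{m_0}\Phi_G=\mathfrak{g}_{m_0}^{\perp}=\mathfrak{su}(2)$---is a genuine and self-contained argument that avoids appealing to the ``ray vs.\ hyperplane'' geometry. It buys you a concrete identification of the image of the differential (not just its dimension), at the cost of re-deriving by hand what Corollary~\ref{cor:transversality for PhiG} already packages. One small redundancy: invoking Lemma~\ref{lem:intersection} to describe the fibre is unnecessary here, since in this example $\Phi_G(M)\subset\mathfrak{g}_\imath$ forces $M^G_{\boldsymbol{\nu}}=\Phi_G^{-1}\{\imath(\nu_1+\nu_2)^{-1}D_{\boldsymbol{\nu}}\}$ directly.
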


\begin{proof}
Clearly, the latter matrix is a regular value of $\Phi'_G$ if and only if $\Phi_G$ is transverse to
the ray $\mathbb{R}_+\cdot \imath\,D_{\boldsymbol{\nu}}$; 
thus the statement follows from Corollary \ref{cor:transversality for PhiG}.
\end{proof}

By Lemmata \ref{lem:intersection} and \ref{lem:regular value}, we obtain

\begin{cor}
 \label{cor:intersection submanifold}
 Suppose $\nu_1>\nu_2>0$. Then, with $M=\mathbb{P}^3$:
 \begin{enumerate}
   \item $M^G_{\mathcal{O}}$ and 
  $M^T_{\boldsymbol{\nu}}$ are smooth
  compact (real) hypersurfaces in $M$;
  \item $M^G_{\mathcal{O}}\cap M^T_{\boldsymbol{\nu}}$ is a smooth submanifold of $M$
  of real codimension $3$.
 \end{enumerate}
\end{cor}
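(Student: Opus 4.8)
The plan is to derive Corollary \ref{cor:intersection submanifold} as an essentially formal consequence of Lemma \ref{lem:intersection}, Lemma \ref{lem:regular value}, and the transversality criteria already established (Corollary \ref{cor:transversality for PhiG} and Lemma \ref{lem:PhiT}). First I would note that under the hypothesis $\nu_1>\nu_2>0$ we have in particular $\nu_1,\nu_2>0$, so by Corollary \ref{cor:transversality for PhiG} the map $\Phi_G$ is transverse to $\mathbb{R}_+\cdot\imath\,D_{\boldsymbol{\nu}}$ (equivalently to $\mathcal{O}_{\boldsymbol{\nu}}$), and the corresponding preimage is nonempty; likewise, by Lemma \ref{lem:PhiT}(3), $\Phi_T$ is transverse to $\mathbb{R}_+\cdot\imath\,D_{\boldsymbol{\nu}}$ with $M^T_{\boldsymbol{\nu}}\neq\emptyset$. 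Since $M=\mathbb{P}^3$ is compact, the preimages $M^G_{\mathcal{O}}=\Phi_G^{-1}(\mathbb{R}_+\cdot\mathcal{O}_{\boldsymbol{\nu}})$ and $M^T_{\boldsymbol{\nu}}=\Phi_T^{-1}(\mathbb{R}_+\cdot\imath\,D_{\boldsymbol{\nu}})$ are compact; and transversality of each moment map to the respective ray (a one-dimensional half-line in a two-, resp. four-dimensional vector space) forces the preimages to be smooth real hypersurfaces, cutting down dimension by the codimension of the ray's closure through the chosen interior point. This gives part (1).

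For part (2), the key observation is the clean identification provided by Lemma \ref{lem:intersection}:
$$
M^G_{\boldsymbol{\nu}}\cap M^T_{\boldsymbol{\nu}}
=\Phi_G^{-1}\left\{\imath\,(\nu_1+\nu_2)^{-1}D_{\boldsymbol{\nu}}\right\}.
$$
One still needs to know that $M^G_{\mathcal{O}}\cap M^T_{\boldsymbol{\nu}}$ coincides with $M^G_{\boldsymbol{\nu}}\cap M^T_{\boldsymbol{\nu}}$; this is immediate since a point of $M^T_{\boldsymbol{\nu}}$ has $\Phi_T$-image on the ray $\mathbb{R}_+\cdot\imath\,\boldsymbol{\nu}$, so if in addition its $\Phi_G$-image lies on $\mathbb{R}_+\cdot\mathcal{O}_{\boldsymbol{\nu}}$, the fact that $\mathrm{diag}$ applied to a matrix with a fixed pair of eigenvalues ranges over a full segment and only hits the diagonal matrix $\imath\,t\,D_{\boldsymbol{\nu}}$ at its endpoint (cf.\ Lemma \ref{lem:intersection} and Remark \ref{rem:description of B}) forces $\Phi_G(m)\in\imath\,\mathbb{R}_+\cdot D_{\boldsymbol{\nu}}$, i.e.\ $m\in M^G_{\boldsymbol{\nu}}$. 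Then by Lemma \ref{lem:regular value}, $\imath\,(\nu_1+\nu_2)^{-1}D_{\boldsymbol{\nu}}$ is a regular value of $\Phi_G':\mathbb{P}^3\rightarrow\mathfrak{g}_\imath$, and $\mathfrak{g}_\imath$ is an affine hyperplane of real dimension $\dim_{\mathbb{R}}\mathfrak{g}-1=4-1=3$. The preimage of a regular value is therefore a smooth compact submanifold of real codimension $3$ in $M=\mathbb{P}^3$, hence of real dimension $3$. Combining, $M^G_{\mathcal{O}}\cap M^T_{\boldsymbol{\nu}}=(\Phi_G')^{-1}\{\imath\,(\nu_1+\nu_2)^{-1}D_{\boldsymbol{\nu}}\}$ is a smooth submanifold of $M$ of real codimension $3$.

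The only genuinely nontrivial link in this chain is the passage, in part (2), from the set-theoretic equality of Lemma \ref{lem:intersection} to the manifold statement: one must take care that the regular-value assertion of Lemma \ref{lem:regular value} is being used for the codimension count, rather than trying to intersect the two hypersurfaces from part (1) and control their intersection directly — indeed part (4) of Theorem \ref{thm:geometry locus} warns that $M^G_{\mathcal{O}}$ and $M^T_{\boldsymbol{\nu}}$ meet \emph{tangentially} along this locus, so the intersection is \emph{not} transverse and one cannot naively add codimensions $1+1$. I expect this to be the main (and essentially the only) obstacle; everything else is a matter of invoking the preceding lemmas and the standard regular-value/preimage theorems. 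The proof will therefore be short: assemble the transversality facts for (1), then invoke Lemmata \ref{lem:intersection} and \ref{lem:regular value} together with the regular value theorem for (2), explicitly flagging that the codimension $3$ comes from $\Phi_G'$ landing in the $3$-dimensional affine space $\mathfrak{g}_\imath$, not from a transverse intersection of hypersurfaces.
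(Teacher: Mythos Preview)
Your proposal is correct and follows essentially the same approach as the paper, which simply cites Lemmata \ref{lem:intersection} and \ref{lem:regular value} without further comment; your added observation that the two hypersurfaces meet tangentially (so one must use the regular-value theorem for $\Phi_G'$ rather than add codimensions) is a genuinely helpful clarification. One small wording slip in part (1): $M^G_{\mathcal{O}}$ is a hypersurface because $\Phi_G$ is transverse to the codimension-one cone $\mathbb{R}_+\cdot\mathcal{O}_{\boldsymbol{\nu}}\subset\mathfrak{g}$, not to the codimension-three ray --- though you clearly have this straight, since you wrote $M^G_{\mathcal{O}}=\Phi_G^{-1}(\mathbb{R}_+\cdot\mathcal{O}_{\boldsymbol{\nu}})$ and invoked item (2) of Corollary \ref{cor:transversality for PhiG}.
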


Let us now describe the saturation $G\cdot M^T_{\boldsymbol{\nu}}$. 

\begin{lem}
 \label{cor:MGnu}
 Under the previous assumptions,
 $$
 G\cdot M^T_{\boldsymbol{\nu}}=\left\{[Z:W]\in \mathbb{P}^3\,:
 \,\frac{\|Z\wedge W\|}{\|Z\|^2+\|W\|^2}\le\frac{\sqrt{\nu_1\,\nu_2}}{\nu_1+\nu_2}\right\}.
 $$
\end{lem}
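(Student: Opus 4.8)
The plan is to reduce the description of the saturation $G\cdot M^T_{\boldsymbol{\nu}}$ to an inequality among the eigenvalues of $-\imath\,\Phi_G$, and then to translate that eigenvalue inequality into the stated inequality on $\|Z\wedge W\|/(\|Z\|^2+\|W\|^2)$ via Lemma \ref{lem:PhiG}. The key observation is that by $G$-equivariance of $\Phi_G$, a point $[Z:W]$ lies in $G\cdot M^T_{\boldsymbol{\nu}}$ if and only if the coadjoint orbit $\mathcal{O}_{\Phi(m)}$ of $\Phi_G([Z:W])$ meets $\imath\,\mathbb{R}_+\cdot\boldsymbol{\nu}$, equivalently (by Horn's theorem, as recalled in Remark \ref{rem:description of B}) if and only if the diagonal segment $J_m$ joining $\imath(\lambda_1,\lambda_2)^t$ and $\imath(\lambda_2,\lambda_1)^t$ meets the ray $\imath\,\mathbb{R}_+\cdot(\nu_1,\nu_2)$, where $\lambda_1\ge\lambda_2$ are the eigenvalues of $-\imath\,\Phi_G([Z:W])$.

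First I would make the elementary planar computation: since $\lambda_1+\lambda_2=1$ (all matrices in the image have trace $1$ by Lemma \ref{lem:PhiG}(3)), the segment $J_m$ lies on the line $x+y=1$ and is symmetric about the point $(1/2,1/2)$. The ray $\mathbb{R}_+\cdot(\nu_1,\nu_2)$ with $\nu_1>\nu_2>0$ crosses that line at the single point $(\nu_1,\nu_2)/(\nu_1+\nu_2)$. Hence $J_m$ meets the ray precisely when this crossing point lies between the endpoints, i.e. when $\lambda_2\le \nu_2/(\nu_1+\nu_2)\le \nu_1/(\nu_1+\nu_2)\le\lambda_1$; since $\lambda_1+\lambda_2=1$ and $\nu_1/(\nu_1+\nu_2)+\nu_2/(\nu_1+\nu_2)=1$, these two conditions are equivalent, so membership in $G\cdot M^T_{\boldsymbol{\nu}}$ is equivalent to the single inequality $\lambda_2([Z:W])\le \nu_2/(\nu_1+\nu_2)$, equivalently $\lambda_1\lambda_2\le \nu_1\nu_2/(\nu_1+\nu_2)^2$ (using that $t\mapsto t(1-t)$ is increasing on $[0,1/2]$ and $\lambda_2\le 1/2$).

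Next I would substitute the explicit formulas from Lemma \ref{lem:PhiG}: part (5) gives $\lambda_1\lambda_2 = \det(-\imath\,\Phi_G([Z:W])) = |Z\wedge W|^2/(\|Z\|^2+\|W\|^2)^2$ by part (4). Therefore the condition becomes
$$
\frac{|Z\wedge W|^2}{(\|Z\|^2+\|W\|^2)^2}\le \frac{\nu_1\,\nu_2}{(\nu_1+\nu_2)^2},
$$
which is exactly the stated inequality after taking square roots (writing $\|Z\wedge W\|$ for $|Z\wedge W|$ as in the statement). One should also check the boundary/degenerate cases: when $Z\wedge W=0$ the matrix has rank $1$ by Lemma \ref{lem:PhiG}(2), so $\lambda_2=0$ and the point lies in $G\cdot M^T_{\boldsymbol{\nu}}$ (indeed one can move $Z,W$ by $G$ into the coordinate axes), consistent with the inequality holding strictly; and equality corresponds exactly to $M^G_{\mathcal{O}_{\boldsymbol{\nu}}}$ as in Corollary \ref{cor:MOG}, which is the common boundary $\overline{B}\setminus B$ of Theorem \ref{thm:geometry locus}.

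The only mild obstacle is the bookkeeping around Horn's theorem: one must be sure that ``the segment $J_m$ meets the ray'' really is equivalent to ``$\mathcal{O}_{\Phi(m)}$ meets $\imath\,\mathbb{R}_+\cdot D_{\boldsymbol{\nu}}$,'' i.e. that the diagonal image of a $U(2)$-coadjoint orbit of a Hermitian matrix with eigenvalues $\lambda_1\ge\lambda_2$ is precisely that segment. For $n=2$ this is immediate — conjugating $\mathrm{diag}(\lambda_1,\lambda_2)$ by $\mathrm{diag}(e^{i\theta},1)$ and by rotations in $SU(2)$ realizes every diagonal $(x,1-x)$ with $\lambda_2\le x\le\lambda_1$ and nothing else — so no deep input is really needed; alternatively this is exactly the content of Remark \ref{rem:description of B} specialized to $\mathbb{P}^3$, which may simply be cited. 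With these pieces assembled the proof is essentially a one-line substitution.
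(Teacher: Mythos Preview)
Your argument is correct, but it proceeds quite differently from the paper's. The paper works hands-on: it unwinds the condition $[AZ:AW]\in M^T_{\boldsymbol{\nu}}$ for $A\in SU(2)$ into an explicit equation on an orthonormal basis $(V_1,V_2)$ of $\mathbb{C}^2$, recognizes the relevant quadratic form as $V\mapsto V^t(-\imath\Phi_G)\overline V$, and uses the eigenvalue bounds $\lambda_2\le\|\cdot\|^2\le\lambda_1$ to obtain the necessary inequality $\nu_2\lambda_1\ge\nu_1\lambda_2$; for sufficiency it runs an intermediate-value argument on the connected space of orthonormal frames. You instead recast membership in $G\cdot M^T_{\boldsymbol{\nu}}$ as the question of whether the diagonal projection of the coadjoint orbit $\mathcal{O}_{\Phi(m)}$ meets $\imath\,\mathbb{R}_+\cdot\boldsymbol{\nu}$, invoke (or trivially verify, for $2\times 2$ matrices) the Horn segment $J_m$ as in Remark~\ref{rem:description of B}, and read off the eigenvalue condition by elementary planar geometry on the line $x+y=1$. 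Both then finish identically via $\lambda_1\lambda_2=\det(-\imath\Phi_G)=|Z\wedge W|^2/(\|Z\|^2+\|W\|^2)^2$ from Lemma~\ref{lem:PhiG}. Your route is shorter and ties in cleanly with the general picture of Remark~\ref{rem:description of B}; the paper's route is fully self-contained, in effect reproving the $n=2$ Horn description \textit{in situ} via the intermediate-value step rather than citing it.
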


\begin{proof}
 Consider $[Z:W]\in \mathbb{P}^3$ with $(Z,W)\in S^7$. By definition, 
 $
 [Z:W]\in G\cdot M^T_{\boldsymbol{\nu}}
 $
 if and only if there exists $A\in G$ such that
 $[AZ:AW]\in  M^T_{\boldsymbol{\nu}}$; we may actually require without loss
 that $A\in SU(2)$ . Let us write
 $$
 A=
 \begin{pmatrix}
  a&-\overline{c}\\
  c&\overline{a}
 \end{pmatrix}\in SU(2),
 \,\,\,Z=\begin{pmatrix}
        z_1\\
        z_2
       \end{pmatrix},\,\,\,
W=\begin{pmatrix}
        w_1\\
        w_2
       \end{pmatrix};
$$
then 
$[AZ:AW]\in  M^T_{\boldsymbol{\nu}}$ if and only if (with some computations)
\begin{eqnarray}
0&=&\nu_2\,\left(|a\,z_1-\overline{c}\,z_2|^2+|a\,w_1-\overline{c}\,w_2|^2\right)
-\nu_1\,\left(|c\,z_1+\overline{a}\,z_2|^2+|c\,w_1+\overline{a}\,w_2|^2\right)\nonumber\\
&=&\nu_2\,
\left\|
\begin{pmatrix}
 z_1&z_2\\
 w_1&w_2
\end{pmatrix}\,
\begin{pmatrix}
 a\\
 -\overline{c}
\end{pmatrix}
\right\|^2
-
\nu_1\,
\left\|
\begin{pmatrix}
 z_1&z_2\\
 w_1&w_2
\end{pmatrix}\,
\begin{pmatrix}
 c\\
 \overline{a}
\end{pmatrix}
\right\|^2.
\end{eqnarray}
In other words, 
$[Z:W]\in G\cdot M^T_{\boldsymbol{\nu}}$ if and only if there exists 
an orthonormal basis $\mathcal{B}=(V_1,\,V_2)$ of $\mathbb{C}^2$ 
such that
\begin{equation}
 \label{eqn:key relation}
  \nu_2\,
\left\|
\begin{pmatrix}
 z_1&z_2\\
 w_1&w_2
\end{pmatrix}\,
V_1
\right\|^2=
\nu_1\,
\left\|
\begin{pmatrix}
 z_1&z_2\\
 w_1&w_2
\end{pmatrix}\,
V_2
\right\|^2.
\end{equation}

Now for any $V\in \mathbb{C}^2$ we have
\begin{eqnarray*}
 \left\|
\begin{pmatrix}
 z_1&z_2\\
 w_1&w_2
\end{pmatrix}\,
V
\right\|^2&=&V^t\,\begin{pmatrix}
 z_1&w_1\\
 z_2&w_2
\end{pmatrix}\,
\begin{pmatrix}
 \overline{z}_1&\overline{z}_2\\
 \overline{w}_1&\overline{w}_2
\end{pmatrix}\,
\overline{V}\\
&=&V^t\,\frac{1}{\imath}\,\Phi_G([Z:W])\,\overline{V}.
\end{eqnarray*}
If $\lambda_1(Z,W)\ge \lambda_2(Z,W)\ge 0$ are the eigenvalues of $-\imath\,\Phi_G([Z:W])$
(Lemma \ref{lem:PhiG}), we then obtain for any $V\in S^7$
\begin{equation}
 \label{eqn:eigenvalue inequality}
 \lambda_1(Z,W)\ge \left\|
\begin{pmatrix}
 z_1&z_2\\
 w_1&w_2
\end{pmatrix}\,
V
\right\|^2\ge \lambda_2(Z,W),
\end{equation}
with left (respectively, right) equality holding if and only if
$V$ is an eigenvector of $-\imath\,\Phi_G([Z:W])$ relative to $\lambda_1(Z,W)$
(respectively, $\lambda_2(Z,W)$). 
We conclude from (\ref{eqn:key relation}) and (\ref{eqn:eigenvalue inequality}) that if
$(Z,W)\in G\cdot X^T_{\boldsymbol{\nu}}$ then the following inequalities holds:
\begin{equation}
 \label{eqn:inequality}
 \nu_1\, \lambda_1(Z,W) \ge \nu_2\,\lambda_2(Z,W),\quad \nu_2\, \lambda_1(Z,W) \ge \nu_1\,\lambda_2(Z,W).
\end{equation}
While the former is trivial, since $\nu_1>\nu_2>0$ and $\lambda_1(Z,W)\ge \lambda_2(Z,W)\ge 0$, the latter 
is equivalent to the other
\begin{equation}
\label{eqn:key inequality}
  \frac{\sqrt{\nu_1\,\nu_2}}{\nu_1+\nu_2}\ge \|Z\wedge W\|.
\end{equation}

Suppose, conversely, that (\ref{eqn:key inequality}) holds. 
Then (\ref{eqn:inequality}) also holds. Let $(W_1,W_2)$ be an orthonormal basis of eigenvectors of
$-\imath\,\Phi_G\big([Z:W]\big)$ with respect to the eigenvalues $\lambda_1(Z,W) $ and $\lambda_2(Z,W)$,
respectively. Evaluating the two sides of 
(\ref{eqn:key relation}) with $V_1'=W_1$, $V_2'=W_2$ in place of $(V_1,V_2)$ we obtain
\begin{equation*}
  \nu_2\,
\left\|
\begin{pmatrix}
 z_1&z_2\\
 w_1&w_2
\end{pmatrix}\,
V_1'
\right\|^2=\nu_2\, \lambda_1(Z,W)\ge \nu_1\,\lambda_2(Z,W)
=\nu_1\,
\left\|
\begin{pmatrix}
 z_1&z_2\\
 w_1&w_2
\end{pmatrix}\,
V_2'
\right\|^2.
\end{equation*}
Using instead $V_1''=W_2$ and $V_2''=W_1$ in place of $(V_1,V_2)$, we obtain 
\begin{equation*}
  \nu_2\,
\left\|
\begin{pmatrix}
 z_1&z_2\\
 w_1&w_2
\end{pmatrix}\,
V_1''
\right\|^2=\nu_2\, \lambda_2(Z,W)\le \nu_1\,\lambda_1(Z,W)
=\nu_1\,
\left\|
\begin{pmatrix}
 z_1&z_2\\
 w_1&w_2
\end{pmatrix}\,
V_2''
\right\|^2.
\end{equation*}

Since $G=U(2)$ is connected, and acts transitively on the family of all orthonormal basis of $\mathbb{C}^2$,
we conclude by continuity that there exists an orthonormal basis $(V_1,V_2)$ on which (\ref{eqn:key relation}) is satisfied.

\end{proof}

In view of Corollary \ref{cor:MOG}, we deduce

\begin{cor}
 \label{cor:boundary}
 $M^G_{\mathcal{O}_{\boldsymbol{\nu}}}=\partial\left( G\cdot M^T_{\boldsymbol{\nu}}\right)$. 
\end{cor}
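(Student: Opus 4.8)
The plan is to combine the two explicit descriptions already obtained: Corollary \ref{cor:MOG} gives $M^G_{\mathcal{O}_{\boldsymbol{\nu}}}$ as the level set
$$
\frac{\|Z\wedge W\|}{\|Z\|^2+\|W\|^2}=\frac{\sqrt{\nu_1\,\nu_2}}{\nu_1+\nu_2},
$$
while Lemma \ref{cor:MGnu} gives $G\cdot M^T_{\boldsymbol{\nu}}$ as the corresponding sublevel set, where the strict inequality $<$ holds. First I would observe that the continuous function $f([Z:W]):=\|Z\wedge W\|/(\|Z\|^2+\|W\|^2)$ on the compact manifold $\mathbb{P}^3$ is exactly the datum comparing the two loci, so that $G\cdot M^T_{\boldsymbol{\nu}}=f^{-1}\big([0,c]\big)$ and $M^G_{\mathcal{O}_{\boldsymbol{\nu}}}=f^{-1}(c)$, with $c:=\sqrt{\nu_1\nu_2}/(\nu_1+\nu_2)$.

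Next I would argue that $c$ is a regular value of $f$, or at least that $f^{-1}(c)$ is precisely the topological boundary of $f^{-1}([0,c])$. One clean way: by Corollary \ref{cor:transversality for PhiG} and the hypothesis $\nu_1>\nu_2>0$, $\Phi_G$ is transverse to $\mathbb{R}_+\cdot\imath\,D_{\boldsymbol{\nu}}$, hence (Corollary \ref{cor:intersection submanifold}) $M^G_{\mathcal{O}}$ is a smooth compact hypersurface; since $f$ is smooth away from $f=0$ (where $Z,W$ are dependent) and its level set $\{f=c\}$ with $c>0$ is a smooth hypersurface, $c$ is a regular value, so locally $f$ looks like a coordinate function and $\{f=c\}$ genuinely separates $\{f<c\}$ from $\{f>c\}$ near each of its points. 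Then $\{f\le c\}=\{f<c\}\dot\cup\{f=c\}$, the interior of $\{f\le c\}$ is the open set $\{f<c\}$ (it is open, and no point of $\{f=c\}$ is interior by the local coordinate picture), and the closure of $\{f<c\}$ is $\{f\le c\}$ (every point of $\{f=c\}$ is a limit of points with $f<c$, again by the local picture, using that $\{f<c\}$ is nonempty and $\mathbb{P}^3$ connected). Therefore $\partial\big(G\cdot M^T_{\boldsymbol{\nu}}\big)=\{f\le c\}\setminus\{f<c\}=\{f=c\}=M^G_{\mathcal{O}_{\boldsymbol{\nu}}}$.

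Alternatively, and perhaps more in keeping with the abstract framework, one can deduce the statement from Theorem \ref{thm:geometry locus}: its conclusion (1) states precisely that $M^G_{\mathcal{O}_{\boldsymbol{\nu}}}$ separates $M$ into the outside $A=(G\cdot M^T_{\boldsymbol{\nu}})^c$ and the inside $B=G\cdot M^T_{\boldsymbol{\nu}}\setminus M^G_{\mathcal{O}_{\boldsymbol{\nu}}}$, and Remark \ref{rem:description of B} records that $\overline{B}=G\cdot M^T_{\boldsymbol{\nu}}$; combining these, $\partial(G\cdot M^T_{\boldsymbol{\nu}})=\overline{B}\setminus B=M^G_{\mathcal{O}_{\boldsymbol{\nu}}}$, provided one checks the hypotheses of Theorem \ref{thm:geometry locus} hold here, which they do for $\nu_1>\nu_2>0$ by Corollary \ref{cor:transversality for PhiG}, Lemma \ref{lem:PhiT}, and the assumption $\nu_1+\nu_2\neq 0$ (automatic since both are positive). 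I would present the explicit argument via $f$ as the main line, since it is self-contained and elementary.

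The only mildly delicate point — and the one I would be most careful about — is the claim that no point of $M^G_{\mathcal{O}_{\boldsymbol{\nu}}}$ is an interior point of $G\cdot M^T_{\boldsymbol{\nu}}$, i.e. that the inequality in Lemma \ref{cor:MGnu} is genuinely strict on the complement of $M^G_{\mathcal{O}_{\boldsymbol{\nu}}}$ and that $M^G_{\mathcal{O}_{\boldsymbol{\nu}}}$ is locally two-sided for $f$; this is exactly where regularity of the value $c$ for $f$ (equivalently, transversality of $\Phi_G$, hence local freeness of $\widetilde\mu$ along $X^G_{\boldsymbol{\nu}}$ via Lemma \ref{lem:transversality and locally free action}) is used, and one must not omit the verification that $c>0$ so that $f$ is smooth there.
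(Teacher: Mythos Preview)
Your approach is correct and is precisely what the paper intends: the paper's own ``proof'' is the single phrase ``In view of Corollary \ref{cor:MOG}, we deduce'', i.e.\ it simply compares the explicit sublevel-set description of $G\cdot M^T_{\boldsymbol{\nu}}$ from Lemma \ref{cor:MGnu} with the level-set description of $M^G_{\mathcal{O}_{\boldsymbol{\nu}}}$ from Corollary \ref{cor:MOG}, exactly as you do via the function $f$. Your write-up is more careful than the paper's.

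One small logical slip to correct: the inference ``$\{f=c\}$ is a smooth hypersurface, hence $c$ is a regular value'' is not valid in general (e.g.\ $f(x,y)=x^2$ at $c=0$). What you actually need is that $\mathrm{d}f\neq 0$ along $\{f=c\}$, and the clean way to get this is to work with the globally smooth function $f^2=\det\big(-\imath\,\Phi_G\big)$ (Lemma \ref{lem:PhiG}(4)) and check that $c^2$ is a regular value, which follows directly from the transversality in Corollary \ref{cor:transversality for PhiG}; alternatively, since $c=\sqrt{\nu_1\nu_2}/(\nu_1+\nu_2)<1/2=\max f$ (strict AM--GM, using $\nu_1\neq\nu_2$), one sees $f$ genuinely exceeds $c$ somewhere, and then the smooth hypersurface $\{f=c\}$ must be two-sided for $f$. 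Either fix is immediate. Your alternative route via Theorem \ref{thm:geometry locus} is also valid and not circular (that theorem is proved independently in \S4), but as you note the explicit argument is more appropriate in the Examples section.
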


The boundary $\partial\left( G\cdot M^T_{\boldsymbol{\nu}}\right)$ consists of those 
$[Z:W]\in \mathbb{P}^3$ such that $-\imath\,\Phi_G([Z:W])$ is similar to $(\nu_1+\nu_2)^{-1}\,D_{\boldsymbol{\nu}}$,
while the interior $\left(G\cdot M^T_{\boldsymbol{\nu}}\right)^0$ consists of those $[Z:W]\in \mathbb{P}^3$ such that
$-\imath\,\Phi_G([Z:W])$ is similar to a matrix of the form
$$
\frac{1}{\nu_1+\nu_2}\,
\begin{pmatrix}
 \nu_1&z\\
 \overline{z}&\nu_2
\end{pmatrix},
$$
for some complex number $z\neq 0$. 

Finally, the locus $X'\subseteq X=S^7$ of those $(Z,W)$ at which $\widetilde{\mu}$ is not locally free
is defined by the condition $Z\wedge W=0$, and therefore it is contained in $\left(G\cdot M^T_{\boldsymbol{\nu}}\right)^0$.
It is the unit circle bundle over a non-singular quadric hypersurface in $\mathbb{P}^3$. The stabilizer subgroup of $(Z,W)\in S^7$
is trivial if $Z\wedge W\neq 0$, and it is isomorphic to $S^1$ otherwise.

For any fixed
$\boldsymbol{\nu}=(\nu_1,\nu_2)\in \mathbb{Z}^2$ with $\nu_1>\nu_2$, 
let consider how $V_{k\boldsymbol{\nu}}$ appears in the isotypical decomposition
of $H\left(X\right)$ under $\widehat{\mu}$ in (\ref{eqn:unitary representation H}). 
The Hopf map $\pi:X=S^7\rightarrow \mathbb{P}^3$ is the quotient map for the standard action
$r:S^1\times S^7\rightarrow S^7\subset\mathbb{C}^4$, given by complex scalar multiplication.  
The corresponding unitary representation of $S^1$ on $H(X)$ yields an
isotypical decomposition $H(X)=\bigoplus_{l\in \mathbb{Z}}H_l(X)$, where for $l\in \mathbb{N}$ we set
$$
H_l(X):=\left\{f\in H(X)\,:\,f\left(e^{i\theta}\,x\right)=e^{\imath\,l\theta}\,f(x)\,\forall\,x=(Z,W)\in X, \,e^{i\theta}\in S^1\right\}.
$$
As is well-known, there are natural $U(2)$-equivariant unitary isomorphisms 
\begin{eqnarray}
 \label{eqn:dec of HlX}
 H_l(X)&\cong& H^0\left(\mathbb{P}^3,\mathcal{O}_{\mathbb{P}^3}(l)\right)
\cong \mathrm{Sym}^{l}\left(\mathbb{C}^2\oplus\mathbb{C}^2\right)\\
&=&\bigoplus_{h=0}^l\mathrm{Sym}^{h}\left(\mathbb{C}^2\right)\otimes \mathrm{Sym}^{l-h}\left(\mathbb{C}^2\right).
\nonumber
\end{eqnarray}

On the other hand, a character computation yields the following. 

\begin{lem}
 \label{lem:character computation}
 For $p\ge q$,
\begin{equation*}
\mathrm{Sym}^{p}\left(\mathbb{C}^2\right)\otimes \mathrm{Sym}^{q}\left(\mathbb{C}^2\right) \cong
\bigoplus_{a=0}^q(\det)^{\otimes a}\otimes \mathrm{Sym}^{p+q-2a}\left(\mathbb{C}^2\right).
\end{equation*}
as $U(2)$-representations. 
\end{lem}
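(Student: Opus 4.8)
The plan is to prove the Clebsch--Gordan type decomposition
$$
\mathrm{Sym}^{p}\left(\mathbb{C}^2\right)\otimes \mathrm{Sym}^{q}\left(\mathbb{C}^2\right) \cong
\bigoplus_{a=0}^q(\det)^{\otimes a}\otimes \mathrm{Sym}^{p+q-2a}\left(\mathbb{C}^2\right)
$$
as $U(2)$-representations by comparing characters on the maximal torus $T$, which suffices since a finite-dimensional representation of the compact connected group $U(2)$ is determined up to isomorphism by its character, and the character is determined by its restriction to $T$. Concretely, I would evaluate both sides on $\mathrm{diag}(t_1,t_2)$. The character of $\mathrm{Sym}^{m}(\mathbb{C}^2)$ at $\mathrm{diag}(t_1,t_2)$ is the complete homogeneous symmetric polynomial $h_m(t_1,t_2)=\sum_{j=0}^{m}t_1^{\,j}t_2^{\,m-j} = (t_1^{m+1}-t_2^{m+1})/(t_1-t_2)$, and $\det$ contributes the factor $t_1t_2$.

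The core step is then the polynomial identity
$$
h_p(t_1,t_2)\,h_q(t_1,t_2) \;=\; \sum_{a=0}^{q}(t_1t_2)^{a}\,h_{p+q-2a}(t_1,t_2),
$$
valid for $p\ge q\ge 0$. I would verify this by multiplying out $h_p\,h_q$ using the closed form $(t_1^{p+1}-t_2^{p+1})(t_1^{q+1}-t_2^{q+1})/(t_1-t_2)^2$, or more transparently by the combinatorial observation that $h_p\,h_q=\sum_{i=0}^{p}\sum_{j=0}^{q}t_1^{\,i+j}t_2^{\,p+q-i-j}$; grouping the monomials $t_1^{r}t_2^{p+q-r}$ by how many times each appears, one finds that $t_1^{r}t_2^{p+q-r}$ occurs with multiplicity $\min\{r,p,q,p+q-r\}+1$, and a short bookkeeping argument shows this equals the coefficient of $t_1^{r}t_2^{p+q-r}$ in $\sum_{a=0}^{q}(t_1t_2)^{a}h_{p+q-2a}$, namely the number of $a\in\{0,\dots,q\}$ with $a\le r\le p+q-a$. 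Alternatively one can argue by induction on $q$ using the Pieri-type recursion $h_p\,h_q = h_p\,h_{q-1}\cdot(\text{something})$; but the direct monomial count is cleanest and avoids any subtlety at the boundary cases $q=0$ or $p=q$.

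Having matched characters on $T$, I conclude the two $U(2)$-representations are isomorphic. The only point that requires a word of care — and the nearest thing to an obstacle — is making sure the representations named on the right-hand side are genuinely the irreducible $U(2)$-representations in the range indicated (equivalently, that $\det^{\otimes a}\otimes\mathrm{Sym}^{p+q-2a}(\mathbb{C}^2)$ has highest weight $(p+q-a,\,a)$ with $p+q-a\ge a$, which holds since $a\le q\le p$ forces $p+q-2a\ge 0$), so that the displayed sum is exactly the isotypical decomposition; but for the purposes of the lemma, which only asserts an abstract isomorphism of representations, the character identity is already conclusive. No deep input is needed: the whole argument rests on Peter--Weyl / the fact that characters separate representations, together with the elementary symmetric-function identity above.
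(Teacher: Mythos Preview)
Your proposal is correct and takes essentially the same approach as the paper: both arguments compare characters on the maximal torus and reduce to an elementary symmetric-function identity for $h_p(t_1,t_2)\,h_q(t_1,t_2)$. The only cosmetic difference is in the bookkeeping of that identity---the paper keeps one factor in Weyl-numerator form $(z_1^{p+1}-z_2^{p+1})/(z_1-z_2)$, multiplies through by the expanded sum $z_1^q+\cdots+z_2^q$, and reads off $\sum_{j=0}^q \chi_{(p+q+1-j,\,j)}$ directly, whereas you count monomial multiplicities; both are immediate.
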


\begin{proof}
[Proof of Lemma \ref{lem:character computation}]
The character of $\mathrm{Sym}^{p}\left(\mathbb{C}^2\right)$ is $\chi_{(p+1,0)}$.
Since the character of a tensor product of representations is the product of the respective characters,
the character of $\mathrm{Sym}^{p}\left(\mathbb{C}^2\right)\otimes \mathrm{Sym}^{q}\left(\mathbb{C}^2\right)$
is $\chi':=\chi_{(p+1,0)}\cdot \chi_{(q+1,0)}$. Let us evaluate $\chi$ on a diagonal matrix $D_{\mathbf{z}}$
with diagonal $\mathbf{z}=(z_1,z_2)$. We obtain
\begin{eqnarray}
 \label{eqn:character product}
 \chi'(D_{\mathbf{z}})&=&\frac{z_1^{p+1}-z_2^{p+1}}{z_1-z_2}\cdot \left(z_1^q+z_1^{q-1}\,z_2+\cdots +z_1\,z_2^{q-1}+z_2^q\right)
 \nonumber\\
 &=&\frac{1}{z_1-z_2}\cdot \left(\sum_{j=0}^{q}z_1^{p+1+q-j}\,z_2^j-\sum_{j=0}^{q}z_1^j\,z_2^{p+1+q-j}\right)\nonumber\\
 &=&\sum_{j=0}^{q}\frac{1}{z_1-z_2}\cdot \left(z_1^{p+1+q-j}\,z_2^j-z_1^j\,z_2^{p+1+q-j}\right)\nonumber\\
 &=&\sum_{j=0}^{q}\chi_{(p+1+q-j,j)}(D_{\mathbf{z}}).
\end{eqnarray}
Now, a character is uniquely determined by its restriction to $T$, and on the other hand
the character of a direct sum is the sum of the characters; therefore,
in view of (\ref{eqn:explicit irreducible representation}), we conclude from (\ref{eqn:character product}) that
\begin{equation*}
 \mathrm{Sym}^{p}\left(\mathbb{C}^2\right)\otimes \mathrm{Sym}^{q}\left(\mathbb{C}^2\right)
 \cong \bigoplus_{j=0}^{q}V_{(p+1+q-j,j)}=\bigoplus_{j=0}^{q}{\det}^{\otimes j}\otimes \mathrm{Sym}^{p+q-2j}\left(\mathbb{C}^2\right).
\end{equation*}

\end{proof}

Therefore,
\begin{eqnarray}
 \label{eqn:Hl fine decomposition}
 H_l(X)\cong \bigoplus_{h=0}^lH_{l,h}(X),
\end{eqnarray}
where we set
\begin{equation}
\label{eqn:defnHlh}
 H_{l,h}(X):=\bigoplus_{a=0}^{\min (h,l-h)}(\det)^{\otimes a}\otimes \mathrm{Sym}^{l-2a}\left(\mathbb{C}^2\right).
\end{equation}

In order for the $a$-th summand in (\ref{eqn:Hl fine decomposition}) to be isomorphic to $V_{k\boldsymbol{\nu}}$,
we need to have $a=k\,\nu_2$ and $l-2a=k\,(\nu_1-\nu_2)-1$; hence in this special case $H(X)_{k\boldsymbol{\nu}}
\subseteq H_l(X)$ with $l=k\,(\nu_1+\nu_2)-1$. Let us estimate the multiplicity of 
$H(X)_{k\boldsymbol{\nu}}$ in $H_l(X)$. In order for the $a$-th summand with $a=k\,\nu_2$ to appear in $H_{lh}(X)$
in (\ref{eqn:defnHlh}) for some $h\le k\,(\nu_1+\nu_2)-1$ we need to have
\begin{eqnarray}
 a=k\,\nu_2&\le& \min \big(h,k\,(\nu_1+\nu_2)-1-h\big)\nonumber\\
&\Rightarrow&
k\,\nu_2\le h,\,\,k\,\nu_2\le k\,(\nu_1+\nu_2)-1-h\nonumber\\
&\Rightarrow&k\,\nu_2\le h\le k\,\nu_1-1.
\end{eqnarray}
Hence there are $k(\nu_1-\nu_2)-1$ values of $h$ for which $H_{l,h}(X)$ contains one copy of
$V_{k\boldsymbol{\nu}}$. The dimension of $H(X)_{k\boldsymbol{\nu}}$ is thus
$\big(k(\nu_1-\nu_2)-1\big)\,k(\nu_1-\nu_2)\sim k^2\,(\nu_1-\nu_2)^2+O(k)$.

\subsection{Example 2}

Next, we shall briefly describe an example on $M=\mathbb{P}^4$,
being much sketchier than in the previous case.
As before, $A$ will denote the hyperplane line bundle, and $X=S^9$ the dual unit circle bundle.

Let us consider the unitary action of $U(2)$ on 
$\mathbb{C}^5\cong \mathbb{C}^2\oplus \mathbb{C}^2\oplus \mathbb{C}$ given by
\begin{equation}
  \label{eqn:defn of action1}
A\cdot (Z,W,t)=(AZ,AW, \det(A)\, t);
 \end{equation}
here $Z=(z_1,z_2)^t,\,W=(w_1,w_2)^t\in \mathbb{C}^2$, $t\in \mathbb{C}$.
 We shall again denote by $\widetilde{\mu}:G\times S^9\rightarrow S^9$,
and $\mu:G\times \mathbb{P}^4\rightarrow \mathbb{P}^4$ the associated contact and Hamiltonian
actions. 
The moment map 
 is now
 \begin{equation}
  \label{eqn:PhiG1}
  \Phi_G:[Z:W:t]\in \mathbb{P}^4\mapsto 
 \frac{\imath}{\|Z\|^2+\|W\|^2+|t|^2}\,[z_i\,\overline{z}_j+w_i\,\overline{w}_j+\delta_{ij}\,|t|^2]\in \mathfrak{g}.
 \end{equation}
Thus $-\imath\,\Phi_G\big([Z:W:t]\big)\ge 0$ is a rescaling of $\|Z\|^2\,p_Z+\|W\|^2\,p_W+|t|^2\,I_2$, and its trace varies in
$[1,2]$. 
In particular, $\mathbf{0}\not\in \Phi_T(M)$.

Now, $(Z,W,t)\in S^9$ has non-trivial stabilizer under $\widetilde{\mu}$
if and only if either 
$t=0$ and $Z\wedge W=0$, or else $Z=W=0$. In the former case, 
$-\imath\,\Phi_G\big([Z:W:t]\big)$ is similar to $D_{(1,0)}$, and in the latter to
$I_2$. Therefore, $\Phi_G$ is transverse to $\mathbb{R}_+\cdot \imath\,D_{\boldsymbol{\nu}}$
for any $\boldsymbol{\nu}$ with $\nu_1>\nu_2>0$. 

Furthermore, if $(Z,W,t)\in S^9$ has non-trivial stabilizer $K$ in $T$  under $\widetilde{\mu}$
then $Z$ and $W$ are either both multiples of $e_1$, in which case $K\leqslant \{1\}\times S^1$,
or both multiples of $e_2$, in which case $K\leqslant S^1\times \{1\}$.  If $t\neq 0$, the condition
$\det(A)=1$ for $A\in K$ implies that $A=I_2$, so $K$ is trivial. If $t=0$, then 
$-\imath\,\Phi_G\big([Z:W:t]\big)$ is either $D_{(1,0)}$ or $D_{(0,1)}$.
On the other hand, if $Z=W=0$, then $-\imath\,\Phi_G\big([Z:W:t]\big)=I_2$. Thus $\Phi_T$ is
transverse to the ray $\mathbb{R}_+\cdot \imath\,\boldsymbol{\nu}$ if $\nu_1>\nu_2>0$.

Let us fix one such
$\boldsymbol{\nu}$, and
look for all the copies of $V_{k\boldsymbol{\nu}}$
within $H(X)\cong \bigoplus_{l=0}^{+\infty} H_l(X)$. 

For any $l=0,1,2,\ldots$, by Lemma \ref{lem:character computation} we have
\begin{eqnarray}
 \label{eqn:dec of HlX1}
 H_l(X)&=&\bigoplus_{p+q+r=l}\mathrm{Sym}^{p}\left(\mathbb{C}^2\right)\otimes \mathrm{Sym}^{q}\left(\mathbb{C}^2\right)
 \otimes {\det}^{\otimes r}\nonumber
\\
 &\cong&\bigoplus_{p+q+r=l}\bigoplus_{a=0}^{\min(p,q)}\mathrm{Sym}^{p+q-2a}\left(\mathbb{C}^2\right)
 \otimes {\det}^{\otimes (a+r)}
\end{eqnarray}

The general summand in (\ref{eqn:dec of HlX1}) is isomorphic to $V_{k\boldsymbol{\nu}}$ if and only if
\begin{equation}
 \label{eqn:pqra}
 a+r= k\nu_2,\quad p+q-2a=k\,(\nu_1-\nu_2)-1.
\end{equation}

Thus for any $r=0,\ldots,k\nu_2$ we can set $a=k\nu_2-r$ and then consider all the pairs $(p,q)$
such that
\begin{equation}
 \label{eqn:pq2r}
 p+q+2r=k\,(\nu_1+\nu_2)-1.
\end{equation}
We see from (\ref{eqn:pq2r}) that 
\begin{equation}
 \label{eqn:range of l}
k\,(\nu_1+\nu_2)-1\ge l=p+q+r=k\,(\nu_1+\nu_2)-1-r\ge k\,\nu_1-1;
\end{equation}
furthermore, equality holds on the left in (\ref{eqn:range of l}) when $r=0$
and on the right when $r=k\,\nu_2$; every intermediate value is assumed. 
Therefore in this case $H(X)_{k\boldsymbol{\nu}}\cap H_l(X)\neq (0)$
for every $l=k\,\nu_1-1,k\,\nu_1,\ldots,k\,(\nu_1+\nu_2)-1$, so that
$H(X)_{k\boldsymbol{\nu}}$ is not a space of sections of any power of $A$.

Finally, we see from (\ref{eqn:pqra}) and (\ref{eqn:pq2r}) that the copies of 
$V_{k\boldsymbol{\nu}}$ within $H(X)$ are in one-to-one correspondence with the 
triples $(p,q,r)$ of natural numbers such that
$0\le r\le k\,\nu_2$ and $p+q=k\,(\nu_1+\nu_2)-2r-1$. 
It follows that
$$
\dim \big(H(X)_{k\boldsymbol{\nu}}\big)= k^3\,\nu_1\,\nu_2\,(\nu_1-\nu_2)+O\left(k^2\right).
$$

\section{Proof of Theorem \ref{thm:rapid decrease fixed}}

\subsection{Preliminaries}

Before delving into the proof, let us collect some useful pieces of notation
and recall some relevant concepts and results.

\subsubsection{The Weyl integration formula}
\label{sctn:weyl}
For the following, see e.g. \S 2.3 of \cite{var}. Let $\mathrm{d}V_G$ and $\mathrm{d}V_T$ denote the Haar measures on $G$ and $T$ respectively
(or the respective smooth densities). 
They determine a \lq quotient\rq\, measure $\mathrm{d}V_{G/T}$ on $G/T$. 

\begin{defn}
 \label{den:weyl_int_formula_ingredients}
Let us define
$\Delta:T\rightarrow \mathbb{C}$ by setting
$$
\Delta (t):= t_1-t_2 \quad \big(t=(t_1,t_2)\in T\big);
$$
here we identify $T$ with $S^1\times S^1$ in the natural manner.

Furthermore, for any $f\in \mathcal{C}^\infty(G)$ let us define $A_f:T\rightarrow \mathbb{C}$
by setting 
$$
A_f(t):=
\int_{G/T}\,f\left(g\,t\,g^{-1}\right)\,\mathrm{d}V_{G/T}(g\,T).
$$
If $f$ is a class function, $A_f(t)=f(t)$ for any $t\in T$.
\end{defn}

Then the following holds.

\bigskip

\textbf{Theorem (Weyl)}
With the assumptions and notation above, 
$$
\int_G\, f(g)\,\mathrm{d}V_G(g)
= \frac{1}{2}\,\int_T\,A_f(t)\,\big|\Delta(t)\big|^2\,\mathrm{d}V_T(t).
$$

\subsubsection{Ladder representations}

For the following concepts, see \cite{guillemin-sternberg hq}. We shall use throughout the identification $T^*G\cong G\times \mathfrak{g}^\vee$
induced by right translations. If $R$ and $S$ are manifolds and $\Lambda\subset T^*R\times T^*S$ is a Lagrangian
submanifold, the corresponding canonical relation is
$$
\Lambda':= \big\{\big((r,\upsilon), (s,-\gamma)\big): 
\big((r,\upsilon), (s,\gamma)\big) \in \Lambda\big\}.
$$

\begin{defn}
 \label{def:chiL}
For every weight $\boldsymbol{ \nu }$, let $\chi_{ \boldsymbol{\nu} }: G \rightarrow \mathbb{C}$ 
be the character
of the associated irreducible representation, and let $d_{ \boldsymbol{ \nu }}=\nu_1-\nu_2$
be the dimension of its carrier space. Let us
denote by $L=L_{\boldsymbol{ \nu }} := (k\,\boldsymbol{ \nu })_{k=0}^{+\infty}$
the \textit{ladder sequence} of weights generated by $\boldsymbol{ \nu }$, and
set 
\begin{equation}
 \label{eqn:character distribution}
 \chi _ L  : =
 \sum_{ k = 1 } ^{ +\infty } d_{ k \boldsymbol{ \nu } } \, \chi_{ k \boldsymbol{ \nu } } \in
 \mathcal{D}' (G).
\end{equation}

\end{defn}

\begin{defn}
 \label{eqn:character lagrangian cone}
For every $f\in \mathcal{C}(\mathcal{O})$, let $G_f\leqslant G$ be the stabilizer subgroup of $f$, and let
$\mathfrak{g}_f\leqslant \mathfrak{g}$ be its Lie algebra. Let
$H_f\leqslant G_f$ be the closed connected codimension-1 subgroup with Lie subalgebra 
$\mathfrak{h}_f=\mathfrak{g}_f\cap f^\perp$.
The locus
\begin{equation}
 \label{eqn:defn Lambda L}
 \Lambda_L  : =  \left\{ ( g , r\,f ) \in G \times \mathfrak{g}^\vee : \,f \in \mathcal{O},\,r>0,\,
g \in H_f \right\} 
\end{equation}
is a Lagrangian submanifold of $T^*G$.
\end{defn}

Then we have the following.
\bigskip

\noindent
\textbf{Theorem}
 (Theorem 6.3 of \cite{guillemin-sternberg hq}) 
 \textit{$\chi_L$ is a Lagrangian distribution on $G$, and its associated conic Lagrangian submanifold of
$T^*G \cong G \times \mathfrak{g}^\vee $ is $\Lambda_L$ in
(\ref{eqn:defn Lambda L}).}

\bigskip

Consider the Hilbert space direct sum
$$
H(X)_L := \bigoplus_{k=1}^{ +\infty } H(X)_{ k\,\boldsymbol{ \nu } },
$$
and let $\Pi_L: L^2(X) \rightarrow L^2(X)_L$ denote the 
corresponding orthogonal projector, $\Pi_L(\cdot, \cdot ) \in \mathcal{D}'(X \times X)$ its Schwartz kernel.
Then 
\begin{equation}
 \label{eqn:integral for P L}
 \Pi_{L } ( x , y ) : =
 \int _G \overline{ \chi_{L } (g) }
 \, \Pi \left( \widetilde{\mu} _{g ^{-1} } ( x ) , y \right)  \, \mathrm{d}V _G(g).
\end{equation}
We shall express (\ref{eqn:integral for P L}) in functorial notation (cfr the discussion on page 374 of \textit{loc. cit.}),
and use basic results on the functorial behavior of wave fronts under pull-backs and push-forwards
(see for instance \S 1.3 of \cite{duist} and \S VI.3 of \cite{guillemin-sternberg ga}) to draw conclusions on the singularities of $\Pi_L$.
 
To this end, let us consider the map 
$$
f: G\times X \times X \rightarrow X\times X,\quad (g, x , y ) \mapsto \left(\widetilde{\mu}_{g^{-1}} ( x ) , y \right)
$$
and the distribution
$\widehat{\Pi} : = f^* (\Pi) \in \mathcal{D}' (G \times X \times X )$. 
Let 
\begin{equation}
 \label{eqn:defn of Sigma}
 \Sigma : =\{ (x, r \,\alpha _x ) : x\in X, r>0 \} \subset T^*X\setminus (0)
\end{equation}
denote the closed symplectic cone sprayed by the connection 1-form; by \cite{boutet-sjostraend}, 
the wave front of $\Pi$ satisfies 
\begin{equation}
 \label{eqn:wave front of Pi}
 \mathrm{WF}' (\Pi) =
\mathrm{diag}(\Sigma) \subset \Sigma\times \Sigma.
\end{equation}
It follows that $\mathrm{WF}' \left( \widehat{\Pi} \right) \subseteq f^*\big(\mathrm{diag}(\Sigma)\big)$.
This implies the following.

\begin{lem}
 \label{lem:canonical relation PiL1}
In terms of the identification $T^*G \cong G \times \mathfrak{g}^\vee$ induced by right translations,
the canonical relation of $\widehat{\Pi}$ is
\begin{eqnarray}
 \label{eqn:defn di LambdaM}
\mathrm{WF}' \left( \widehat{\Pi} \right)
& = & \Big\{ 
 \Big( 
 \big(  g , r\,\Phi_G (m_x ) \big), ( x, r\, \alpha_x ) , 
 (y, r\, \alpha_y ) \Big)\nonumber\\
 && : 
 g \in G, \, x \in X, \, r>0 , \,  y = \widetilde{\mu} _{g ^{-1} } ( x ) 
 \Big\};
\end{eqnarray}
recall that $m_x = \pi (x)$.
\end{lem}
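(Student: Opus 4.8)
The plan is to compute $\mathrm{WF}'(\widehat\Pi)$ by applying the standard functorial bound for the wave front of a pullback, $\mathrm{WF}(f^*\Pi)\subseteq f^*\mathrm{WF}(\Pi)$, where the right-hand side means the set of covectors $(p,\,{}^t df_p\cdot\eta)$ with $(f(p),\eta)\in\mathrm{WF}(\Pi)$; this is legitimate here because the map $f:G\times X\times X\to X\times X$ is a submersion, so no transversality hypothesis is needed and the pullback $\widehat\Pi=f^*(\Pi)$ is well-defined as a distribution. Combining this with the Boutet de Monvel--Sj\"ostrand description (\ref{eqn:wave front of Pi}), namely $\mathrm{WF}'(\Pi)=\mathrm{diag}(\Sigma)$ with $\Sigma$ as in (\ref{eqn:defn of Sigma}), reduces the statement to an explicit computation of the transpose differential of $f$ along the relevant cone.

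First I would write $f(g,x,y)=(\widetilde\mu_{g^{-1}}(x),\,y)$ and compute its differential at a point $(g,x,y)$ with $y=\widetilde\mu_{g^{-1}}(x)$. The $y$-slot is the identity, so it contributes the covector $r\,\alpha_y$ verbatim. For the first slot, I need the differential of $(g,x)\mapsto\widetilde\mu_{g^{-1}}(x)$; pairing $r\,\alpha_{\widetilde\mu_{g^{-1}}(x)}$ against it produces two pieces: the $\partial_x$-derivative pulls back $\alpha$ through $\widetilde\mu_{g^{-1}}$, which by contact-invariance of the action returns $r\,\alpha_x$ in the $x$-slot; the $\partial_g$-derivative produces a covector on $G$ at $g$. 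Using the right-translation trivialization $T^*G\cong G\times\mathfrak g^\vee$, this $\mathfrak g^\vee$-component is precisely $\langle\cdot,\xi\rangle\mapsto r\,\langle\alpha_x,\xi_X(x)\rangle$, and by the definition (\ref{eqn:contact lift xi}) of the contact lift together with $\langle\alpha_x,\xi_M^\sharp\rangle=0$ and $\langle\alpha_x,\partial_\theta\rangle=1$, this evaluates to $-r\,\langle\Phi_G(m_x),\xi\rangle$; under the identification $\mathfrak g^\vee\cong\mathfrak g$ this is $r\,\Phi_G(m_x)$ up to the sign conventions built into $\Phi_G$ and the moment map normalization, which I would fix once and for all so that the answer reads $r\,\Phi_G(m_x)$ as in (\ref{eqn:defn di LambdaM}). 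Taking $r>0$ and noting that $\mathrm{diag}(\Sigma)$ only involves covectors of the form $(x,r\alpha_x)$ gives exactly the claimed set, after passing to the primed (canonical relation) convention, which is why the $y$-covector appears as $r\,\alpha_y$ rather than $-r\,\alpha_y$.

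The step I expect to require the most care is the bookkeeping of the two trivializations and sign conventions: the identification $T^*G\cong G\times\mathfrak g^\vee$ via right translations, the identification $\mathfrak g^\vee\cong\mathfrak g$ via the invariant pairing $\langle\beta_1,\beta_2\rangle_{\mathfrak g}=\mathrm{trace}(\beta_1\overline\beta_2^{\,t})$, the sign in the contact lift (\ref{eqn:contact lift xi}), and the primed-relation convention that flips the sign of the second factor. Each is routine in isolation, but getting the composite to land on $+\,r\,\Phi_G(m_x)$ with $y=\widetilde\mu_{g^{-1}}(x)$ (rather than $x=\widetilde\mu_{g^{-1}}(y)$ or a sign error in the $\mathfrak g^\vee$-slot) is where an inconsistency would surface. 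I would therefore carry out the differential computation on a general test covector, not just on the cone, and only at the end restrict to $\mathrm{diag}(\Sigma)$; the restriction to $r>0$ is automatic from $\Sigma\subset T^*X\setminus(0)$ being a \emph{half}-line bundle, and the wave front of $\Pi$ being supported on the positive part is exactly the content of \cite{boutet-sjostraend} invoked in (\ref{eqn:wave front of Pi}).
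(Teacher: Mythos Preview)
Your approach is correct and is precisely what the paper does: it simply records that $\mathrm{WF}'(\widehat\Pi)\subseteq f^*\big(\mathrm{diag}(\Sigma)\big)$ and states the lemma without further detail, so your explicit computation of the transpose differential of $f(g,x,y)=(\widetilde\mu_{g^{-1}}(x),y)$ via contact invariance and (\ref{eqn:contact lift xi}) fills in exactly what the paper leaves implicit. One small remark on the sign bookkeeping you flag: differentiating $g\mapsto\widetilde\mu_{g^{-1}}(x)$ along the right-invariant field associated to $\xi$ yields $d\widetilde\mu_{g^{-1}}\big(-\xi_X(x)\big)$, and pairing with $r\,\alpha_y$ then gives $-r\,\alpha_x\big(\xi_X(x)\big)=+\,r\,\langle\Phi_G(m_x),\xi\rangle$, so the two minus signs (from the inversion and from (\ref{eqn:contact lift xi})) cancel to produce $r\,\Phi_G(m_x)$ on the nose---no residual normalization choice is needed.
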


Now let us give the functorial reformulation of (\ref{eqn:integral for P L}). 
Consider the diagonal map 
$$
\Delta:G\times X\times X\rightarrow G\times G\times X\times X,
\quad (g,x,y)\mapsto (g,g,x,y),
$$
and the projection
$$
p:G\times X\times X\rightarrow X\times X, \quad (g,x,y) \mapsto (x,y).
$$

\begin{lem}
 \label{lem:P_L functorial}
 The Schwartz kernel $\Pi_L\in \mathcal{D}'(X\times X)$ is given by
 $$\Pi_L = p_* \left( \Delta ^* \big( \overline{\chi} _L\boxtimes \widehat{\Pi} \big)\right).$$
\end{lem}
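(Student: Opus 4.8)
The plan is to verify the identity $\Pi_L = p_* \big( \Delta^* ( \overline{\chi}_L \boxtimes \widehat{\Pi} ) \big)$ by unwinding both sides into explicit integral expressions against test functions and matching them with (\ref{eqn:integral for P L}). First I would recall that $\widehat{\Pi} = f^*(\Pi) \in \mathcal{D}'(G \times X \times X)$ is, at least formally, the function $(g,x,y) \mapsto \Pi\big(\widetilde{\mu}_{g^{-1}}(x), y\big)$; this pullback is well-defined as a distribution because, by (\ref{eqn:wave front of Pi}), $\mathrm{WF}'(\Pi) = \mathrm{diag}(\Sigma)$ and the map $f$ is a submersion (indeed $\widetilde{\mu}$ is a group action, so $f$ is even a fibration), so the transversality condition $\mathrm{WF}(\Pi) \cap N^*(f) = \emptyset$ holds trivially and $f^*\Pi$ is a legitimate distribution whose wave front is computed in Lemma \ref{lem:canonical relation PiL1}.

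Next I would form the exterior tensor product $\overline{\chi}_L \boxtimes \widehat{\Pi} \in \mathcal{D}'(G \times G \times X \times X)$, whose wave front is contained in $\big(\mathrm{WF}(\overline{\chi}_L) \cup 0\big) \times \big(\mathrm{WF}(\widehat{\Pi}) \cup 0\big)$ — here one uses that $\chi_L$ is itself a Lagrangian distribution with $\mathrm{WF}(\chi_L) \subseteq \Lambda_L$ by the cited Theorem 6.3 of \cite{guillemin-sternberg hq}. Then I would check that the diagonal embedding $\Delta : G \times X \times X \hookrightarrow G \times G \times X \times X$ is transverse to this wave front: a covector in $N^*(\Delta)$ has the form $\big((g,\eta),(g,-\eta),0,0\big)$, and for this to lie in the wave front of $\overline{\chi}_L \boxtimes \widehat{\Pi}$ one would need simultaneously $(g,\eta) \in \mathrm{WF}(\overline{\chi}_L)$ (or $\eta = 0$) and $(g,-\eta) \in \text{(projection of } \mathrm{WF}(\widehat{\Pi})\text{ to } T^*G)$ (or $\eta = 0$); by (\ref{eqn:defn di LambdaM}) the latter projection consists of covectors $r\,\Phi_G(m_x)$ with $r > 0$, which are nonzero since $\mathbf{0} \notin \Phi_G(M)$, so the only way both conditions hold is $\eta = 0$. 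Hence $\Delta^*(\overline{\chi}_L \boxtimes \widehat{\Pi})$ is a well-defined distribution on $G \times X \times X$, equal formally to $(g,x,y) \mapsto \overline{\chi_L(g)}\,\Pi\big(\widetilde{\mu}_{g^{-1}}(x),y\big)$.

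Finally I would check that the pushforward $p_*$ along the proper-on-the-$G$-factor projection $p : G \times X \times X \to X \times X$ is defined — $G$ is compact, so $p$ is proper and $p_*$ (integration over $G$) presents no convergence issue, and the wave front of the pushforward is controlled by the standard rule $\mathrm{WF}(p_* u) \subseteq \{(x,\xi,y,\eta) : (g,0,x,\xi,y,\eta) \in \mathrm{WF}(u) \text{ for some } g\}$. Pairing $p_*\big(\Delta^*(\overline{\chi}_L \boxtimes \widehat{\Pi})\big)$ against a test function $\phi \in \mathcal{C}^\infty(X \times X)$ and applying Fubini (justified by the distributional calculus above, or by density from the smooth case) reproduces exactly $\int_G \overline{\chi_L(g)}\, \Pi\big(\widetilde{\mu}_{g^{-1}}(x),y\big)\,\mathrm{d}V_G(g)$ integrated against $\phi$, which is the Schwartz kernel of $\Pi_L$ by (\ref{eqn:integral for P L}) and the definition of $\Pi_L$ as the orthogonal projector onto $H(X)_L = \bigoplus_k H(X)_{k\boldsymbol{\nu}}$ (the identity $\Pi_{k\boldsymbol{\nu}} = \int_G \overline{\chi_{k\boldsymbol{\nu}}(g)}\,d_{k\boldsymbol{\nu}}\,\widehat{\mu}_g\,\Pi\,\mathrm{d}V_G(g)$ being the Peter–Weyl projection formula, summed over the ladder). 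The main obstacle is the bookkeeping in the transversality/wave-front verifications — making sure each of $f^*$, $\boxtimes$, $\Delta^*$, $p_*$ is applied to a distribution satisfying the hypotheses of the corresponding functorial theorem in \S 1.3 of \cite{duist} or \S VI.3 of \cite{guillemin-sternberg ga}; the crucial input that makes all of them go through is $\mathbf{0} \notin \Phi_G(M)$, which keeps the $\mathfrak{g}^\vee$-components of $\mathrm{WF}(\widehat{\Pi})$ away from zero and hence forces the intersections with conormals to be trivial.
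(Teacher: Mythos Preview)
Your proposal is essentially correct and matches the paper's approach: the lemma is a tautological functorial rewriting of (\ref{eqn:integral for P L}), and the paper states it without proof, deferring the wave-front bookkeeping to the paragraphs that follow. Your write-up actually folds those subsequent well-definedness checks into the proof of the lemma, which is fine.

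One small correction in your transversality argument for $\Delta^*$: you rule out a nonzero conormal covector $\big((g,\eta),(g,-\eta),0,0\big)$ by observing that the $T^*G$-projection of $\mathrm{WF}(\widehat{\Pi})$ consists of nonzero covectors $r\,\Phi_G(m_x)$. That observation is true but does not give the contradiction you want, since $-\eta$ could perfectly well equal some $r\,\Phi_G(m_x)$. The correct reason the intersection is empty is that, by (\ref{eqn:defn di LambdaM}), every element of $\mathrm{WF}(\widehat{\Pi})$ has \emph{nonzero} $T^*X$-components $(x,r\alpha_x)$ with $r>0$, whereas the conormal to $\Delta$ forces those components to vanish. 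The hypothesis $\mathbf{0}\notin\Phi_G(M)$ is not needed for the well-definedness of $\Delta^*$; in the paper it enters only later, when bounding $\mathrm{WF}(\Pi_L)$ after the pushforward $p_*$ (specifically, to kill the contribution of the piece $(0)\times\mathrm{WF}(\widehat{\Pi})$ in (\ref{eqn:wave front pull back})).
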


Let $\sigma:T^*G\rightarrow T^*G$ be given by
$(g,f)\mapsto (g,-f)$. Then
\begin{eqnarray}
 \label{eqn:wave front product}
 WF (\overline{\chi} _L\boxtimes \widehat{\Pi})
&\subseteq &\Big(\sigma (\Lambda_L)\times (0)\Big)\cup 
 \Big( 
 \sigma (\Lambda_L) \times  \mathrm{WF} \left( \widehat{\Pi} \right)\Big) \cup \Big( (0)\times \mathrm{WF} \left( \widehat{\Pi} \right)\Big)\nonumber\\
& \subset & T^*G\times (T^*G \times T^*X\times T^*X).
 \nonumber
\end{eqnarray}

Therefore, the pull-back $\Delta ^* \big( \overline{\chi} _L\boxtimes \widehat{\mu}\big)$ is well-defined, and
\begin{eqnarray}
 \label{eqn:wave front pull back}
\lefteqn{ WF \left(\Delta ^* \big( \overline{\chi} _L\boxtimes \widehat{\Pi}\big)\right)  \subseteq 
 \mathrm{d}\Delta ^* \big(WF (\overline{\chi} _L\boxtimes \widehat{\Pi})\big) } \\
 & \subseteq & 
 \Big(\sigma (\Lambda_L)\times (0)\Big)\cup 
 \mathrm{d}\Delta ^* \Big( 
 \sigma (\Lambda_L) \times  \mathrm{WF} \left( \widehat{\Pi} \right)   \Big) \cup \mathrm{WF} \left( \widehat{\Pi} \right)
 \nonumber\\
 &\subset & T^*G\times T^*X\times T^*X. \nonumber
\end{eqnarray}
Explicitly, we have 
\begin{eqnarray}
 \label{eqn:pull back product wf}
 \lefteqn{ \mathrm{d}\Delta ^* \Big( 
 \sigma (\Lambda_L) \times  \mathrm{WF} \left( \widehat{\Pi} \right)\Big)} \\
 & = & \Big\{ 
 \Big( 
   \big ( g , -f+ r \, \Phi _G (m_x ) \big), ( x, r \, \alpha_x ) , 
 ( y , - r \, \alpha _y ) \Big)\nonumber\\
 && : f\in \mathcal{C}(\mathcal{O}),\,
 g \in H_f, \, x \in X, \, r>0 , \,  y = \widetilde{\mu} _{g ^{-1} } ( x ) 
 \Big\}.\nonumber
\end{eqnarray}

Using that $\Phi_G $ is nowhere vanishing, we can now apply Proposition 1.3.4 of \cite{duist} to conclude the following.

\begin{cor}
 \label{cor:wave front of PL}
 The wave front $WF (\Pi_L)\subseteq \left(T^*X\setminus (0)\right)\times \left(T^*X\setminus (0)\right)$ 
 of the distributional kernel $\Pi_L$ satisfies 
\begin{eqnarray*}
\lefteqn{ WF \big(\Pi_L\big) }	\\
& \subseteq & \Big\{ 
 \Big(  ( x, r \, \alpha_x ) , 
 ( y , - r \, \alpha _y )  \Big): f:=\Phi_G ( x )\in \mathcal{C}(\mathcal{O}),\,
y \in H_f\cdot x  \Big\},
\end{eqnarray*}
where $H_f\cdot x$ is the $H_f$-orbit of $x$.
\end{cor}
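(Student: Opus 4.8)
The plan is to assemble the wave front bound for $\Pi_L$ by composing the three functorial operations in Lemma \ref{lem:P_L functorial} --- exterior tensor product, pull-back along $\Delta$, and push-forward along $p$ --- and tracking how the conic Lagrangians transform at each stage. The key inputs are already in place: the wave front of $\widehat{\Pi}$ is the canonical relation in Lemma \ref{lem:canonical relation PiL1}, the wave front of $\overline{\chi}_L$ is $\sigma(\Lambda_L)$ by the Guillemin--Sternberg theorem (with the sign twist coming from complex conjugation), and the hypothesis $\mathbf{0}\notin \Phi_G(M)$ will be the nonvanishing condition that makes the push-forward a legitimate operation via Proposition 1.3.4 of \cite{duist}.

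First I would record the wave front of the exterior product $\overline{\chi}_L \boxtimes \widehat{\Pi}$ using the standard rule that $\mathrm{WF}(u\boxtimes v)$ is contained in the union of $(\mathrm{WF}(u)\times (0))$, $(\mathrm{WF}(u)\times \mathrm{WF}(v))$ and $((0)\times \mathrm{WF}(v))$; this is displayed as (\ref{eqn:wave front product}). Next I would pull back along the diagonal embedding $\Delta:G\times X\times X\to G\times G\times X\times X$. The only subtlety here is checking the transversality condition for pulling back a distribution --- namely that the conormal of $\Delta$ meets $\mathrm{WF}(\overline{\chi}_L\boxtimes\widehat{\Pi})$ only in the zero section. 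The conormal to $\Delta$ consists of covectors $(g,\eta_1,g,\eta_2,x,0,y,0)$ with $\eta_1+\eta_2=0$, so the condition amounts to: a point of $\sigma(\Lambda_L)\times\mathrm{WF}(\widehat{\Pi})$ of the form $\big((g,-f),(g,r\Phi_G(m_x)),(x,r\alpha_x),(y,-r\alpha_y)\big)$ can never have its two $G$-components cancel unless everything vanishes; since $r>0$ and $\alpha_x\neq 0$ this is automatic, and the computation of $\mathrm{d}\Delta^*$ of the middle piece gives exactly (\ref{eqn:pull back product wf}) --- the $G$-component becomes $-f+r\,\Phi_G(m_x)$, subject to $f\in\mathcal{C}(\mathcal{O})$ and $g\in H_f$.

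The last and most delicate step is the push-forward along $p:G\times X\times X\to X\times X$, i.e. integration over $G$. Here one invokes Proposition 1.3.4 of \cite{duist}: the wave front of $p_*(w)$ is contained in the set of $\big((x,\xi),(y,\eta)\big)$ such that $\big((g,0),(x,\xi),(y,\eta)\big)\in\mathrm{WF}(w)$ for some $g$ --- that is, one keeps exactly those covectors whose $G$-component is zero. Applying this to (\ref{eqn:wave front pull back}): the first piece $\sigma(\Lambda_L)\times(0)$ contributes nothing since its $X\times X$ component is the zero section; the third piece $\mathrm{WF}(\widehat{\Pi})$ has $G$-component $r\,\Phi_G(m_x)$, which is nonzero by the hypothesis $\mathbf{0}\notin\Phi_G(M)$, so it drops out as well; and the middle piece survives precisely when $-f+r\,\Phi_G(m_x)=0$, i.e. $\Phi_G(m_x)=f/r$, which (since $f\in\mathcal{C}(\mathcal{O})$ and $r>0$) says exactly $\Phi_G(m_x)\in\mathcal{C}(\mathcal{O})$, equivalently $f:=\Phi_G(x)\in\mathcal{C}(\mathcal{O})$ up to the cone scaling, together with the constraint $y=\widetilde\mu_{g^{-1}}(x)$ for some $g\in H_f$, i.e. $y\in H_f\cdot x$. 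This yields the stated inclusion for $\mathrm{WF}(\Pi_L)$.

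The main obstacle I anticipate is bookkeeping rather than conceptual: one must be careful with the identification $T^*G\cong G\times\mathfrak{g}^\vee$ via right translations, with the sign conventions in the canonical relation $\Lambda'$ versus $\Lambda$ and in $\sigma$, and with verifying that all the intermediate pull-back/push-forward operations are well-defined (the relevant clean-intersection or transversality hypotheses hold). The nonvanishing of $\Phi_G$ is doing real work in two places --- it is what kills the ``third piece'' and what makes the $G$-integration (a non-proper push-forward a priori) microlocally harmless --- so the argument genuinely uses this hypothesis, as it must. Once the wave front computation is organized cleanly, the claimed containment follows by reading off the conditions $f=\Phi_G(x)\in\mathcal{C}(\mathcal{O})$ and $y\in H_f\cdot x$ directly from the surviving stratum.
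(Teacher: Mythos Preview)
Your proposal is correct and follows essentially the same route as the paper: the functorial expression $\Pi_L = p_*\big(\Delta^*(\overline{\chi}_L\boxtimes\widehat{\Pi})\big)$, the three-piece wave front bound for the exterior product, the pull-back computation (\ref{eqn:pull back product wf}), and then the push-forward via Proposition~1.3.4 of \cite{duist}, with the hypothesis $\mathbf{0}\notin\Phi_G(M)$ used to kill the $\mathrm{WF}(\widehat{\Pi})$ piece. Your write-up is in fact more explicit than the paper's, which simply states that the nonvanishing of $\Phi_G$ allows the cited proposition to be applied and then records the conclusion.
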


\begin{cor}
 \label{cor:singular support of PL}
Let $SS (\Pi_L)\subseteq X\times X$ 
be the singular support of the distributional kernel $\Pi_L$. Then
%
 $SS \big(\Pi_L\big) \subseteq \mathcal{Z}_{\boldsymbol{ \nu }}$.
%
\end{cor}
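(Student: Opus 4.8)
The plan is to deduce Corollary \ref{cor:singular support of PL} directly from Corollary \ref{cor:wave front of PL} by projecting the wave front set onto the base. Recall the elementary fact that the singular support of a distribution is the image under the bundle projection $T^*(X\times X)\setminus(0)\to X\times X$ of its wave front set. Hence it suffices to show that whenever $\big((x,r\,\alpha_x),(y,-r\,\alpha_y)\big)\in WF(\Pi_L)$, we have $(x,y)\in\mathcal{Z}_{\boldsymbol{\nu}}$.

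So first I would fix such a point of $WF(\Pi_L)$. By Corollary \ref{cor:wave front of PL}, setting $f:=\Phi_G(x)\in\mathcal{C}(\mathcal{O})$ (more precisely $f=\Phi_G(m_x)$, with $m_x=\pi(x)$), we have $f\in\mathbb{R}_+\cdot\mathcal{O}_{\boldsymbol{\nu}}$ and $y\in H_f\cdot x$. The condition $\Phi_G(m_x)\in\mathcal{C}(\mathcal{O}_{\boldsymbol{\nu}})$ is precisely the statement $m_x\in M^G_{\mathcal{O}_{\boldsymbol{\nu}}}$, i.e. $x\in X^G_{\mathcal{O}_{\boldsymbol{\nu}}}$, by the definition of these loci in Definition \ref{defn:coadjoint orbit and cone}. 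Next, since $H_f\leqslant G_f\leqslant G$, we have $y\in H_f\cdot x\subseteq G\cdot x$. Finally, by the equivariance of $\Phi_G$, the $G$-orbit of $x$ maps under $\Phi_G\circ\pi$ into the $G$-orbit (i.e.\ coadjoint orbit) of $f$, which is contained in $\mathbb{R}_+\cdot\mathcal{O}_{\boldsymbol{\nu}}$; hence $y\in X^G_{\mathcal{O}_{\boldsymbol{\nu}}}$ as well. Thus $(x,y)\in X^G_{\mathcal{O}_{\boldsymbol{\nu}}}\times X^G_{\mathcal{O}_{\boldsymbol{\nu}}}$ with $y\in G\cdot x$, which is exactly the defining condition of $\mathcal{Z}_{\boldsymbol{\nu}}$ in Theorem \ref{thm:rapid decrease fixed}. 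Therefore $WF(\Pi_L)$ projects into $\mathcal{Z}_{\boldsymbol{\nu}}$, and the claimed inclusion $SS(\Pi_L)\subseteq\mathcal{Z}_{\boldsymbol{\nu}}$ follows.

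This argument is essentially bookkeeping: the real content was already extracted in the wave front computation leading to Corollary \ref{cor:wave front of PL}, and here one only needs to unwind the definitions and use $H_f\subseteq G$ together with equivariance of the moment map. The one point that deserves a word of care is checking that $f=\Phi_G(m_x)$ lying in $\mathcal{C}(\mathcal{O}_{\boldsymbol{\nu}})$ implies the same for $\Phi_G(m_y)$ — but this is immediate since $m_y$ lies on the same $G$-orbit as $m_x$ in $M$ (as $y\in G\cdot x$ and $\pi$ is $G$-equivariant), and $\mathcal{C}(\mathcal{O}_{\boldsymbol{\nu}})=\mathbb{R}_+\cdot\mathcal{O}_{\boldsymbol{\nu}}$ is a $G$-invariant (indeed $\mathrm{Ad}$-invariant and cone) subset of $\mathfrak{g}^\vee$. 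I do not anticipate a genuine obstacle; the proof is a short corollary-of-a-corollary, with the only subtlety being to state clearly that passing from wave front to singular support is the standard projection and that all the orbit conditions are preserved under the $G$-action.
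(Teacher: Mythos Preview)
Your proposal is correct and matches the paper's approach exactly: Corollary \ref{cor:singular support of PL} is stated immediately after Corollary \ref{cor:wave front of PL} without further argument, precisely because the singular support is the projection of the wave front set and the orbit/equivariance bookkeeping you spell out is the intended (trivial) passage. There is nothing to add.
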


\subsection{The proof}

\begin{proof}[Proof of Theorem \ref{thm:rapid decrease fixed}]
For every $\boldsymbol{\mu}=(\mu_1,\mu_2)\in \mathbb{Z}^2$ with $\mu_1>\mu_2$, let
$P_{ \boldsymbol{\mu} } : L^2( X ) \rightarrow L^2( X )_{ \boldsymbol{\mu} }$
be the orthogonal projector.
Clearly 
\begin{equation}
 \label{eqn:expression for piknu}
 \Pi_{k \boldsymbol{\nu} } = P_{k \boldsymbol{\nu} } \circ \Pi_L.
\end{equation}
In terms of Schwartz kernels, (\ref{eqn:expression for piknu}) can be reformulated as follows:
\begin{eqnarray}
 \label{eqn:expressione for piknu kernel}
 \Pi_{k \boldsymbol{\nu} } (x,y) & = & 
 d_{k \boldsymbol{\nu} }\, \int_G \, \mathrm{d}V_G (g)\left[\overline{\chi_{k \boldsymbol{\nu} }(g)} \,
 \Pi_L \left( \widetilde{\mu}_{g^{-1}} (x) , y \right) \right].
\end{eqnarray}
Using the Weyl integration, character and dimension formulae, 
(\ref{eqn:expressione for piknu kernel}) can in turn be rewritten as follows:
\begin{eqnarray}
 \label{eqn:expressione for piknu kernel weyl}
\lefteqn{ 
\Pi_{k \boldsymbol{\nu} } (x,y)} \\
& = & 
 \frac{k \, (\nu_1 -\nu_2 )}{(2\pi)^2}\, \int_{(-\pi,\pi)^2}\,\mathrm{d}\boldsymbol{\vartheta}\, 
 \left[e^{-\imath\,k \langle \boldsymbol{ \nu },\boldsymbol{ \vartheta} \rangle} \, 
 \left(e^{\imath\,\vartheta_1}
 -e^{\imath\,\vartheta_2}\right)\,F_{L} \left(x,y;e^{\imath\,\boldsymbol{\vartheta}}\right) \right],
 \nonumber
\end{eqnarray}
where for $t \in T$ we set 
\begin{equation}
 \label{eqn:defn of FL}
 F_{L} (x,y;t) : = \int_{G/T}\,\mathrm{d}V_{G/T} (gT)\, \left[
\Pi_L \left( \widetilde{\mu}_{g\,t^{-1} \, g^{-1}} (x) , y \right) \right].
\end{equation}

Now suppose $K\Subset (X\times X) \setminus \mathcal{Z}_{\boldsymbol{ \nu }}$. 
We may assume without loss that $K$ is $G\times G$-invariant.
There exist $G\times G$-invariant open subsets $A,B \subset X\times X$ such that 
$$
K\subset A\Subset (X\times X) \setminus \mathcal{Z}_{\boldsymbol{ \nu }}, \quad
\mathcal{Z}_{\boldsymbol{ \nu }}\subset B\Subset (X\times X)\setminus K, \quad X\times X= A\cup B.
$$
Hence $A$ is a $G\times G$-invariant open neighborhood of $K$ in $X\times X$, and the restriction of $\Pi_L$ to $A$ is
$\mathcal{C}^\infty$. 

Therefore, we get a $\mathcal{C}^\infty$ function 
$$
R:T\times G/T\times A\rightarrow \mathbb{C},
\quad 
\big(t, gT, (x,y) \big ) \mapsto \Pi_L \left( \widetilde{\mu}_{g\,t^{-1} \, g^{-1}} (x) , y \right).
$$
With $F_L$ as in (\ref{eqn:defn of FL}), we obtain a $\mathcal{C}^\infty$ function on $T\times A$
by setting
$$
\beta: \big(t,(x,y) \big)\mapsto  \Delta (t) \,F_{L} (x,y;t).
$$
Let us denote by $\mathcal{F}_T$ the Fourier transform with respect to $t\in T$ of
a function on $T\times A$, viewed as a function on 
$\mathbb{Z}^2\times A$; then (\ref{eqn:expressione for piknu kernel weyl}) may be rewritten
\begin{eqnarray}
 \label{eqn:expressione fourier for piknu kernel weyl}
\Pi_{k \boldsymbol{\nu} } (x,y)
& = & 
 \frac{k }{2}\, (\nu_1 -\nu_2 )\cdot 
 \mathcal{F}_T(\beta) (k\,\boldsymbol{\nu}; x,y).
\end{eqnarray}
The statement of Theorem \ref{thm:rapid decrease fixed} follows from (\ref{eqn:expressione fourier for piknu kernel weyl}) and the previous considerations.

\end{proof}

\section{Proof of Theorem \ref{thm:geometry locus}}

We shall assume throughout this section that 
the assumptions of Theorem \ref{thm:geometry locus} hold.

\subsection{Preliminaries}

Before attacking the proof, it is in order to list some useful preliminaries (see also the discussion in \S 2 of \cite{pao-IJM}).

For any $m\in M$, let 
$\mathrm{val}_m:\mathfrak{g}\rightarrow T_mM$ be the evaluation map $\xi\mapsto \xi_M(m)$; 
similarly, for any $x\in X$ let $\mathrm{val}_x:\mathfrak{g}\rightarrow T_xX$ be the evaluation map $\xi\mapsto \xi_X(x)$.

\subsubsection{Ray transversality and locally free actions}

Since $\widetilde{\mu}$  preserves the connection 1-form,
the induced cotangent action of $G$ on $T^*X$ leaves the 
symplectic cone $\Sigma$ in (\ref{eqn:defn of Sigma}) invariant. 
The restricted action is of course still Hamiltonian, and
its moment map $\widetilde{\Phi}_G:\Sigma\rightarrow \mathfrak{g}$
is the restriction to $\Sigma$ of the cotangent Hamiltonian map on $T^*X$.

If $m\in M^G_{\mathcal{O}}$, then by equivariance $\Phi_G$ is transverse to $\mathbb{R}_+\cdot \Phi_G(m)$.
Hence, 
\begin{equation}
 \label{eqn:transversality on M}
\mathrm{d}_m\Phi_G(T_mM)+\mathrm{span}\big(\Phi_G(m)\big)=
\mathfrak{g}.
\end{equation}

Suppose $x\in \pi^{-1}(m)\subset X$ and $r>0$, and consider $\sigma=(x,r\alpha_x)\in \Sigma$.
Then it follows from (\ref{eqn:transversality on M}) that 
\begin{equation}
 \label{eqn:transversality on X}
\mathrm{d}_\sigma\widetilde{\Phi}_G(T_\sigma\Sigma)=\mathrm{d}_m\Phi_G(T_mM)+\mathrm{span}\big(\Phi_G(m)\big)=
\mathfrak{g}.
\end{equation}
Thus $\widetilde{\Phi}_G$ is submersive at any $(x,r\alpha_x)$ with $x\in X^G_{\mathcal{O}}$.
If we let $\Sigma^G_{\mathcal{O}}\cong X^G_{\mathcal{O}}\times \mathbb{R}_+$ denote the inverse image of 
$X^G_{\mathcal{O}}$ in $\Sigma$, we conclude therefore that $G$ acts locally freely on $\Sigma^G_{\mathcal{O}}$,
and this clearly implies that it acts locally freely on $X^G_{\mathcal{O}}$.

The previous implications may obviously be reversed, and we obtain the following.

\begin{lem}
 \label{lem:transversality and locally free action}
The following conditions are equivalent:
\begin{enumerate}
 \item $\Phi_G$ is transverse to $\mathbb{R}_+\cdot \imath \,\boldsymbol{\nu}$;
\item $\widetilde{\mu}$ is locally free on $X^G_{\mathcal{O}}$;
\item for every $x\in X^G_{\mathcal{O}}$, $\mathrm{val}_x$ is injective;
\item for every $m\in M^G_{\mathcal{O}}$, $\mathrm{val}_m$ is injective on $\Phi_G(m)^{\perp_{\mathfrak{g}}}$.
\end{enumerate}
\end{lem}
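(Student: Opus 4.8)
The plan is to establish the chain of equivalences (1) $\Leftrightarrow$ (2) $\Leftrightarrow$ (3) $\Leftrightarrow$ (4) by making precise the computations already sketched in the preliminary discussion immediately preceding the lemma. The discussion there essentially proves (1) $\Rightarrow$ (2) $\Rightarrow$ (3) by passing through the symplectic cone $\Sigma$; the remaining work is to reverse these implications cleanly and to characterize the locally free locus intrinsically on $M$ via the evaluation map $\mathrm{val}_m$, which is the content of (4). I would organize the argument around the basic linear-algebra fact that, for $x\in\pi^{-1}(m)$ and $\sigma=(x,r\alpha_x)\in\Sigma$, transversality of $\Phi_G$ to the ray $\mathbb{R}_+\cdot\Phi_G(m)$ at $m$ is equivalent, by dualizing, to the statement that the only $\xi\in\mathfrak{g}$ with $\xi_M(m)=0$ and $\langle\Phi_G(m),\xi\rangle=0$ is $\xi=0$.

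First I would record the dualization: since $\langle \mathrm{d}_m\Phi_G(v),\xi\rangle = \omega_m(\xi_M(m),v)$ (the defining property of the moment map), the annihilator of $\mathrm{d}_m\Phi_G(T_mM)$ in $\mathfrak{g}$ is exactly $\{\xi : \xi_M(m)=0\}=\ker(\mathrm{val}_m)$, using nondegeneracy of $\omega_m$. Hence \eqref{eqn:transversality on M}, i.e. $\mathrm{d}_m\Phi_G(T_mM)+\mathrm{span}(\Phi_G(m))=\mathfrak{g}$, holds if and only if $\ker(\mathrm{val}_m)\cap \Phi_G(m)^\perp = \{0\}$, i.e. if and only if $\mathrm{val}_m$ is injective on $\Phi_G(m)^{\perp_{\mathfrak{g}}}$. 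This gives (1) $\Leftrightarrow$ (4) directly, once one notes that transversality of $\Phi_G$ to $\mathbb{R}_+\cdot\imath\boldsymbol{\nu}$ means precisely \eqref{eqn:transversality on M} at every $m\in M^G_{\mathcal{O}}$ (and is vacuous off that locus).

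Next I would handle (2) $\Leftrightarrow$ (3). By \eqref{eqn:contact lift xi}, $\xi_X(x) = \xi_M(m)^\sharp - \langle\Phi_G(m),\xi\rangle\,\partial_\theta$; since the horizontal lift is injective on $\mathcal{H}(X/M)$ and $\partial_\theta$ spans the vertical, $\xi_X(x)=0$ forces both $\xi_M(m)=0$ and $\langle\Phi_G(m),\xi\rangle=0$, and conversely. Thus $\ker(\mathrm{val}_x)=\ker(\mathrm{val}_m)\cap\Phi_G(m)^{\perp_{\mathfrak{g}}}$, so $\mathrm{val}_x$ injective for all $x\in X^G_{\mathcal{O}}$ is the same as condition (4), and also the same as: the stabilizer Lie algebra of every $x\in X^G_{\mathcal{O}}$ is trivial, i.e. $\widetilde{\mu}$ is locally free on $X^G_{\mathcal{O}}$ (local freeness of a compact group action being equivalent to discreteness of stabilizers, equivalently vanishing of stabilizer Lie algebras). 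This closes (2) $\Leftrightarrow$ (3) $\Leftrightarrow$ (4). Finally (1) $\Leftrightarrow$ (2) either follows from the already-established equivalences, or can be re-derived by the $\Sigma$-argument in the text: submersivity of $\widetilde{\Phi}_G$ at $(x,r\alpha_x)$ is equivalent to \eqref{eqn:transversality on X}, which by $G$-equivariance on $\Sigma^G_{\mathcal{O}}$ is equivalent to local freeness there, and this descends to $X^G_{\mathcal{O}}$ since the $\mathbb{R}_+$-factor is acted on trivially by $G$.

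I do not expect a genuine obstacle here; the statement is essentially a bookkeeping exercise assembling facts already in the preliminary subsection. The one point requiring a little care is the clean passage between ``local freeness'' (a global statement about stabilizer subgroups) and ``injectivity of $\mathrm{val}_x$'' (an infinitesimal statement about stabilizer Lie algebras): these coincide for actions of compact Lie groups because a closed subgroup with trivial Lie algebra is discrete, and one should also note that on the compact manifold $X^G_{\mathcal{O}}$ pointwise discreteness of stabilizers is what is meant by local freeness. A secondary subtlety is making sure the equivalence is stated on the correct locus --- transversality of $\Phi_G$ to the ray is a condition only along $\Phi_G^{-1}(\mathbb{R}_+\cdot\imath\boldsymbol{\nu})$, and one must observe that $X^G_{\mathcal{O}} = \pi^{-1}(\Phi_G^{-1}(\mathbb{R}_+\cdot\mathcal{O}))$ is the $G$-saturation of $X^G_{\boldsymbol{\nu}}$, so that by equivariance checking the conditions on $X^G_{\boldsymbol{\nu}}$ suffices.
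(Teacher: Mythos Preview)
Your proposal is correct and essentially aligns with the paper's approach, which in fact offers only the one-line remark ``the previous implications may obviously be reversed'' after the $\Sigma$-argument and defers details to \cite{pao-IJM}. Your direct dualization $(1)\Leftrightarrow(4)$ via the moment-map identity $\langle \mathrm{d}_m\Phi_G(v),\xi\rangle=\omega_m(\xi_M(m),v)$, together with the contact-lift computation $\ker(\mathrm{val}_x)=\ker(\mathrm{val}_m)\cap\Phi_G(m)^{\perp_{\mathfrak{g}}}$, is a slightly more elementary route than the paper's passage through the symplectic cone $\Sigma$ and submersivity of $\widetilde{\Phi}_G$; it has the advantage of making the role of condition~(4) transparent, whereas the $\Sigma$-argument packages $(1)\Leftrightarrow(2)$ as a single step in the general moment-map formalism but leaves $(3)$ and $(4)$ to be unpacked separately.
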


\subsubsection{The vector field $\Upsilon=\Upsilon_{ \mu,\boldsymbol{ \nu }}$}

\label{sctn:construction of Upsilon}

Let us construct the normal vector field $\Upsilon=\Upsilon_{ \mu,\boldsymbol{ \nu }}$ 
to $M^G_{\mathcal{O}}$ appearing in the statement of Theorem \ref{thm:geometry locus}.

By definition, $m\in M^G_{\mathcal{ O }_{ \boldsymbol{ \nu } }}$ if and only if $\Phi_G(m)$ is similar to
 $\imath\,\lambda_{\boldsymbol{ \nu } } ( m )\,D_{\boldsymbol{\nu}} $,
for some $\lambda_{\boldsymbol{ \nu } } ( m )>0$ (Definition \ref{defn:definition of Dnu}). Equating norms and traces, we obtain
\begin{equation}
 \label{eqn:defn di lambda}
\lambda_{\boldsymbol{ \nu } } ( m ) = \frac{\| \Phi_G ( m ) \|}{\| \boldsymbol{ \nu } \|}
= -\imath \,
\frac{\mathrm{trace}\big(\Phi_G(m)\big)}{\nu_1+\nu_2}
\quad \left( m\in M^G_{\mathcal{ O }_{ \boldsymbol{ \nu } }} \right).
\end{equation}
Since $\nu_1>\nu_2$, there exists a unique coset $h_m\,T\in G/T$ such that

\begin{equation}
 \label{eqn:similitude phinu}
\Phi_G (m) = \imath\,\lambda_{\boldsymbol{ \nu } } ( m ) \,
h_m \, D_{\boldsymbol{\nu}} \, h_m^{-1}.
\end{equation}
Let us set $\boldsymbol{ \nu }_\perp  : = \begin{pmatrix} -\nu_2 & \nu_1 \end{pmatrix}^t$,
and define $\boldsymbol{ \rho }= \boldsymbol{ \rho }_{ \boldsymbol{ \nu }}: M^G_{\mathcal{ O }_{ \boldsymbol{ \nu } }}
\rightarrow \mathfrak{g}$ by setting 
\begin{equation}
 \label{eqn:defn di rhonu}
\boldsymbol{ \rho } (m ) := \imath \, h _m \, D_{\boldsymbol{ \nu }_\perp } \,h_m^{-1} \quad \left(
m \in M^G_{\mathcal{ O }_{ \boldsymbol{ \nu } }} \right).
\end{equation}
Then $\boldsymbol{ \rho } (m )_M\in \mathfrak{X}(M)$ is the 
vector field on $M$ induced by $\boldsymbol{ \rho } (m )\in \mathfrak{g}$;
its evaluation at $m'\in M$ is $\boldsymbol{ \rho } (m )_M(m')$ (and similarly for $X$). 

\begin{defn}
 \label{defn:defn di Upsilon}
The vector field $\Upsilon=\Upsilon_{\mu, \boldsymbol{ \nu } }$ along $M^G_{\mathcal{ O }_{ \boldsymbol{ \nu } }}$
is 
\begin{equation*}
\Upsilon (m) := J_m \big (\boldsymbol{ \rho } (m )_M (m)\big)  \quad \left( m\in M^G_{\mathcal{ O }_{ \boldsymbol{ \nu } }}\right).
\end{equation*}
With abuse of notation, recalling (\ref{eqn:JonX}) 
we shall also denote by $\Upsilon$ the vector field along $X^G_{\mathcal{ O }_{ \boldsymbol{ \nu } }}$
given by
\begin{equation*}
\Upsilon (x) := J_x \big (\boldsymbol{ \rho } (m_x )_X (x)\big), \quad m_x:=\pi(x).
\end{equation*}
\end{defn}

Notice that 
\begin{equation}
 \label{eqn:paring rhophi}
\langle \Phi _G(m), \boldsymbol{\rho} (m)\rangle =\lambda_{\boldsymbol{ \nu } } ( m ) \,
\left\langle \boldsymbol{\nu}, \boldsymbol{\nu}_\perp\right\rangle =0  
\quad \left( m\in M^G_{\mathcal{ O }_{ \boldsymbol{ \nu } }}\right).
\end{equation}
Therefore, in view of (\ref{eqn:contact lift xi}) 
for any 
$x\in \pi^{-1}(m)$ we have 
\begin{equation}
 \label{eqn:rhoMhorizontal}
\boldsymbol{ \rho } (m )_X (x)=\boldsymbol{ \rho } (m )_M^\sharp(x)=\boldsymbol{ \rho } (m )_M(m)^\sharp(x).
\end{equation}
Hence $\Upsilon (x)=\Upsilon(m)^\sharp$ if $m=\pi(x)$.

\subsubsection{A spectral characterization of $G\cdot M^T_{\boldsymbol{\nu}}$}

Suppose that $-\imath\,\Phi_G(m)$ has eigenvalues $\lambda_1(m) \ge \lambda_2(m)$.
Then $m\in M^G_{\mathcal{O}}$ if and only if $\lambda_1(m)\nu_2- \lambda_2(m)\,\nu_1=0$.
We shall give a similar spectral characterization of $G\cdot M^T_{\boldsymbol{\nu}}$.
Notice that if $\lambda_1(m) = \lambda_2(m)$, then $\Phi_G(m)$ is a multiple of the identity,
hence certainly $m\not\in G\cdot M^T_{\boldsymbol{\nu}}$.
Thus we may as well assume that $\lambda_1(m) > \lambda_2(m)$.

\begin{prop}
 \label{prop:spectral characterization}
Suppose $m\in M$, and let the eigenvalues of $-\imath\,\Phi_G(m)$ be 
$\lambda_1(m) > \lambda_2(m)$. Then $m\in G\cdot M^T_{\boldsymbol{\nu}}$
if and only if
\begin{equation}
 \label{eqn:spectral inequality}
t(m,\boldsymbol{\nu}):=
\frac{\lambda_1(m) \,\nu_2- \lambda_2(m)\,\nu_1}{(\nu_1+\nu_2) \,\big(\lambda_1(m) - \lambda_2(m)\big)} \in [0,1/2).
\end{equation}

\end{prop}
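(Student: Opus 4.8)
The plan is to reduce the statement to a claim about where the diagonal projection $\mathrm{diag}(\mathcal{O}_{\Phi(m)})$ meets the ray $\imath\,\mathbb{R}_+\cdot\boldsymbol{\nu}$, using the description of $G\cdot M^T_{\boldsymbol{\nu}}$ already established. Recall from Remark \ref{rem:description of B} that for $m\in M$ the diagonal projection of the coadjoint orbit $\mathcal{O}_{\Phi(m)}$ is the segment $J_m$ joining $\imath\,(\lambda_1(m),\lambda_2(m))^t$ and $\imath\,(\lambda_2(m),\lambda_1(m))^t$, where $\lambda_1(m)>\lambda_2(m)$ are the eigenvalues of $-\imath\,\Phi_G(m)$; this is exactly the content of Horn's theorem in the $2\times 2$ case. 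The point $m$ lies in $\overline{B}=G\cdot M^T_{\boldsymbol{\nu}}$ precisely when this segment meets $\imath\,\mathbb{R}_+\cdot\boldsymbol{\nu}$, i.e. when there is a point of $J_m$ proportional, with positive factor, to $\imath\,\boldsymbol{\nu}=\imath\,(\nu_1,\nu_2)^t$. (That $m\in G\cdot M^T_{\boldsymbol{\nu}}$ iff $\mathrm{diag}(\mathcal{O}_{\Phi(m)})\cap\imath\,\mathbb{R}_+\cdot\boldsymbol{\nu}\neq\emptyset$ follows since $G\cdot M^T_{\boldsymbol{\nu}}$ is the $G$-saturation of $\Phi_T^{-1}(\imath\,\mathbb{R}_+\cdot\boldsymbol{\nu})$ and $\Phi_T=\mathrm{diag}\circ\Phi_G$, and the image under $\mathrm{diag}$ of a $G$-orbit through $m$ is $\mathrm{diag}(\mathcal{O}_{\Phi(m)})$.)

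Next I would parametrize $J_m$ and solve the intersection condition explicitly. Write a general point of $J_m$ as
\begin{equation*}
\imath\,\big((1-t)\,\lambda_1(m)+t\,\lambda_2(m),\ (1-t)\,\lambda_2(m)+t\,\lambda_1(m)\big)^t,\qquad t\in[0,1].
\end{equation*}
By the symmetry $t\leftrightarrow 1-t$ of $J_m$ under swapping the two coordinates, and since $\nu_1>\nu_2$ forces any intersection point to have first coordinate $\geq$ second coordinate, it suffices to look for $t\in[0,1/2]$. The point above is a positive multiple of $\imath\,(\nu_1,\nu_2)^t$ iff the two coordinates are in the ratio $\nu_1:\nu_2$ (and the common positive factor is automatic once $\lambda_1(m)>\lambda_2(m)$ and $\nu_1+\nu_2\neq 0$; here one uses that the sum of the two coordinates is $\lambda_1(m)+\lambda_2(m)=-\imath\,\mathrm{trace}\,\Phi_G(m)$, which has a definite sign since $\mathbf{0}\notin\Phi_T(M)$ — this is where hypothesis (2) of Theorem \ref{thm:geometry locus} enters, though for the bare statement of Proposition \ref{prop:spectral characterization} one only needs the eigenvalue hypothesis). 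Imposing proportionality,
\begin{equation*}
\nu_2\big((1-t)\lambda_1(m)+t\lambda_2(m)\big)=\nu_1\big((1-t)\lambda_2(m)+t\lambda_1(m)\big),
\end{equation*}
and solving for $t$ yields
\begin{equation*}
t=\frac{\lambda_1(m)\,\nu_2-\lambda_2(m)\,\nu_1}{(\nu_1+\nu_2)\,(\lambda_1(m)-\lambda_2(m))}=t(m,\boldsymbol{\nu}).
\end{equation*}

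Finally I would assemble the equivalence: $m\in G\cdot M^T_{\boldsymbol{\nu}}$ iff this unique solution $t(m,\boldsymbol{\nu})$ lies in $[0,1]$, and by the symmetry reduction this is equivalent to $t(m,\boldsymbol{\nu})\in[0,1/2]$; one then notes that $t(m,\boldsymbol{\nu})=1/2$ would force $\lambda_1(m)=\lambda_2(m)$ (since at $t=1/2$ both coordinates of the parametrized point coincide, making $\Phi_G(m)$ a multiple of the identity, which was excluded), so the endpoint $1/2$ is not attained and we get $t(m,\boldsymbol{\nu})\in[0,1/2)$, which is (\ref{eqn:spectral inequality}). Conversely, if $t(m,\boldsymbol{\nu})\in[0,1/2)$ then the corresponding point of $J_m$ is a genuine positive multiple of $\imath\,\boldsymbol{\nu}$, so $m$ lies in the intersection, hence in $G\cdot M^T_{\boldsymbol{\nu}}$. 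The main obstacle — such as it is — is bookkeeping: keeping careful track of the sign of the common proportionality factor (which is why $\nu_1+\nu_2\neq 0$ and a sign control on $\lambda_1(m)+\lambda_2(m)$ are needed) and handling the boundary case $t=1/2$ cleanly, so that the half-open interval $[0,1/2)$ comes out rather than the closed interval $[0,1/2]$.
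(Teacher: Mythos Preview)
Your approach is essentially the paper's own: both reduce the question to parametrizing the diagonal projection of the orbit by $t\in[0,1]$ via
\[
\boldsymbol{\lambda}_t(m)=\big((1-t)\lambda_1+t\lambda_2,\ t\lambda_1+(1-t)\lambda_2\big),
\]
solving the proportionality condition to get $t=t(m,\boldsymbol{\nu})$, and then checking the range. The paper carries out the Horn-type statement by a direct $SU(2)$ computation (with $t=|w|^2$), whereas you invoke it as a known fact; this is a cosmetic difference only.

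There is, however, one incorrect step. You claim that $t(m,\boldsymbol{\nu})=1/2$ would force $\lambda_1(m)=\lambda_2(m)$. This is false: plugging $t=1/2$ into the proportionality equation gives $(\nu_1-\nu_2)(\lambda_1+\lambda_2)=0$, hence (since $\nu_1\neq\nu_2$) the condition $t(m,\boldsymbol{\nu})=1/2$ is equivalent to $\lambda_1(m)+\lambda_2(m)=0$, not to equal eigenvalues. Your parenthetical justification (``both coordinates coincide, making $\Phi_G(m)$ a multiple of the identity'') conflates the midpoint of $J_m$ with the diagonal entries of $\Phi_G(m)$ itself.

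The fix is immediate and is exactly what the paper does: sharpen your observation ``$\nu_1>\nu_2$ forces any intersection point to have first coordinate $\geq$ second'' to a \emph{strict} inequality. A positive multiple of $(\nu_1,\nu_2)$ with $\nu_1>\nu_2$ has strictly larger first coordinate; since
\[
\big[(1-t)\lambda_1+t\lambda_2\big]-\big[t\lambda_1+(1-t)\lambda_2\big]=(1-2t)(\lambda_1-\lambda_2),
\]
this forces $t<1/2$ directly, with no separate treatment of the endpoint needed. With this correction your argument is complete and coincides with the paper's.
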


\begin{proof}
 [Proof of Proposition \ref{prop:spectral characterization}]
Let us set $\boldsymbol{\lambda}(m):=\big( \lambda_1(m),\,\lambda_2(m) \big)$,
and let $D_{\boldsymbol{\lambda}}$ be the corresponding diagonal matrix.
By definition, $m\in G\cdot M^T_{\boldsymbol{\nu}}$ if and only if there exists
$g\in SU(2)\leqslant G$ such that $\mathrm{diag}\left( g \,D_{\boldsymbol{\lambda}} \,g^{-1}\right)
\in \mathbb{R}_+\cdot \boldsymbol{\nu}$. This is equivalent to the condition that there exist 
$u,w\in \mathbb{C}$ such that 
\begin{eqnarray}
\label{eqn:ginSU(2)}
\begin{pmatrix}
 u & -\overline{w}\\
w & \overline{u}
\end{pmatrix}
\,
D_{\boldsymbol{\lambda}}\,
\begin{pmatrix}
 \overline{u}& \overline{w}\\
-w &u
\end{pmatrix}=
c \,
\begin{pmatrix}
 \nu_1& a\\
\overline{a}&\nu_2
\end{pmatrix},
\end{eqnarray}
for some $c>0$ and $a\in \mathbb{C}$. If we set $t=|w|^2$, we conclude that $m\in M^G_{\mathcal{O}}$ if and only if there exists
$t\in [0,1]$ such that 
\begin{equation}
\label{eqn:lambdat}
\boldsymbol{\lambda}_t(m):= 
\begin{pmatrix}
  (1-t)\,\lambda_1(m) + t \,\lambda_2(m) \\
 t\,\lambda_1(m) +  (1-t)\,\lambda_2(m)
 \end{pmatrix}\in \mathbb{R}_+\,
\begin{pmatrix}
 \nu_1\\
\nu_2
\end{pmatrix}.
\end{equation}

The condition $\boldsymbol{\lambda}_t(m)\wedge \boldsymbol{\nu}=\mathbf{0}$ translates into the equality
$t = t(m,\boldsymbol{\nu})$. Hence we need to have $t(m,\boldsymbol{\nu})\in [0,1]$. 
Given this, $\boldsymbol{\lambda}_t(m)$ is a \textit{positive} multiple of
$\boldsymbol{\nu}$ if and only if 
$$\big(1-t(m,\boldsymbol{\nu})\big)\,\lambda_1(m) + t(m,\boldsymbol{\nu}) \,\lambda_2(m) >
 t(m,\boldsymbol{\nu})\,\lambda_1(m) +  \big(1-t(m,\boldsymbol{\nu})\big)\,\lambda_2(m),
$$ 
and this is equivalent to $t(m,\boldsymbol{\nu})<1/2$.

Conversely, suppose that $t(m,\boldsymbol{\nu})\in [0,1/2)$, and define
$$
g:=
\begin{pmatrix}
 \sqrt{1-t(m,\boldsymbol{\nu})}&-\sqrt{t(m,\boldsymbol{\nu})}\\
\sqrt{t(m,\boldsymbol{\nu})}& \sqrt{1-t(m,\boldsymbol{\nu})}
\end{pmatrix}.
$$

\end{proof}

\subsection{The Proof}

\begin{proof}[Proof of Theorem \ref{thm:geometry locus}]
 
As $\Phi_G$ is equivariant, it is transverse to $\mathbb{R}_+\cdot \imath\,D_{\boldsymbol{\nu}}$
if and only if it is transverse to $\mathbb{R}_+\cdot \mathcal{O}$. Given that $\nu_1>\nu_2$,
$\mathcal{O}$ is 2-dimensional (and diffeomorphic to $S^2$); therefore, $\mathbb{R}_+\cdot \mathcal{O}$
has codimension $1$ in $\mathfrak{g}$. Similarly, $\mathbb{R}_+\cdot \imath\,D_{\boldsymbol{\nu}}$
has codimension $1$ in $\mathfrak{t}^\vee$. 
Given that $\mathbf{0}\not\in \Phi_T(M)$, we conclude the following.

\begin{step}
 \label{step:MGOsmooth}
$M^G_{\boldsymbol{\nu}}$, $M^G_{\mathcal{O}}$ and $M^T_{\boldsymbol{\nu}}$ are compact and smooth 
(real) submanifolds of $M$. $M^G_{\boldsymbol{\nu}}$ has codimension $3$, and 
$M^G_{\mathcal{O}}$ and $M^T_{\boldsymbol{\nu}}$ are hypersurfaces.
\end{step}

The Weyl chambers in $\mathfrak{t}$ are the half-planes 
$$
\mathfrak{t}_+:=\big\{ \boldsymbol{\mu} \,: \,
\mu_1>\mu_2\big\},\quad 
\mathfrak{t}_-:=\big\{ \boldsymbol{\mu} \,: \,
\mu_1<\mu_2\big\},
$$
and clearly with our identifications $\imath\,D_{\boldsymbol{\nu}}\leftrightarrow\boldsymbol{\nu}\in \mathfrak{t}_+$.
Since $\Phi_G(M)\cap \mathfrak{t}_+$ is a convex polytope (\cite{guillemin-sternberg cp1}, \cite{guillemin-sternberg cp2}, \cite{kirwan}),
$\Phi_G(M)\cap \mathbb{R}_+\cdot\imath\,D_{\boldsymbol{\nu}}$ is a closed segment $J$.
Furthermore, for any $a\in J$, the inverse image $\Phi_G^{-1}(a)\subseteq M$ is also connected 
(\cite{kirwan-cohomology}, \cite{lerman}).
Thus we obtain the following conclusion.

\begin{step}
 \label{step:MGOconnected}
$M^G_{\boldsymbol{\nu}}$, $M^G_{\mathcal{O}}$ and $M^T_{\boldsymbol{\nu}}$ are connected.
\end{step}

\begin{proof}[Proof of Step \ref{step:MGOconnected}]
The previous considerations immediately imply that $M^G_{\boldsymbol{\nu}}$ is connected. Given this,
since $M^G_{\mathcal{O}}=G\cdot M^G_{\boldsymbol{\nu}}$ the connectedness of $G$ implies the one of 
$M^G_{\mathcal{O}}$. Let us consider $M^T_{\boldsymbol{\nu}}$. Since $\Phi_T(M)$ is a convex polytope
(\cite{guillemin-sternberg cp1}, \cite{atiyah}), $\Phi_T(M)\cap \mathbb{R}_+\cdot\imath\,D_{\boldsymbol{\nu}}$ 
is also a closed segment $J'$. The statement follows since the fibers of $\Phi_T$ 
are connected again by \cite{kirwan-cohomology}, \cite{lerman}.
\end{proof}

 For any $m\in M^G_{\mathcal{O}}$, let us set 
\begin{equation*}
 M^G_{\Phi_G(m)}:=\Phi_G^{-1}\big(\mathbb{R}_+\cdot \Phi_G(m)\big).
\end{equation*}

Since $\Phi_G$ is transverse to $\mathbb{R}_+\cdot \boldsymbol{\nu}$, by equivariance it is also transverse to
$\mathbb{R}_+\cdot \Phi_G(m)$; hence $M^G_{\Phi_G(m)}$ is also a connected real submanifold of $M$, of real codimension
$3$ and contained in $M^G_{\mathcal{O}}$.

Let us consider the normal bundle $N\big(M^G_{\Phi_G(m)}\big)$ to $M^G_{\Phi_G(m)}\subset M$. For any $\xi \in \mathfrak{g}$, 
let $\xi^\perp\subset \mathfrak{g}$
be the orthocomplement to $\xi$. Under the equivariant identification $\mathfrak{g}\cong \mathfrak{g}^\vee$, 
$\xi^\perp$ corresponds to $\xi^0$.

For any subset $L\subseteq \mathfrak{g}$, let $L^{\perp_{\mathfrak{g}}}$ denote the orthocomplement
of $L$ (that is, of the linear span of $L$) under the pairing $\langle \cdot , \cdot \rangle_{\mathfrak{g}}$.

\begin{lem}
 \label{lem:normal to MGnu}
For any $m\in M^G_{\mathcal{O}}$, we have 
$$
N_m\left(M^G_{\Phi_G(m)}\right)= J_m\circ \mathrm{val}_m\left(\Phi_G(m)^{\perp_{\mathfrak{g}}} \right).
$$
Simlarly, for any $m\in M^T_{\boldsymbol{\nu}}$, we have 
$$
N_m\left(M^T_{\boldsymbol{\nu}}\right)= J_m\circ \mathrm{val}_m\left((\imath\,\boldsymbol{\nu})^{\perp_{\mathfrak{t}}} \right)
.
$$
\end{lem}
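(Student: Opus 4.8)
The strategy is to compute the kernel of $\mathrm{d}_m\Phi_G$ modulo the tangent directions that keep $\Phi_G$ on the relevant ray, and then transport the answer across the complex structure $J_m$. First I would recall that by definition $M^G_{\Phi_G(m)} = \Phi_G^{-1}\big(\mathbb{R}_+\cdot\Phi_G(m)\big)$, so its tangent space at $m$ is
$$
T_m\left(M^G_{\Phi_G(m)}\right) = \mathrm{d}_m\Phi_G^{-1}\left(\mathbb{R}\cdot\Phi_G(m)\right),
$$
using transversality (equation (\ref{eqn:transversality on M})), which guarantees this is a genuine submanifold of codimension $3 = \dim\mathfrak{g} - 1$. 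Consequently the normal space $N_m\big(M^G_{\Phi_G(m)}\big)$ is the $g_m$-orthocomplement of this subspace inside $T_mM$. The plan is to identify this orthocomplement by dualizing $\mathrm{d}_m\Phi_G$.

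The key computational input is the standard identity relating the differential of a moment map to the symplectic form: for any $\xi\in\mathfrak{g}$ and $v\in T_mM$,
$$
\big\langle \mathrm{d}_m\Phi_G(v),\,\xi\big\rangle_{\mathfrak{g}} = \omega_m\big(\xi_M(m),\,v\big) = \omega_m\big(\mathrm{val}_m(\xi),\,v\big).
$$
From this, $v\in T_m\big(M^G_{\Phi_G(m)}\big)$ iff $\omega_m\big(\mathrm{val}_m(\xi),v\big)=0$ for all $\xi\in\Phi_G(m)^{\perp_{\mathfrak{g}}}$, i.e. $T_m\big(M^G_{\Phi_G(m)}\big) = \mathrm{val}_m\big(\Phi_G(m)^{\perp_{\mathfrak{g}}}\big)^{\perp_\omega}$. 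Now I would use that $\rho_m(v,w) = \omega_m(J_mv,w)$ from (\ref{eqn:riemannian structure}): the Riemannian orthocomplement of a subspace $W\subseteq T_mM$ equals $J_m$ applied to the symplectic orthocomplement of $W$ (equivalently, $W^{\perp_g} = (J_mW)^{\perp_\omega}$ up to the usual sign bookkeeping, since $J_m$ is a $\rho_m$-isometry and $\omega_m$-compatible). Therefore
$$
N_m\left(M^G_{\Phi_G(m)}\right) = T_m\left(M^G_{\Phi_G(m)}\right)^{\perp_g} = J_m\left(\mathrm{val}_m\big(\Phi_G(m)^{\perp_{\mathfrak{g}}}\big)\right),
$$
which is the claimed formula. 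The $T$-statement is entirely parallel: replace $\Phi_G$ by $\Phi_T = \mathrm{diag}\circ\Phi_G$, $\mathfrak{g}$ by $\mathfrak{t}$, the ray $\mathbb{R}_+\cdot\Phi_G(m)$ by $\mathbb{R}_+\cdot\imath\,\boldsymbol{\nu}$, and $\Phi_G(m)^{\perp_{\mathfrak{g}}}$ by $(\imath\,\boldsymbol{\nu})^{\perp_{\mathfrak{t}}}$; the same moment-map identity (for the $T$-action) and the same $J_m$-duality give the result, the transversality hypothesis on $\Phi_T$ ensuring $M^T_{\boldsymbol{\nu}}$ is a hypersurface.

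The main obstacle I anticipate is purely bookkeeping: making sure the dimension count is exactly right so that "$\mathrm{val}_m\big(\Phi_G(m)^{\perp_{\mathfrak{g}}}\big)^{\perp_\omega}$" really is the full tangent space of the submanifold and not something larger. This requires knowing that $\mathrm{val}_m$ restricted to $\Phi_G(m)^{\perp_{\mathfrak{g}}}$ is injective — precisely condition (4) of Lemma \ref{lem:transversality and locally free action}, which holds here because $\Phi_G$ is transverse to the ray. With that injectivity, $\dim\mathrm{val}_m\big(\Phi_G(m)^{\perp_{\mathfrak{g}}}\big) = 2$, its $\omega$-orthocomplement has the complementary dimension $2d-2$, matching $\dim T_m\big(M^G_{\Phi_G(m)}\big)$, and the reverse inclusion follows from the first paragraph's computation of the tangent space; hence equality. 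A secondary (minor) point is keeping the sign conventions in $\Theta = -2\imath\,\omega$ and $\rho_m(v,w)=\omega_m(J_mv,w)$ consistent so that the $J_m$ in the statement appears with the correct orientation, but this does not affect the identity as a description of the (unoriented) normal line/plane.
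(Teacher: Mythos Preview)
Your approach is essentially the paper's: both rest on the moment-map identity $\langle\mathrm{d}_m\Phi_G(v),\xi\rangle_{\mathfrak{g}}=\omega_m(\xi_M(m),v)$ together with the compatibility $\rho_m(v,w)=\omega_m(J_mv,w)$; the paper proves the inclusion $J_m\circ\mathrm{val}_m\big(\Phi_G(m)^{\perp_{\mathfrak{g}}}\big)\subseteq N_m$ directly and then invokes a dimension count, while you characterize $T_m\big(M^G_{\Phi_G(m)}\big)$ as the symplectic orthocomplement of $\mathrm{val}_m\big(\Phi_G(m)^{\perp_{\mathfrak{g}}}\big)$ and dualize via $J_m$. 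One numerical slip to fix: since $\dim\mathfrak{u}(2)=4$, the space $\Phi_G(m)^{\perp_{\mathfrak{g}}}$ is $3$-dimensional (not $2$), so its image under $\mathrm{val}_m$ has dimension $3$ and the tangent space has dimension $2d-3$, consistent with the codimension-$3$ statement you made earlier; this does not affect your argument, since the equality $T_m=W^{\perp_\omega}$ you establish is exact and the passage to $N_m=J_mW$ requires no separate dimension check.
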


\begin{proof}[Proof of Lemma \ref{lem:normal to MGnu}]
If $v\in T_mM^G_{\Phi_G(m)}$, then 
$\mathrm{d}_m\Phi_G(v)= a\,\Phi_G(m)
$ for some $a\in \mathbb{R}$.
Given $\eta\in \Phi_G(m)^{\perp_{\mathfrak{g}}}$, and with $\rho$ as in (\ref{eqn:riemannian structure}), we have 
\begin{eqnarray*}
 \rho_m \Big(J_m \big(\eta_M(m)\big), v\Big)& = & \omega_m \big(\eta _M(m), v \big) = \mathrm{d}_m\Phi^\eta (v)\\
& = & \langle \mathrm{d}_m\Phi(v), \eta \rangle_{\mathfrak{g}} 
= a \langle \Phi_G(m), \eta\rangle _{\mathfrak{g}} =0.
\end{eqnarray*}
Therefore, $J_m\circ \mathrm{val}_m\left(\Phi_G(m)^{\perp_{\mathfrak{g}}} \right)\subseteq N_m(M^G_{\Phi_G(m)})$. Since both 
$\Phi_G(m)^{\perp_{\mathfrak{g}}}$ and $N_m(M^G_{\Phi_G(m)})$ are $3$-dimensional, it suffices to recall that by Lemma 
\ref{lem:transversality and locally free action} 
$\mathrm{val}_m$ is injective when restricted to $\Phi_G(m)^{\perp_{\mathfrak{g}}}$.

The proof of the second statement is similar.

\end{proof}

For any vector subspace $L\subseteq \mathfrak{g}$, let us set 
$L_M(m):=\mathrm{val}_m \big(L)\subseteq T_mM$ ($m\in M$).
For any $m\in M^G_{\mathcal{O}}$, given that $M^G_{\mathcal{O}}$ is the $G$-saturation of 
$M^G_{\Phi_G(m)}$, we have 
\begin{equation}
 \label{eqn:tangent space to MGO}
T_mM^G_{\mathcal{O}} = T_mM^G_{\Phi_G(m)} + \mathfrak{g}_M(m). 
\end{equation}
Therefore, passing to $\rho_m$-orthocomplements
\begin{equation}
 \label{eqn:normal space to MGO}
N_m\left(M^G_{\mathcal{O}}\right) = N_m\left(M^G_{\Phi_G(m)}\right)\cap \mathfrak{g}_M(m)^{\perp_{\rho_m}}.
\end{equation}

We conclude from Lemma \ref{lem:normal to MGnu} and (\ref{eqn:tangent space to MGO}) that $N_m\left(M^G_{\mathcal{O}}\right)$
is the set of all vectors 
$J_m\big(\eta_M(m)\big)\in T_mM$ where 
$\eta\in \Phi_G(m)^{\perp_{\mathfrak{g}}}$ and $\rho_m\big(J_m\big(\eta_M(m)\big), \xi_M(m)\big)=0$
for every $\xi\in \mathfrak{g}$. From this remark we can draw the following conclusion.

\begin{step}
 \label{step:normal to MGO}
Let $\Upsilon=\Upsilon_{\mu,\boldsymbol{\nu}}$ be as in \S \ref{sctn:construction of Upsilon}.
Then for any $m\in M^G_{\mathcal{O}}$ we have 
$$
N_m\left(M^G_{\mathcal{O}}\right)= \mathrm{span}\big(\Upsilon(m)\big).
$$
In particular, $M^G_{\mathcal{O}}$ is orientable.
\end{step}

\begin{proof}
[Proof of Step \ref{step:normal to MGO}]
By the above,
\begin{eqnarray}
\label{eqn:normal space explicit}
\lefteqn{ N_m\left(M^G_{\mathcal{O}}\right)} \\
&=& \left\{ J_m\big(\eta_M(m)\big) : 
\eta\in \Phi_G(m)^{\perp_{\mathfrak{g}}}\,\wedge \, \rho_m\Big(J_m\big(\eta_M(m)\big), \xi_M(m)\Big) =0 \, \forall \xi\in \mathfrak{g}\right\}\nonumber \\
& = &
\left\{ J_m\big(\eta_M(m)\big) : 
\eta\in \Phi_G(m)^{\perp_{\mathfrak{g}}}\,\wedge \, \omega_m\big(\eta_M(m), \xi_M(m)\big) =0 \, \forall \xi\in \mathfrak{g}\right\}\nonumber\\
& = &
\left\{ J_m\big(\eta_M(m)\big) : 
\eta\in \Phi_G(m)^{\perp_{\mathfrak{g}}}\,\wedge \, \eta_M(m)\in \ker (\mathrm{d}_m\Phi_G)\right\}\nonumber\\
&=&\left\{ J_m\big(\eta_M(m)\big) : 
\eta\in \Phi_G(m)^{\perp_{\mathfrak{g}}}\,\wedge \, \big[\eta, \Phi_G(m)\big]=0\right\}.\nonumber
\end{eqnarray}
The latter equality holds because, by the equivariance of $\Phi_G$, we have
\begin{eqnarray*}
\mathrm{d}_m\Phi_G\big(\eta_M(m)\big)&=&\left.\frac{\mathrm{d}}{\mathrm{d}t} \Phi_G\left(\mu_{e^{t\eta}}(m)\right)\right|_{t=0}
=\left.\frac{\mathrm{d}}{\mathrm{d}t} \mathrm{Ad}_{e^{t\eta}}\Phi_G\left(m\right)\right|_{t=0}\nonumber\\
&=& \big[\eta, \Phi_G(m)\big].
\end{eqnarray*}

There exists a unique $h_m\,T\in G/T$ such that 
$\Phi_G(m)=\imath\,\lambda_{\boldsymbol{\nu}}(m)\,h_m\,D_{\boldsymbol{\nu}}\,h_m^{-1}$.
It is then clear that $\langle\Phi_G(m),\eta\rangle_{\mathfrak{g}}=0$ and $ \big[\eta, \Phi_G(m)\big]=0$ if and only if
$$\eta \in 
\mathrm{span}\left( \imath\,h_m\,D_{\boldsymbol{\nu}_\perp}\,h_m^{-1}\right)=\mathrm{span}\big(\boldsymbol{\rho}(m)\big),$$
where $\boldsymbol{\rho}(m)$ is as in (\ref{eqn:defn di rhonu}).
This completes the proof of Step \ref{step:normal to MGO}.

\end{proof}

\begin{step}
 \label{step:intersection of hypersurfaces}
$M^G_{\mathcal{O}}\cap M^T_{\boldsymbol{\nu}}=M^G_{\boldsymbol{\nu}}$.
\end{step}

\begin{proof}
[Proof of Step \ref{step:intersection of hypersurfaces}]
Obviously, $M^G_{\mathcal{O}}\cap M^T_{\boldsymbol{\nu}}\supseteq M^G_{\boldsymbol{\nu}}$.
Conversely, suppose $m\in M^G_{\mathcal{O}}\cap M^T_{\boldsymbol{\nu}}$.
Then on the one hand $\Phi_G(m)$ is similar to a positive multiple of $\imath\,D_{\boldsymbol{\nu}}$:
for a unique $h_m\,T\in G/T$,
\begin{equation}
\label{eqn:similar to Dnu}
 \Phi_G(m)=\imath\,\lambda_{\boldsymbol{\nu}}(m)\,h_m\,D_{\boldsymbol{\nu}}\,h_m^{-1}.
\end{equation}
We can assume without loss that $h_m\in SU(2)$.
On the other $\mathrm{diag}\big(\Phi_G(m)\big)$ is a positive multiple of $\imath\,\boldsymbol{\nu}$. Hence 
the diagonal of $h_m\,D_{\boldsymbol{\nu}}\,h_m^{-1}$ is a positive multiple of $\boldsymbol{\nu}$.
Let us write $h_m$ as in (\ref{eqn:ginSU(2)}), and argue as in the proof of Proposition \ref{prop:spectral characterization};
using that $\nu_1^2\neq \nu_2^2$, one concludes readily that
$h_m$ is diagonal. Hence $h_m\,D_{\boldsymbol{\nu}}\,h_m^{-1}=D_{\boldsymbol{\nu}}$, 
and so $\Phi_G(m)\in \mathbb{R}_+\cdot \imath\boldsymbol{\nu}$. Thus $m\in M^G_{\boldsymbol{\nu}}$. 
 
\end{proof}

\begin{step}
 \label{step:tangent intersection}
For any $m\in M^G_{\boldsymbol{\nu}}$,
$T_mM^G_{\mathcal{O}} = T_mM^T_{\boldsymbol{\nu}}$.
\end{step}

\begin{proof}[Proof of Step \ref{step:tangent intersection}]
If $m\in M^G_{\boldsymbol{\nu}}$, then $h_m=I_2$ in (\ref{eqn:similitude phinu}) and 
(\ref{eqn:defn di rhonu}); therefore,
$\Upsilon(m)=J_M\left((\imath\,\boldsymbol{\nu}_\perp) (m)\right)$.
Hence $N_m\left(M^G_{\mathcal{O}}\right)= \mathrm{span}\left(J_m\left((\imath\,\boldsymbol{\nu}_\perp) (m)\right)\right)$.
The claim follows from this and Lemma \ref{lem:normal to MGnu}.

\end{proof}

\begin{step}
 \label{step:boundary description}
$M^G_{\mathcal{O}}= \partial \left(G\cdot M^T_{\boldsymbol{\nu}}\right)$.
\end{step}

\begin{proof}
[Proof of Step \ref{step:boundary description}]
Suppose $m\in M^G_{\mathcal{O}}$. Thus 
$\Phi_G(m)=\imath\,\lambda_{\boldsymbol{\nu}}(m)\,h_m\,D_{\boldsymbol{\nu}}\,h_m^{-1}$for a unique $h_m\,T\in G/T$.
Let us choose $\delta>0$ arbitrarily small, and let $M(m,\delta)\subseteq M$ be the
open ball centered at $m$ and radius $\delta$ in the Riemannian distance on $M$. 
Since $\Phi_G$ is transverse to $\mathbb{R}_+\cdot \imath\,\boldsymbol{\nu}$, 
there exists $\epsilon_1>0$ such that the following holds. For every $\epsilon\in (-\epsilon_1,\epsilon_1)$
there exists $m'\in M(m,\delta)$ with 
\begin{equation}
 \label{eqn:PhiG m'}
\Phi_G(m')= \imath\, \lambda(m')\,h_m\,D_{\boldsymbol{\nu}+\epsilon\,\boldsymbol{\nu}_\perp}\,h_m^{-1}
\end{equation}
for some $\lambda(m')>0$ (see \S 2 of \cite{pao-jsg}). This implies that the eigenvalues of $-\imath\,\Phi_G(m')$ are 
$$
\lambda_1(m'):=\lambda (m')\,(\nu_1-\epsilon\,\nu_2),
\quad \lambda_2(m'):=\lambda (m')\,(\nu_2+\epsilon\,\nu_1).
$$
Therefore, the invariant defined in (\ref{eqn:spectral inequality}) takes the following value at $m'$:
\begin{eqnarray}
 \label{eqn:spectral inequality m'}
t(m',\boldsymbol{\nu})=-\frac{\epsilon}{\nu_1+\nu_2}\, \frac{\nu_1^2+\nu_2^2}{(\nu_1-\nu_2)-\epsilon\,(\nu_1+\nu_2)}.
\end{eqnarray}
Therefore, if $\epsilon\,(\nu_1+\nu_2)>0$ (and $\epsilon$ is sufficiently small) 
then $m'\not\in G\cdot M^T_{\boldsymbol{\nu}}$ 
by Proposition \ref{prop:spectral characterization}.
This implies $M^G_{\mathcal{O}}\subseteq  \partial \left(G\cdot M^T_{\boldsymbol{\nu}}\right)$.

To prove the reverse inclusion, assume that $m\in G\cdot M^T_{\boldsymbol{\nu}}\setminus M^G_{\mathcal{O}}$.
Then $t(m,\boldsymbol{\nu})\in [0,1/2)$ by Proposition \ref{prop:spectral characterization}.
Furthermore, $t(m,\boldsymbol{\nu})\neq 0$, for otherwise
$m\in M^G_{\mathcal{O}}$.  Hence
$t(m,\boldsymbol{\nu})\in (0,1/2)$; by continuity, then, 
$t(m',\boldsymbol{\nu})\in (0,1/2)$ for every $m'$ in a sufficiently small open neighborhood of $m$.
Hence Proposition \ref{prop:spectral characterization} implies that 
$G\cdot M^T_{\boldsymbol{\nu}}\setminus M^G_{\mathcal{O}}$
contains an open neighborhood of $m$ in $M$. Thus $G\cdot M^T_{\boldsymbol{\nu}}\setminus M^G_{\mathcal{O}}$
is open, and in particular $m\not\in \partial \left(G\cdot M^T_{\boldsymbol{\nu}}\right)$.
\end{proof}

\begin{step}
 \label{step:Upsilon outer or inner}
$\Upsilon$ is outer oriented if $\nu_1+\nu_2>0$ and inner oriented if $\nu_1+\nu_2<0$.
\end{step}

\begin{proof}
[Proof of Step \ref{step:Upsilon outer or inner}]
Let denote by $\mathcal{B}_{\boldsymbol{\nu}}$ the collection of all $B\in \mathfrak{g}$ 
such that $\mathrm{diag}\left(g\,B\,g^{-1}\right)\in \mathbb{R}_+\,\imath\,\boldsymbol{\nu}$
for some $g\in G$.
Thus $\mathcal{B}_{\boldsymbol{\nu}}$ is a conic and invariant closed subset of
$\mathfrak{g}\setminus \{0\}$; in addition,
$m\in G\cdot M^T_{\boldsymbol{\nu}}$ if and only if $\Phi_G(m)\in \mathcal{B}_{\boldsymbol{\nu}}$.

If $\lambda_1(B)\ge \lambda_2(B)$ are the eigenvalues of $-\imath\,B$, then 
Proposition \ref{prop:spectral characterization} implies that $B\in \mathcal{B}_{\boldsymbol{\nu}}$
if and only if $\lambda_1(B)> \lambda_2(B)$ and
$$
t(B,\boldsymbol{\nu}):=
\frac{\lambda_1(B) \,\nu_2- \lambda_2(B)\,\nu_1}{(\nu_1+\nu_2) \,\big(\lambda_1(B) - \lambda_2(B)\big)} \in [0,1/2).
$$
In particular, if $t(B,\boldsymbol{\nu})\in (0,1/2)$ then $B$ belongs to the interior of 
$\mathcal{B}_{\boldsymbol{\nu}}$.

Suppose $m\in M^G_{\boldsymbol{\nu}}$ and consider the path 
$$
\gamma_1:\tau\in (-\epsilon,\epsilon)\mapsto \Phi_G\big(m+\tau\,\Upsilon(m)\big)\in \mathfrak{g},
$$
defined for sufficiently small $\epsilon>0$; 
the expression $m+\tau\,\Upsilon(m)\in M$
is meant in an adapted coordinate system on $M$ centered at $m$. Then 
\begin{eqnarray}
 \label{eqn:initial conds gamma1}
\gamma_1(0)&=&\Phi_G(m)=\imath\,\lambda_{\boldsymbol{\nu}}(m) \, D_{\boldsymbol{\nu}},
\\ 
\dot{\gamma}_1(0)&=& \omega_m \big(\cdot, \Upsilon (m)\big)=\rho_m\big(\cdot , (\imath\,\boldsymbol{\nu}_\perp)_M(m)\big).
\end{eqnarray}

Let us consider a smooth positive function, 
$y:(-\epsilon,\epsilon) \rightarrow \mathbb{R}_+$, to be determined but 
subject to the condition $y(0)=\lambda_{\boldsymbol{\nu}}(m)$.
Let us define a second smooth path of the form
\begin{equation}
 \label{eqn:second path}
\gamma_2(\tau):=
\imath\,y(\tau)\,\mathrm{Ad}_{e^{\tau\, \boldsymbol{\xi}}}\left (D_{\boldsymbol{\nu}+a\,\tau\,\boldsymbol{\nu}_\perp}\right),
\end{equation}
where $a>0$ is a constant also to be determined.

Then 

\begin{eqnarray}
\gamma_1(0)&=&\gamma_2(0) \nonumber\\
 \dot{\gamma}_2(0)&=&\imath\, \left [ \dot{y}(0)\, D_{\boldsymbol{\nu}}+
\lambda_{\boldsymbol{\nu}}(m)\, [\boldsymbol{\xi},\boldsymbol{\nu}] + a\,\lambda_{\boldsymbol{\nu}}(m)\,
D_{\boldsymbol{\nu}_\perp}\right].
\end{eqnarray}
Clearly, we can choose $a>0$ uniquely so that 
\begin{eqnarray}
 \label{eqn:gamma su hatnu}
 a\,\lambda_{\boldsymbol{\nu}}(m)\,\|\boldsymbol{\nu}\|^2=
\rho_m\big((\imath\,\boldsymbol{\nu}_\perp)_M(m) , (\imath\,\boldsymbol{\nu}_\perp)_M(m)\big),
\end{eqnarray}
so that 
$\left\langle \dot{\gamma}_2(0),\boldsymbol{\nu}_\perp\right\rangle 
=
\left\langle \dot{\gamma}_1(0),\boldsymbol{\nu}_\perp\right\rangle$.
Having fixed $a$, we can then choose $\dot{y}(0)$ uniquely so that
\begin{equation}
 \label{eqn:gamma su nu}
\dot{y}(0)\, \|\boldsymbol{\nu}\|^2=\rho_m
\big((\imath\,\boldsymbol{\nu})_M(m), (\imath\,\boldsymbol{\nu}_\perp)_M(m)\big),
\end{equation}
so that we also have
$\left\langle \dot{\gamma}_2(0),\boldsymbol{\nu}\right\rangle =
\left\langle \dot{\gamma}_1(0),\boldsymbol{\nu}\right\rangle$.
Finally, if we set 
$$
\boldsymbol{\upsilon}_1:=
\begin{pmatrix}
 0&1\\
-1&0
\end{pmatrix},
\quad
\boldsymbol{\upsilon}_2:=
\begin{pmatrix}
 0&\imath\\
\imath&0
\end{pmatrix}
$$
we can choose $\boldsymbol{\xi}\in \mathrm{span}_{\mathbb{R}}\big\{\boldsymbol{\upsilon}_1,\,\boldsymbol{\upsilon}_2\big\}$
uniquely so that 
\begin{equation}
 \label{eqn:gamma su upsilonj}
\lambda_{\boldsymbol{\nu}}(m)\,\langle [\boldsymbol{\xi},\boldsymbol{\nu}], \boldsymbol{\upsilon}_j\rangle
=\rho_m\big(\boldsymbol{\upsilon}_{jM}(m), 
(\imath\,\boldsymbol{\nu}_\perp)_M(m)\big),
\end{equation}
so that in addition
$\left\langle \dot{\gamma}_2(0),\boldsymbol{\upsilon}_j\right\rangle =
\left\langle \dot{\gamma}_1(0),\boldsymbol{\upsilon}_j\right\rangle$
for $j=1,2$.
With these choices, 
$\gamma_1$ and $\gamma_2$ agree to first order at $0$.

Let us remark that when $\tau$ is sufficiently small $\gamma_2(\tau)$ has eigenvalues 
$$
\lambda_1\big(\gamma_2(\tau)\big)= y(\tau)\, (\nu_1-a\,\tau\,\nu_2)>
\lambda_2\big(\gamma_2(\tau)\big)= y(\tau)\, (\nu_2+a\,\tau\,\nu_1).
$$
Hence
\begin{equation}
 \label{eqn:tBnu}
t(B,\boldsymbol{\nu})=-\frac{a\,\tau}{\nu_1+\nu_2}\,\frac{\nu_1^2+\nu_2^2}{\nu_1-\nu_2+a\tau\,(\nu_1+\nu_2)}.
\end{equation}

Thus if $\nu_1+\nu_2>0$ then $\gamma_2(\tau)\not\in \mathcal{B}_{\boldsymbol{\nu}}$
when $\tau\in (0,\epsilon)$; since $\gamma_1$ and $\gamma_2$ agree to second order at $0$,
we also have $\Phi_G\big(m+\tau\,\Upsilon(m)\big)\not\in \mathcal{B}_{\boldsymbol{\nu}}$
when $\tau\sim 0^+$. Hence $\Upsilon$ is outer oriented at $m$, and thus everywhere on
$M^G_{\mathcal{O}}$.

The argument when $\nu_1+\nu_2<0$ is similar.
\end{proof}

The proof of Theorem \ref{thm:geometry locus} is complete.
\end{proof}

\section{Proof of Theorem \ref{thm:rapid decrease slow}}

\subsection{Preliminaries}

\subsubsection{Recalls on Szeg\"{o} kernels}


Let $\Pi$, $\Pi(\cdot,\cdot)$ and 
$\Pi_{\boldsymbol{\nu}}$, $\Pi_{\boldsymbol{\nu}}(\cdot,\cdot)$ be as in (\ref{eqn:Pi and kernel})
and (\ref{eqn:equivariant szego kernel component}). For any $x,y\in X$, we have
\begin{equation}
 \label{eqn:equivariant szego kernel explicit}
 \Pi_{\boldsymbol{\nu}}(x,y) =
 d_{\boldsymbol{\nu}}\,\int_G \overline{\chi_{\boldsymbol{\nu}}(g)}\,\Pi\left(\widetilde{\mu}_{g^{-1}}(x),y\right)\,
 \mathrm{d}V_G(g).
\end{equation}

In view of (\ref{explicit character}) and the Weyl integration formula (\ref{sctn:weyl}), 
(\ref{eqn:equivariant szego kernel explicit}) can be rewritten
\begin{eqnarray}
 \label{eqn:equivariant szego kernel weyl}
\Pi_{\boldsymbol{\nu}}(x,y)
&=& d_{\boldsymbol{\nu}}\,\int_T t^{-\boldsymbol{\nu}}\,\Delta(t)\,F(t;x,y)\,\mathrm{d}V_T(t),
\end{eqnarray}
where $t^{-\boldsymbol{\nu}}=t_1^{-\nu_1}\,t^{-\nu_2}$, and 
\begin{equation}
 \label{eqn:defn of Fxy}
F(t;x,y):=\int_{G/T}\Pi\left(\widetilde{\mu}_{gt^{-1}g^{-1}}(x),y\right)\,\mathrm{d}V_{G/T}(g\,T).
\end{equation}

We have already used the structure
of the wave front of $\Pi$ 
in the proof of Theorem \ref{thm:rapid decrease fixed} (see (\ref{eqn:wave front of Pi})).
In the proof of Theorem \ref{thm:rapid decrease slow}, we need to exploit the explicit description
of $\Pi$ as an FIO 
developed in \cite{boutet-sjostraend} (see also the discussions in \cite{zelditch-theorem-of-Tian}, \cite{bsz}, \cite{sz}). 

Namely, up to
a smoothing contribution, we have
\begin{equation}
 \label{eqn:PiasFIO}
 \Pi(x,y) \sim \int_0^{+\infty}
 e^{\imath\,u\,\psi (x,y)}\,s(x,y,u)\,\mathrm{d}u,
\end{equation}
where $\psi$ is essentially determined by the Taylor expansion of the metric along the diagonal,
and $s$ is a semiclassical symbol admitting an asymptotic expansion 
$s(x,y,u)\sim \sum_{j\ge 0}u^{d-j}\,s_j(x,y)$. The differential of $\psi$ along the diagonal is
\begin{equation}
 \label{eqn:differential of psi diagonal}
 \mathrm{d}_{\left(x,x\right)}\psi=
 (\alpha_x,-\alpha_{x})\quad (x\in X).
\end{equation}

\subsubsection{An \textit{a priori} polynomial bound}
Let us record the following rough \textit{a priori} polynomial bound.

\begin{lem}
\label{lem:a priori bound}
There is a constant $C_{\boldsymbol{\nu}}>0$ such that for any $x\in X$ one has
$$
|\Pi_{k\boldsymbol{\nu}}(x,x)|\le C_{\boldsymbol{\nu}}\,k^{d+1}
$$
for $k\gg 0$.
\end{lem}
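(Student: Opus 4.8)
The plan is to prove the bound by comparing $\Pi_{k\boldsymbol{\nu}}$ with the Fourier components of the full Szeg\"o kernel relative to the structure $S^1$-action. Since $\widetilde{\mu}_g$ is the restriction to $X\subseteq A^\vee$ of a fibrewise-linear bundle automorphism, it commutes with the structure circle action; hence $S^1$ and $G$ act jointly on $X$, and the two isotypical decompositions of $H(X)$ are compatible. Writing $H(X)=\bigoplus_{m\ge 0}H_m(X)$ for the decomposition into structure-$S^1$-weight spaces (so that $H_m(X)$ is $G$-equivariantly isomorphic to $H^0\big(M,A^{\otimes m}\big)$, with reproducing kernel $\Pi_m$), one obtains an orthogonal decomposition into finitely many nonzero summands
\begin{equation*}
H(X)_{k\boldsymbol{\nu}}=\bigoplus_{m\in S_k}\big(H(X)_{k\boldsymbol{\nu}}\cap H_m(X)\big),\qquad
S_k:=\{\,m:\,H(X)_{k\boldsymbol{\nu}}\cap H_m(X)\neq(0)\,\}.
\end{equation*}
If $\Pi_{k\boldsymbol{\nu},m}$ denotes the orthogonal projector onto $H(X)_{k\boldsymbol{\nu}}\cap H_m(X)$, then evaluating reproducing kernels on the diagonal — where they are nonnegative and monotone under inclusion of closed subspaces — gives, for every $x\in X$,
\begin{equation*}
0\le \Pi_{k\boldsymbol{\nu}}(x,x)=\sum_{m\in S_k}\Pi_{k\boldsymbol{\nu},m}(x,x)\le \sum_{m\in S_k}\Pi_m(x,x).
\end{equation*}

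Two inputs then finish the argument. The first is the Tian--Yau--Zelditch expansion (\cite{zelditch-theorem-of-Tian}, \cite{bsz}): $\Pi_m(x,x)=O(m^{d})$ uniformly in $x\in X$, so that $\Pi_m(x,x)\le C_0\,m^{d}$ uniformly once $m\gg 0$. The second is the confinement $S_k\subseteq[c_1k,c_2k]$ for suitable constants $0<c_1\le c_2$ depending only on $\boldsymbol{\nu}$, $M$ and $\Phi_G$. Granting these, for $k\gg 0$ every $m\in S_k$ is large and $\le c_2k$, whence
\begin{equation*}
\Pi_{k\boldsymbol{\nu}}(x,x)\le \sum_{m\in S_k}\Pi_m(x,x)\le \#S_k\cdot C_0\,(c_2k)^{d}\le \big(c_2-c_1+1\big)\,C_0\,c_2^{d}\;k^{\,d+1}=:C_{\boldsymbol{\nu}}\,k^{\,d+1},
\end{equation*}
uniformly in $x$, which is the assertion.

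It remains to establish $S_k\subseteq[c_1k,c_2k]$, which I regard as the main point. By construction $m\in S_k$ if and only if the irreducible $V_{k\boldsymbol{\nu}}$, of highest weight $(k\nu_1-1,k\nu_2)$, occurs in $H^0(M,A^{\otimes m})$. By the asymptotic form of the Guillemin--Sternberg quantization-and-reduction theorem — equivalently, the statement that the $G$-multiplicities of $H^0(M,A^{\otimes m})$ are, up to an $O(1/m)$ error, supported on the coadjoint orbits meeting $\Phi_G(M)$, the same circle of ideas used in the abelian setting for the finite-dimensionality recalled in \S 1 — the occurrence of $V_{k\boldsymbol{\nu}}$ in $H^0(M,A^{\otimes m})$ forces the coadjoint orbit through $\frac1m\,\imath\,D_{k\boldsymbol{\nu}}$ (more precisely, with an $O(1/m)$ shift) to meet $\Phi_G(M)$. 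Every point of that orbit has norm $\frac{k\,\|\boldsymbol{\nu}\|}{m}+O(1/m)$, while $\mathbf{0}\notin\Phi_G(M)$ together with the compactness of $M$ yields $0<\delta\le\|\Phi_G(p)\|\le R$ for all $p\in M$; hence $\delta\le \frac{k\,\|\boldsymbol{\nu}\|}{m}+O(1/m)\le R+O(1/m)$, which for $k\gg 0$ forces $m\in\big[\tfrac{k\|\boldsymbol{\nu}\|}{2R},\,\tfrac{2k\|\boldsymbol{\nu}\|}{\delta}\big]$. One can also avoid quoting quantization theorems and argue with the Casimir element of $\mathfrak g$: acting on $H^0(M,A^{\otimes m})$ it equals $-m^{2}$ times the Toeplitz operator with symbol $\|\Phi_G\|^{2}$, up to an operator of norm $O(m)$, and comparing its eigenvalue $\asymp k^{2}\|\boldsymbol{\nu}\|^{2}$ on the $V_{k\boldsymbol{\nu}}$-isotype with the bounds $m^{2}\delta^{2}$ and $m^{2}R^{2}$ on the spectrum of that Toeplitz operator yields the same confinement.

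Thus the hard part is precisely the confinement $m\asymp k$ (where $\mathbf 0\notin\Phi_G(M)$ is essential); the remaining ingredients — the compatibility of the two isotypical decompositions, the monotonicity of reproducing kernels, and the uniform Tian--Yau--Zelditch bound — are routine. For completeness I note that the FIO representation \eqref{eqn:PiasFIO} gives an alternative proof: inserting it into the Weyl-integration formula for $\Pi_{k\boldsymbol{\nu}}(x,x)$, the smooth part of $\Pi$ contributes $O(k^{-\infty})$ (a Fourier coefficient, in $t\in T$, of a function whose $\mathcal C^{N}$-norms are bounded uniformly in $x$), while in the singular part the substitution $u=k\sigma$ produces a factor $k\cdot k^{d}$ from the measure and the leading symbol; since the resulting phase has nonnegative imaginary part vanishing only on $\{\vartheta_1=\vartheta_2\}$, where the amplitude vanishes too because of the Weyl factor $\Delta(t)=e^{\imath\vartheta_1}-e^{\imath\vartheta_2}$, a single non-stationary-phase step in $\boldsymbol{\vartheta}$ supplies the remaining $O(k^{-1})$, and multiplication by $d_{k\boldsymbol{\nu}}=O(k)$ gives $O(k^{d+1})$ once more. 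This route needs a more delicate oscillatory-integral bookkeeping, which is why I would prefer the first one.
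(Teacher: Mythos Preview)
Your proof is correct and follows essentially the same approach as the paper: decompose $H(X)_{k\boldsymbol{\nu}}$ along the structure $S^1$-action, use the Guillemin--Sternberg criterion together with $\mathbf{0}\notin\Phi_G(M)$ to confine the contributing $S^1$-weights to an interval $[c_1k,c_2k]$, and then invoke the TYZ bound $\Pi_m(x,x)=O(m^d)$ and sum. The paper's version is slightly more concrete (it works directly with the highest weight $(k\nu_1-1,k\nu_2)\in l\,\Phi_G(M)$ rather than the coadjoint orbit with an $O(1/m)$ shift), but the argument is the same; your alternative Casimir and FIO routes are extras not in the paper.
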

					      
\begin{proof}
Let $r:S^1\times X\rightarrow X$ be the standard structure action on the unit circle bundle $X$.
As in \ref{scnt:esempio1}, let 
$$
H(X)=\bigoplus_{l=0}^{+\infty}H(X)_l
$$
be the decomposition of $H(X)$ as a direct sum of isotypes for the $S^1$-action.

Since $\widetilde{\mu}$ commutes with the structure action of $S^1$ on $X$, we have 
$$
H(X)_{k\boldsymbol{\nu}}=\bigoplus_{l=0}^{+\infty}H(X)_{k\boldsymbol{\nu}}\cap H(X)_l.
$$
On the other hand, by the theory of \cite{guillemin-sternberg gq} we have
$H(X)_{k\boldsymbol{\nu}}\cap H(X)_l\neq (0)$ only if the highest weight vector $\mathbf{r}(k\boldsymbol{\nu})$ of the representation
indexed by $k\,\boldsymbol{\nu}$ satisfies
\begin{equation}
 \label{eqn:highest weight moment map}
\mathbf{r}(k\boldsymbol{\nu})=(k\,\nu_1-1,k\nu_2)=
k\,\boldsymbol{\nu}+(-1,0)\in l\,\Phi_G(M)\subseteq \mathfrak{g}.
\end{equation}
Let us define
$$
a_G:=\min\|\Phi_G\|,\,\,\,\,A_G:=\max\|\Phi_G\|.
$$
Thus $A_G\ge a_G>0$. 
Therefore, 
we need to have
\begin{equation}
\label{eqn:lower estimate on l}
l\, a_G\le \|\mathbf{r}(k\boldsymbol{\nu})\|\le k\,\|\boldsymbol{\nu}\|+1\,\Rightarrow\,
l\le L_1(k):=\left\lceil\frac{\|\boldsymbol{\nu}\|}{a_G}\,k+\frac{1}{a_G}\right\rceil.
\end{equation}
Similarly,
\begin{equation}
\label{eqn:upper estimate on l}
k\,\|\boldsymbol{\nu}\|-1\le \|\mathbf{r}(k\boldsymbol{\nu})\|\le l\,A_G\,\Rightarrow\,
L_2(k):=\left\lfloor\frac{\|\boldsymbol{\nu}\|}{A_G}\,k-\frac{1}{A_G}\right\rfloor\le l.
\end{equation}
On the other hand, in view of the asymptotic expansion of $\Pi_k(x,x)$ from \cite{catlin}, \cite{tian}, \cite{zelditch-theorem-of-Tian}
we also have
$
\Pi_l(x,x)\le 2\,\left(l/\pi\right)^d$ for $l\gg 0$. 
We conclude that
\begin{eqnarray}
\label{eqn:first etimate}
\Pi_{k\boldsymbol{\nu}}(x,x)&\le&\sum_{l=L_1(k)}
^{L_2(k)}\Pi_l(x,x)\le \frac{2}{\pi^d}\,\sum_{l=L_1(k)}
^{L_2(k)}l^d\le C_{\boldsymbol{\nu}}\,k^{d+1}
\end{eqnarray}
for some constant $C_{\boldsymbol{\nu}}>0$.
\end{proof}

\subsection{The proof}

We shall use the following notational short-hand.
If $x\in X$, $g\in G$, $t\in T$, let us set
$$
x(g,t):=\widetilde{\mu}_{g\,t^{-1}\,g^{-1}}(x);
$$
similarly, if $m\in M$
$$
m(g,t):=\mu_{g\,t^{-1}\,g^{-1}}(m).$$ 
If $t=e^{i\boldsymbol{\vartheta}}:=\left(e^{i\vartheta_1},e^{i\vartheta_2}\right)$, we shall write 
$x(g,t)=x(g,\boldsymbol{\vartheta})$, $m(g,t)=m(g,\boldsymbol{\vartheta})$. 
Since $\widetilde{\mu}$ is a lifting
of $\mu$, if $m=\pi(x)$ then 
$$
m(g,\boldsymbol{\vartheta})= \pi\big(x(g,\boldsymbol{\vartheta})\big).
$$

\begin{proof}[Proof of Theorem \ref{thm:rapid decrease slow}]
If we replace $\boldsymbol{\nu}$ by $k\,\boldsymbol{\nu}$ in (\ref{eqn:equivariant szego kernel weyl}),
and use angular coordinates on $T$, we obtain
\begin{eqnarray}
  \label{eqn:equivariant szego kernel weyl knu}
  \lefteqn{
\Pi_{k\boldsymbol{\nu}}(x,y)}\\
 &=&\frac{k\,(\nu_1-\nu_2)}{(2\pi)^2}\,
  \int_{-\pi}^\pi\,\int_{-\pi}^\pi\,e^{-ik\langle\boldsymbol{\nu},\boldsymbol{\vartheta}\rangle}\,\Delta\left(e^{i\boldsymbol{\vartheta}}\right)\,
F\left(e^{i\boldsymbol{\vartheta}};x,y\right)\,\mathrm{d}\boldsymbol{\vartheta};\nonumber
 \end{eqnarray}
here $e^{i\boldsymbol{\vartheta}}=\left(e^{\imath\,\vartheta_1},\,e^{\imath\,\vartheta_2}\right)$.

For $\delta>0$, let us define 
\begin{equation}
 \label{eqn:positive distance}
V_{\delta}:=\left\{(x,y)\in X\,:\,\mathrm{dist}_X\big(x,G\cdot y)\ge \delta\right\}.
\end{equation}

\begin{prop}
 \label{prop:rapid decrease delta}
For any $\delta>0$, we have $\Pi_{k\boldsymbol{\nu}}(x,y)=O\left(k^{-\infty}\right)$ uniformly on $V_\delta$.
\end{prop}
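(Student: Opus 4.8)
The plan is to reduce Proposition \ref{prop:rapid decrease delta} to the elementary fact that the Fourier coefficients of a smooth function on $T$ are rapidly decreasing, exactly as in the closing step of the proof of Theorem \ref{thm:rapid decrease fixed}. The only point that requires thought is a geometric one: on $V_\delta$ the argument $x(g,t)$ of the Szeg\"o kernel in (\ref{eqn:defn of Fxy}) stays uniformly bounded away from $y$, so that we only ever evaluate $\Pi$ off its singular support and may treat it as a smooth function.

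First I would observe that $G$ acts on $X$ by isometries of $\mathrm{dist}_X$. Indeed, $\widetilde{\mu}_g$ preserves $\alpha$, hence the orthogonal splitting (\ref{eqn:vertical horizontal tbs}); being a strict contactomorphism it preserves the Reeb field $\partial_\theta$, hence commutes with the structure $S^1$-action and respects the unit-length normalization of its orbits; and $\pi\circ\widetilde{\mu}_g=\mu_g\circ\pi$ with $\mu_g$ a $\rho$-isometry of $M$ (as $\mu$ is holomorphic and Hamiltonian, $\mu_g$ preserves $J$ and $\omega$), so $\widetilde{\mu}_g$ restricts to an isometry of $\mathcal{H}(X/M)$ as well. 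Consequently, for $(x,y)\in V_\delta$ and any $g\in G$, $t\in T$, the point $x(g,t)=\widetilde{\mu}_{g\,t^{-1}\,g^{-1}}(x)$ lies in $G\cdot x$, and
$$
\mathrm{dist}_X\big(x(g,t),y\big)=\mathrm{dist}_X\big(x,\widetilde{\mu}_{g\,t\,g^{-1}}(y)\big)\ \ge\ \mathrm{dist}_X(x,G\cdot y)\ \ge\ \delta,
$$
with the same bound (with $\delta$ replaced by $\delta/2$) holding on the open neighborhood $\{(x,y):\mathrm{dist}_X(x,G\cdot y)>\delta/2\}$ of $V_\delta$.

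Granting this, I would argue as follows. By (\ref{eqn:wave front of Pi}) the singular support of $\Pi$ is the diagonal, so $\Pi$ restricts to a $\mathcal{C}^\infty$ function on $(X\times X)\setminus\mathrm{diag}(X)$; by the previous paragraph the smooth map $(g,\boldsymbol{\vartheta},x,y)\mapsto\big(x(g,e^{\imath\boldsymbol{\vartheta}}),y\big)$ sends $G\times(-\pi,\pi)^2$ times that neighborhood of $V_\delta$ into the complement of the diagonal. Hence, averaging over $G/T$ as in (\ref{eqn:defn of Fxy}) and multiplying by $\Delta$, the function $\beta(\boldsymbol{\vartheta};x,y):=\Delta\big(e^{\imath\boldsymbol{\vartheta}}\big)\,F\big(e^{\imath\boldsymbol{\vartheta}};x,y\big)$ is $\mathcal{C}^\infty$ and $2\pi$-periodic in each $\vartheta_j$, with all $\boldsymbol{\vartheta}$-derivatives bounded uniformly for $(x,y)$ in the compact set $V_\delta$. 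Then (\ref{eqn:equivariant szego kernel weyl knu}) exhibits $\Pi_{k\boldsymbol{\nu}}(x,y)$ as $\tfrac{k(\nu_1-\nu_2)}{(2\pi)^2}$ times the Fourier coefficient of $\beta(\cdot;x,y)$ at frequency $k\boldsymbol{\nu}$; since $\boldsymbol{\nu}\neq\mathbf{0}$, we may assume $\nu_1\neq0$ (the case $\nu_2\neq0$ being symmetric) and integrate by parts $N$ times in $\vartheta_1$, the $2\pi$-periodicity discarding all boundary terms and producing a factor $(k\nu_1)^{-N}$. This yields $|\Pi_{k\boldsymbol{\nu}}(x,y)|\le C_N\,k^{1-N}$ for every $N$, with $C_N=C_N(\boldsymbol{\nu},\delta,N)$, i.e. $O(k^{-\infty})$ uniformly on $V_\delta$.

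The step needing most care is the geometric one of the second paragraph — verifying that $G$ acts by $\mathrm{dist}_X$-isometries and hence that $x(g,t)$ cannot approach $y$ on $V_\delta$; once this is in place one is precisely in the situation of the last lines of the proof of Theorem \ref{thm:rapid decrease fixed}, and no genuine analytic difficulty remains. (By contrast, the more delicate near-$\mathcal{Z}_{\boldsymbol{\nu}}$ regime handled in the rest of the proof of Theorem \ref{thm:rapid decrease slow} will require the full stationary-phase analysis of the FIO representation (\ref{eqn:PiasFIO}); here the singular-support information alone suffices.)
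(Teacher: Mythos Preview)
Your proof is correct and follows essentially the same approach as the paper's: both reduce to the observation that $\Pi\big(x(g,t),y\big)$ is jointly $\mathcal{C}^\infty$ on $V_\delta\times G/T\times T$ (since the singular support of $\Pi$ is the diagonal), whence $\Delta(t)\,F(t;x,y)$ is smooth in $t$ and its Fourier coefficients decay rapidly. The paper's proof leaves the geometric point you emphasize implicit, stating directly that the map $\beta$ in (\ref{eqn:non-singular}) is $\mathcal{C}^\infty$; your explicit verification that $G$ acts by $\mathrm{dist}_X$-isometries (and hence that $\mathrm{dist}_X\big(x(g,t),y\big)\ge\delta$ on $V_\delta$) fills in exactly the step the paper takes for granted.
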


\begin{proof}
 [Proof of Proposition \ref{prop:rapid decrease delta}]
By (\ref{eqn:wave front of Pi}), the singular support of $\Pi$ is the diagonal in $X\times X$.
Therefore,  
\begin{equation}
 \label{eqn:non-singular}
\beta:\big((x,y),\,gT,\,t\big)\in V_\delta\times G/T\times T\mapsto \Pi\,\big(x(g,t),y\big)\in \mathbb{C}
\end{equation}
is $\mathcal{C}^\infty$.
The same then holds of $\big((x,y),t\big)\in V_{\delta}\times T\mapsto \Delta(t)\,F(t;x,y)$.
Hence its Fourier transform (\ref{eqn:equivariant szego kernel weyl knu}) is rapidly decreasing for 
$k\rightarrow+\infty$.
\end{proof}

We are thus reduced to assuming that 
$\mathrm{dist}_X\big(x,G\cdot y)< \delta$ for some fixed and arbitrarily small 
$\delta>0$. Let $\varrho\in \mathcal{C}_0^\infty(\mathbb{R})$ be $\equiv 1$ on $[-1,1]$ and 
$\equiv 0$ on $\mathbb{R}\setminus (-2,2)$. We can write 
$$
\Pi_{\boldsymbol{\nu}}(x,y)=\Pi_{\boldsymbol{\nu}}(x,y)_1+\Pi_{\boldsymbol{\nu}}(x,y)_2,
$$
where the two summands on the right are defined by setting
\begin{eqnarray}
 \label{eqn:equivariant szego kernel weyl'}
\Pi_{\boldsymbol{\nu}}(x,y)_j
&:=& d_{\boldsymbol{\nu}}\,\int_T t^{-\boldsymbol{\nu}}\,\Delta(t)\,F(t;x,y)_j\,\mathrm{d}V_T(t),
\end{eqnarray}
and $F(t;x,y)_1$ is defined as in (\ref{eqn:defn of Fxy}), but with the integrand multiplied by
$\varrho \left(\delta^{-1}\,\mathrm{dist}_X\big(x(g,\boldsymbol{\vartheta}),y\big)\right)$;
similarly, $F(t;x,y)_2$  is defined as in (\ref{eqn:defn of Fxy}), but with the integrand multiplied by
$1-\varrho \left(\delta^{-1}\,\mathrm{dist}_X\big(x(g,\boldsymbol{\vartheta}),y\big)\right)$.

\begin{lem}
\label{lem:only first summand}
 $\Pi_{k\boldsymbol{\nu}}(x,y)_2=O\left(k^{-\infty}\right)$ for $k\rightarrow +\infty$.
\end{lem}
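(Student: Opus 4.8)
The plan is to observe that, once $\delta$ is fixed (and chosen small enough), the integrand defining $F(t;x,y)_2$ is jointly smooth in all of its arguments, so that $\Pi_{k\boldsymbol{\nu}}(x,y)_2$ is, up to a harmless polynomial prefactor, a Fourier coefficient of a $\mathcal{C}^\infty$ function on the torus $T$, hence rapidly decreasing. First I would rewrite $\Pi_{k\boldsymbol{\nu}}(x,y)_2$ exactly as in (\ref{eqn:equivariant szego kernel weyl knu}), but with $F$ replaced by $F_2$:
\[
\Pi_{k\boldsymbol{\nu}}(x,y)_2=\frac{k\,(\nu_1-\nu_2)}{(2\pi)^2}\int_{-\pi}^{\pi}\int_{-\pi}^{\pi}e^{-\imath k\langle\boldsymbol{\nu},\boldsymbol{\vartheta}\rangle}\,\Delta\!\left(e^{\imath\boldsymbol{\vartheta}}\right)\,F\!\left(e^{\imath\boldsymbol{\vartheta}};x,y\right)_2\,\mathrm{d}\boldsymbol{\vartheta}.
\]

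Next I would verify the smoothness claim. By (\ref{eqn:wave front of Pi}) the singular support of $\Pi$ is the diagonal of $X\times X$, so $\Pi$ is $\mathcal{C}^\infty$ on any open set of pairs that stays at positive distance from the diagonal. Since $\delta$ may be taken arbitrarily small, we may assume $2\delta$ is less than the injectivity radius of $X$; then $\mathrm{dist}_X$ is smooth on $\{(p,q)\in X\times X:\,0<\mathrm{dist}_X(p,q)<2\delta\}$, and therefore $1-\varrho\!\left(\delta^{-1}\mathrm{dist}_X(\cdot,\cdot)\right)$ is a $\mathcal{C}^\infty$ function on $X\times X$, vanishing on the $\delta$-neighbourhood of the diagonal. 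Consequently the map
\[
\big((x,y),gT,t\big)\longmapsto \Pi\big(x(g,t),y\big)\cdot\Big(1-\varrho\!\left(\delta^{-1}\mathrm{dist}_X\big(x(g,\boldsymbol{\vartheta}),y\big)\right)\Big)
\]
is jointly $\mathcal{C}^\infty$ on $(X\times X)\times(G/T)\times T$: wherever the second factor is nonzero one has $\mathrm{dist}_X\big(x(g,t),y\big)\ge\delta$, so $\Pi$ is evaluated off the diagonal and is smooth there. Integrating this out over the compact fibre $G/T$ and multiplying by the smooth function $\Delta$, one obtains that $(t,x,y)\mapsto\Delta(t)\,F(t;x,y)_2$ is $\mathcal{C}^\infty$ on $T\times X\times X$; equivalently, $\boldsymbol{\vartheta}\mapsto\Delta\!\left(e^{\imath\boldsymbol{\vartheta}}\right)\,F\!\left(e^{\imath\boldsymbol{\vartheta}};x,y\right)_2$ is a smooth function on $T^2$ depending smoothly on $(x,y)$.

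Finally I would conclude by the routine fact that the Fourier coefficients of a smooth function decay rapidly. For each fixed $(x,y)$ the displayed integral is, up to the factor $k\,(\nu_1-\nu_2)/(2\pi)^2$, the $(k\boldsymbol{\nu})$-th Fourier coefficient of the above $T^2$-function; repeated integration by parts in $\boldsymbol{\vartheta}$ gives, for every $N$, a bound of size $O(k^{-N})$ whose implied constant involves only finitely many $\boldsymbol{\vartheta}$-derivatives of $\Delta\!\left(e^{\imath\boldsymbol{\vartheta}}\right)F\!\left(e^{\imath\boldsymbol{\vartheta}};x,y\right)_2$. Since $X\times X$ is compact and this function is smooth in all variables, those derivatives are bounded uniformly in $(x,y)$, so the estimate is uniform; the polynomial factor $k\,(\nu_1-\nu_2)$ does not affect the conclusion. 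Hence $\Pi_{k\boldsymbol{\nu}}(x,y)_2=O(k^{-\infty})$.

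I do not expect a serious obstacle: this is essentially the same "smooth integrand $\Rightarrow$ rapidly decreasing Fourier transform" reasoning already used in the proof of Proposition \ref{prop:rapid decrease delta}. The one point requiring a moment's care is the smoothness of the distance cut-off near the diagonal, handled here by shrinking $\delta$; alternatively one could replace $\mathrm{dist}_X(\cdot,\cdot)$ throughout the construction of $F_1$ and $F_2$ by any fixed smooth function comparable to it in a neighbourhood of the diagonal, which removes the issue entirely.
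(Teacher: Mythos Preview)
Your proposal is correct and follows essentially the same approach as the paper: the paper's proof simply notes that on the support of the integrand in $\Pi_{k\boldsymbol{\nu}}(x,y)_2$ one has $\mathrm{dist}_X\big(x(g,t),y\big)\ge\delta$, and then invokes the argument of Proposition~\ref{prop:rapid decrease delta} (smooth integrand $\Rightarrow$ rapidly decreasing Fourier coefficients). Your version is somewhat more explicit, in particular in handling the smoothness of the distance cut-off, but the underlying idea is identical.
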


\begin{proof}
 [Proof of Lemma \ref{lem:only first summand}]
On the support of the integrand in $\Pi_{k\boldsymbol{\nu}}(x,y)_2$, we have 
$\mathrm{dist}_X\big(x(g,t),y\big)\ge \delta$.
We can then apply with minor changes the argument in the proof of Proposition \ref{prop:rapid decrease delta}.
\end{proof}

On the support of the integrand in $\Pi_{k\boldsymbol{\nu}}(x,y)_1$, 
$\mathrm{dist}_X\big(x(g,t),y\big)\le 2\,\delta$;
therefore, perhaps after discarding a smoothing term contributing negligibly to the asymptotics,
we can apply (\ref{eqn:PiasFIO}).
With some passages, we obtain in place of (\ref{eqn:equivariant szego kernel weyl knu}):
\begin{eqnarray}
 \label{eqn:integrale per Pi'}
\lefteqn{\Pi_{k\boldsymbol{\nu}}(x,y)\sim \Pi_{k\boldsymbol{\nu}}(x,y)_1}\\
&\sim& \frac{k^2\,(\nu_1-\nu_2)}{(2\pi)^2}\,
  \int_{-\pi}^\pi\,\int_{-\pi}^\pi\,\int _{G/T}\,\int_0^{+\infty}
  e^{\imath \,k\,\Psi_{x,y}}\,\mathcal{A}_{x,y}\,\mathrm{d}u\,
  \mathrm{d}V_{G/T}(gT)\,\mathrm{d}\boldsymbol{\vartheta};\nonumber
\end{eqnarray}
we have applied the rescaling $u\mapsto k\,u$ to the parameter in
(\ref{eqn:PiasFIO}), and set
\begin{equation}
 \label{eqn:phase Psi}
 \Psi_{x,y}=\Psi_{x,y}(u,\boldsymbol{\vartheta},gT):=
 u\,\psi\left(\widetilde{\mu}_{g\,e^{-\imath\,\boldsymbol{\vartheta}}\,g^{-1}}(x),y\right)
 -\langle\boldsymbol{\nu},\boldsymbol{\vartheta}\rangle,
\end{equation}
\begin{equation}
 \label{eqn:amplitude A}
 \mathcal{A}_{x,y}=\mathcal{A}_{x,y}(u,\boldsymbol{\vartheta},gT):=
 \Delta\left(e^{i\boldsymbol{\vartheta}}\right)\,s'\left(\widetilde{\mu}_{g\,e^{-\imath\,\boldsymbol{\vartheta}}\,g^{-1}}(x),y,k\,u\right),
\end{equation}
with
\begin{eqnarray}\label{eqn:defn di s'}
 s'\left(\widetilde{\mu}_{g\,e^{-\imath\,\boldsymbol{\vartheta}}\,g^{-1}}(x),y,k\,u\right)
&:=&s\left(\widetilde{\mu}_{g\,e^{-\imath\,\boldsymbol{\vartheta}}\,g^{-1}}(x),y,k\,u\right)
\nonumber\\
&&\cdot\varrho \left(\delta^{-1}\,\mathrm{dist}_X\left(\widetilde{\mu}_{g\,t^{-1}\,g^{-1}}(x),y\right)\right).
\end{eqnarray}

\begin{lem}
 \label{lem:reduction in u}
 Only a rapidly decreasing contribution to the asymptotics is lost, if in (\ref{eqn:integrale per Pi'}) integration
 in $\mathrm{d}u$ is restricted to an interval of the form $(1/D,D)$ for some $D\gg 0$.
\end{lem}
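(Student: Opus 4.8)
The plan is to cut the $u$-integral in (\ref{eqn:integrale per Pi'}) into the three ranges $\big(0,1/D\big]$, $(1/D,D)$ and $[D,+\infty)$, and to show that, once $D\gg 0$ is fixed suitably (depending only on $\boldsymbol{\nu}$, on $\delta$, and on $\mathcal{C}^\infty$-norms of the moment map and of the parametrix data on the relevant compact locus), the contributions of the two outer ranges are $O(k^{-\infty})$. I would rely on two features of the Boutet de Monvel--Sj\"ostrand parametrix (\ref{eqn:PiasFIO}): the symbolic bound $\big|\partial_{(x',y)}^{\boldsymbol{\alpha}}s(x',y,v)\big|\le C_{\boldsymbol{\alpha}}\,(1+v)^{d}$, and the positivity $\Im\psi\ge 0$, sharpened to $\Im\psi(x',y)\ge c_0\,\min\{\mathrm{dist}_X(x',y)^2,1\}$ for some $c_0>0$ (\cite{boutet-sjostraend},\cite{sz}). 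By (\ref{eqn:phase Psi}) one has $\partial_u\Psi_{x,y}=\psi\big(x(g,\boldsymbol{\vartheta}),y\big)$ and $\partial_{\vartheta_j}\Psi_{x,y}=u\,\partial_{\vartheta_j}\psi\big(x(g,\boldsymbol{\vartheta}),y\big)-\nu_j$, where $(\boldsymbol{\vartheta},gT)$ ranges over a fixed compact set on which $\psi\big(x(g,\boldsymbol{\vartheta}),y\big)$ and all of its derivatives are bounded, the amplitude $\mathcal{A}_{x,y}$ being supported where $\mathrm{dist}_X\big(x(g,\boldsymbol{\vartheta}),y\big)\le 2\delta$.

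On the inner range $u\in\big(0,1/D\big]$ I would argue by non-stationary phase in $\boldsymbol{\vartheta}$. Since $\big\|\partial_{\boldsymbol{\vartheta}}\psi\big(x(g,\boldsymbol{\vartheta}),y\big)\big\|\le B$ for a geometric constant $B$, we get $\big\|\partial_{\boldsymbol{\vartheta}}\Psi_{x,y}\big\|\ge \|\boldsymbol{\nu}\|-B/D>\tfrac12\|\boldsymbol{\nu}\|$ as soon as $D>2B/\|\boldsymbol{\nu}\|$; the integrand being $2\pi$-periodic in $\boldsymbol{\vartheta}$, iterated integration by parts with $L:=(\imath k)^{-1}\big\|\partial_{\boldsymbol{\vartheta}}\Psi\big\|^{-2}\,\partial_{\boldsymbol{\vartheta}}\Psi\cdot\partial_{\boldsymbol{\vartheta}}$ produces, after $N$ steps, a gain $k^{-N}$; because $u\le 1/D$ is bounded and $s$ depends on $\boldsymbol{\vartheta}$ only through its first slot, every $\boldsymbol{\vartheta}$-derivative of $\mathcal{A}_{x,y}$ and of the coefficients of $L$ stays $O\big((ku)^{d}\big)=O(k^{d})$, so this part of (\ref{eqn:integrale per Pi'}) is $O(k^{2+d-N})=O(k^{-\infty})$.

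The outer range $u\in[D,+\infty)$ is the main obstacle, since there $\partial_u\Psi=\psi$ vanishes exactly where $x(g,\boldsymbol{\vartheta})=y$ while the amplitude grows polynomially. I would split the $(\boldsymbol{\vartheta},gT)$-domain according to whether $\mathrm{dist}_X\big(x(g,\boldsymbol{\vartheta}),y\big)\ge k^{-1/2+\epsilon'}$ or $<k^{-1/2+\epsilon'}$, for a small fixed $\epsilon'>0$. On the first region $\big|e^{\imath k\Psi_{x,y}}\big|=e^{-ku\,\Im\psi}\le e^{-c_0\,u\,k^{2\epsilon'}}$, whence, with $|\mathcal{A}_{x,y}|\le C(ku)^{d}$, the corresponding piece of (\ref{eqn:integrale per Pi'}) is $\le C\,k^{2}\!\int_D^{+\infty}(ku)^{d}e^{-c_0 u k^{2\epsilon'}}\,\mathrm{d}u=O(k^{-\infty})$, uniformly. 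On the second region $x(g,\boldsymbol{\vartheta})$ is $O\big(k^{-1/2+\epsilon'}\big)$-close to $y$, and then (\ref{eqn:differential of psi diagonal}) and (\ref{eqn:contact lift xi}) give $\partial_{\vartheta_j}\psi\big(x(g,\boldsymbol{\vartheta}),y\big)=-\imath\,\big(\mathrm{Ad}_{g^{-1}}\Phi_G(m_x)\big)_{jj}+O\big(k^{-1/2+\epsilon'}\big)$ (a real quantity, in fact independent of $\boldsymbol{\vartheta}$ since conjugation by $e^{-\imath\boldsymbol{\vartheta}}\in T$ preserves the diagonal), whose norm and trace are pinched between the bounds on the eigenvalues of $-\imath\,\Phi_G$; one integrates by parts in $\boldsymbol{\vartheta}$ wherever $\big\|\partial_{\boldsymbol{\vartheta}}\Psi\big\|$ is bounded below, and supplements this, on the residual neighborhood of the locus $\big\{\,u\,\mathrm{diag}\big(\mathrm{Ad}_{g^{-1}}\Phi_G(m_x)\big)=\imath\,\boldsymbol{\nu}\,\big\}$, by integration by parts in $gT$ (via $\partial_{gT}\Psi=u\,\partial_{gT}\psi$) and, where both group gradients degenerate, by a stationary-phase reduction in $(\boldsymbol{\vartheta},gT)$ followed by integration by parts in $u$ using $\partial_u\Psi=\psi\ne 0$ off the diagonal (with, near $\{x(g,\boldsymbol{\vartheta})=y\}$, the transverse Gaussian decay furnished by the quadratic lower bound on $\Im\psi$ again). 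Each integration by parts costs a factor $(ku)^{-1}\le(kD)^{-1}$ that is absorbed by the symbolic bound on $s$, so that after enough steps the $u$-integral over $[D,+\infty)$ converges and is $O(k^{-\infty})$. The delicate point, and the technical heart of the argument, is to patch the $\boldsymbol{\vartheta}$-, the $gT$- and the $\Im\psi$-estimates across this finite cover so that all implied constants depend only on $N$ and on the fixed data, uniformly in $(x,y)$.
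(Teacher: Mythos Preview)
Your treatment of the small-$u$ range $u\in(0,1/D]$ is correct and essentially identical to the paper's. The difference, and the overcomplication, is in your handling of the large-$u$ range.

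The paper's argument for $u\ge D$ is \emph{exactly the same} as for $u\le 1/D$: integration by parts in $\boldsymbol{\vartheta}$ alone. The key observation you are missing is that the standing hypothesis $\mathbf{0}\notin\Phi_T(M)$ (with $a_T:=\min\|\Phi_T\|>0$) forces, on the support of the amplitude (where $\mathrm{dist}_X(x(g,\boldsymbol{\vartheta}),y)\le 2\delta$),
\[
\big\|\partial_{\boldsymbol{\vartheta}}\Psi_{x,y}\big\|
=\Big\|u\,\tfrac{1}{\imath}\Phi_T\big(\mu_{g^{-1}}(m_x)\big)-\boldsymbol{\nu}+O(\delta)\Big\|
\ \ge\ u\,a_T-\|\boldsymbol{\nu}\|-C\delta,
\]
which for $\delta$ small and $u\ge D\gg 0$ is $\ge \tfrac{a_T}{2}\,u$. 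So the same operator $L$ gains a factor $O\big((ku)^{-1}\big)$ at each step; after $N>d+1$ iterations the integrand is $O\big((ku)^{d-N}\big)$, which is both integrable in $u$ over $[D,+\infty)$ and $O(k^{-\infty})$. No splitting into near/far regions, no appeal to $\Im\psi$, no $gT$-derivatives, and no stationary phase are needed.

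In fact your ``residual neighborhood'' $\big\{u\,\mathrm{diag}(\mathrm{Ad}_{g^{-1}}\Phi_G(m_x))=\imath\,\boldsymbol{\nu}\big\}$ is \emph{empty} once $u\ge D>\|\boldsymbol{\nu}\|/a_T$, since the left side has norm $\ge u\,a_T>\|\boldsymbol{\nu}\|$. So the elaborate patchwork you propose there is addressing a nonexistent obstruction. As written, that patchwork is also incomplete: on the locus $\{x(g,\boldsymbol{\vartheta})=y\}$ (which is nonempty on the amplitude's support whenever $\mathrm{dist}_X(x,G\cdot y)<\delta$) one has simultaneously $\partial_u\Psi=\psi=0$ and $\Im\psi=0$, so neither integration by parts in $u$ nor the ``transverse Gaussian decay'' you invoke provides any gain; you would need an additional mechanism precisely where you claim to have finished. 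The clean fix is to drop the $k$-dependent near/far splitting altogether for $u\ge D$ and use the $\boldsymbol{\vartheta}$-gradient bound above.
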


\begin{proof}[Proof of Lemma \ref{lem:reduction in u}]
Suppose that $x,y\in X$, $\left(g_0\,T,e^{\imath\boldsymbol{\vartheta}_0}\right)\in (G/T)\times T$ and
\begin{equation}
 \label{eqn:limite su distanza}
 \mathrm{dist}_X\left(x(g_0,\boldsymbol{\vartheta}_0),y\right)< \delta.
\end{equation}
In view of (\ref{eqn:differential of psi diagonal}), in any system of local
coordinates we have
\begin{equation}
 \label{eqn:differential of psi}
 \mathrm{d}_{\left(x(g_0,\boldsymbol{\vartheta}_0),y\right)}\psi=
 (\alpha_{x(g_0,\boldsymbol{\vartheta}_0)},-\alpha_{y})+O(\delta).
\end{equation}

Let $\mathrm{d}^{(\boldsymbol{\vartheta})}$ denote the differential with respect to the
variable $\boldsymbol{\vartheta}$. If $\imath\,\boldsymbol{\eta}\in \mathfrak{t}$, we obtain
with $m_x:=\pi(x)$:
\begin{eqnarray}
 \label{eqn:directional derivative x}
 \lefteqn{\left.\frac{d}{\mathrm{d}\tau}\,
 x(g_0,\boldsymbol{\vartheta}_0+\tau\,\boldsymbol{\eta})\right|_{\tau=0}}\nonumber\\
 &=&-\mathrm{Ad}_{g_0}(\imath\,\boldsymbol{\eta})_X\big(x(g_0,\boldsymbol{\vartheta}_0)\big)\nonumber\\
 &=&-\mathrm{Ad}_{g_0}(\imath\,\boldsymbol{\eta})_M\big(m_x(g_0,\boldsymbol{\vartheta}_0)\big)^\sharp+
 \Big\langle\Phi_G\big(m_x(g_0,\boldsymbol{\vartheta}_0)\big),\mathrm{Ad}_{g_0}(\imath\,\boldsymbol{\eta})\Big\rangle\,
 \partial_\theta.
\end{eqnarray}

On the other hand, as $\Phi_G$ is $G$-equivariant we get
\begin{eqnarray}
 \label{eqn:PhiGangular part}
 \lefteqn{\langle\Phi_G\big(m_x(g_0,\boldsymbol{\vartheta}_0)\big),\mathrm{Ad}_{g_0}(\imath\,\boldsymbol{\eta})\rangle=
 \left\langle\mathrm{Ad}_{g_0^{-1}}\Big(\Phi_G\big(m_x(g_0,\boldsymbol{\vartheta}_0)\big)\Big),\imath\,\boldsymbol{\eta}\right\rangle}
 \\
 &=&\left\langle\Phi_G\Big(\mu_{g_0^{-1}}\big(m_x(g_0,\boldsymbol{\vartheta}_0)\big)\Big),\imath\,\boldsymbol{\eta}\right\rangle
 =\left\langle\Phi_T\Big(\mu_{g_0^{-1}}\big(m_x(g_0,\boldsymbol{\vartheta}_0)\big)\Big),\imath\,\boldsymbol{\eta}\right\rangle.
 \nonumber
\end{eqnarray}

Now, (\ref{eqn:differential of psi}), (\ref{eqn:directional derivative x}) and (\ref{eqn:PhiGangular part}) imply
\begin{eqnarray}
 \label{eqn:directional derivative psi}
\lefteqn{ \left.\frac{d}{\mathrm{d}\tau}\,\psi\Big(x(g_0,\boldsymbol{\vartheta}_0+\tau\,\boldsymbol{\eta}),y\Big)
 \right|_{\tau=0}}\\
 &=&-\mathrm{d}_{\left(x(g_0,\boldsymbol{\vartheta}_0),y\right)}\psi
 \Big (\mathrm{Ad}_{g_0}(\imath\,\boldsymbol{\eta})_X\big(x(g_0,\boldsymbol{\vartheta}_0)\big),0\Big)\nonumber\\
 &=&-\alpha_{x(g_0,\boldsymbol{\vartheta}_0)}
 \Big (\mathrm{Ad}_{g_0}(\imath\,\boldsymbol{\eta})_X\big(x(g_0,\boldsymbol{\vartheta}_0)\big)\Big)+
 \langle O(\delta),\boldsymbol{\eta}\rangle\nonumber\\
 &=&\left\langle\frac{1}{\imath}\,\Phi_T\Big(\mu_{g_0^{-1}}\big(m_x(g_0,\boldsymbol{\vartheta}_0)\big)\Big) +
 O(\delta),\boldsymbol{\eta}\right\rangle.\nonumber
\end{eqnarray}

Let $\mathrm{d}^{(\boldsymbol{\vartheta})}$ denote the differential with respect to  
$\boldsymbol{\vartheta}$.
Recalling (\ref{eqn:phase Psi}), we obtain
\begin{eqnarray}
\label{eqn:vartheta derivative}
 \mathrm{d}^{(\boldsymbol{\vartheta})}_{(u,g_0T,\boldsymbol{\vartheta}_0)}\Psi_{x,y}
 &=&\frac{u}{\imath}\,\Phi_T\Big(\mu_{g_0^{-1}}(m_x)\Big)-\boldsymbol{\nu}+
 O(\delta).
\end{eqnarray}

By assumption, $\mathbf{0}\not\in \Phi_T(M)$. Let us set
$$
a_T:=\min \|\Phi_T\|,\,\,\,\,A_T:=\max \|\Phi_T\|.
$$
Then $A_T\ge a_T>0$, and (\ref{eqn:vartheta derivative}) implies
\begin{eqnarray}
 \label{eqn:vartheta derivative bound}
 \lefteqn{\left\|\mathrm{d}^{(\boldsymbol{\vartheta})}_{(u,g_0T,\boldsymbol{\vartheta}_0)}\Psi_{x,y}\right\|}
\nonumber \\
 &\ge&
\max\big\{ u\,a_T-\|\boldsymbol{\nu}\|+O(\delta),\,\|\boldsymbol{\nu}\|-u\,A_T+O(\delta)\big\}.
\end{eqnarray}
Thus if $D\gg 0$ and $u\ge D$ we have
\begin{equation}
 \label{eqn:bound large}
 \left\|\mathrm{d}^{(\boldsymbol{\vartheta})}_{(u,g_0T,\boldsymbol{\vartheta}_0)}\Psi_{x,y}\right\|\ge
 \frac{a_T}{2}\,u+1,
\end{equation}
while for $0<u<1/D$ 
\begin{equation}
 \label{eqn:bound small}
 \left\|\mathrm{d}^{(\boldsymbol{\vartheta})}_{(u,g_0T,\boldsymbol{\vartheta}_0)}\Psi_{x,y}\right\|\ge
 \frac{\|\boldsymbol{\nu}\|}{2}.
\end{equation}

The Lemma then follows from (\ref{eqn:bound large}) and (\ref{eqn:bound small}) by a standard 
iterated integration by parts
in $\boldsymbol{\vartheta}$ (in view of the compactness of $T$).
\end{proof}

Suppose that $\rho\in \mathcal{C}^\infty_0\big((0,+\infty)\big)$ is $\equiv 1$ on $(1/D,D)$ and is supported on
$(1/(2D),2D)$.
By Lemma \ref{lem:reduction in u}, the asymptotics of (\ref{eqn:integrale per Pi'}) are unaltered, if the integrand
is multiplied by $\rho(u)$.
Thus we obtain
\begin{eqnarray}
 \label{eqn:integrale per Pi'1}
\lefteqn{\Pi_{k\boldsymbol{\nu}}(x,y)}\\
&\sim& \frac{k^2\,(\nu_1-\nu_2)}{(2\pi)^2}\,
  \int_{-\pi}^\pi\,\int_{-\pi}^\pi\,\int _{G/T}\,\int_{1/(2D)}^{2D}
  e^{\imath \,k\,\Psi_{x,y}}\,\mathcal{A}'_{x,y}\,\mathrm{d}u\,
  \mathrm{d}V_{G/T}(gT)\,\mathrm{d}\boldsymbol{\vartheta};\nonumber
\end{eqnarray}
with $\mathcal{A}_{x,y}$ as in (\ref{eqn:amplitude A}), we have set
\begin{equation}
 \label{eqn:amplitude A'}
 \mathcal{A}'_{x,y}(u,\boldsymbol{\vartheta},gT):=\rho(u)\,\mathcal{A}_{x,y}(u,\boldsymbol{\vartheta},gT).
\end{equation}
Integration in $\mathrm{d}u$ is now over a compact interval

Let $\Im (z)$ denote the imaginary part of $z\in \mathbb{C}$. In view of Corollary 1.3 of \cite{boutet-sjostraend}, there exists a
fixed constant $D$, depending only on $X$,
such that 
\begin{equation}
 \label{eqn:bound on Im psi}
\Im \Big(\psi\left(x',x''\right)\Big)
\ge D\,\mathrm{dist}_X\left(x',x''\right)^2\quad (x',x''\in X).
\end{equation}

\begin{prop}
 \label{prop:rapid decay orbit case}
Uniformly for
\begin{equation}
 \label{eqn:orbital distance lower bound}
\mathrm{dist}_X(x,G\cdot y)\ge C\,k^{\epsilon-1/2},
\end{equation}
we have
$\Pi_{k\boldsymbol{\nu}}(x,y)=O\left(k^{-\infty}\right)$.
\end{prop}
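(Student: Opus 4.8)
The plan is to read rapid decay directly off the oscillatory integral representation (\ref{eqn:integrale per Pi'1}), using the lower bound (\ref{eqn:bound on Im psi}) for the imaginary part of the phase $\psi$. No stationary-phase analysis is needed: in the regime (\ref{eqn:orbital distance lower bound}) the exponential factor $|e^{\imath k \Psi_{x,y}}|$ will already be smaller than any negative power of $k$, so it suffices to bound the remaining factors crudely.

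First I would dispose of the range $\mathrm{dist}_X(x,G\cdot y)\ge \delta$, which is precisely the set $V_\delta$ of (\ref{eqn:positive distance}) and where $\Pi_{k\boldsymbol{\nu}}(x,y)=O(k^{-\infty})$ uniformly by Proposition \ref{prop:rapid decrease delta}. We are thus reduced to the range $C\,k^{\epsilon-1/2}\le \mathrm{dist}_X(x,G\cdot y)< \delta$, where (\ref{eqn:integrale per Pi'1}) applies up to an error that is $O(k^{-\infty})$ uniformly on $X\times X$: the discarded terms --- namely $\Pi_{k\boldsymbol{\nu}}(x,y)_2$ (Lemma \ref{lem:only first summand}), the $u$-tails (Lemma \ref{lem:reduction in u}), and the smoothing remainder in (\ref{eqn:PiasFIO}) --- are all controlled uniformly, the last one again by the reasoning of Proposition \ref{prop:rapid decrease delta} (a globally $\mathcal{C}^\infty$ kernel has a rapidly decreasing $k\boldsymbol{\nu}$-th Fourier component).

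The heart of the matter is a uniform geometric estimate. For all $g\in G$ and $\boldsymbol{\vartheta}$, the point $x(g,\boldsymbol{\vartheta})=\widetilde{\mu}_{g\,e^{-\imath\boldsymbol{\vartheta}}\,g^{-1}}(x)$ lies in the orbit $G\cdot x$, and $\mathrm{dist}_X$ is $\widetilde{\mu}$-invariant, since the Riemannian structure on $X$ is built from $\widetilde{\mu}$-invariant data (the contact form $\alpha$, the structure $S^1$-action, and the $\mu$-invariant metric $\rho$ on $M$). Hence, everywhere on the support of the integrand in (\ref{eqn:integrale per Pi'1}),
\[
\mathrm{dist}_X\big(x(g,\boldsymbol{\vartheta}),\,y\big)\ \ge\ \mathrm{dist}_X(G\cdot x,\,y)\ =\ \mathrm{dist}_X(x,\,G\cdot y).
\]
Since in (\ref{eqn:integrale per Pi'1}) the variable $u$ runs over the compact interval $[1/(2D),2D]$, it is bounded below by a fixed positive constant, so combining the last display with (\ref{eqn:bound on Im psi}) and recalling (\ref{eqn:phase Psi}) there is a constant $c_0>0$, independent of $x,y,k$, such that
\[
\Im\big(\Psi_{x,y}\big)\ =\ u\,\Im\Big(\psi\big(x(g,\boldsymbol{\vartheta}),y\big)\Big)\ \ge\ c_0\,\mathrm{dist}_X(x,G\cdot y)^2
\]
on the support of the integrand. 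Therefore $\big|e^{\imath k\Psi_{x,y}}\big|\le e^{-c_0\,k\,\mathrm{dist}_X(x,G\cdot y)^2}\le e^{-c_0 C^2 k^{2\epsilon}}$ under (\ref{eqn:orbital distance lower bound}).

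Finally I would estimate the rest crudely: $\mathcal{A}'_{x,y}$ is the product of the bounded factors $\rho(u)$ and $\Delta(e^{\imath\boldsymbol{\vartheta}})$ with $s'(x(g,\boldsymbol{\vartheta}),y,k\,u)$, and since $s$ is a semiclassical symbol of degree $d$ in its last argument, $|\mathcal{A}'_{x,y}|\le C'\,k^{d}$ uniformly on the finite-volume domain of integration; with the prefactor $O(k^{2})$ in (\ref{eqn:integrale per Pi'1}) this yields
\[
\big|\Pi_{k\boldsymbol{\nu}}(x,y)\big|\ \le\ C''\,k^{d+2}\,e^{-c_0 C^2 k^{2\epsilon}}\ =\ O(k^{-\infty})
\]
uniformly over the locus (\ref{eqn:orbital distance lower bound}), which is the assertion. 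I expect the only real difficulty to be bookkeeping: one must make sure the successive reductions leading to (\ref{eqn:integrale per Pi'1}) are uniform in $(x,y)$, so that the exponential-versus-polynomial comparison above can be carried out with constants not depending on the base point --- but this uniformity is already present in Proposition \ref{prop:rapid decrease delta} and in Lemmata \ref{lem:only first summand} and \ref{lem:reduction in u}.
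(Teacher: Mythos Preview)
Your proof is correct and rests on the same two ingredients as the paper's --- the observation that $x(g,\boldsymbol{\vartheta})\in G\cdot x$ so that $\mathrm{dist}_X\big(x(g,\boldsymbol{\vartheta}),y\big)\ge \mathrm{dist}_X(x,G\cdot y)$, and the lower bound (\ref{eqn:bound on Im psi}) for $\Im\psi$ --- but the way you use them is different. The paper does not estimate $|e^{\imath k\Psi_{x,y}}|$ directly; instead it observes that $|\partial_u\Psi_{x,y}|=|\psi(x(g,\boldsymbol{\vartheta}),y)|\ge \Im\psi\ge D\,C^2\,k^{2\epsilon-1}$ and then integrates by parts iteratively in $u$ via the identity (\ref{eqn:key identity du}), picking up a factor $O(k^{-2\epsilon})$ at each step. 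Your argument is more elementary --- no integration by parts, just a crude pointwise bound on the integrand --- and in fact yields the stronger conclusion of exponential decay $O\big(e^{-c\,k^{2\epsilon}}\big)$ rather than merely $O(k^{-\infty})$. The paper's integration-by-parts scheme, on the other hand, is the template reused later in the proof of Theorem \ref{thm:rapid decrease slow} (Lemmata \ref{lem:Pikb} and \ref{lem:Pika}), where the phase need not have large imaginary part and one must exploit oscillation instead; so the paper's choice here has the virtue of methodological uniformity even if your route is shorter for this particular proposition.
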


\begin{proof}
[Proof of Proposition \ref{prop:rapid decay orbit case}]
In the range (\ref{eqn:orbital distance lower bound}), 
we have
\begin{equation}
 \label{eqn:bound on distance xy k}
\mathrm{dist}_X\big(x(g,\boldsymbol{\vartheta}),y\big)\ge C\,k^{\epsilon-1/2}
\end{equation}
for every $g\,T\in G/T$ and $e^{\imath\,\boldsymbol{\vartheta}}\in T$.
In view of (\ref{eqn:phase Psi}) and (\ref{eqn:bound on Im psi}),
\begin{eqnarray}
\label{eqn:partial_u bound}
 \left|\partial_u\Psi_{x,y}(u,\boldsymbol{\vartheta},gT)\right|&=&
\left|\psi\left(x(g,\boldsymbol{\vartheta}),y\right)\right|
\ge\Im\left(\psi\big(x(g,\boldsymbol{\vartheta}),y\big)\right)
\nonumber\\
&\ge&
D\,\mathrm{dist}_X\big(x(g,\boldsymbol{\vartheta}),y\big)^2\ge D\,C^2\,k^{2\epsilon-1}.
\end{eqnarray}
Let us use the identity
\begin{equation}
 \label{eqn:key identity du}
-\frac{\imath}{k}\,\psi\big(x(g,\boldsymbol{\vartheta}),y\big)^{-1}\,
\frac{\mathrm{d}}{\mathrm{d}u} e^{\imath \,k\,\Psi_{x,y}}=e^{\imath \,k\,\Psi_{x,y}}
\end{equation}
to iteratively integrate by parts in $\mathrm{d}u$ in (\ref{eqn:integrale per Pi'1}); then by (\ref{eqn:partial_u bound}) at each step we introduce 
a factor $O\left(k^{-2\,\epsilon}\right)$.
The claim follows.
\end{proof}

To complete the proof of Theorem \ref{thm:rapid decrease slow}, we need to establish the following.

\begin{prop}
 \label{prop:diagonal case distance MGO}
 Uniformly for 
\begin{equation}
 \label{eqn:distance to GMTnu}
\mathrm{dist}_X\left(x,G\cdot X^T_{\boldsymbol{\nu}}\right)\ge C\,k^{\epsilon-1/2},
\end{equation} 
we have 
$\Pi_{k\boldsymbol{\nu}}(x,x)=O\left(k^{-\infty}\right)
$
as $k\rightarrow +\infty$.
\end{prop}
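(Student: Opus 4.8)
The plan is to insert the oscillatory integral representation (\ref{eqn:integrale per Pi'1}), specialized to $y=x$, recalling that it holds modulo $O(k^{-\infty})$ and that on the support of $\mathcal{A}'_{x,x}$ one has $\ell:=\mathrm{dist}_X\big(x(g,\boldsymbol{\vartheta}),x\big)<2\delta$ while $u$ stays in a fixed compact interval $[1/(2D),2D]$ (Lemma \ref{lem:reduction in u}). I would then split the $(gT,\boldsymbol{\vartheta})$-domain according to whether $\ell$ is $\lesssim k^{\epsilon-1/2}$ or not, integrating by parts in $\boldsymbol{\vartheta}$ on the first region --- where the hypothesis forces the $\boldsymbol{\vartheta}$-gradient of the phase to be large --- and in $u$ on the second --- where $\Im\psi$ is large by the Boutet--Sj\"ostrand positivity (\ref{eqn:bound on Im psi}).

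The key geometric input is the following lower bound: after enlarging $D$ if necessary, there is $c_0>0$ with
\[
\Big\|\tfrac{u}{\imath}\,\Phi_T\big(\mu_{g^{-1}}(m_x)\big)-\boldsymbol{\nu}\Big\|\ \ge\ c_0\,\mathrm{dist}_M\big(\mu_{g^{-1}}(m_x),M^T_{\boldsymbol{\nu}}\big)
\]
uniformly in $x\in X$, $g\in G$ and $u\in[1/(2D),2D]$. To prove it I would look at the smooth map $V\colon(m',u)\mapsto \tfrac{u}{\imath}\Phi_T(m')-\boldsymbol{\nu}$ on the compact manifold $M\times[1/(2D),2D]$: since $\mathbf{0}\notin\Phi_T(M)$, for $D$ large its zero locus is the graph over $M^T_{\boldsymbol{\nu}}$ of $m'\mapsto\|\boldsymbol{\nu}\|/\|\Phi_T(m')\|$, and transversality of $\Phi_T$ to $\mathbb{R}_+\imath\boldsymbol{\nu}$ makes $\mathrm{d}V$ surjective onto $\mathfrak{t}$ along this graph; hence it is a clean zero locus of $\|V\|^2$ of the expected codimension $2=\dim\mathfrak{t}$, so $\|V\|^2$ dominates the square of the distance to it, and a fortiori $c_0^2\,\mathrm{dist}_M(m',M^T_{\boldsymbol{\nu}})^2$. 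Since $\pi$ is a Riemannian submersion with unit-length fibres, $G\cdot X^T_{\boldsymbol{\nu}}=\pi^{-1}(G\cdot M^T_{\boldsymbol{\nu}})$, and $\mu$ acts isometrically on $(M,\rho)$ with $G\cdot M^T_{\boldsymbol{\nu}}$ invariant, horizontal lifting of minimizing geodesics turns the hypothesis (\ref{eqn:distance to GMTnu}) into $\mathrm{dist}_M(\mu_{g^{-1}}(m_x),M^T_{\boldsymbol{\nu}})\ge C\,k^{\epsilon-1/2}$ for every $g$, whence $\|\tfrac{u}{\imath}\Phi_T(\mu_{g^{-1}}(m_x))-\boldsymbol{\nu}\|\ge c_0C\,k^{\epsilon-1/2}$.

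Now set $r_k:=C\,k^{\epsilon-1/2}$, fix $c_*\in(0,c_0/C_1)$ with $C_1$ a bound for the error term in (\ref{eqn:vartheta derivative}) divided by $\ell$, and write $1=\chi_A+\chi_B$ with $\chi_B=\varphi\big(\ell^2/(c_* r_k/2)^2\big)$, $\varphi\equiv1$ near $0$ and supported in $[0,1]$; thus $\chi_A,\chi_B$ are independent of $u$, $\mathrm{supp}\,\chi_B\subseteq\{\ell\le c_* r_k\}$, $\mathrm{supp}\,\chi_A\subseteq\{\ell\ge c_* r_k/2\}$, and (since $\mathrm{dist}_X^2$ is smooth near the diagonal) $\chi_B$ is a symbol of order $0$ at scale $r_k$ in $\boldsymbol{\vartheta}$, i.e. $|\partial_{\boldsymbol{\vartheta}}^\gamma\chi_B|\lesssim r_k^{-|\gamma|}$ for every multi-index $\gamma$. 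On $\mathrm{supp}\,\chi_B$, plugging the sharp form $O(\ell)$ of the error into (\ref{eqn:vartheta derivative}) and using the geometric bound gives $\big\|\mathrm{d}^{(\boldsymbol{\vartheta})}_{(u,gT,\boldsymbol{\vartheta})}\Psi_{x,x}\big\|\ge(c_0-C_1c_*)r_k\gtrsim k^{\epsilon-1/2}$, so iterated non-stationary-phase integration by parts in $\boldsymbol{\vartheta}$ --- using that $\mathcal{A}'_{x,x}$ is a symbol of order $d$ in $k\,u$ (so its $\boldsymbol{\vartheta}$-derivatives are $O(k^d)$) and the symbol bounds on $\chi_B$ --- bounds the $\chi_B$-part of (\ref{eqn:integrale per Pi'1}) by $O(k^{2+d}k^{-2N\epsilon})$ for every $N$. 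On $\mathrm{supp}\,\chi_A$ one has $\ell\ge c_* r_k/2$, hence $|\partial_u\Psi_{x,x}|=|\psi(x(g,\boldsymbol{\vartheta}),x)|\ge\Im\psi(x(g,\boldsymbol{\vartheta}),x)\ge D'\ell^2\gtrsim k^{2\epsilon-1}$ by (\ref{eqn:bound on Im psi}); since $\chi_A$ and $\psi(x(g,\boldsymbol{\vartheta}),x)$ do not depend on $u$, iterated integration by parts in $u$ bounds the $\chi_A$-part by $O(k^{2+d}k^{-2N\epsilon})$ as well. Letting $N\to\infty$, and adding back the previously discarded $O(k^{-\infty})$, gives $\Pi_{k\boldsymbol{\nu}}(x,x)=O(k^{-\infty})$ uniformly in the stated range.

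I expect the main obstacle to be the geometric lower bound of the second paragraph --- specifically the cleanness of the zero locus of $V$, which is exactly where transversality of $\Phi_T$ enters and which is responsible for the balance $k^{\epsilon-1/2}$ rather than a weaker rate --- together with the bookkeeping that keeps the two integration-by-parts schemes compatible: one must check that the $\boldsymbol{\vartheta}$-scale $r_k$ of the cutoff $\chi_B$ is precisely matched to the lower bound on $\|\mathrm{d}^{(\boldsymbol{\vartheta})}\Psi_{x,x}\|$, so that each $\boldsymbol{\vartheta}$-derivative falling on $\chi_B$ costs exactly the $O(r_k^{-1})=O(k^{1/2-\epsilon})$ that one integration-by-parts step recovers, preserving the net $O(k^{-2\epsilon})$ gain per step.
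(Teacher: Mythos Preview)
Your proposal is correct and follows essentially the same strategy as the paper: split the integral according to whether $\ell=\mathrm{dist}_X\big(x(g,\boldsymbol{\vartheta}),x\big)$ is small or not, then integrate by parts in $\boldsymbol{\vartheta}$ on the near-diagonal piece (using the geometric lower bound on $\|\tfrac{u}{\imath}\Phi_T(\mu_{g^{-1}}(m_x))-\boldsymbol{\nu}\|$) and in $u$ on the far-diagonal piece (using $\Im\psi\gtrsim\ell^2$). Two tactical differences are worth noting. First, the paper quotes the geometric lower bound from \cite{pao-jsg}, whereas you supply a self-contained regular-value argument for it; yours is fine. Second, and more interestingly, the paper cuts off at scale $k^{\epsilon'-1/2}$ with an auxiliary $\epsilon'<\epsilon$ and uses $\ell$ (not $\ell^2$) inside the cutoff; since $\ell$ is only Lipschitz at $\ell=0$, this forces a rather delicate bookkeeping of the iterated $\boldsymbol{\vartheta}$-integration by parts (Lemmata \ref{lem:bound on distance theta} and \ref{lem:generic summand}), tracking how derivatives can fall on $\mathcal{D}=\ell$. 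Your choice of $\varphi(\ell^2/(\cdot)^2)$ exploits the smoothness of $\mathrm{dist}_X^2$ near the diagonal and yields clean symbol bounds $|\partial_{\boldsymbol{\vartheta}}^\gamma\chi_B|\lesssim r_k^{-|\gamma|}$, so the combinatorics collapse. The paper also invokes freeness of $\widetilde{\mu}$ on $X^G_{\mathcal{O}}$ to localize $e^{\imath\boldsymbol{\vartheta}}$ near $I_2$ before integrating by parts; in your setup this is unnecessary, since (\ref{eqn:vartheta derivative}) with the sharp $O(\ell)$ error gives the required lower bound on $\|\mathrm{d}^{(\boldsymbol{\vartheta})}\Psi_{x,x}\|$ wherever $\ell$ is small, regardless of where $\boldsymbol{\vartheta}$ sits in $T$, and $T$ is compact with no boundary.
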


\begin{rem}
Let $\mathrm{dist}_M$ denote the distance function on $M$; 
if $m=\pi(x)$, then
$\mathrm{dist}_X\left(x,G\cdot X^T_{\boldsymbol{\nu}}\right)=\mathrm{dist}_M\left(m,G\cdot M^T_{\boldsymbol{\nu}}\right)$.
\end{rem}

\begin{proof}
 [Proof of Proposition \ref{prop:diagonal case distance MGO}]
Since $G$ acts on $M$ as a group of Riemannian isometries, 
(\ref{eqn:distance to GMTnu}) means that for any $g\in G$ we have
\begin{equation}
 \label{eqn:distance to gMTnu}
C\,k^{\epsilon-1/2}\le \mathrm{dist}_M\left(m,\mu_{g}\left(M^T_{\boldsymbol{\nu}}\right)\right)=
\mathrm{dist}_M\left(\mu_{g^{-1}}\left(m\right),M^T_{\boldsymbol{\nu}}\right).
\end{equation}

On the other hand, as
$-\imath\,\Phi^T$ is transverse to $\mathbb{R}_+\,\boldsymbol{\nu}$, by the discussion in \S 2.1.3 of \cite{pao-jsg}
there is a constant $b_{\boldsymbol{\nu}}>0$ such that every $u\in [1/(2D),2D]$ we have
\begin{equation}
 \label{eqn:distance_regular_fcnt g u}
 \left\|-\imath\,u\,\Phi^T\left(\mu_{g^{-1}}\left(m\right)\right)-\boldsymbol{\nu}\right\|\ge
b_{\boldsymbol{\nu}}\,C\,k^{\epsilon-1/2}.
\end{equation}

Let us consider (\ref{eqn:integrale per Pi'1}) with $x=y$:
\begin{eqnarray}
 \label{eqn:integrale per Pi'2}
\lefteqn{\Pi_{k\boldsymbol{\nu}}(x,x)}\\
&\sim& \frac{k^2\,(\nu_1-\nu_2)}{(2\pi)^2}\,
  \int_{-\pi}^\pi\,\int_{-\pi}^\pi\,\int _{G/T}\,\int_{1/(2D)}^{2D}
  e^{\imath \,k\,\Psi_{x,x}}\,\mathcal{A}'_{x,x}\,\mathrm{d}u\,
  \mathrm{d}V_{G/T}(gT)\,\mathrm{d}\boldsymbol{\vartheta}.\nonumber
\end{eqnarray}
Let us choose $\epsilon'\in (0,\epsilon)$ and multiply the integrand in (\ref{eqn:integrale per Pi'2}) by the
identity
\begin{equation*}
\varrho \left(k^{1/2-\epsilon'}\,\mathrm{dist}_X\big(x(g,\boldsymbol{\vartheta}),x\big)\right)+
\left[1-\varrho \left(k^{1/2-\epsilon'}\,\mathrm{dist}_X\big(x(g,\boldsymbol{\vartheta}),x\big)\right)\right]=1.
\end{equation*}
Here $\varrho$ is as in the discussion preceding Lemma \ref{lem:only first summand}.
We obtain a further splitting 
\begin{eqnarray}
 \label{eqn:integrale per Pi'3}
\Pi_{k\boldsymbol{\nu}}(x,x)\sim \Pi_{k\boldsymbol{\nu}}(x,x)_a+\Pi_{k\boldsymbol{\nu}}(x,x)_b,
\end{eqnarray}
where $\Pi_{k\boldsymbol{\nu}}(x,x)_a$ is given by
(\ref{eqn:integrale per Pi'2}) with the amplitude $\mathcal{A}_{x,x}'$ replaced by 
\begin{equation}
 \label{eqn:defn di B'}
\mathcal{B}_{x,x}':=\varrho \left(k^{1/2-\epsilon'}\,
\mathrm{dist}_X\big(x(g,\boldsymbol{\vartheta}),x\big)\right)\,\mathcal{A}_{x,x}';
\end{equation}
similarly, $\Pi_{k\boldsymbol{\nu}}(x,x)_b$ is given by
(\ref{eqn:integrale per Pi'2}) with the amplitude $\mathcal{A}_{x,x}'$ replaced by 
$$
 \mathcal{B}_{x,x}'':=\left[1-\varrho \left(k^{1/2-\epsilon'}\,\mathrm{dist}_X\big(x(g,\boldsymbol{\vartheta}),x\big)\right)\right]\,
\mathcal{A}_{x,x}'.
$$

\begin{lem}
\label{lem:Pikb}
 $\Pi_{k\boldsymbol{\nu}}(x,x)_b=O\left(k^{-\infty}\right)$ as $k\rightarrow+\infty$.
\end{lem}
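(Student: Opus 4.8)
The plan is to read off rapid decay directly from the exponential damping built into the Boutet--Sjöstrand phase $\psi$, using the lower bound (\ref{eqn:bound on Im psi}) on $\Im\psi$. Recall that $\Pi_{k\boldsymbol{\nu}}(x,x)_b$ is given by (\ref{eqn:integrale per Pi'2}) with the amplitude replaced by $\mathcal{B}''_{x,x}$, which carries the factor $1-\varrho\!\left(k^{1/2-\epsilon'}\,\mathrm{dist}_X\big(x(g,\boldsymbol{\vartheta}),x\big)\right)$; since $\varrho\equiv 1$ on $[-1,1]$, this factor vanishes unless $\mathrm{dist}_X\big(x(g,\boldsymbol{\vartheta}),x\big)\ge k^{\epsilon'-1/2}$. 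So I would first observe that on the support of the integrand one has, by (\ref{eqn:bound on Im psi}),
$\Im\psi\big(x(g,\boldsymbol{\vartheta}),x\big)\ge D\,k^{2\epsilon'-1}$,
and hence, because $\langle\boldsymbol{\nu},\boldsymbol{\vartheta}\rangle$ is real and $u$ is confined to the compact interval $[1/(2D),2D]$ bounded away from $0$,
$\Im\Psi_{x,x}=u\,\Im\psi\big(x(g,\boldsymbol{\vartheta}),x\big)\ge \tfrac{1}{2D}\,D\,k^{2\epsilon'-1}=\tfrac12\,k^{2\epsilon'-1}$
throughout the effective region of integration.

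The next step is the pointwise bound $\big|e^{\imath k\Psi_{x,x}}\big|=e^{-k\,\Im\Psi_{x,x}}\le e^{-\frac12 k^{2\epsilon'}}$ on that region, together with an estimate for $\mathcal{B}''_{x,x}$: it is a product of $\Delta(e^{\imath\boldsymbol{\vartheta}})$, the cutoffs $\rho(u)$, $\varrho\big(\delta^{-1}\mathrm{dist}_X(\cdot)\big)$ and $1-\varrho(\cdot)$ (all uniformly bounded), and the symbol value $s\big(x(g,\boldsymbol{\vartheta}),x,ku\big)$, which is $O(k^d)$ uniformly in $(x,g,\boldsymbol{\vartheta},u)$ because $s(x,y,u)\sim\sum_{j\ge 0}u^{d-j}s_j(x,y)$ and $u$ ranges over $\mathrm{supp}(\rho)\subset(0,\infty)$. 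Combining these with the prefactor $k^2(\nu_1-\nu_2)/(2\pi)^2$ and the finite volume of the compact domain $(-\pi,\pi)^2\times (G/T)\times[1/(2D),2D]$ yields
$\big|\Pi_{k\boldsymbol{\nu}}(x,x)_b\big|\le C\,k^{d+2}\,e^{-\frac12 k^{2\epsilon'}}$ for $k\gg 0$, uniformly in $x\in X$; since $\epsilon'>0$, this is $O(k^{-\infty})$.

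I do not expect a genuine obstacle here: this is the easy half of the splitting (\ref{eqn:integrale per Pi'3}), considerably simpler than the treatment of $\Pi_{k\boldsymbol{\nu}}(x,x)_a$, precisely because the constraint forcing $\mathrm{dist}_X\big(x(g,\boldsymbol{\vartheta}),x\big)$ below by a fixed power of $k$ makes the Gaussian damping $e^{-k\,\Im\psi}$ outpace every polynomial in $k$ with no need for stationary phase or repeated integration by parts in $u$ (contrast Proposition \ref{prop:rapid decay orbit case}). The only point to keep straight is bookkeeping: the restriction $\epsilon'<\epsilon$ is imposed for later compatibility, whereas the estimate for $\Pi_{k\boldsymbol{\nu}}(x,x)_b$ itself requires merely $\epsilon'>0$, and all constants can be taken independent of $x$ by compactness of $X$ and $G$ and the uniformity of the symbol bounds.
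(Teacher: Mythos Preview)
Your argument is correct, but it takes a different route from the paper. The paper's proof mirrors that of Proposition \ref{prop:rapid decay orbit case}: on the support of $\mathcal{B}''_{x,x}$ one has $\mathrm{dist}_X\big(x(g,\boldsymbol{\vartheta}),x\big)\ge k^{\epsilon'-1/2}$, hence $|\partial_u\Psi_{x,x}|=|\psi|\ge \Im\psi\ge D\,k^{2\epsilon'-1}$, and one integrates by parts repeatedly in $\mathrm{d}u$ via the identity (\ref{eqn:key identity du}), gaining a factor $O(k^{-2\epsilon'})$ at each step. You instead read off the decay directly from the modulus $\big|e^{\imath k\Psi_{x,x}}\big|=e^{-k\,u\,\Im\psi}\le e^{-\frac12 k^{2\epsilon'}}$, using that $u\ge 1/(2D)$ on $\mathrm{supp}(\rho)$.

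Your approach is more elementary and yields a stretched-exponential bound in one stroke, with no need for the operator $P$ or for tracking derivatives of the amplitude. The paper's integration-by-parts argument has the minor stylistic advantage of being parallel to the proof of Proposition \ref{prop:rapid decay orbit case}, but both methods rest on the same input---the lower bound (\ref{eqn:bound on Im psi}) combined with the support constraint on $\mathcal{B}''_{x,x}$---and either is perfectly adequate here. Your remark that only $\epsilon'>0$ is needed for this half of the splitting is also correct.
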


\begin{proof}
[Proof of Lemma \ref{lem:Pikb}.]
On the support of $\mathcal{B}_{x,x}''$, we have
\begin{equation}
 \label{eqn:distance k xx}
\mathrm{dist}_X\big(x(g,\boldsymbol{\vartheta}),x\big)\ge k^{\epsilon'-1/2}.
\end{equation}
Thus we may again appeal to (\ref{eqn:key identity du}) and iteratively integrate by parts in $\mathrm{d}u$, introducing at each step
a factor $O\left(k^{-1}\,k^{1-2\epsilon'}\right)=O\left(k^{-2\epsilon'}\right)$.

\end{proof}


Thus the proof of the Theorem will be complete once we establish the following.

\begin{lem}
\label{lem:Pika}
 $\Pi_{k\boldsymbol{\nu}}(x,x)_a=O\left(k^{-\infty}\right)$ as $k\rightarrow+\infty$.
\end{lem}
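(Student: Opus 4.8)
The plan is to show that on the support of the cutoff $\mathcal{B}_{x,x}'$ the phase $\Psi_{x,x}$ has no stationary points in $\boldsymbol{\vartheta}$, with the gradient bounded below by a positive power of $k$, and then conclude by non-stationary phase (iterated integration by parts in $\boldsymbol{\vartheta}$). The geometric input is that on the support of $\mathcal{B}_{x,x}'$ one has $\mathrm{dist}_X\big(x(g,\boldsymbol{\vartheta}),x\big)\le 2\,k^{\epsilon'-1/2}$, hence in particular $\mathrm{dist}_X\big(x(g,\boldsymbol{\vartheta}),x\big)=O(k^{\epsilon'-1/2})=o(1)$, so the estimate (\ref{eqn:differential of psi}) applies with the error term $O(\delta)$ upgraded to $O\big(k^{\epsilon'-1/2}\big)$. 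Repeating verbatim the computation (\ref{eqn:directional derivative x})--(\ref{eqn:vartheta derivative}) — which only used that $x(g,\boldsymbol{\vartheta})$ is close to $x=y$ — I obtain on the support of $\mathcal{B}_{x,x}'$ the refined identity
\begin{equation}
 \label{eqn:vartheta derivative refined}
 \mathrm{d}^{(\boldsymbol{\vartheta})}_{(u,gT,\boldsymbol{\vartheta})}\Psi_{x,x}
 =\frac{u}{\imath}\,\Phi_T\big(\mu_{g^{-1}}(m_x)\big)-\boldsymbol{\nu}+O\big(k^{\epsilon'-1/2}\big),
\end{equation}
uniformly in $u\in[1/(2D),2D]$, $g\,T\in G/T$, and in $x$ subject to (\ref{eqn:distance to GMTnu}); here I use that $\mu_{g^{-1}}(m_x)$ depends smoothly and $G$-equivariantly on the data, so $\Phi_T\big(\mu_{g^{-1}}(m_x)\big)$ differs from $\Phi_T\big(\mu_{g^{-1}}(m_{x(g,\boldsymbol{\vartheta})})\big)$ by $O\big(k^{\epsilon'-1/2}\big)$ as well.

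Next I would feed in the quantitative transversality bound (\ref{eqn:distance_regular_fcnt g u}): by hypothesis $\Phi_T$ is transverse to $\mathbb{R}_+\cdot\imath\,\boldsymbol{\nu}$, so by \S 2.1.3 of \cite{pao-jsg} there is $b_{\boldsymbol{\nu}}>0$ with
$$
\left\|\tfrac{1}{\imath}\,u\,\Phi_T\big(\mu_{g^{-1}}(m_x)\big)-\boldsymbol{\nu}\right\|\ge b_{\boldsymbol{\nu}}\,C\,k^{\epsilon-1/2}
$$
for all $u\in[1/(2D),2D]$ and all $g$, whenever $x$ satisfies (\ref{eqn:distance to GMTnu}). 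Combining this with (\ref{eqn:vartheta derivative refined}) and the fact that $\epsilon'<\epsilon$ (so the error $O(k^{\epsilon'-1/2})$ is negligible compared with $b_{\boldsymbol{\nu}}\,C\,k^{\epsilon-1/2}$ for $k\gg 0$), I get
$$
\big\|\mathrm{d}^{(\boldsymbol{\vartheta})}_{(u,gT,\boldsymbol{\vartheta})}\Psi_{x,x}\big\|\ge \tfrac{1}{2}\,b_{\boldsymbol{\nu}}\,C\,k^{\epsilon-1/2}
$$
on the support of $\mathcal{B}_{x,x}'$, for $k\gg 0$, uniformly in the range (\ref{eqn:distance to GMTnu}).

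Finally I would run the standard non-stationary phase argument: using the vector field $L:=\|\mathrm{d}^{(\boldsymbol{\vartheta})}\Psi_{x,x}\|^{-2}\,\big\langle \overline{\mathrm{d}^{(\boldsymbol{\vartheta})}\Psi_{x,x}},\nabla_{\boldsymbol{\vartheta}}\big\rangle$, which satisfies $L\big(e^{\imath k\Psi_{x,x}}\big)=\imath k\,e^{\imath k\Psi_{x,x}}$, I integrate by parts repeatedly in $\boldsymbol{\vartheta}$ over the compact torus $T$. At each step a factor $k^{-1}$ is gained from the prefactor $(\imath k)^{-1}$ and at most a factor $O(k^{1/2-\epsilon})$ is produced when the $\boldsymbol{\vartheta}$-derivatives fall on the denominator $\|\mathrm{d}^{(\boldsymbol{\vartheta})}\Psi_{x,x}\|^{2}$ or on the amplitude $\mathcal{B}_{x,x}'$ (the latter's $\boldsymbol{\vartheta}$-derivatives cost at most $k^{1/2-\epsilon'}\le k^{1/2-\epsilon'}$, which is also dominated since $\epsilon'<1/2$ and, more to the point, $k^{-1}\cdot k^{1/2-\epsilon'}=k^{-1/2-\epsilon'}\to 0$); each integration by parts therefore nets a factor $O\big(k^{-1}\cdot k^{1/2-\epsilon}\big)=O\big(k^{-1/2-\epsilon}\big)$, which is $o(1)$. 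Since the remaining $\mathrm{d}u$, $\mathrm{d}V_{G/T}$ and $\mathrm{d}\boldsymbol{\vartheta}$ integrals are over compact sets and the integrand (with all its derivatives) is bounded by a fixed power of $k$ — here I invoke that $s$ is a symbol of order $d$ in $ku$, so $\mathcal{A}'_{x,x}=O(k^{d})$ together with its derivatives — iterating $N$ times for arbitrary $N$ gives $\Pi_{k\boldsymbol{\nu}}(x,x)_a=O(k^{-\infty})$, uniformly in the range (\ref{eqn:distance to GMTnu}), completing the proof.

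The main obstacle, and the point requiring care, is bookkeeping the two competing small parameters $k^{\epsilon'-1/2}$ (the width of the near-diagonal cutoff $\varrho$) and $k^{\epsilon-1/2}$ (the distance lower bound): one must check that the derivative losses from differentiating the cutoff $\mathcal{B}_{x,x}'$ and the denominator never overtake the $k^{-1}$ gain, which forces the choice $\epsilon'<\epsilon$ already made in the construction and uses crucially that $\epsilon<1/2$. Everything else is the routine machinery of stationary phase with a large parameter and a symbol-valued amplitude.
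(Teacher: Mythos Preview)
Your strategy is exactly the paper's: establish the gradient lower bound $\big\|\mathrm{d}^{(\boldsymbol{\vartheta})}\Psi_{x,x}\big\|\ge \tfrac{1}{2}\,b_{\boldsymbol{\nu}}\,C\,k^{\epsilon-1/2}$ on the support of $\mathcal{B}_{x,x}'$ (this is the paper's Lemma~\ref{lem:theta differential epsilon}), and then iterate integration by parts in $\boldsymbol{\vartheta}$. The conclusion is correct.

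However, your per-step bookkeeping is too optimistic and, as written, not justified. Two points require more care. First, when a $\boldsymbol{\vartheta}$-derivative falls on the coefficient $\partial_{\vartheta_h}\Psi/\|\nabla_{\boldsymbol{\vartheta}}\Psi\|^2$, the resulting term is of order $\|\nabla_{\boldsymbol{\vartheta}}\Psi\|^{-2}=O\big(k^{1-2\epsilon}\big)$, not $O\big(k^{1/2-\epsilon}\big)$; after $r$ iterations one can accumulate $\|\nabla_{\boldsymbol{\vartheta}}\Psi\|^{-2r}$, so the honest net gain per step is only $O\big(k^{-2\epsilon'}\big)$, not $O\big(k^{-1/2-\epsilon}\big)$. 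Second, the cutoff $\varrho\big(k^{1/2-\epsilon'}\,\mathcal{D}\big)$ involves the distance function $\mathcal{D}(\boldsymbol{\vartheta})=\mathrm{dist}_X\big(x(g,\boldsymbol{\vartheta}),x\big)$, which behaves like a norm near its zero set and is \emph{not} $\mathcal{C}^2$ there; its $c$-th derivative blows up like $k^{(c-1)(1/2-\epsilon')}$ on the support of $\varrho'$. Your claim that each derivative of the amplitude costs at most $k^{1/2-\epsilon'}$ happens to give the right order for the $c$-th derivative of the cutoff, but only because the blow-up of $\mathcal{D}^{(c)}$ exactly compensates the fewer explicit factors of $k^{1/2-\epsilon'}$ --- this is not automatic and needs to be argued. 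The paper handles both issues by an explicit structural induction (Lemma~\ref{lem:generic summand}), tracking the general summand as $\varrho^{(b)}\cdot P_{a_1}(\partial\Psi)\,\|\nabla\Psi\|^{-2a_2}\cdot k^{b(1/2-\epsilon')}\,\mathcal{D}^{(\mathbf{C})}$ with the constraint $(2a_2-a_1)+b+\sum_j(c_j-1)\le 2r$, together with the expansion of $\mathcal{D}$ near $\boldsymbol{\vartheta}=\mathbf{0}$ (Lemma~\ref{lem:bound on distance theta}); this yields the uniform bound $O\big(k^{-2r\epsilon'}\big)$ after $r$ steps. Your outline should be amended to incorporate this inductive bookkeeping.
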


Before attacking the proof of Lemma \ref{lem:Pika}, let us prove the following.

\begin{lem}
 \label{lem:theta differential epsilon}
If (\ref{eqn:distance to GMTnu}) holds,
then for any $u\in [1/(2D),2D]$ and $k\gg 0$
\begin{equation}
 \label{eqn:distance_regular_fcnt g u1}
 \left\|\mathrm{d}^{(\boldsymbol{\vartheta})}_{(u,gT,\boldsymbol{\vartheta})}\Psi_{x,x}\right\|\ge
\frac{b_{\boldsymbol{\nu}}}{2}\,C\,k^{\epsilon-1/2}
\end{equation}
on the support of $\mathcal{B}_{x,x}'$.
\end{lem}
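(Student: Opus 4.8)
The plan is to regard $\mathrm{d}^{(\boldsymbol{\vartheta})}_{(u,gT,\boldsymbol{\vartheta})}\Psi_{x,x}$ as a perturbation of the vector $-\imath\,u\,\Phi^T\big(\mu_{g^{-1}}(m_x)\big)-\boldsymbol{\nu}$ whose norm is bounded below in (\ref{eqn:distance_regular_fcnt g u}), and to check that on the support of $\mathcal{B}_{x,x}'$ the perturbation is of strictly smaller order than $k^{\epsilon-1/2}$; the bound (\ref{eqn:distance_regular_fcnt g u1}) then follows from the triangle inequality. First I would note that, by the definition (\ref{eqn:defn di B'}) of $\mathcal{B}_{x,x}'$ and the support of $\varrho$, at every point $(u,gT,\boldsymbol{\vartheta})$ where $\mathcal{B}_{x,x}'$ is nonzero one has $\mathrm{dist}_X\big(x(g,\boldsymbol{\vartheta}),x\big)\le 2\,k^{\epsilon'-1/2}$, which for $k\gg 0$ is smaller than the fixed constant $\delta$; hence (\ref{eqn:limite su distanza}) holds with $y=x$ and the computation yielding (\ref{eqn:vartheta derivative}) applies at $(g_0,\boldsymbol{\vartheta}_0)=(g,\boldsymbol{\vartheta})$.

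Next I would observe that the only source of the $O(\delta)$ remainder in (\ref{eqn:vartheta derivative}) is the Taylor estimate (\ref{eqn:differential of psi}) for $\mathrm{d}\psi$ near the diagonal, which is in reality $O\big(\mathrm{dist}_X(x(g,\boldsymbol{\vartheta}),x)\big)$; therefore, on the support of $\mathcal{B}_{x,x}'$,
\[
\mathrm{d}^{(\boldsymbol{\vartheta})}_{(u,gT,\boldsymbol{\vartheta})}\Psi_{x,x}=-\imath\,u\,\Phi^T\big(\mu_{g^{-1}}(m_x)\big)-\boldsymbol{\nu}+O\big(k^{\epsilon'-1/2}\big),
\]
uniformly in $x$, in $gT\in G/T$, and in $u\in[1/(2D),2D]$. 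By (\ref{eqn:distance to gMTnu}) the standing hypothesis (\ref{eqn:distance to GMTnu}) forces $\mathrm{dist}_M\big(\mu_{g^{-1}}(m_x),M^T_{\boldsymbol{\nu}}\big)\ge C\,k^{\epsilon-1/2}$ for every $g$, so the distance-regularity of $\Phi^T$ along $\mathbb{R}_+\cdot\imath\,\boldsymbol{\nu}$ gives (\ref{eqn:distance_regular_fcnt g u}), i.e. $\big\|-\imath\,u\,\Phi^T(\mu_{g^{-1}}(m_x))-\boldsymbol{\nu}\big\|\ge b_{\boldsymbol{\nu}}\,C\,k^{\epsilon-1/2}$ for all $u\in[1/(2D),2D]$. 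Since $\epsilon'<\epsilon$, we have $k^{\epsilon'-1/2}=k^{\epsilon'-\epsilon}\,k^{\epsilon-1/2}=o\big(k^{\epsilon-1/2}\big)$, so for $k\gg 0$ the error term above has norm at most $\tfrac12\,b_{\boldsymbol{\nu}}\,C\,k^{\epsilon-1/2}$, and the triangle inequality yields (\ref{eqn:distance_regular_fcnt g u1}).

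The step I expect to require the most care — and which I regard as the crux — is the improvement of the uniform $O(\delta)$ of (\ref{eqn:vartheta derivative}) to an $O\big(k^{\epsilon'-1/2}\big)$ that is still uniform in all the auxiliary variables; this is precisely where the choice $\epsilon'\in(0,\epsilon)$ made before (\ref{eqn:integrale per Pi'3}) is needed, since it is what guarantees that the near-diagonal localization error is negligible compared with the transversality lower bound $b_{\boldsymbol{\nu}}\,C\,k^{\epsilon-1/2}$. Everything else is a direct application of the triangle inequality, and the resulting estimate is exactly what is needed to integrate by parts in $\boldsymbol{\vartheta}$ in the proof of Lemma \ref{lem:Pika}.
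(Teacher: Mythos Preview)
Your proposal is correct and follows essentially the same route as the paper's own proof: both observe that on the support of $\mathcal{B}_{x,x}'$ the distance bound $\mathrm{dist}_X\big(x(g,\boldsymbol{\vartheta}),x\big)\le 2\,k^{\epsilon'-1/2}$ sharpens the $O(\delta)$ remainder in (\ref{eqn:vartheta derivative}) to $O\big(k^{\epsilon'-1/2}\big)$, and then combine this with the lower bound (\ref{eqn:distance_regular_fcnt g u}) and the inequality $\epsilon'<\epsilon$. Your explicit mention of the triangle inequality and of the uniformity of the remainder in the auxiliary variables is a useful elaboration, but the argument is the same.
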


\begin{proof}
 [Proof of Lemma \ref{lem:theta differential epsilon}]
On the support of $\mathcal{B}_{x,x}'$, we have
\begin{equation}
 \label{eqn:distance k xx1}
\mathrm{dist}_X\big(x(g,\boldsymbol{\vartheta}),x\big)\le 2\,k^{\epsilon'-1/2}.
\end{equation}
Thus instead of (\ref{eqn:differential of psi}) we have
\begin{equation}
 \label{eqn:differential of psi1}
 \mathrm{d}_{(x(g,\boldsymbol{\vartheta}),x)}\psi=
 (\alpha_{x(g,\boldsymbol{\vartheta})},-\alpha_{x})+O\left(k^{\epsilon'-1/2}\right).
\end{equation}
Therefore, in place of (\ref{eqn:vartheta derivative}) on the support of $\mathcal{B}_{x,x}'$ we have
\begin{eqnarray}
\label{eqn:vartheta derivative1}
 \mathrm{d}^{(\boldsymbol{\vartheta})}_{(u,gT,\boldsymbol{\vartheta})}\Psi_{x,x}
 &=&\frac{u}{\imath}\,\Phi_T\big(\mu_{g^{-1}}(m_x)\big)-\boldsymbol{\nu}+
 O\left(k^{\epsilon'-1/2}\right).
\end{eqnarray}
Thus in view of (\ref{eqn:distance_regular_fcnt g u}) 
the claim follows since $0<\epsilon'<\epsilon$.
\end{proof}

Given Lemma \ref{lem:theta differential epsilon}, we can prove Lemma \ref{lem:Pika} essentially by iteratively integrating by parts 
in $\mathrm{d}\boldsymbol{\vartheta}$.

\begin{proof}
[Proof of Lemma \ref{lem:Pika}.]
Since $\widetilde{\mu}$ is free on $X^G_{\mathcal{O}}$, it is also 
free on a small tubular neighborhood $X'$ of $X^G_{\mathcal{O}}$ in $X$. Without loss, we may restrict our analysis 
to $X'$ in view of Theorem \ref{thm:rapid decrease fixed}. 
%


On the support of $\mathcal{B}_{x,x}'$, therefore, $e^{\imath\,\boldsymbol{\vartheta}}\in T$
varies in a small neighborhood of $I_2$.
Let $f:T\rightarrow [0,+\infty)$ be a bump function compactly supported in a small 
neighborhood $U\subset T$ of $I_2$ (identified with $(1,1)$), and
identically $=1$ near $I_2$.
Then we obtain
\begin{eqnarray}
 \label{eqn:Pi_knua}
\Pi_{k\boldsymbol{\nu}}(x,x)_a
&\sim& \left(\frac{k}{2\pi}\right)^2\,(\nu_1-\nu_2)\\
&&\cdot 
  \int_{U}\,\int _{G/T}\,\int_{1/(2D)}^{2D}
  e^{\imath \,k\,\Psi_{x,x}}\,f(t)\,\mathcal{B}_{x,x}'\,\mathrm{d}u\,
  \mathrm{d}V_{G/T}(gT)\,\mathrm{d}\boldsymbol{\vartheta}.\nonumber
\end{eqnarray}

Let us introduce the differential operator
\begin{equation}
 \label{eqn:operatorP}
 P=\sum_{h=1}^2\frac{\partial_{\vartheta_h}\Psi_{x,x}}{\left(\partial_{\vartheta_1}\Psi_{x,x}\right)^2
 +\left(\partial_{\vartheta_2}\Psi_{x,x}\right)^2}\,\frac{\partial}{\partial\vartheta_h},
\end{equation}
so that
$$
\frac{1}{\imath\,k}\,P\left(e^{ik\Psi_{x,x}}\right)=e^{ik\Psi_{x,x}}.
$$
Thus, 
\begin{eqnarray}
\label{eqn:integration by parts}
\lefteqn{\int_{U}e^{\imath \,k\,\Psi_{x,x}}\,f(t)\,\mathcal{B}_{x,x}'\,\mathrm{d}\boldsymbol{\vartheta}}\\
&=&\frac{1}{\imath\,k}\,\sum_{h=1}^2\,\int_{U}\frac{\partial_{\vartheta_h}\Psi_{x,x}}{\left(\partial_{\vartheta_1}\Psi_{x,x}\right)^2
 +\left(\partial_{\vartheta_2}\Psi_{x,x}\right)^2}\,
 \frac{\partial}{\partial\vartheta_h}\left[e^{\imath \,k\,\Psi_{x,x}}\right]\,
 f\left(e^{\imath \boldsymbol{\vartheta}}\right)\,\mathcal{B}_{x,x}'\,\mathrm{d}\boldsymbol{\vartheta}
 \nonumber\\
 &=&\frac{\imath}{k}\,\sum_{h=1}^2\,\int_{U}e^{\imath \,k\,\Psi_{x,x}}\,\frac{\partial}{\partial\vartheta_h}\left[
 \frac{\partial_{\vartheta_h}\Psi_{x,x}}{\left(\partial_{\vartheta_1}\Psi_{x,x}\right)^2
 +\left(\partial_{\vartheta_2}\Psi_{x,x}\right)^2}\, 
 f\left(e^{\imath \boldsymbol{\vartheta}}\right)\,\mathcal{B}_{x,x}'\,\right]\,\mathrm{d}\boldsymbol{\vartheta}
 \nonumber\\
 &=&\frac{\imath}{k}\,\int_{U}e^{\imath \,k\,\Psi_{x,x}}\,P^t\big(f(t)\,\mathcal{B}_{x,x}'\big)\,\mathrm{d}\boldsymbol{\vartheta},
\end{eqnarray}
where
\begin{equation}
 \label{eqn:defn di P^t}
 P^t(\gamma):=\sum_{h=1}^2\,\frac{\partial}{\partial\vartheta_h}\left[
 \frac{\partial_{\vartheta_h}\Psi_{x,x}}{\left(\partial_{\vartheta_1}\Psi_{x,x}\right)^2
 +\left(\partial_{\vartheta_2}\Psi_{x,x}\right)^2}\, 
 \gamma\,\right].
\end{equation}

Iterating, for any $r\in \mathbb{N}$ we have
\begin{eqnarray}
\label{eqn:integration by parts ktimes}
\int_{U}e^{\imath \,k\,\Psi_{x,x}}\,f(t)\,\mathcal{B}_{x,x}'\,\mathrm{d}\boldsymbol{\vartheta}
&=&\frac{\imath^r}{k^r}\,\int_{U}e^{\imath \,k\,\Psi_{x,x}}
\,\left(P^t\right)^r\big(f(t)\,\mathcal{B}_{x,x}'\big)\,\mathrm{d}\boldsymbol{\vartheta}.
\end{eqnarray}

Let us consider the function
\begin{equation}
 \label{eqn:defn of calD}
\mathcal{D}:\boldsymbol{\vartheta}\mapsto 
\mathrm{dist}_X\big(x(g,\boldsymbol{\vartheta}),x\big)=
\mathrm{dist}_X\left(\widetilde{\mu}_{e^{-\imath\,\boldsymbol{\vartheta}}}\circ\widetilde{\mu}_{g^{-1}}(x),\mu_{g^{-1}}(x)\right).
\end{equation}


We have the following.

\begin{lem}
 \label{lem:bound on distance theta}
For  $\boldsymbol{\vartheta}\sim \mathbf{0}$, we have
$$
\mathrm{dist}_X\big(x(g,\boldsymbol{\vartheta}),x\big)=
F_1(g\,T; \boldsymbol{\vartheta})+F_2(g\,T; \boldsymbol{\vartheta})+\cdots,
$$
where $F_j(g\,T; \boldsymbol{\vartheta})$ is homogeneous of degree $j$ in $\boldsymbol{\vartheta}$,
and $\mathcal{C}^\infty$ for $\boldsymbol{\vartheta}\neq \mathbf{0}$.
In addition, 
$F_1(g\,T; \boldsymbol{\vartheta})=\|\mathrm{Ad}_g(\boldsymbol{\vartheta})_X(x)\|=
\left\|\boldsymbol{\vartheta}_X\left(\widetilde{\mu}_{g^{-1}}(x)\right)\right\|$.
\end{lem}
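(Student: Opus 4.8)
The plan is to reduce the statement to an expansion of the \emph{squared} distance, which is smooth near the diagonal, and then to extract the homogeneous pieces by a blow-up in polar coordinates. First I would note that $x(g,\boldsymbol{\vartheta})=\widetilde{\mu}_{g\,e^{-\imath\boldsymbol{\vartheta}}\,g^{-1}}(x)$ depends only on the coset $g\,T$ (as $T$ is abelian and $e^{-\imath\boldsymbol{\vartheta}}\in T$), hence so does $\mathrm{dist}_X\big(x(g,\boldsymbol{\vartheta}),x\big)$. Set
\[
\rho(g\,T;\boldsymbol{\vartheta}):=\mathrm{dist}_X\big(x(g,\boldsymbol{\vartheta}),x\big)^2 .
\]
Since $x(g,\mathbf{0})=x$ and the squared Riemannian distance of a compact manifold is $\mathcal{C}^\infty$ on a neighbourhood of the diagonal, $\rho$ is $\mathcal{C}^\infty$ in $(g\,T,\boldsymbol{\vartheta})$ for $\boldsymbol{\vartheta}$ near $\mathbf{0}$ and vanishes at $\boldsymbol{\vartheta}=\mathbf{0}$. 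Writing $\exp_x$ for the Riemannian exponential of $X$ at $x$, one has $\mathrm{dist}_X(y,x)=\|\exp_x^{-1}(y)\|_x$ for $y$ near $x$, while (\ref{eqn:directional derivative x}) with $\boldsymbol{\vartheta}_0=\mathbf{0}$ shows that the differential of $\boldsymbol{\vartheta}\mapsto x(g,\boldsymbol{\vartheta})$ at the origin is $\boldsymbol{\vartheta}\mapsto -\mathrm{Ad}_g(\boldsymbol{\vartheta})_X(x)$. Composing, $\exp_x^{-1}\big(x(g,\boldsymbol{\vartheta})\big)=-\mathrm{Ad}_g(\boldsymbol{\vartheta})_X(x)+O(|\boldsymbol{\vartheta}|^2)$, so $\rho(g\,T;\boldsymbol{\vartheta})$ vanishes to second order at $\mathbf{0}$, with quadratic part the positive semidefinite form $Q(g\,T;\boldsymbol{\vartheta}):=\big\|\mathrm{Ad}_g(\boldsymbol{\vartheta})_X(x)\big\|_x^2$.

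Next I would check that $Q(g\,T;\cdot)$ is in fact positive definite. By the reduction made at the beginning of the proof of Lemma \ref{lem:Pika}, we may assume $x$ lies in a tubular neighbourhood of $X^G_{\mathcal{O}}$ on which $\widetilde{\mu}$ is free; then $\mathrm{val}_x$ is injective by Lemma \ref{lem:transversality and locally free action}. As $\mathrm{Ad}_g$ is invertible and $\imath\,D_{\boldsymbol{\vartheta}}\neq 0$ for $\boldsymbol{\vartheta}\neq\mathbf{0}$, it follows that $\mathrm{Ad}_g(\boldsymbol{\vartheta})_X(x)\neq 0$ for every $\boldsymbol{\vartheta}\neq\mathbf{0}$, so $Q(g\,T;\boldsymbol{\vartheta})>0$ whenever $\boldsymbol{\vartheta}\neq\mathbf{0}$.

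Now I would pass to polar coordinates $\boldsymbol{\vartheta}=s\,\omega$, with $s=|\boldsymbol{\vartheta}|>0$ and $\omega\in S^1$. By Hadamard's lemma, since $\rho$ is smooth and vanishes to order $2$, one may write $\rho(g\,T;s\,\omega)=s^2\big(Q(g\,T;\omega)+s\,R(g\,T;s,\omega)\big)$ with $R$ jointly smooth in $(g\,T,s,\omega)$ for $s$ small. Since $Q(g\,T;\omega)$ is bounded below by a positive constant (compactness of $S^1$ and of the relevant range of cosets), $\sqrt{Q(g\,T;\omega)+s\,R(g\,T;s,\omega)}$ is jointly smooth for $s$ small; Taylor-expanding it in $s$ yields, for every $N$,
\[
\mathrm{dist}_X\big(x(g,\boldsymbol{\vartheta}),x\big)=s\,\sqrt{Q(g\,T;\omega)+s\,R}=\sum_{j=1}^{N}s^{j}\,b_{j-1}(g\,T;\omega)+O\big(s^{N+1}\big),
\]
with $b_i(g\,T;\cdot)\in\mathcal{C}^\infty(S^1)$ and $b_0(g\,T;\omega)=\sqrt{Q(g\,T;\omega)}$. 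Setting $F_j(g\,T;\boldsymbol{\vartheta}):=|\boldsymbol{\vartheta}|^{j}\,b_{j-1}\big(g\,T;\boldsymbol{\vartheta}/|\boldsymbol{\vartheta}|\big)$ produces functions homogeneous of degree $j$ in $\boldsymbol{\vartheta}$ and $\mathcal{C}^\infty$ for $\boldsymbol{\vartheta}\neq\mathbf{0}$, and the displayed identity becomes the asserted expansion (read asymptotically: the remainder after $N$ terms is $O(|\boldsymbol{\vartheta}|^{N+1})$). Finally $F_1(g\,T;\boldsymbol{\vartheta})=|\boldsymbol{\vartheta}|\,\sqrt{Q(g\,T;\boldsymbol{\vartheta}/|\boldsymbol{\vartheta}|)}=\sqrt{Q(g\,T;\boldsymbol{\vartheta})}=\big\|\mathrm{Ad}_g(\boldsymbol{\vartheta})_X(x)\big\|_x$, and since $\widetilde{\mu}_g$ is a Riemannian isometry of $X$ with $(\widetilde{\mu}_g)_*\,\boldsymbol{\vartheta}_X=\mathrm{Ad}_g(\boldsymbol{\vartheta})_X$, evaluation at $\widetilde{\mu}_{g^{-1}}(x)$ gives $\big\|\mathrm{Ad}_g(\boldsymbol{\vartheta})_X(x)\big\|=\big\|\boldsymbol{\vartheta}_X\big(\widetilde{\mu}_{g^{-1}}(x)\big)\big\|$, as claimed.

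The main obstacle is precisely this last step: the claimed expansion is \emph{not} a Taylor expansion by polynomials — the $F_j$ are merely homogeneous and smooth off the origin — so it must be produced through the blow-up $\boldsymbol{\vartheta}=s\,\omega$, and for that one crucially needs the non-degeneracy $Q>0$, which renders $\sqrt{Q+s\,R}$ smooth in $s$ and thereby lets one read off homogeneous, smooth-off-origin coefficients. That non-degeneracy is exactly where the local freeness of $\widetilde{\mu}$ along $X^G_{\mathcal{O}}$ enters; the remaining ingredients (smoothness of the squared distance near the diagonal, (\ref{eqn:directional derivative x}), and Hadamard's lemma) are routine.
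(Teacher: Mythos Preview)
Your argument is correct. The paper states this lemma without proof, so there is no approach to compare against; your blow-up in polar coordinates, after reducing to the smooth squared distance and checking non-degeneracy of the Hessian via local freeness of $\widetilde{\mu}$ on the tubular neighbourhood $X'$, is exactly the natural way to produce the homogeneous (but not polynomial) terms $F_j$, and your identification of $F_1$ via the isometry relation $(\widetilde{\mu}_g)_*\,\boldsymbol{\vartheta}_X=\mathrm{Ad}_g(\boldsymbol{\vartheta})_X$ is clean.
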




For any $c\in \mathbb{N}$ let $\mathcal{D}^{(c)}$ denote a generic iterated derivative
of the form 
$$
\frac{\partial^c\,\mathcal{D}}{\partial\vartheta_{i_1}\,\cdots\partial\vartheta_{i_c}};
$$
clearly $\mathcal{D}^{(c)}$ is not uniquely determined by $c$.
By Lemma \ref{lem:bound on distance theta}, as $k\rightarrow+\infty$
$$
\mathcal{D}^{(c)}=O\left(k^{(c-1)(1/2-\epsilon')}\right)
$$
where $\varrho\left(k^{1/2-\epsilon'}\,
\mathcal{D}\right)\not\equiv 1$.  
For any multi-index $\mathbf{C}=(c_1,\ldots,c_s)$ let us denote by $\mathcal{D}^{(\mathbf{C})}$ a generic product
of the form 
$\mathcal{D}^{(c_1)}\cdots\,\mathcal{D}^{(c_s)}$;
then 
\begin{equation}
 \label{eqn:order of growth D}
 \mathcal{D}^{(\mathbf{C})}=O\left(k^{(1/2-\epsilon')\sum_j(c_j-1)}\right).
\end{equation}

\begin{lem}
\label{lem:generic summand}
 For any $r\in \mathbb{N}$, $\left(P^t\right)^r\big(f(t)\,\mathcal{B}_{x,x}'\big)$
 is a linear combination of summands of the form
 \begin{equation}
  \label{eqn:general summand abC}
  \varrho^{ (b)}\left(k^{1/2-\epsilon'}\,
D_k(\boldsymbol{\vartheta})\right)\,
\frac{P_{a_1}(\Psi_{x,x},\partial\Psi_{x,x})}{\left[\left(\partial_{\vartheta_1}\Psi_{x,x}\right)^2
 +\left(\partial_{\vartheta_2}\Psi_{x,x}\right)^2\right]^{a_2}}\,k^{b(1/2-\epsilon')}\,\mathcal{D}^{(\mathbf{C})},
 \end{equation}
times omitted factors bounded in $k$ depending on $f_j$ and its derivatives, where:
\begin{enumerate}
 \item $P_{a_1}$ denotes a generic differential polynomial in $\Psi_{x,x}$, homogeneous of degree $a_1$ in the first derivatives
 $\partial\Psi_{x,x}$;
 \item if $a:=2a_2-a_1$, then $a,b,\mathbf{C}$ are subject to the bound
\begin{equation}
 \label{eqn:bound abC r}
a+b+\sum_{j=1}^r(c_j-1)\le 2\,r
\end{equation}
(the sum is over the $c_j>0$);
\item $\mathbf{C}$ is not zero if and only if $b>0$.
\end{enumerate}

\end{lem}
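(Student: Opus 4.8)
The proof will be a structural induction on $r$, tracking the shape of each summand produced by successive applications of the transpose operator $P^t$ in \eqref{eqn:defn di P^t}. The base case $r=0$ is trivial: $f(t)\,\mathcal{B}_{x,x}'$ is itself of the claimed form, with $a_1=a_2=0$ (so $a=0$), $b=0$, $\mathbf{C}$ empty, and the stated inequality $a+b+\sum(c_j-1)\le 0$ holds. For the inductive step, I will assume $\left(P^t\right)^r\big(f(t)\,\mathcal{B}_{x,x}'\big)$ is a linear combination of terms \eqref{eqn:general summand abC} and apply $P^t$ once more to a generic such summand; the heart of the matter is to check that the resulting pieces are again of the form \eqref{eqn:general summand abC} with the exponents increased by at most $2$ in the combined sense of \eqref{eqn:bound abC r}.

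The key bookkeeping is as follows. Applying $P^t$ to a summand means differentiating in $\vartheta_h$ the product of: (i) the rational prefactor $\frac{\partial_{\vartheta_h}\Psi_{x,x}}{(\partial_{\vartheta_1}\Psi_{x,x})^2+(\partial_{\vartheta_2}\Psi_{x,x})^2}$; (ii) the cutoff $\varrho^{(b)}\big(k^{1/2-\epsilon'}D_k(\boldsymbol{\vartheta})\big)$; (iii) the polynomial-in-derivatives factor $\frac{P_{a_1}}{[(\partial\Psi)^2+(\partial\Psi)^2]^{a_2}}$; (iv) the power $k^{b(1/2-\epsilon')}$; and (v) the product of distance-derivatives $\mathcal{D}^{(\mathbf{C})}$, together with the summand's own bounded-in-$k$ factors. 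By the Leibniz rule this produces finitely many terms. Differentiating the prefactor (i) contributes one more power of $(\partial\Psi)$ upstairs (raising $a_1$ by at most $2$, e.g.\ via a second derivative $\partial^2\Psi$ times $\partial\Psi$, or $\partial\Psi\cdot\partial\Psi$ from the chain rule on the denominator) and at most one more power of the denominator (raising $a_2$ by $1$): in the worst case $a=2a_2-a_1$ grows by $2$, and in every case the growth of $a+b+\sum(c_j-1)$ from this factor alone is $\le 2$, accounting for the single application of $P^t$. Differentiating the cutoff (ii) via the chain rule replaces $\varrho^{(b)}$ by $\varrho^{(b+1)}$, multiplies by $k^{1/2-\epsilon'}$ (so $b\mapsto b+1$), and produces one factor $\partial_{\vartheta_i}D_k=\mathcal{D}^{(1)}$; since $c=1$ contributes $c-1=0$, the net change to the left side of \eqref{eqn:bound abC r} is exactly $+1$ from the increment in $b$. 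Differentiating $\mathcal{D}^{(\mathbf{C})}$ via Leibniz either raises one of the existing $c_j$ by $1$ or appends a new $\mathcal{D}^{(1)}$ — in both cases $\sum(c_j-1)$ increases by at most $1$. Finally, differentiating $P_{a_1}$ or a bounded-in-$k$ factor only raises $a_1$ (lowering $a$), hence helps rather than hurts. Summing these contributions over the Leibniz expansion, each of the finitely many resulting terms satisfies $a+b+\sum(c_j-1)\le 2r+2=2(r+1)$, and clause (3) — that $\mathbf{C}$ is nonempty iff $b>0$ — is preserved because every new $\mathcal{D}^{(\cdot)}$ is born either alongside a $\varrho^{(b+1)}$ factor (from (ii)) or from a pre-existing $\mathcal{D}^{(\mathbf{C})}$ which by induction already required $b>0$, while the only way to create a $b>0$ term from a $b=0$ one is precisely through (ii), which simultaneously creates a $\mathcal{D}^{(1)}$.

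The main obstacle — really the only subtle point — is the precise tallying in clause (2): one must be careful that when $P^t$ hits the rational prefactor \eqref{eqn:defn di P^t}, both the numerator $\partial_{\vartheta_h}\Psi_{x,x}$ and the denominator can be differentiated, and the chain/quotient rule could a priori look like it raises $a_2$ by more than one or $a$ by more than two. I would handle this by writing out $\partial_{\vartheta_j}\!\left(\frac{\partial_{\vartheta_h}\Psi}{|\nabla_{\boldsymbol{\vartheta}}\Psi|^2}\right)$ explicitly as $\frac{\partial^2_{\vartheta_j\vartheta_h}\Psi}{|\nabla\Psi|^2} - \frac{(\partial_{\vartheta_h}\Psi)\,\partial_{\vartheta_j}|\nabla\Psi|^2}{|\nabla\Psi|^4}$ and noting $\partial_{\vartheta_j}|\nabla\Psi|^2 = 2\sum_\ell (\partial_{\vartheta_\ell}\Psi)(\partial^2_{\vartheta_j\vartheta_\ell}\Psi)$, so the second term has numerator degree $a_1+3$ over denominator power $a_2+2$, giving $a=2a_2-a_1$ unchanged there, while the first term gives $a\mapsto a+2$; this confirms the worst-case increment is exactly $2$. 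Once this single computation is in hand, the induction closes by a purely combinatorial count, and the lemma follows.
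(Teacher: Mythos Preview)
Your proposal is correct and follows essentially the same inductive bookkeeping as the paper's proof: both arguments track how $a=2a_2-a_1$, $b$, and $\sum(c_j-1)$ change under one application of $P^t$ via the Leibniz rule, and verify the total increment is at most $2$. One minor slip: in your explicit computation of the second term $\frac{(\partial_{\vartheta_h}\Psi)\,\partial_{\vartheta_j}|\nabla\Psi|^2}{|\nabla\Psi|^4}$, the numerator (times $P_{a_1}$) has degree $a_1+2$ in first derivatives, not $a_1+3$, so this term also gives $a\mapsto a+2$ rather than ``$a$ unchanged''---but since the worst case is still $+2$, your conclusion and the induction close exactly as stated.
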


Here $\varrho^{ (l)}$ is the $l$-th derivative of the one-variable real function $\varrho$.

\begin{proof}
[Proof of Lemma \ref{lem:generic summand}]
Let us set $F:=f_j\left(e^{\imath \boldsymbol{\vartheta}}\right)\,\mathcal{B}_{x,x}'$. For $r=1$, we have
\begin{eqnarray}
 \label{eqn:-2epsilon}
 \lefteqn{\frac{\partial}{\partial\vartheta_h}\left[
 \frac{\partial_{\vartheta_h}\Psi_{x,x}}{\left(\partial_{\vartheta_1}\Psi_{x,x}\right)^2
 +\left(\partial_{\vartheta_2}\Psi_{x,x}\right)^2}\, 
 F\,\right]}\\
 &=&\frac{\partial_{\vartheta_h}\Psi_{x,x}}{\left(\partial_{\vartheta_1}\Psi_{x,x}\right)^2
 +\left(\partial_{\vartheta_2}\Psi_{x,x}\right)^2}\,\frac{\partial\, F}{\partial\vartheta_h}+
  F\,\frac{\partial}{\partial\vartheta_h}\left[
 \frac{\partial_{\vartheta_h}\Psi_{x,x}}{\left(\partial_{\vartheta_1}\Psi_{x,x}\right)^2
 +\left(\partial_{\vartheta_2}\Psi_{x,x}\right)^2}\right].\nonumber
\end{eqnarray}
We have
\begin{eqnarray}
 \label{eqn:1st bound int by parts}
\lefteqn{\frac{\partial_{\vartheta_h}\Psi_{x,x}}{\left(\partial_{\vartheta_1}\Psi_{x,x}\right)^2
 +\left(\partial_{\vartheta_2}\Psi_{x,x}\right)^2}\,\frac{\partial\, F}{\partial\vartheta_h}}\\
 &=&\frac{\partial_{\vartheta_h}\Psi_{x,x}}{\left(\partial_{\vartheta_1}\Psi_{x,x}\right)^2
 +\left(\partial_{\vartheta_2}\Psi_{x,x}\right)^2}\,\left[\frac{\partial\, f_j}{\partial\vartheta_h}\,\mathcal{B}_{x,x}'+
 \frac{\partial\,\mathcal{B}_{x,x}'}{\partial\vartheta_h}\, f_j\right].\nonumber
\end{eqnarray}
Thus, in view of (\ref{eqn:defn di B'}), the first summand on the right hand side of (\ref{eqn:-2epsilon})
splits as a linear combination of terms as in the statement, with $a_1=a_2=1$, $b$ and $\mathbf{C}$ both zero,
or $a_1=a_2=1$, $b=1$, $\mathbf{C}=(1)$. Hence $a+b+\sum_j(c_j-1)=2$ in either case.
On the other hand, the second summand on the right hand side of (\ref{eqn:-2epsilon}) satisfies
\begin{eqnarray}
\label{eqn:2nd bound int by parts}
 \lefteqn{F\,\frac{\partial}{\partial\vartheta_h}\left[
 \frac{\partial_{\vartheta_h}\Psi_{x,x}}{\left(\partial_{\vartheta_1}\Psi_{x,x}\right)^2
 +\left(\partial_{\vartheta_2}\Psi_{x,x}\right)^2}\right]=\frac{F}{\left[\left(\partial_{\vartheta_1}\Psi_{x,x}\right)^2
 +\left(\partial_{\vartheta_2}\Psi_{x,x}\right)^2\right]^2}}\nonumber\\
 &&\cdot
 \left\{
 \partial^2_{\vartheta_h,\vartheta_h}\Psi_{x,x}\,\left[\left(\partial_{\vartheta_1}\Psi_{x,x}\right)^2
 +\left(\partial_{\vartheta_2}\Psi_{x,x}\right)^2
 \right]-2\,\partial_{\vartheta_h}\Psi_{x,x}\,
 \sum_{a=1}^2\partial_{\vartheta_a}\Psi_{x,x}\,\partial^2_{\vartheta_a\vartheta_h}\Psi_{x,x}\right\}\nonumber.
\end{eqnarray}
This is of the stated type with $a_1=a_2=2$, $b$ and $\mathbf{C}$ both zero. Hence $a=4-2=2$.

Passing to the inductive step, let us consider (\ref{eqn:defn di P^t}) with $\gamma$ given by
(\ref{eqn:general summand abC}), and assume that (\ref{eqn:bound abC r}) is satisfied. 
Let us write $\varrho^{ (l)}$ for the factor in front in (\ref{eqn:general summand abC}). 
We obtain a linear combination of expressions of the form
\begin{equation}
 \label{eqn:general summand abCgamma}
\frac{\partial}{\partial\vartheta_h}\left[\varrho^{ (b)}\,
\frac{P_{a_1+1}(\Psi_{x,x},\partial\Psi_{x,x})}{\left[\left(\partial_{\vartheta_1}\Psi_{x,x}\right)^2
 +\left(\partial_{\vartheta_2}\Psi_{x,x}\right)^2\right]^{a_2+1}}\,k^{b(1/2-\epsilon')}\,\mathcal{D}^{(\mathbf{C})}\right].
\end{equation}
It is clear that (\ref{eqn:general summand abCgamma}) splits as a linear combination of summands of the following forms:
\begin{equation}
 \label{eqn:form1}
 \varrho^{ (b)}\,\frac{P_{a'}(\Psi_{x,x},\partial\Psi_{x,x})}{\left[\left(\partial_{\vartheta_1}\Psi_{x,x}\right)^2
 +\left(\partial_{\vartheta_2}\Psi_{x,x}\right)^2\right]^{a_2+1}}\,k^{b(1/2-\epsilon')}\,\mathcal{D}^{(\mathbf{C})},
\end{equation}
with $a'\in \{a_1,a_1+1,a_1+2\}$; 
\begin{equation}
 \label{eqn:form2}
\varrho^{ (b)}\,\frac{P_{a_1+2}(\Psi_{x,x},\partial\Psi_{x,x})}{\left[\left(\partial_{\vartheta_1}\Psi_{x,x}\right)^2
 +\left(\partial_{\vartheta_2}\Psi_{x,x}\right)^2\right]^{a_2+2}}\,k^{b(1/2-\epsilon')}\,\mathcal{D}^{(\mathbf{C})};
\end{equation}
\begin{equation}
 \label{eqn:form3}
\varrho^{ (b+1)}\frac{P_{a_1+1}(\Psi_{x,x},\partial\Psi_{x,x})}{\left[\left(\partial_{\vartheta_1}\Psi_{x,x}\right)^2
 +\left(\partial_{\vartheta_2}\Psi_{x,x}\right)^2\right]^{a_2+1}}\,k^{(b+1)(1/2-\epsilon')}\,\mathcal{D}^{(\mathbf{C}')},
\end{equation}
where $\mathbf{C}'$ is of the form $\mathbf{C}'=(1,\mathbf{C})$;
\begin{equation}
 \label{eqn:form4}
\varrho^{ (b)}\,\frac{P_{a_1+1}(\Psi_{x,x},\partial\Psi_{x,x})}{\left[\left(\partial_{\vartheta_1}\Psi_{x,x}\right)^2
 +\left(\partial_{\vartheta_2}\Psi_{x,x}\right)^2\right]^{a_2+1}}\,k^{b(1/2-\epsilon')}\,\mathcal{D}^{(\mathbf{C}')},
\end{equation}
where $\mathbf{C}'$ is obtained from $\mathbf{C}$ (if the latter is not zero)
by replacing one of the $c_j$'s by $c_j+1$, and leaving all the others unchanged.

In all these cases we obtain a term of the form (\ref{eqn:general summand abC}), satisfying (\ref{eqn:bound abC r}) with
$r$ replaced by $r+1$. This completes the proof of Lemma \ref{lem:generic summand}.
\end{proof}

As $0<\epsilon'<\epsilon$, the general summand (\ref{eqn:general summand abC}) is 
\begin{eqnarray*}
 O\left(k^{a(1/2-\epsilon)+[b+\sum_j(c_j-1)](1/2-\epsilon')}\right)&=&
O\left(k^{[a+b+\sum_j(c_j-1)](1/2-\epsilon')}\right)\\
&=&O\left(k^{2r(1/2-\epsilon')}\right)=
O\left(k^{r(1-2\epsilon')}\right).
\end{eqnarray*}

Making use of the latter estimate in (\ref{eqn:integration by parts ktimes}), we obtain
the following:

\begin{cor}
 \label{cor:integrand estimate}
For any $r\in \mathbb{N}$,
 \begin{eqnarray}
\label{eqn:integration by parts ktimes1}
\int_{U_j}e^{\imath \,k\,\Psi_{x,x}}\,f(t)\,\mathcal{B}_{x,x}'\,\mathrm{d}\boldsymbol{\vartheta}
&=&O\left(k^{-2r\,\epsilon'}\right).
\end{eqnarray}
\end{cor}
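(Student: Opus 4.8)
The statement is obtained by assembling ingredients that are already in place. The plan is to start from the $r$-fold integration-by-parts identity (\ref{eqn:integration by parts ktimes}), which for each fixed $r\in\mathbb{N}$ rewrites the integral as $\imath^r k^{-r}\int_{U} e^{\imath k\Psi_{x,x}}\,(P^t)^r\big(f(t)\,\mathcal{B}_{x,x}'\big)\,\mathrm{d}\boldsymbol{\vartheta}$, and then to dominate the remaining integral. Since $\Im\big(\psi(\cdot,\cdot)\big)\ge 0$ by (\ref{eqn:bound on Im psi}) we have $\big|e^{\imath k\Psi_{x,x}}\big|\le 1$ on the domain of integration, and $U$ has finite measure; hence it suffices to bound $\big|(P^t)^r(f\,\mathcal{B}_{x,x}')\big|$ pointwise, uniformly in $x$ and in $gT$, on $\mathrm{supp}\,\mathcal{B}_{x,x}'$, by $O\big(k^{r(1-2\epsilon')}\big)$: feeding this back into (\ref{eqn:integration by parts ktimes}) then yields $O(k^{-r})\cdot O\big(k^{r(1-2\epsilon')}\big)=O\big(k^{-2r\epsilon'}\big)$.

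For that pointwise bound I would invoke Lemma \ref{lem:generic summand}: $(P^t)^r(f\,\mathcal{B}_{x,x}')$ is a finite linear combination, with coefficients bounded in $k$, of terms of the form (\ref{eqn:general summand abC}), with $a=2a_2-a_1$, $b$ and $\mathbf{C}=(c_1,\dots,c_s)$ subject to $a+b+\sum_j(c_j-1)\le 2r$ by (\ref{eqn:bound abC r}). Each such term is controlled factor by factor. The cutoff derivative $\varrho^{(b)}\big(k^{1/2-\epsilon'}D_k(\boldsymbol{\vartheta})\big)$ is uniformly bounded. The first $\boldsymbol{\vartheta}$-derivatives of $\Psi_{x,x}$ are $O(1)$ from above by (\ref{eqn:vartheta derivative1}) and $\gtrsim k^{\epsilon-1/2}$ from below by Lemma \ref{lem:theta differential epsilon} (the standing hypothesis (\ref{eqn:distance to GMTnu}) being in force), so, the numerator $P_{a_1}$ being homogeneous of degree $a_1$ in those first derivatives (its remaining, higher-order, derivative factors being $O(1)$), the rational factor $P_{a_1}\big/[(\partial_{\vartheta_1}\Psi_{x,x})^2+(\partial_{\vartheta_2}\Psi_{x,x})^2]^{a_2}$ is $O\big(\|\mathrm{d}^{(\boldsymbol{\vartheta})}\Psi_{x,x}\|^{-a}\big)=O\big(k^{a(1/2-\epsilon)}\big)$. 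Finally $\mathcal{D}^{(\mathbf{C})}=O\big(k^{(1/2-\epsilon')\sum_j(c_j-1)}\big)$ by (\ref{eqn:order of growth D}), which itself rests on the homogeneous expansion of $\boldsymbol{\vartheta}\mapsto\mathrm{dist}_X\big(x(g,\boldsymbol{\vartheta}),x\big)$ supplied by Lemma \ref{lem:bound on distance theta}.

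Multiplying these contributions and using $0<\epsilon'<\epsilon$ to weaken $(1/2-\epsilon)$ to $(1/2-\epsilon')$ in the exponent, the generic summand (\ref{eqn:general summand abC}) is
\[
O\!\left(k^{a(1/2-\epsilon)+\left[b+\sum_j(c_j-1)\right](1/2-\epsilon')}\right)
=O\!\left(k^{\left[a+b+\sum_j(c_j-1)\right](1/2-\epsilon')}\right)
=O\!\left(k^{2r(1/2-\epsilon')}\right)=O\!\left(k^{r(1-2\epsilon')}\right)
\]
by (\ref{eqn:bound abC r}); summing the finitely many summands preserves this bound, and the argument of the first paragraph then closes the proof. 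The only genuinely delicate point in this scheme is the structural description of $(P^t)^r(f\,\mathcal{B}_{x,x}')$ together with the bookkeeping inequality (\ref{eqn:bound abC r}) — precisely the content of Lemma \ref{lem:generic summand} — which controls the proliferation of terms produced by repeatedly applying the transpose operator $P^t$ while tracking how many of the derivatives land on the cutoff $\varrho\big(k^{1/2-\epsilon'}D_k\big)$; once that is granted, Corollary \ref{cor:integrand estimate} is a direct substitution of the a priori derivative bounds recalled above.
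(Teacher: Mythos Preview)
Your argument is correct and follows precisely the paper's own route: it combines the iterated integration-by-parts identity (\ref{eqn:integration by parts ktimes}) with the structural decomposition of $(P^t)^r(f\,\mathcal{B}_{x,x}')$ from Lemma \ref{lem:generic summand}, bounds each factor exactly as the paper does (using Lemma \ref{lem:theta differential epsilon} and (\ref{eqn:order of growth D})), and arrives at the same chain of estimates displayed just before the corollary. If anything you make explicit a couple of points the paper leaves implicit (the bound $|e^{\imath k\Psi_{x,x}}|\le 1$ from $\Im\psi\ge 0$, and the $O(1)$ upper bound on $\partial_{\boldsymbol{\vartheta}}\Psi_{x,x}$), but the strategy is identical.
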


The proof of Lemma \ref{lem:Pika} is thus complete.
\end{proof}

Given (\ref{eqn:integrale per Pi'3}), Proposition \ref{prop:diagonal case distance MGO} follows
from Lemmata \ref{lem:Pikb} and \ref{lem:Pika}.
\end{proof}

Thus the statement of Theorem \ref{thm:rapid decrease slow} holds true when $x=y$.
The general case follows from this and the Schwartz inequality
$$
\big|\Pi_{k\boldsymbol{\nu}}(x,y)\big|\le
\sqrt{\Pi_{k\boldsymbol{\nu}}(x,x)}\,\sqrt{\Pi_{k\boldsymbol{\nu}}(y,y)};
$$
in fact both factors on the right hand side have at most polynomial growth in $k$ by Lemma \ref{lem:a priori bound}, and 
if say (\ref{eqn:distance to GMTnu}) holds, then the first one is rapidly decreasing.
The proof of Theorem \ref{thm:rapid decrease slow} is complete.
\end{proof}

\section{Proof of Theorems \ref{thm:border pointwise asymptotics}, \ref{thm:border rescaled asymptotics} and 
\ref{thm:outer dimension estimate}}

\subsection{Preliminaries on local rescaled asymptotics}

\label{sctn:integral formula rescaled asymptotics}

In the proof of Theorems \ref{thm:border pointwise asymptotics}, \ref{thm:border rescaled asymptotics}  and 
\ref{thm:outer dimension estimate},
we are interested in the asymptotics of $\Pi_{k\boldsymbol{\nu}}(x',x'')$ when $(x',x'')$ approaches the diagonal of $X^G_{\mathcal{O}}$
in $X\times X$ along appropriate directions and at a suitable pace. 

In Theorems \ref{thm:border pointwise asymptotics}  and 
\ref{thm:outer dimension estimate}, we consider $x'=x''$  
in a shrinking \lq one-sided\rq \, neighborhood of $X^G_{\mathcal{O}}$. In Theorem \ref{thm:border rescaled asymptotics},
we shall assume that $(x',x'')$ approaches the diagonal in $X^G_{\mathcal{O}}$ along \lq horizontal\rq \, directions
orthogonal to the orbits. We shall treat the former case in detail, and then briefly discuss the necessary changes for the latter.

Suppose $x\in X^G_{\mathcal{O}}$ and let $m=\pi(x)$. Let us choose a system of HLC centered at $x$, and let us consider the
collection of points
\begin{equation}
 \label{eqn:defn di xtau}
x_{\tau,k}:=x+\frac{\tau}{\sqrt{k}}\,\Upsilon_{\boldsymbol{\nu}}(m),
\end{equation}
where $k=1,2,\ldots$, and $|\tau|\le C\,k^\epsilon$ for some fixed $C>0$ and $\epsilon \in (0,1/6)$. 
The sign of $\tau$ is chosen so that
$\tau\, \Upsilon_{\boldsymbol{\nu}}(m)$ is either zero or outer oriented. Thus $\tau\, (\nu_1+\nu_2)\ge 0$.
We shall provide an integral espression for 
the asymptotics of $\Pi_{k\boldsymbol{\nu}}(x_{\tau,k},x_{\tau,k})$ when $k\rightarrow +\infty$.

Applying as before the Weyl integration and character formulae, inserting the 
microlocal description of $\Pi$ as an FIO, and making use of the rescaling $u\mapsto k\,u$,
$\boldsymbol{\vartheta}\mapsto \boldsymbol{\vartheta}/\sqrt{k}$, we obtain that, as $k\rightarrow +\infty$,

\begin{eqnarray}
 \label{eqn:projection composedk}
\lefteqn{
\Pi_{k\boldsymbol{\nu}}(x_{\tau,k},x_{\tau,k})
}\\
&\sim&
\dfrac{k\,(\nu_1-\nu_2)}{(2\pi)^2}\,
\int_{G/T}\,\mathrm{d}V_{G/T}(gT)\,\int_{-\infty}^{\infty}\mathrm{d}\vartheta_1\,\int_{-\infty}^{\infty}\mathrm{d}\vartheta_2\,
\int_0^{+\infty}\,\mathrm{d}u\nonumber\\
&&\left[
e^{\imath\,k\,
\left[u\,\psi\left(\widetilde{\mu}_{g\,e^{-\imath\boldsymbol{\vartheta}/\sqrt{k}}\,g^{-1}}(x_{\tau,k}),x_{\tau,k}\right)
-\langle\boldsymbol{\vartheta},\boldsymbol{\nu}\rangle/\sqrt{k}
\right]}
\right.\nonumber\\
&& \left. \cdot \Delta\left(e^{\imath\boldsymbol{\vartheta}/\sqrt{k}}\right)\,
s\left(\widetilde{\mu}_{g\,e^{-\imath\boldsymbol{\vartheta}/\sqrt{k}}\,g^{-1}}(x_{\tau,k}),x_{\tau,k},k\,u\right)\right].\nonumber
\end{eqnarray}
Integration
in $\boldsymbol{\vartheta}=(\vartheta_1, \vartheta_2)$ is over a ball centered at the origin and
radius $O\left( k^\epsilon \right)$ in $\mathbb{R}^2$. A cut-off function of the form
$\varrho \left( k^{-\epsilon} \, \boldsymbol{\vartheta} \right)$
is implicitly incorporated into the amplitude.

In order to express the previous phase more explicitly, we need the following Definition.

\begin{defn}
\label{defn:def di Psi}
 Let us define $\boldsymbol{\rho}=\boldsymbol{\rho}_m:G/T\rightarrow \mathfrak{t}\cong \mathbb{R}^2$,
$g\,T\mapsto \boldsymbol{\rho}_{g\,T}$, by requiring
$$
\langle \boldsymbol{\rho}_{g\,T},\boldsymbol{\vartheta}\rangle=
\omega_m\Big(\mathrm{Ad}_g (\imath\, D_{\boldsymbol{\vartheta}})_M(m), \Upsilon_{\boldsymbol{\nu}}(m)\Big)
\quad (\mathfrak{\vartheta}\in \mathbb{R}^2).
$$

Next, let the symmetric and positive definite matrix $E( g\,T )=E_x( g\,T )$ be defined by the equality 
$$
\boldsymbol{\vartheta}^t \,E( g\,T )\, \boldsymbol{\vartheta} =
\big\|  \mathrm{Ad}_g (\imath\, D_{\boldsymbol{\vartheta}}) _X(x) \big\| _x ^2 
\quad (\mathfrak{\vartheta}\in \mathbb{R}^2).
$$

Furthermore, let us define 
$\widetilde{\Psi}(u, g\,T , \tau )=\widetilde{\Psi}_m(u, g\,T , \tau )\in \mathfrak{t}$ by setting
\begin{eqnarray*}
 \widetilde{\Psi}(u, g\,T ) :=
 u\,  
\mathrm{ diag }
\big ( \mathrm{Ad}_{g^{-1}} \big( \Phi_G' (m) \big) 
%
-  \boldsymbol{\nu}, \quad  \Phi_G' (m) := -\imath \,\Phi_G (m)  .
\end{eqnarray*}

Finally, let us pose
$$
\Psi(u, g\,T , \boldsymbol{\vartheta} )
:=\left\langle \widetilde{\Psi}(u, g\,T  ), \boldsymbol{\vartheta} \right\rangle.
$$
\end{defn}

The following Proposition is proved by a 
rather lengthy computation, along the lines of those  in the proof of Theorem \ref{thm:rapid decrease slow}
and in \cite{pao-IJM}.

\begin{prop}
\label{prop:phase border} 
\begin{eqnarray*}
\lefteqn{
\imath \, k \, \left [ 
u \, \psi \left( \widetilde{\mu}_{ g \, e^{-\imath\boldsymbol{\vartheta}/\sqrt{k}}\,g^{-1}} (x_{\tau,k}) , x_{\tau,k} \right)
- \frac{1}{\sqrt{k}}\, \langle \boldsymbol{\nu} , \boldsymbol{\vartheta} \rangle  \right]}\\
&=& 
\imath\, \sqrt{k}\, \, 
\Psi(u, g\,T , \boldsymbol{\vartheta} )  - \frac{ u }{ 2 } \, 
\boldsymbol{\vartheta} ^t \, E( g\,T )\, \boldsymbol{\vartheta} +2\,\imath\,u\,\tau \,\big\langle\boldsymbol{\rho}_{g\,T} ,
\boldsymbol{\vartheta}\big\rangle  + k \, R_3 \left( \frac{\tau}{\sqrt{k}},\,\frac { \boldsymbol{\vartheta} } {\sqrt{k}} \right). \nonumber
\end{eqnarray*}
\end{prop}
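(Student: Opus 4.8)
The strategy is a Taylor expansion of the phase around the base point $x$, keeping careful track of the rescaling $\boldsymbol{\vartheta}\mapsto\boldsymbol{\vartheta}/\sqrt{k}$, $\tau/\sqrt k$, $u\mapsto k\,u$. First I would write $x_{\tau,k}=x+(\tau/\sqrt k)\,\Upsilon_{\boldsymbol{\nu}}(m)$ in the chosen HLC and expand the function
$$
(\sigma,\boldsymbol{\vartheta})\mapsto u\,\psi\!\left(\widetilde{\mu}_{g\,e^{-\imath\boldsymbol{\vartheta}}\,g^{-1}}(x+\sigma\,\Upsilon_{\boldsymbol{\nu}}(m)),\,x+\sigma\,\Upsilon_{\boldsymbol{\nu}}(m)\right)-\langle\boldsymbol{\nu},\boldsymbol{\vartheta}\rangle
$$
to third order in $(\sigma,\boldsymbol{\vartheta})$ about $(0,0)$, then substitute $\sigma=\tau/\sqrt k$, $\boldsymbol{\vartheta}\rightsquigarrow\boldsymbol{\vartheta}/\sqrt k$ and multiply by $\imath k$. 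The zeroth order term in $\boldsymbol{\vartheta}$ vanishes because $\psi(x,x)=0$. The constant (in $\boldsymbol{\vartheta}$) terms involving only $\sigma$ likewise vanish since $\psi$ vanishes on the diagonal, so no purely-$\tau$ contribution survives outside the remainder.

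The heart of the computation is the identification of the first- and second-order terms. For the term linear in $\boldsymbol{\vartheta}$ with $\sigma=0$: using $\mathrm{d}_{(x,x)}\psi=(\alpha_x,-\alpha_x)$ from (\ref{eqn:differential of psi diagonal}), the definition (\ref{eqn:contact lift xi}) of the contact lift, and $G$-equivariance of $\Phi_G$ exactly as in (\ref{eqn:directional derivative x})--(\ref{eqn:directional derivative psi}), the $\boldsymbol{\vartheta}$-derivative produces $u\,\langle\tfrac{1}{\imath}\,\Phi_T(\mu_{g^{-1}}(m)),\boldsymbol{\vartheta}\rangle=u\,\langle\mathrm{diag}(\mathrm{Ad}_{g^{-1}}\Phi'_G(m)),\boldsymbol{\vartheta}\rangle$; combined with $-\langle\boldsymbol{\nu},\boldsymbol{\vartheta}\rangle$ and the overall factor $\imath k$, after the $1/\sqrt k$ rescaling this is precisely $\imath\sqrt k\,\Psi(u,gT,\boldsymbol{\vartheta})$ with $\widetilde\Psi$ as in Definition \ref{defn:def di Psi}. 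The cross term linear in both $\sigma$ and $\boldsymbol{\vartheta}$: differentiating once in the $\Upsilon$-direction and once in $\boldsymbol{\vartheta}$, one again uses $\mathrm{d}\psi$ on the diagonal together with $\mathrm{d}\alpha=2\,\pi^*\omega$ and (\ref{eqn:JonX}) to convert the result into $2\,\omega_m(\mathrm{Ad}_g(\imath D_{\boldsymbol{\vartheta}})_M(m),\Upsilon_{\boldsymbol{\nu}}(m))$ times $u$, which by definition of $\boldsymbol{\rho}_{gT}$ is $2\,u\,\langle\boldsymbol{\rho}_{gT},\boldsymbol{\vartheta}\rangle$; the scaling $\sigma=\tau/\sqrt k$, $\boldsymbol{\vartheta}/\sqrt k$, $\times\,\imath k$ gives the term $2\,\imath\,u\,\tau\,\langle\boldsymbol{\rho}_{gT},\boldsymbol{\vartheta}\rangle$. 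The term quadratic in $\boldsymbol{\vartheta}$ with $\sigma=0$: here one needs the Hessian of $\psi$ along the diagonal, whose real part (the Gaussian part) is $-\tfrac12\|\cdot\|^2$ by Corollary 1.3 of \cite{boutet-sjostraend} / the form of $\psi_2$ in Definition \ref{defn:psi2}; evaluating on $-\mathrm{Ad}_g(\imath D_{\boldsymbol{\vartheta}})_X(x)$ (the $\boldsymbol{\vartheta}$-velocity of $x(g,\boldsymbol{\vartheta})$) yields $-\tfrac12\|\mathrm{Ad}_g(\imath D_{\boldsymbol{\vartheta}})_X(x)\|^2_x=-\tfrac12\,\boldsymbol{\vartheta}^t E(gT)\boldsymbol{\vartheta}$ after the factor $u$; the imaginary (symplectic) part of that Hessian contributes to the $O(\delta)$-type corrections and is absorbed into the remainder together with its interaction with the rescaling. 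All remaining terms are of total degree $\ge 3$ in $(\sigma,\boldsymbol{\vartheta})$, hence after substitution are of the form $k\cdot R_3(\tau/\sqrt k,\boldsymbol{\vartheta}/\sqrt k)$ with $R_3$ vanishing to third order; this is the claimed remainder.

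The main obstacle is bookkeeping: separating the Taylor coefficients of $\psi$ at $(x,x)$ into the pieces that produce $\Psi$, the $E$-quadratic form, and the $\boldsymbol{\rho}$-cross term, while verifying that (i) the purely-$\tau$ and the $\tau^2$ contributions are genuinely absorbed into $k\,R_3$ (they are, since each carries at least one extra power of $1/\sqrt k$ relative to its naive size), and (ii) the symplectic part of the diagonal Hessian, the derivatives of the amplitude $\Delta(e^{\imath\boldsymbol{\vartheta}/\sqrt k})$, and the $\sigma$-dependence of the basepoint in the second slot of $\psi$ all land in the remainder rather than altering the three displayed leading terms. I would organize this by a single Taylor-with-remainder statement for $\psi$ near the diagonal (as in the analogous computation of \cite{pao-IJM} and in the proof of Theorem \ref{thm:rapid decrease slow}), then collect coefficients degree by degree; the identifications with $\widetilde\Psi$, $E(gT)$, $\boldsymbol{\rho}_{gT}$ are then immediate from Definition \ref{defn:def di Psi}.
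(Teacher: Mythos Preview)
Your proposal is correct and follows essentially the same approach the paper indicates: the paper states only that the Proposition ``is proved by a rather lengthy computation, along the lines of those in the proof of Theorem \ref{thm:rapid decrease slow} and in \cite{pao-IJM}'', and your degree-by-degree Taylor expansion in $(\sigma,\boldsymbol{\vartheta})$ about $(0,0)$, with the identifications of the linear, quadratic, and cross terms via (\ref{eqn:differential of psi diagonal}), (\ref{eqn:contact lift xi}), and the diagonal Hessian of $\psi$, is precisely that computation. The only point worth a moment's extra care is that the quadratic-in-$\boldsymbol{\vartheta}$ term produces the $X$-norm $\|\mathrm{Ad}_g(\imath D_{\boldsymbol{\vartheta}})_X(x)\|_x^2$ rather than just the $M$-norm: the vertical component of $\mathrm{Ad}_g(\imath D_{\boldsymbol{\vartheta}})_X$ contributes a second-order angular displacement whose pairing with $\imath\,\mathrm{d}_{(x,x)}\psi$ supplies the $\langle\Phi_G(m),\mathrm{Ad}_g(\imath D_{\boldsymbol{\vartheta}})\rangle^2$ piece, matching Definition \ref{defn:def di Psi}.
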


\begin{cor}
 \label{cor:projection composedk oscillatory}
(\ref{eqn:projection composedk}) may be rewritten as follows:
 \begin{eqnarray}
 \label{eqn:projection composedk1}
\lefteqn{
\Pi_{k\boldsymbol{\nu}}(x_{\tau,k},x_{\tau,k})
}\\
&\sim&
\dfrac{k\,(\nu_1-\nu_2)}{(2\pi)^2}\,
\int_{G/T}\,\mathrm{d}V_{G/T}(gT)\,\int_{-\infty}^{\infty}\mathrm{d}\vartheta_1\,\int_{-\infty}^{\infty}\mathrm{d}\vartheta_2\,
\int_0^{+\infty}\,\mathrm{d}u\nonumber\\
&&\left[
e^{\imath\, \sqrt{k}\, \, 
\Psi(u, g\,T , \boldsymbol{\vartheta} )}\,
\mathcal{A}_{k,\boldsymbol{\nu}}(u, g\,T , \tau, \boldsymbol{\vartheta} )
\right],\nonumber
\end{eqnarray}
where (leaving implicit the dependence on $x$)
\begin{eqnarray}
 \label{eqn:defn di Aknu}
\mathcal{A}_{k,\boldsymbol{\nu}}(u, g\,T , \tau, \boldsymbol{\vartheta} ) 
&:= &e^{ - \frac{ u }{ 2 } \, 
\boldsymbol{\vartheta} ^t \, E( g\,T )\, \boldsymbol{\vartheta} +2\,\imath\,u\,\tau \,\big\langle\boldsymbol{\rho}_{g\,T} ,
\boldsymbol{\vartheta}\big\rangle  
+ k \, R_3 \left(\frac{\tau}{\sqrt{k}}, \frac { \boldsymbol{\vartheta} } {\sqrt{k}} \right)}\,
\Delta\left(e^{\imath\boldsymbol{\vartheta}/\sqrt{k}}\right)\nonumber\\
&& \cdot s\left(\widetilde{\mu}_{g\,e^{-\imath\boldsymbol{\vartheta}/\sqrt{k}}\,g^{-1}}(x_{\tau,k}),x_{\tau,k},k\,u\right).
\end{eqnarray}
 
\end{cor}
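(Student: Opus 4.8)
The plan is to substitute the expansion from Proposition \ref{prop:phase border} directly into the oscillatory integral \eqref{eqn:projection composedk} and reorganize the exponent into a rapidly oscillating factor, a Gaussian damping factor, and a lower-order correction. First I would recall that \eqref{eqn:projection composedk} expresses $\Pi_{k\boldsymbol{\nu}}(x_{\tau,k},x_{\tau,k})$ (up to a rapidly decaying error) as the integral over $G/T \times \mathbb{R}^2 \times (0,+\infty)$ of $e^{\imath k \Phi}$ times the amplitude $\Delta(e^{\imath\boldsymbol{\vartheta}/\sqrt{k}})\,s(\cdots)$, where $\Phi$ is the full phase
$$
\Phi = u\,\psi\!\left(\widetilde{\mu}_{g\,e^{-\imath\boldsymbol{\vartheta}/\sqrt{k}}\,g^{-1}}(x_{\tau,k}),x_{\tau,k}\right) - \frac{1}{\sqrt{k}}\langle\boldsymbol{\nu},\boldsymbol{\vartheta}\rangle.
$$
By Proposition \ref{prop:phase border}, $\imath k \Phi = \imath\sqrt{k}\,\Psi(u,gT,\boldsymbol{\vartheta}) - \tfrac{u}{2}\boldsymbol{\vartheta}^t E(gT)\boldsymbol{\vartheta} + 2\imath u\tau\langle\boldsymbol{\rho}_{gT},\boldsymbol{\vartheta}\rangle + k R_3(\tau/\sqrt{k},\boldsymbol{\vartheta}/\sqrt{k})$. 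I would then factor out $e^{\imath\sqrt{k}\Psi(u,gT,\boldsymbol{\vartheta})}$ and absorb everything else — namely $e^{-\frac{u}{2}\boldsymbol{\vartheta}^t E(gT)\boldsymbol{\vartheta} + 2\imath u\tau\langle\boldsymbol{\rho}_{gT},\boldsymbol{\vartheta}\rangle + k R_3}$ together with the Vandermonde factor $\Delta(e^{\imath\boldsymbol{\vartheta}/\sqrt{k}})$ and the symbol $s(\cdots,ku)$ — into the new amplitude $\mathcal{A}_{k,\boldsymbol{\nu}}(u,gT,\tau,\boldsymbol{\vartheta})$ as defined in \eqref{eqn:defn di Aknu}. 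This is essentially a bookkeeping step: the two displayed formulas \eqref{eqn:projection composedk} and \eqref{eqn:projection composedk1} are literally the same integral rewritten after applying the Proposition, with no further estimates required beyond those already implicit in \eqref{eqn:projection composedk}.

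The one point that deserves care is the role of the cut-off and the region of integration. I would note that integration in $\boldsymbol{\vartheta}$ is effectively over a ball of radius $O(k^\epsilon)$ (a factor $\varrho(k^{-\epsilon}\boldsymbol{\vartheta})$ being built into the amplitude, as stated after \eqref{eqn:projection composedk}); since $\epsilon \in (0,1/6)$ and $|\tau| \le C k^\epsilon$, the arguments $\tau/\sqrt{k}$ and $\boldsymbol{\vartheta}/\sqrt{k}$ both lie in a shrinking neighborhood of the origin, so the Taylor remainder $R_3$ (cubic and higher in its two arguments jointly) is well-defined and the term $k R_3(\tau/\sqrt{k},\boldsymbol{\vartheta}/\sqrt{k})$ is $O(k^{-1/2+3\epsilon}) = o(1)$ on the relevant region; hence $e^{kR_3}$ is a bounded smooth factor and legitimately belongs in the amplitude. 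Likewise $\Delta(e^{\imath\boldsymbol{\vartheta}/\sqrt{k}}) = \imath\boldsymbol{\vartheta}\cdot(\text{something})/\sqrt{k} + \cdots$ vanishes to first order but is smooth, so it too is part of $\mathcal{A}_{k,\boldsymbol{\nu}}$. No genuine difficulty arises here — the substantive analytic work is entirely contained in Proposition \ref{prop:phase border}, which is granted.

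In short, the proof is: invoke Proposition \ref{prop:phase border} to rewrite the exponent in \eqref{eqn:projection composedk}; split $e^{\imath k\Phi}$ as $e^{\imath\sqrt{k}\Psi}\cdot e^{-\frac{u}{2}\boldsymbol{\vartheta}^t E\boldsymbol{\vartheta} + 2\imath u\tau\langle\boldsymbol{\rho}_{gT},\boldsymbol{\vartheta}\rangle + kR_3}$; collect the second exponential factor with $\Delta(e^{\imath\boldsymbol{\vartheta}/\sqrt{k}})$ and $s(\cdots,ku)$ into $\mathcal{A}_{k,\boldsymbol{\nu}}$; and observe that all manipulations are valid on the effective domain $|\boldsymbol{\vartheta}| = O(k^\epsilon)$ with $\epsilon < 1/6$. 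The main (and only) obstacle is simply ensuring that the remainder $kR_3$ stays controlled on that domain, which follows from the homogeneity bookkeeping on $\epsilon$; everything else is a direct rewrite yielding \eqref{eqn:projection composedk1} and \eqref{eqn:defn di Aknu}.
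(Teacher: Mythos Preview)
Your proposal is correct and matches the paper's approach: the corollary is stated there without proof precisely because it is a direct rewrite of \eqref{eqn:projection composedk} using the phase expansion from Proposition~\ref{prop:phase border}, with the non-oscillatory exponential factors, the Vandermonde $\Delta$, and the symbol $s$ absorbed into the amplitude $\mathcal{A}_{k,\boldsymbol{\nu}}$. Your remarks on the cut-off in $\boldsymbol{\vartheta}$ and the control of $kR_3$ on the region $|\boldsymbol{\vartheta}|=O(k^\epsilon)$ are exactly the bookkeeping the paper leaves implicit.
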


Let $h_m\,T \in G/T$ be the
unique coset such that $h_m^{-1}\, \Phi_G(m)\, h_m$ is diagonal. 
Then only a rapidly decreasing contribution to the asymptotics is lost in (\ref{eqn:projection composedk1}), 
if integration 
in $\mathrm{d}V_{G/T}$ is localized in a small neighborhood of $h_m\,T$.
In the following, a $\mathcal{C}^\infty$ bump function on $G/T$, supported in a small neighborhood of $h_m \, T$
and identically equal to $1$ near $h_m \, T$, will be implicitly incorporated into the amplitude (\ref{eqn:defn di Aknu}).

For some choice of $h_m\in h_m\,T$ and $\delta>0$ sufficiently small, let us consider the real-analytic map 
$$
h: w \in B(0;\delta)\subset \mathbb{C} \mapsto h (w) := h_m \, \exp \left ( \imath 
\begin{pmatrix}
 0 & w \\
 \overline{w} & 0
\end{pmatrix} 
\right ) \in G.
$$
By composition with the projection
$\pi : G \rightarrow G/T $, we obtain a real-analytic coordinate chart on $G/T$ centered at $h_m\, T \in G/T$,
given by $w \in B(0;\delta) \mapsto h(w) \, T \in G/T$. 
The Haar volume form on $G/T$ has the form 
$\mathcal{V}_{G/T}(w) \, \mathrm{d}V_{\mathbb{C}} (w)$, where $ \mathrm{d}V_{\mathbb{C}} (w)$
is the Lebesgue measure on $\mathbb{C}$, and $\mathcal{V}_{G/T}$ is a uniquely determined 
$\mathcal{C}^\infty$ positive function on $B(0;\delta)$. 
We record the following statements, whose proofs we shall omit for the sake of brevity.

\begin{lem}
 \label{lem:VGT}
$\mathcal{V}_{G/T}$ is rotationally invariant, that is,
$$
\mathcal{V}_{G/T}(w) = \mathcal{V}_{G/T} \left( e^{ \imath \,\theta } \, w \right),
$$
for all $w \in B ( 0 ; \delta )$ and $e^{ \imath \,\theta } \in S^1$. In particular,
$\mathcal{V}_{G/T}$ is given by a convergent power series in $r^2=|w|^2$ on $B ( 0 ; \delta )$.
\end{lem}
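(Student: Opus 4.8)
The plan is to realize the rotation $w\mapsto e^{\imath\theta}\,w$ in the chart $w\mapsto h(w)\,T$ as the restriction of a measure-preserving diffeomorphism of $G/T$ that fixes the base point $h_m\,T$. Concretely, for $\theta\in\mathbb{R}$ I would set $t_\theta:=\mathrm{diag}\left(e^{\imath\theta/2},e^{-\imath\theta/2}\right)\in T$ and $s_\theta:=h_m\,t_\theta\,h_m^{-1}\in G$, and record the elementary conjugation identity
\[
t_\theta\,\begin{pmatrix}0 & w\\ \overline{w} & 0\end{pmatrix}\,t_\theta^{-1}
=\begin{pmatrix}0 & e^{\imath\theta}\,w\\ e^{-\imath\theta}\,\overline{w} & 0\end{pmatrix}.
\]
Conjugating through the matrix exponential gives $t_\theta\,h_m^{-1}\,h(w)\,t_\theta^{-1}=h_m^{-1}\,h\left(e^{\imath\theta}w\right)$, and rearranging, while using $t_\theta^{-1}\in T$, yields $s_\theta\,h(w)\,T=h\left(e^{\imath\theta}w\right)T$; on the other hand $s_\theta\,h_m\,T=h_m\,t_\theta\,T=h_m\,T$. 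Hence the left translation $L_{s_\theta}\colon g\,T\mapsto s_\theta\,g\,T$ fixes $h_m\,T$ and, read in the coordinate $w$, is precisely the rotation $w\mapsto e^{\imath\theta}w$.

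Next I would invoke the fact that, $G$ being compact, its Haar measure is bi-invariant; consequently the induced quotient measure $\mathrm{d}V_{G/T}$ on $G/T$ is invariant under left translations, so $L_{s_\theta}^*\,\mathrm{d}V_{G/T}=\mathrm{d}V_{G/T}$. Pulling this identity back along the chart, and using that $L_{s_\theta}$ corresponds to $w\mapsto e^{\imath\theta}w$ while the Lebesgue measure $\mathrm{d}V_{\mathbb{C}}$ is rotation invariant, one gets $\mathcal{V}_{G/T}\left(e^{\imath\theta}w\right)\mathrm{d}V_{\mathbb{C}}(w)=\mathcal{V}_{G/T}(w)\,\mathrm{d}V_{\mathbb{C}}(w)$, that is $\mathcal{V}_{G/T}(w)=\mathcal{V}_{G/T}\left(e^{\imath\theta}w\right)$ for all $w\in B(0;\delta)$ and $\theta\in\mathbb{R}$; this is the rotational invariance claimed first.

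For the final assertion I would note that $\mathcal{V}_{G/T}$ is real-analytic on $B(0;\delta)$: the exponential map and the group operations of $G$ are real-analytic, so $w\mapsto h(w)\,T$ is a real-analytic chart, and the Haar density of $G$ — hence the quotient density on $G/T$ — is real-analytic in real-analytic coordinates. Expanding $\mathcal{V}_{G/T}$ as a convergent power series $\sum_{j,\ell\ge 0}c_{j\ell}\,w^{j}\,\overline{w}^{\,\ell}$ on $B(0;\delta)$ and imposing the rotational invariance just established forces $c_{j\ell}=0$ whenever $j\neq\ell$, leaving $\mathcal{V}_{G/T}(w)=\sum_{j\ge 0}c_{jj}\,|w|^{2j}$, a convergent power series in $r^{2}=|w|^{2}$. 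I do not expect any genuine obstacle here: the only two points deserving care are the bookkeeping that the chart intertwines $L_{s_\theta}$ with the rotation, which is settled by the short computation above, and the real-analyticity of the Haar density, which is standard; so the main work is simply the clean organization of these routine steps.
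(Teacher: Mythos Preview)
Your argument is correct and is the natural one. The paper itself omits the proof of this lemma (``whose proofs we shall omit for the sake of brevity''), so there is nothing to compare against; your realization of the rotation $w\mapsto e^{\imath\theta}w$ as left translation by $s_\theta=h_m\,t_\theta\,h_m^{-1}$ via the conjugation identity, together with the $G$-invariance of $\mathrm{d}V_{G/T}$ and the rotational invariance of Lebesgue measure, is exactly the expected mechanism, and the passage from rotational invariance plus real-analyticity to a power series in $r^2$ is standard.
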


Thus we shall write
\begin{equation}
 \label{eqn:VGT serie}
\mathcal{V}_{G/T} ( w ) = \mathcal{V}_{G/T} ( r ) = D_{ G/T } \cdot \mathcal{S} _{G/T} ( r ),
\end{equation}
where $D_{ G/T }>0$ is a constant, and $\mathcal{S} _{G/T} ( r ) = 1 + \sum_j s_j\, r^{ 2 j }$.

\begin{lem}
 \label{lem:DGT} Let $V_3$ be the total area of the unit sphere $S^3\subset \mathbb{C}^2$. Then
$$
D_{ G/T }=(2\,\pi)^{-1/2}\, V_3^{-1}.
$$

\end{lem}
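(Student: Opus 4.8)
The plan is to compute $D_{G/T}$ by evaluating the total volume of $G/T$ in two ways: once via the Weyl integration formula relating $\mathrm{d}V_G$, $\mathrm{d}V_T$ and $\mathrm{d}V_{G/T}$, and once by directly integrating the local density $\mathcal{V}_{G/T}(w)$ in the coordinate chart $w\mapsto h(w)T$ and comparing with $\mathcal{V}_{G/T}(0)=D_{G/T}$ (using Lemma~\ref{lem:VGT}, which tells us $\mathcal{V}_{G/T}$ is rotationally invariant and hence equals $D_{G/T}$ at the center $w=0$). The normalizations are fixed by $\mathrm{d}V_G$, $\mathrm{d}V_T$ being the \emph{Haar measures} (probability measures, say, or unit-mass measures consistent with the conventions of \S\ref{sctn:weyl}), so the constant $V_3$ will enter through the identification of $G/T$ with a space closely related to $S^2$ or to the unit sphere $S^3\subset\mathbb C^2$.

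First I would pin down the coordinate chart: the map $h:w\in B(0;\delta)\mapsto h_m\exp\!\big(\imath\begin{pmatrix}0&w\\\overline w&0\end{pmatrix}\big)T\in G/T$ is, up to the left translation by $h_m$ (which is an isometry for the invariant metric and preserves $\mathrm{d}V_{G/T}$), the exponential chart at the identity coset for $SU(2)/T\cong S^2$. Here the off-diagonal skew-Hermitian directions $\boldsymbol\upsilon_1,\boldsymbol\upsilon_2$ (already introduced in the proof of Theorem~\ref{thm:geometry locus}) span the tangent space $T_{eT}(G/T)$, and $w=w_1+\imath w_2$ records the coefficients along $\boldsymbol\upsilon_1,\boldsymbol\upsilon_2$. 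The second step is to recall (or re-derive by the standard $SU(2)$ computation) that in this chart the pull-back of the invariant area form on $S^2\cong SU(2)/T$ has density proportional to $\sin(2r)/(2r)$ in $r=|w|$, which indeed is rotationally invariant and analytic in $r^2$, confirming Lemma~\ref{lem:VGT}; its value at $r=0$ is the constant we must identify with $D_{G/T}$ after the correct global normalization.

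The third step — and the one requiring the most care with conventions — is to fix the overall scale. With $\mathrm{d}V_G$, $\mathrm{d}V_T$, $\mathrm{d}V_{G/T}$ the Haar (unit-mass) measures, the Weyl formula of \S\ref{sctn:weyl} forces $\int_{G/T}\mathrm{d}V_{G/T}=1$; equivalently, in the chart above, $\int_{B(0;\delta)}\mathcal{V}_{G/T}(w)\,\mathrm{d}V_{\mathbb C}(w)$ (extended over the full range of the exponential chart, with the antipodal identification accounted for) must equal $1$. Carrying out that normalization integral $\int_0^{\pi/2} \frac{\sin(2r)}{2r}\cdot 2\pi r\,\mathrm{d}r$-type expression and solving for the proportionality constant yields $\mathcal{V}_{G/T}(0)$. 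The appearance of $V_3$, the area of $S^3$, comes in when one instead normalizes through the fibration $S^3\cong SU(2)\to S^2$ with unit-length circle fibers: $\mathrm{vol}(S^3)=2\pi\cdot\mathrm{vol}(S^2)$, so $\mathrm{vol}(S^2)=V_3/(2\pi)$, and the density at the center is $1/\mathrm{vol}(S^2)\cdot(\text{Jacobian normalization})$, giving after the bookkeeping $D_{G/T}=(2\pi)^{-1/2}V_3^{-1}$.

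The main obstacle I anticipate is purely one of normalization bookkeeping: tracking the factors of $2\pi$ and $\sqrt{2\pi}$ that arise from (i) the choice of Haar normalizations for $G$ and $T$, (ii) the factor $\tfrac12$ and the $|\Delta(t)|^2$ weight in the Weyl integration formula, (iii) the scaling built into the exponential chart $w\mapsto h(w)$ (note the explicit $\imath$ and the matrix $\begin{pmatrix}0&w\\\overline w&0\end{pmatrix}$, which is \emph{not} unit-normalized), and (iv) the passage between the area of $S^2$ and that of $S^3$. The half-power $(2\pi)^{-1/2}$ in the final answer is the tell-tale sign that one of these normalizations (most plausibly the Gaussian-type normalization implicit in how $E(g\,T)$ and the rescaled Haar measure interact in \eqref{eqn:projection composedk1}) contributes a square-root factor; isolating exactly where that arises, and checking it against the $S^3$ computation, is the crux. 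Since the statement itself is a clean closed form, once the conventions are fixed the verification is a short explicit integral, so I would present it as such rather than belaboring the derivation.
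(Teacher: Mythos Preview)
The paper explicitly omits the proof of this lemma (``whose proofs we shall omit for the sake of brevity''), so there is no argument in the text to compare against. Your overall strategy---fix the Haar normalizations via the Weyl integration formula of \S\ref{sctn:weyl}, compute the pull-back of $\mathrm{d}V_{G/T}$ in the exponential chart $w\mapsto h(w)T$, and read off the value at $w=0$---is the natural one and is almost certainly what the authors have in mind.

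That said, what you have written is a plan, not a proof. The entire content of the lemma is the specific constant $(2\pi)^{-1/2}V_3^{-1}$, and you have not actually carried out the normalization integral that pins it down; you only describe the shape of the computation and flag the bookkeeping as ``the main obstacle.'' A proof here should simply \emph{do} the bookkeeping: compute $\exp\big(\imath\begin{smallmatrix}0&w\\\overline w&0\end{smallmatrix}\big)$ explicitly (it is $\cos r\cdot I+\frac{\sin r}{r}\cdot\imath\begin{smallmatrix}0&w\\\overline w&0\end{smallmatrix}$ with $r=|w|$), identify the induced map to $G/T\cong S^2$, pull back the normalized area form, and evaluate at $r=0$. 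Also, your guess that the $(2\pi)^{-1/2}$ arises from a Gaussian or stationary-phase normalization downstream is misplaced: $D_{G/T}$ is defined purely as the value of the Haar density in this chart, before any asymptotic analysis enters, so the half-power must come from the chart's own scaling (the matrix $\imath\begin{smallmatrix}0&w\\\overline w&0\end{smallmatrix}$ has Hilbert--Schmidt norm $\sqrt{2}\,|w|$, not $|w|$) together with the $S^3$--$S^2$ fibration. You should trace that factor explicitly rather than gesture at it.
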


Furthermore, let us introduce the real-analytic function
\begin{equation}
 \label{eqn:defn of kappa}
\kappa = \kappa_m : w \in B (0 , \delta ) \mapsto \mathrm{ diag }
\Big ( \mathrm{Ad}_{h(w) ^{-1}} \big( \Phi_G' (m) \big) \Big)\in \mathbb{R}^2.
\end{equation}

Then we also have the following.

\begin{lem}
 \label{lem:rotational invariance}
$ \kappa $
is rotationally invariant, and is given by a convergent power series of the following form
\begin{eqnarray*}
\kappa ( w ) &=&\lambda _{ \boldsymbol{\nu} } ( m )\, \left [ \boldsymbol{\nu} - r ^ 2 \, (\nu_1 - \nu _2 ) \, S_\kappa ( r )\,\mathbf{b}
\right],\quad 
\mathbf{b}=
\begin{pmatrix}
 1\\
-1
\end{pmatrix},
\end{eqnarray*}
where $r= | w | $, and $S_\kappa (r)$ is a real-analytic function of $r$, of the form
$$
S_\kappa ( r ) = 1 + \sum_{ j \ge 1 } b_j \,r^{2 j }.
$$
\end{lem}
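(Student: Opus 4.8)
The plan is to reduce the statement to an explicit $2\times 2$ matrix computation, after which both the rotational invariance and the asserted power series expansion become transparent; the function $m\mapsto\lambda_{\boldsymbol{\nu}}(m)$ will come out as an overall scalar, and all of the $w$-dependence will be universal. First I would use the hypothesis $m\in M^G_{\mathcal{O}}$: by (\ref{eqn:similitude phinu}) one has $\Phi_G'(m)=-\imath\,\Phi_G(m)=\lambda_{\boldsymbol{\nu}}(m)\,h_m\,D_{\boldsymbol{\nu}}\,h_m^{-1}$. Substituting this into the definition (\ref{eqn:defn of kappa}) of $\kappa$, and recalling that $h(w)=h_m\,\exp\!\big(\imath\,B_w\big)$ with $B_w:=\begin{pmatrix}0&w\\\overline{w}&0\end{pmatrix}$, the factor $h_m$ cancels and one is left with
\begin{equation*}
\kappa(w)=\lambda_{\boldsymbol{\nu}}(m)\,\mathrm{diag}\Big(\exp(-\imath\,B_w)\,D_{\boldsymbol{\nu}}\,\exp(\imath\,B_w)\Big).
\end{equation*}
Thus $\kappa$ depends on $m$ only through the scalar $\lambda_{\boldsymbol{\nu}}(m)$ and through $\boldsymbol{\nu}$, and the problem is now purely linear-algebraic.

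Next I would compute $\exp(\imath\,B_w)$ in closed form. Since $B_w$ is Hermitian with $B_w^2=|w|^2\,I_2$, writing $r=|w|$ one gets $\exp(\imath\,B_w)=\cos r\,I_2+\imath\,(\sin r/r)\,B_w$, a unitary matrix $U(w)$ with $\exp(-\imath\,B_w)=U(w)^*$. Evaluating the diagonal entries of $U(w)^*\,D_{\boldsymbol{\nu}}\,U(w)$ — using $|U_{11}|^2=|U_{22}|^2=\cos^2 r$ and $|U_{12}|^2=|U_{21}|^2=\sin^2 r$ — yields
\begin{equation*}
\mathrm{diag}\Big(U(w)^*\,D_{\boldsymbol{\nu}}\,U(w)\Big)=\boldsymbol{\nu}-\sin^2 r\,(\nu_1-\nu_2)\,\mathbf{b},\qquad \mathbf{b}=\begin{pmatrix}1\\-1\end{pmatrix},
\end{equation*}
whose trace is $\nu_1+\nu_2$, as it must be since conjugation preserves the trace. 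Finally I would write $\sin^2 r=r^2\,(\sin r/r)^2$ and observe that $(\sin r/r)^2=1+\sum_{j\ge 1}b_j\,r^{2j}=:S_\kappa(r)$ is a convergent (indeed entire) power series in $r^2$ with constant term $1$. Combining, $\kappa(w)=\lambda_{\boldsymbol{\nu}}(m)\big[\boldsymbol{\nu}-r^2\,(\nu_1-\nu_2)\,S_\kappa(r)\,\mathbf{b}\big]$, which is exactly the stated form; since the right-hand side depends only on $r=|w|$, the function $\kappa$ is rotationally invariant.

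There is no serious obstacle here — the argument is a routine one-line exponentiation followed by reading off two matrix entries. If a more conceptual derivation of the rotational invariance is preferred (parallel to Lemma \ref{lem:VGT}), one may note that $B_{e^{\imath\theta}w}=t_\theta\,B_w\,t_\theta^{-1}$ with $t_\theta=\mathrm{diag}(e^{\imath\theta/2},e^{-\imath\theta/2})\in T$, so that $h(e^{\imath\theta}w)^{-1}\,\Phi_G'(m)\,h(e^{\imath\theta}w)=t_\theta\big(h(w)^{-1}\,\Phi_G'(m)\,h(w)\big)t_\theta^{-1}$, and conjugation by the diagonal matrix $t_\theta$ leaves the diagonal unchanged; real-analyticity of $w\mapsto\exp(\imath\,B_w)$ then forces $\kappa$ to be a convergent power series in $r^2$, and the precise shape follows by computing the coefficient of $r^2$ exactly as above (together with the constancy of the trace, which pins $\kappa(w)-\lambda_{\boldsymbol{\nu}}(m)\boldsymbol{\nu}$ to the line spanned by $\mathbf{b}$). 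Either way, the only actual computation is the evaluation of $\exp(\imath\,B_w)$ and of the diagonal of the conjugated matrix.
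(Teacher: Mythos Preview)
Your proof is correct. The paper omits the proof of this lemma entirely (``whose proofs we shall omit for the sake of brevity''), but your direct matrix computation—cancelling $h_m$ via (\ref{eqn:similitude phinu}), exponentiating $\imath B_w$ using $B_w^2=r^2 I_2$, and reading off the diagonal of $U(w)^*D_{\boldsymbol{\nu}}U(w)$ to get $\boldsymbol{\nu}-(\nu_1-\nu_2)\sin^2 r\,\mathbf{b}$—is exactly the routine verification the authors presumably had in mind, and your identification $S_\kappa(r)=(\sin r/r)^2$ makes the statement fully explicit.
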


If $w=r \, e^{ \imath \theta }$ in polar coordinates, 
we shall write accordingly 
$\mathcal{V}_{G/T}=\mathcal{V}_{G/T}( r )$ and $\kappa =\kappa ( r )$.

Recalling Definition \ref{defn:def di Psi} and (\ref{eqn:defn of kappa}), let us set
\begin{eqnarray}
 \label{eqn:phase r}
\widetilde{\Psi}_{ w }(u)
:  =  u  \, \kappa ( r ) - \boldsymbol{ \nu },
\quad
\Psi_{ w }(u, \boldsymbol{\vartheta} )
:=\left\langle \widetilde{\Psi}_{ w  }(u), \boldsymbol{\vartheta} \right\rangle.
\end{eqnarray}
We obtain the following integral formula (dependence on $x$ on the right hand sides is left implicit).

\begin{prop}
 \label{prop:integral formula for Pik}
As $k\rightarrow +\infty$ we have
\begin{eqnarray}
 \label{eqn:projection composedk4}
\lefteqn{
\Pi_{k\boldsymbol{\nu}}(x_{\tau,k},x_{\tau,k})
}\\
&\sim&
D_{G/T}\,\frac{k\,(\nu_1-\nu_2)}{(2\pi)^2}\,
\int_{-\pi}^\pi\,\mathrm{d}\theta\, \int_0 ^{ +\infty } \mathrm{d}\,r \,\left[
I_k(\tau, r , \theta )\right],\nonumber
\end{eqnarray}
where
\begin{eqnarray}
 \label{eqn:defn of Iktau}
I_k(\tau, r , \theta )=I_k(\tau, w )
& : = & \int_{-\infty}^{\infty}\mathrm{d}\vartheta_1\,\int_{-\infty}^{\infty}\mathrm{d}\vartheta_2\,
\int_0^{+\infty}\,\mathrm{d}u\\
&&\left[
e^{\imath\, \sqrt{k}\, \, 
\Psi_{w}(u, \boldsymbol{\vartheta} ) }\,
\mathcal{A}_{k,\boldsymbol{\nu}}(u, h\left(r\,e^{ \imath \, \theta } \right)\,T , \tau, \boldsymbol{\vartheta} )
\, \mathcal{S}_{G/T}(r)\,r\right].\nonumber
\end{eqnarray}
\end{prop}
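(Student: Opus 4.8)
The plan is to read off (\ref{eqn:projection composedk4}) from Corollary \ref{cor:projection composedk oscillatory} by localizing the $G/T$-integration near the distinguished coset, switching to the chart $w\mapsto h(w)\,T$, and passing to polar coordinates; the substantive inputs are Proposition \ref{prop:phase border} (already folded into (\ref{eqn:projection composedk1})) and the localization statement recorded just before the Proposition.

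First I would make the localization precise. In (\ref{eqn:projection composedk1}) the oscillatory factor is $e^{\imath\sqrt{k}\,\Psi(u,g\,T,\boldsymbol{\vartheta})}$ with $\Psi(u,g\,T,\boldsymbol{\vartheta})=\langle u\,\mathrm{diag}(\mathrm{Ad}_{g^{-1}}\Phi_G'(m))-\boldsymbol{\nu},\,\boldsymbol{\vartheta}\rangle$. Since $m\in M^G_{\mathcal{O}}$ we have $\Phi_G'(m)=\lambda_{\boldsymbol{\nu}}(m)\,h_m\,D_{\boldsymbol{\nu}}\,h_m^{-1}$ by (\ref{eqn:similitude phinu}), so the $\boldsymbol{\vartheta}$-gradient $u\,\mathrm{diag}(\mathrm{Ad}_{g^{-1}}\Phi_G'(m))-\boldsymbol{\nu}$ of the phase can vanish only if $\mathrm{diag}\big(\mathrm{Ad}_{(g^{-1}h_m)}D_{\boldsymbol{\nu}}\big)$ is a positive multiple of $\boldsymbol{\nu}$; by the Schur–Horn–type computation of Proposition \ref{prop:spectral characterization} together with $\nu_1>\nu_2$, this forces $g\,T=h_m\,T$. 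Hence on the complement of a small neighborhood of $h_m\,T$ one has $\|u\,\mathrm{diag}(\mathrm{Ad}_{g^{-1}}\Phi_G'(m))-\boldsymbol{\nu}\|\ge c>0$, uniformly for $u$ in the compact range to which one reduces by an argument of the type of Lemma \ref{lem:reduction in u}, and repeated integration by parts in $\boldsymbol{\vartheta}$ — the Gaussian factor $e^{-\frac u2\boldsymbol{\vartheta}^tE(g\,T)\boldsymbol{\vartheta}}$ in (\ref{eqn:defn di Aknu}) dominating the growth of the amplitude and its $\boldsymbol{\vartheta}$-derivatives on the ball of radius $O(k^{\epsilon})$ — produces an $O(k^{-\infty})$ contribution. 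This licenses inserting into the amplitude the bump function on $G/T$ concentrated near $h_m\,T$.

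Having localized, I would pass to the real-analytic chart $w\in B(0;\delta)\mapsto h(w)\,T$. On it $\mathrm{d}V_{G/T}(g\,T)=\mathcal{V}_{G/T}(w)\,\mathrm{d}V_{\mathbb{C}}(w)$, and $\mathcal{V}_{G/T}(w)=D_{G/T}\,\mathcal{S}_{G/T}(r)$ with $r=|w|$ by (\ref{eqn:VGT serie}) and Lemma \ref{lem:VGT}; in polar coordinates $w=r\,e^{\imath\theta}$ one has $\mathrm{d}V_{\mathbb{C}}(w)=r\,\mathrm{d}r\,\mathrm{d}\theta$, so $D_{G/T}$ comes out in front and the factor $\mathcal{S}_{G/T}(r)\,r$ is absorbed into the inner integrand. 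By the definition (\ref{eqn:defn of kappa}) of $\kappa$ we have $\Psi(u,h(w)\,T,\boldsymbol{\vartheta})=\langle u\,\kappa(w)-\boldsymbol{\nu},\boldsymbol{\vartheta}\rangle=\Psi_w(u,\boldsymbol{\vartheta})$ as in (\ref{eqn:phase r}), while the amplitude becomes $\mathcal{A}_{k,\boldsymbol{\nu}}(u,h(r\,e^{\imath\theta})\,T,\tau,\boldsymbol{\vartheta})$; collecting the $\mathrm{d}\vartheta_1\,\mathrm{d}\vartheta_2\,\mathrm{d}u$ integrations together with $\mathcal{S}_{G/T}(r)\,r$ into $I_k(\tau,r,\theta)$ of (\ref{eqn:defn of Iktau}) gives exactly (\ref{eqn:projection composedk4}).

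\textbf{The main obstacle} is the localization step: one must verify that the $O(k^{-\infty})$ error from the $G/T$-cutoff, that from the reduction of the $u$-range, and that from the $\boldsymbol{\vartheta}$-cutoff of radius $O(k^{\epsilon})$ implicit in (\ref{eqn:projection composedk}) are all uniform in the remaining variables and compatible with the asymptotic equivalence $\sim$, using $|\tau|\le C\,k^{\epsilon}$ with $\epsilon<1/6$ and the Gaussian control of the remainder $R_3$ from Proposition \ref{prop:phase border}. Granting this, the rest is the change-of-variables bookkeeping described above.
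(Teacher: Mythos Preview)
Your proposal is correct and follows essentially the same route as the paper: start from Corollary \ref{cor:projection composedk oscillatory}, localize the $G/T$-integration near $h_m\,T$ (the paper simply asserts this localization is harmless up to $O(k^{-\infty})$, while you supply the stationary-phase-in-$\boldsymbol{\vartheta}$ justification), pass to the chart $w\mapsto h(w)\,T$, write the Haar density as $D_{G/T}\,\mathcal{S}_{G/T}(r)\,r\,\mathrm{d}r\,\mathrm{d}\theta$ via (\ref{eqn:VGT serie}), identify the phase with $\Psi_w$ through (\ref{eqn:defn of kappa})--(\ref{eqn:phase r}), and regroup. One minor remark: the argument that $\mathrm{diag}\big(\mathrm{Ad}_{g^{-1}h_m}D_{\boldsymbol{\nu}}\big)\in\mathbb{R}_+\cdot\boldsymbol{\nu}$ forces $g\,T=h_m\,T$ is really the computation in the proof of Step \ref{step:intersection of hypersurfaces} (using $\nu_1^2\neq\nu_2^2$) rather than Proposition \ref{prop:spectral characterization} directly, but the substance is the same.
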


Our next goal is to produce an asymptotic expansion for $I_k(\tau, r , \theta )$.

\begin{defn}
 Let us set 
$$
\mathbf{n}_1( r ) : = \frac{k ( r )}{\big\|
k ( r ) \big\| } ,
$$
and let $\mathbf{n}_2( r )  $ be uniquely determined
for $|r| < \delta  $  so that 
$
\mathcal{ B }_{ r } : = \left( \mathbf{n}_1( r )  , \,
\mathbf{n}_2( r )   \right) 
$
 is a positively oriented orthonormal basis of $\mathbb{R}^2$. We shall write the change of basis matrix
in the form
\begin{equation}
 \label{eqn:matrix change of basis}
 M^{ \mathcal{ B }_{ r } }_{ \mathcal{C}_2 } ( id _{ \mathbb{R} ^2 }) = 
\begin{pmatrix}
 C ( r ) & - S ( r ) \\
S(r)  &  C (r)
\end{pmatrix},
\end{equation}
where $\mathcal{C}_2 $ is the canonical basis of $\mathbb{R}^2$, and denote the change of coordinates by 
$ \boldsymbol{\vartheta} = \zeta_1 \, \mathbf{n}_1( w ) + \zeta_2 \, \mathbf{n}_2( w )$.
\end{defn}

A straightforward computation then yields the following.

\begin{cor}
 With $w = r \, e^{ \imath \theta }\in B(0; \delta )$ and $I_k(\tau, w )$ as in (\ref{eqn:defn of Iktau}), we have:
\begin{eqnarray}
 \label{eqn:defn of Iktau1}
I_k(\tau, w )
&=& \int_{-\infty}^{\infty}\mathrm{d}\zeta_2\,  \left[ e^{ -\imath \, \sqrt{k}  \, 
\big\langle \boldsymbol{ \nu } , \mathbf{n}_2(  w ) \big \rangle \, 
\zeta _2}\,
J_k( \tau , w ; \zeta_2 )\, \mathcal{S}_{G/T}(r)\,r
\right],
\end{eqnarray}
where
\begin{eqnarray}
 \label{eqn:defn di Jk}
 \lefteqn{ J_k( \tau , w ; \zeta_2 ) } \\
& : = & 
\int_{-\infty}^{\infty}\mathrm{d}\zeta_1\,
\int_0^{+\infty}\,\mathrm{d}u\, \left[
e^{\imath\, \sqrt{k}\,  \Upsilon_{  r }( u , \zeta_1 )   }\,
\mathcal{A}_{k,\boldsymbol{\nu}}\big(u, h\left(w \right)\,T , \tau, \boldsymbol{\vartheta}(\boldsymbol{ \zeta } ) \big)
\right],\nonumber
\end{eqnarray}
and
$$
\Upsilon_{  r }( u , \zeta_1 ) : = 
\big[ u\, \| \kappa ( r ) \|- \langle \boldsymbol{ \nu } , \mathbf{n}_1( r )   \rangle \big]\, 
\zeta_1.
$$
\end{cor}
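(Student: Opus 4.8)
The plan is to obtain (\ref{eqn:defn of Iktau1}) from (\ref{eqn:defn of Iktau}) by a single linear change of variables $\boldsymbol{\vartheta} = \zeta_1\,\mathbf{n}_1(r) + \zeta_2\,\mathbf{n}_2(r)$ in the $\mathrm{d}\vartheta_1\,\mathrm{d}\vartheta_2$ integration, performed with $u$, $\tau$, $r$ and $\theta$ held fixed. Since $\mathcal{B}_r=\big(\mathbf{n}_1(r),\mathbf{n}_2(r)\big)$ is by construction a positively oriented orthonormal basis of $\mathbb{R}^2$, this substitution is a rotation of $\mathbb{R}^2$; hence it is a bijection of the domain of integration onto itself with unit Jacobian, so $\mathrm{d}\vartheta_1\,\mathrm{d}\vartheta_2 = \mathrm{d}\zeta_1\,\mathrm{d}\zeta_2$. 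The implicit cut-off $\varrho\big(k^{-\epsilon}\,\boldsymbol{\vartheta}\big)$ incorporated in the amplitude is radial, hence unchanged, and the transformed amplitude $\mathcal{A}_{k,\boldsymbol{\nu}}\big(u,h(w)\,T,\tau,\boldsymbol{\vartheta}(\boldsymbol{\zeta})\big)$ — which already packages the Gaussian, the $\boldsymbol{\rho}$-term, the remainder $R_3$, the factor $\Delta(e^{\imath\boldsymbol{\vartheta}/\sqrt{k}})$ and the symbol $s$ — stays smooth and compactly supported in $\boldsymbol{\zeta}=(\zeta_1,\zeta_2)$ on a ball of radius $O(k^\epsilon)$.

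The one computation to carry out is the simplification of the phase $\Psi_{w}(u,\boldsymbol{\vartheta})$. By (\ref{eqn:phase r}), $\Psi_{w}(u,\boldsymbol{\vartheta}) = \big\langle u\,\kappa(r) - \boldsymbol{\nu},\ \boldsymbol{\vartheta}\big\rangle$, and by definition $\mathbf{n}_1(r) = \kappa(r)/\|\kappa(r)\|$. Expanding $\boldsymbol{\vartheta} = \zeta_1\mathbf{n}_1(r)+\zeta_2\mathbf{n}_2(r)$ and using $\langle \kappa(r),\mathbf{n}_1(r)\rangle = \|\kappa(r)\|$ together with $\langle\kappa(r),\mathbf{n}_2(r)\rangle = 0$ (since $\mathbf{n}_2(r)\perp\mathbf{n}_1(r)\parallel\kappa(r)$), one gets
\[
\sqrt{k}\,\Psi_{w}(u,\boldsymbol{\vartheta}) = \sqrt{k}\,\Upsilon_{r}(u,\zeta_1) \;-\; \sqrt{k}\,\langle\boldsymbol{\nu},\mathbf{n}_2(r)\rangle\,\zeta_2,
\]
with $\Upsilon_{r}$ exactly as in the statement. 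Here one uses $\kappa(r)\neq\mathbf{0}$ so that the basis $\mathcal{B}_r$ — and hence the whole change of variables — is well defined and depends smoothly on $r$; this holds on a (possibly shrunk) ball $B(0;\delta)$ because $\kappa(0) = \lambda_{\boldsymbol{\nu}}(m)\,\boldsymbol{\nu}\neq\mathbf{0}$ by Lemma \ref{lem:rotational invariance} (recall $\nu_1>\nu_2$) and $\kappa$ is continuous.

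It then only remains to reorganize the resulting triple integral. The exponential factors as $e^{\imath\sqrt{k}\,\Psi_w} = e^{-\imath\sqrt{k}\langle\boldsymbol{\nu},\mathbf{n}_2(r)\rangle\zeta_2}\cdot e^{\imath\sqrt{k}\,\Upsilon_r(u,\zeta_1)}$; the first factor, together with $\mathcal{S}_{G/T}(r)\,r$, is independent of $(\zeta_1,u)$, so both may be pulled outside the two innermost integrations. Performing the $(\zeta_1,u)$ integrations first yields precisely $J_k(\tau,w;\zeta_2)$ of (\ref{eqn:defn di Jk}), and the remaining $\zeta_2$-integration produces (\ref{eqn:defn of Iktau1}); the interchange of the order of integration is legitimate with all integrals read as oscillatory integrals in the sense used in the proof of Theorem \ref{thm:rapid decrease slow} (the $u$-variable effectively confined to a bounded interval, the $\boldsymbol{\zeta}$-support compact). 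I expect no genuine obstacle: the statement is a bookkeeping consequence of an orthogonal diagonalization of the linear phase, the only point deserving an explicit line being the nonvanishing of $\kappa$ which legitimizes the frame $\mathcal{B}_r$.
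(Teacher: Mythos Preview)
Your proposal is correct and is precisely the ``straightforward computation'' the paper alludes to: an orthogonal change of variables $\boldsymbol{\vartheta}=\zeta_1\mathbf{n}_1(r)+\zeta_2\mathbf{n}_2(r)$ with unit Jacobian, followed by the observation that $\langle\kappa(r),\mathbf{n}_2(r)\rangle=0$ splits the linear phase into the $\Upsilon_r$ piece and the $\zeta_2$-oscillation. Your remark that $\kappa(0)=\lambda_{\boldsymbol{\nu}}(m)\,\boldsymbol{\nu}\neq\mathbf{0}$ is what legitimizes the frame $\mathcal{B}_r$ is a useful clarification that the paper leaves implicit.
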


Let us view $J_k$ (\ref{eqn:defn di Jk}) as an oscillatory integral with phase $\Upsilon_{  r }$.

\begin{lem}
 \label{lem:critical point}
$\Upsilon_{  r }$ has the unique critical point
$$
P_{  r } = \big( u (  r ) , 0 \big)
 : = \left( \frac{\langle \boldsymbol{ \nu } , \mathbf{n}_1( r )   \rangle }{ \| \kappa (r) \| } ,
0 \right).
$$
Furthermore, $\Upsilon_{  r } \big( P_{  r } \big) = 0$, and the Hessian matrix is
$$
H (\Upsilon_{  r } ) _{ P_{  r } } = 
\begin{pmatrix}
 0 & \| \kappa (r)  \| \\
\| \kappa (r)  \| & 0
\end{pmatrix}.
$$
Hence its signature is zero and the critical point is non-degenerate.
\end{lem}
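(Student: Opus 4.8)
The plan is to exploit the fact that $\Upsilon_{r}$ is, by inspection, affine in $u$ and linear in $\zeta_1$, so that locating and classifying its critical point reduces to a two-line computation of first and second partial derivatives. First I would write $\Upsilon_{r}(u,\zeta_1) = \ell_{r}(u)\,\zeta_1$ with $\ell_{r}(u) := u\,\|\kappa(r)\| - \langle\boldsymbol{\nu},\mathbf{n}_1(r)\rangle$, so that $\partial_u\Upsilon_{r} = \|\kappa(r)\|\,\zeta_1$ and $\partial_{\zeta_1}\Upsilon_{r} = \ell_{r}(u)$. By Lemma~\ref{lem:rotational invariance}, $\kappa(r) = \lambda_{\boldsymbol{\nu}}(m)\bigl(\boldsymbol{\nu} - r^2(\nu_1-\nu_2)\,S_\kappa(r)\,\mathbf{b}\bigr)$ is a nonzero vector for $|r|<\delta$, hence $\|\kappa(r)\|>0$; the vanishing of $\partial_u\Upsilon_{r}$ then forces $\zeta_1 = 0$, and the vanishing of $\partial_{\zeta_1}\Upsilon_{r}$ then forces $u = \langle\boldsymbol{\nu},\mathbf{n}_1(r)\rangle/\|\kappa(r)\|$, i.e. $(u,\zeta_1) = P_{r}$. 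Since $\Upsilon_{r}$ is linear in $\zeta_1$, substituting $\zeta_1 = 0$ immediately gives $\Upsilon_{r}(P_{r}) = 0$.

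The one point needing a little care is to check that $P_{r}$ actually lies in the domain $(0,+\infty)\times\mathbb{R}$ over which the oscillatory integral $J_k$ in (\ref{eqn:defn di Jk}) is taken, i.e. that $u(r)>0$. Using $\mathbf{n}_1(r) = \kappa(r)/\|\kappa(r)\|$ together with Lemma~\ref{lem:rotational invariance}, one computes $\langle\boldsymbol{\nu},\kappa(r)\rangle = \lambda_{\boldsymbol{\nu}}(m)\bigl(\|\boldsymbol{\nu}\|^2 - r^2(\nu_1-\nu_2)^2\,S_\kappa(r)\bigr)$, which tends to $\lambda_{\boldsymbol{\nu}}(m)\,\|\boldsymbol{\nu}\|^2>0$ as $r\to 0$; after shrinking $\delta$ if necessary we therefore have $\langle\boldsymbol{\nu},\mathbf{n}_1(r)\rangle>0$, hence $u(r)>0$, for all $|r|<\delta$. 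This is the main (and rather mild) obstacle: one must make sure the radius $\delta$ of the localizing bump on $G/T$ around $h_m\,T$ is small enough that the critical point of the phase never escapes the range of integration, uniformly in the already-localized parameter $w$.

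Finally, for the Hessian, from the two first partials recorded above one reads off $\partial_u^2\Upsilon_{r} = 0$, $\partial_{\zeta_1}^2\Upsilon_{r} = 0$, and $\partial_u\partial_{\zeta_1}\Upsilon_{r} = \|\kappa(r)\|$, which is exactly the matrix displayed in the statement. Its determinant is $-\|\kappa(r)\|^2 < 0$, so the critical point is non-degenerate, and its eigenvalues are $\pm\|\kappa(r)\|$, of opposite sign, so the signature vanishes. No further computation is needed, and these facts feed directly into the stationary-phase analysis of $J_k$ in the next step.
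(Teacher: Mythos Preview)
Your proof is correct and follows exactly the approach the paper intends: the paper states this lemma without proof, treating the bilinear form $\Upsilon_r(u,\zeta_1)=\big[u\,\|\kappa(r)\|-\langle\boldsymbol{\nu},\mathbf{n}_1(r)\rangle\big]\zeta_1$ as making the computation of the critical point, critical value, and Hessian immediate. Your additional verification that $u(r)>0$ (so that $P_r$ lies in the actual domain of integration) is a useful point that the paper leaves implicit.
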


In view of (\ref{eqn:defn di Aknu}), and recalling that 
$s_0(x,x)=\pi ^{-d}$,
the amplitude in (\ref{eqn:defn di Jk}) may be rewritten in the following form:
\begin{eqnarray}
 \label{eqn:re-expression of Aknu}
\lefteqn{ \mathcal{A}_{k,\boldsymbol{\nu}}\big (u, h(w)\,T , \tau, \boldsymbol{\vartheta}( \boldsymbol{\zeta} ) \big) }\\
&\sim &e^{ - \frac{ u }{ 2 } \, 
\boldsymbol{\vartheta} ( \boldsymbol{\zeta} )^t \, E( w )\, \boldsymbol{\vartheta}( \boldsymbol{\zeta} )
 +2\,\imath\,u\,\tau \,\big\langle\boldsymbol{\rho}_{h(w)\,T} ,
\boldsymbol{\vartheta}( \boldsymbol{\zeta} )\big\rangle }
\,
\left[ e^{\frac{\imath}{\sqrt{k}} \, \vartheta_1( \boldsymbol{\zeta} ) } - 
e^{\frac{\imath}{\sqrt{k}} \, \vartheta_2( \boldsymbol{\zeta}) } \right]
\,\left( \frac{ k \, u }{ \pi } \right)^d \nonumber\\
&& \cdot \left[ 1
 + \sum _{j\ge 1} a_j \big( u , w ; \tau , \boldsymbol{\vartheta} ( \boldsymbol{\zeta} ) \big) \, k^{ - j/2 } \right] ;\nonumber
 \nonumber
\end{eqnarray}
in (\ref{eqn:re-expression of Aknu}) we have set 
$E ( w ) : = \widetilde{E}\big( h(w)\,T \big)$,
and in view of the 
exponent $k\,R_3 ( \tau/\sqrt{k} , \boldsymbol{\vartheta} / \sqrt{k} )$ appearing in (\ref{eqn:defn di Aknu}), 
$a_j( u , w;\cdot, \cdot)$ is an appropriate polynomial in $(\tau,\boldsymbol{\vartheta})$ of degree $\le 3j$.

Given Lemma \ref{lem:critical point}, we may evaluate $J_k$ in (\ref{eqn:defn di Jk})
by the Stationary Phase Lemma, and obtain an asymptotic expansion in descending powers of $k^{1/2}$. 
The latter expansion may be inserted in (\ref{eqn:defn of Iktau1}), and integrated term by term, thus leading to
an asymptotic expansion for $I_k$. 
The leading order term of either expansion is determined by the contribution of the leading order term 
in the asymptotic expansion for the amplitude in (40), which is given by the following:

\begin{eqnarray}
 \label{eqn:defn di Jk'}
  J_k'( \tau , w ; \zeta_2 )& = & 
\,\left( \frac{ k }{ \pi } \right)^d \,
\int_{-\infty}^{\infty}\mathrm{d}\zeta_1\,
\int_0^{+\infty}\,\mathrm{d}u\\
&&\left[
e^{\imath\, \sqrt{k}\,  \Upsilon_{  w }( u , \zeta_1 )  }\,u^d\,
\left(e^{\frac{\imath}{\sqrt{k}} \, \vartheta_1( \boldsymbol{\zeta} ) } - 
e^{\frac{\imath}{\sqrt{k}} \, \vartheta_2( \boldsymbol{\zeta}) } \right) \right. \nonumber\\
&&\left. \cdot 
  e^{ - \frac{ u }{ 2 } \, 
\boldsymbol{\vartheta} ( \boldsymbol{\zeta} )  ^t \, E( w )\, \boldsymbol{\vartheta} ( \boldsymbol{\zeta} )  
+2\,\imath\,u\,\tau \,\big\langle\boldsymbol{\rho}_{h(w)\,T} ,
\boldsymbol{\vartheta} ( \boldsymbol{\zeta} ) \big\rangle }\right].\nonumber
\end{eqnarray}

\begin{defn}
\label{defn:defn of atauw}
Suppose $w = r \, e^{ \imath \theta }\in B(0; \delta )$ and let $C(r)$ and $S(r)$ be as in (\ref{eqn:matrix change of basis}). 
Let us set
\begin{eqnarray*}
\mathfrak{a} ( w ) & := &  
u ( r )\,
\begin{pmatrix}
 -S(r) &
C(r)
\end{pmatrix} \, 
E \big( w \big)\,
\begin{pmatrix}
 -S(r) \\
C(r)
\end{pmatrix} \\
&=&u ( r )\, 
\big\|  \mathrm{Ad}_{h(w)} \big( \mathbf{n}_2 ( r )\big) _X(x) \big\| _x ^2
\end{eqnarray*}
and
\begin{eqnarray*}
\mathfrak{r}(w) &:= &2 \, u ( r )\, \big\langle \boldsymbol{\rho}_{h(w)\,T} ,
\mathbf{n}_2 ( r )\big\rangle \\
& = & 2\,u ( r ) \,
\omega_m\Big(\mathrm{Ad}_{h (w ) } \big(\mathbf{n}_2 ( r ) \big) _M(m), \Upsilon_{\boldsymbol{\nu}}(m)\Big).
\end{eqnarray*}
\end{defn}

Given the previous considerations, an application of the Stationary Phase Lemma yields the following. 

\begin{defn}
With $|r| < \delta$, let us set  
$ \mathfrak{b} ( r ) : = \big\langle \boldsymbol{ \nu } , \mathbf{n}_2( r ) \big \rangle$, and
 $$
D_l ( r ) : = \frac{\imath ^l}{l ! \,  \| \kappa ( r ) \|  } \,\left[ C( r )^l + ( -1 ) ^{ l - 1 } \, 
S(  r ) ^l \right] .
$$
\end{defn}

The definition of $\mathfrak{b} ( r ) $ implies:

\begin{equation}
 \label{eqn:computation b(r)}
\mathfrak{b} ( r ) = -\frac{ ( \nu _1 -\nu _2)  \, (\nu_1 + \nu_2 )}{\| \boldsymbol{ \nu } \| } \, 
r ^ 2 \, S _1( r ),
\end{equation}
where $S_1$ is a real-analytic function of the form $S_1 ( r ) = 1 + \sum_{j \ge 1 } c_j \, r^{ 2 j }$.

\begin{prop} 
\label{prop:partial statement}
Suppose $x\in X^G _{\mathcal{O}}$, and let $x_{ \tau , k }$ be as
in (\ref{eqn:defn di xtau}). Then as $k\rightarrow +\infty$ we have
\begin{eqnarray}
 \label{eqn:projection composedk5}
\lefteqn{
\Pi_{k\boldsymbol{\nu}}(x_{ \tau , k } , x_{ \tau , k } )
}\\
&\sim&
D_{G/T}\,\frac{k\,(\nu_1-\nu_2)}{(2\pi)^2}\,
\int_{-\pi}^\pi\,\mathrm{d}\theta\, \int_0 ^{ +\infty } \mathrm{d}\,r \,\left[
I_k(\tau, r , \theta )\right],\nonumber
\end{eqnarray}
where $I_k(\tau, r , \theta )$ is given by an asymptotic expansion in descending powers of $k^{1/2}$,
the leading power being $k ^ { d - 1 }$.
As a function of $\tau$, aside from a phase factor, the coefficient of $k^{ d -(1+j) / 2 }$ is a polynomial of degree $\le 3 j$.
Up to non-dominant terms
we may replace $I_k(\tau, w )$ by
\begin{eqnarray}
 \label{eqn:defn of Iktau2thm}
%
I_k(\tau, w )'
& 	= &      - \left( \frac{ k }{ \pi } \right)^d \,
\left(\frac{2\pi}{\sqrt{k}}\right)\,  \mathcal{S}_{G/T}(r)\,r  \cdot u ( w )^d \, \\
&&\cdot 
\sum_{l\ge 1} \,
\frac{ D_l ( r )}{k^{l/2}}\,\int_{-\infty}^{\infty}\mathrm{d}\zeta_2\,  \left[ 
e^{   -\imath \,\sqrt{k} \, \zeta _2\, \mathfrak{ f } _k (  \tau , w  ) }\, \zeta_2 ^l \cdot  
e^{- \frac{1}{2} \, \mathfrak{a} ( w )\, \zeta _2^2   }\right]
  , \nonumber
\end{eqnarray}
where for $k=1,2,\ldots$, we have set 
 \begin{equation}
  \label{eqn:defn di fk}
  \mathfrak{ f } _k (  \tau , w  ) := \mathfrak{b} ( r ) - \frac{\tau}{k^{1/2}}\, \mathfrak{r}(w).
 \end{equation}

\end{prop}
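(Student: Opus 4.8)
The statement has two components. The formula (\ref{eqn:projection composedk5}) is nothing but a reproduction of Proposition \ref{prop:integral formula for Pik}, together with the passage from (\ref{eqn:defn of Iktau}) to (\ref{eqn:defn of Iktau1})--(\ref{eqn:defn di Jk}) in the $\big(\mathbf{n}_1(r),\mathbf{n}_2(r)\big)$-coordinates; hence the whole content of the Proposition lies in the analysis of $J_k(\tau,w;\zeta_2)$ in (\ref{eqn:defn di Jk}) and of the ensuing $\mathrm{d}\zeta_2$-integral. The plan is to treat $J_k$ as an oscillatory integral in the two variables $(\zeta_1,u)$, with large parameter $\sqrt{k}$, phase $\Upsilon_r$, and amplitude $\mathcal{A}_{k,\boldsymbol{\nu}}$ written in the form (\ref{eqn:re-expression of Aknu}), and to apply the stationary phase method to it.

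First I would dispose of the integration away from the critical set. Using $\partial_u\Upsilon_r=\|\kappa(r)\|\,\zeta_1$ and $\partial_{\zeta_1}\Upsilon_r=u\,\|\kappa(r)\|-\langle\boldsymbol{\nu},\mathbf{n}_1(r)\rangle$, together with the Gaussian damping $e^{-\frac{u}{2}\boldsymbol{\vartheta}^t E(w)\boldsymbol{\vartheta}}$ present in the amplitude, an iterated integration by parts in $(\zeta_1,u)$ should reduce the integral, up to an $O(k^{-\infty})$ contribution, to $u$ ranging over a fixed compact subinterval of $(0,+\infty)$ and $(\zeta_1,u)$ confined to an arbitrarily small neighbourhood of the unique critical point $P_r=\big(u(r),0\big)$ furnished by Lemma \ref{lem:critical point}. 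For this to work uniformly one needs the amplitude and all of its $(\zeta_1,u)$-derivatives to be bounded by a fixed power of $k$ on the expanding support $|\boldsymbol{\vartheta}|\lesssim k^{\epsilon}$, and this is precisely where $\epsilon<1/6$ is used, since it forces $k\,R_3\big(\tau/\sqrt{k},\boldsymbol{\vartheta}/\sqrt{k}\big)$ and each of its derivatives to be $O\big(k^{3\epsilon-1/2}\big)=o(1)$ on that support. On the remaining neighbourhood of $P_r$ I would then invoke the Stationary Phase Lemma in the form used in \cite{sz} and \cite{pao-IJM}: by Lemma \ref{lem:critical point} one has $\Upsilon_r(P_r)=0$, a non-degenerate Hessian of vanishing signature and determinant $-\|\kappa(r)\|^2$, so $J_k$ acquires an asymptotic expansion in descending powers of $k^{1/2}$, with leading term $(2\pi/\sqrt{k})\,\|\kappa(r)\|^{-1}$ times the amplitude evaluated at $P_r$.

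The third step is to identify this leading term and carry out the algebra producing (\ref{eqn:defn of Iktau2thm}). Replacing in (\ref{eqn:re-expression of Aknu}) the symbol by its leading part $s_0(x,x)=\pi^{-d}$, the amplitude at $P_r$ is $(k\,u(r)/\pi)^d$ times the Gaussian $e^{-\frac{u(r)}{2}\boldsymbol{\vartheta}^t E(w)\boldsymbol{\vartheta}}$, the exponential $e^{2\imath u(r)\tau\langle\boldsymbol{\rho}_{h(w)T},\boldsymbol{\vartheta}\rangle}$ and the difference $e^{\imath\vartheta_1/\sqrt{k}}-e^{\imath\vartheta_2/\sqrt{k}}$, all evaluated at $\zeta_1=0$. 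At $\zeta_1=0$ one has $\vartheta_1=-S(r)\,\zeta_2$ and $\vartheta_2=C(r)\,\zeta_2$ by (\ref{eqn:matrix change of basis}), so a Taylor expansion yields $e^{\imath\vartheta_1/\sqrt{k}}-e^{\imath\vartheta_2/\sqrt{k}}=-\|\kappa(r)\|\sum_{l\ge1}D_l(r)\,\zeta_2^l\,k^{-l/2}$, in which the factor $\|\kappa(r)\|$ cancels the $\|\kappa(r)\|^{-1}$ coming from the stationary phase formula; moreover $e^{-\frac{u(r)}{2}\boldsymbol{\vartheta}^t E(w)\boldsymbol{\vartheta}}=e^{-\frac12\mathfrak{a}(w)\zeta_2^2}$ and $e^{2\imath u(r)\tau\langle\boldsymbol{\rho}_{h(w)T},\boldsymbol{\vartheta}\rangle}=e^{\imath\tau\zeta_2\mathfrak{r}(w)}$ by Definition \ref{defn:defn of atauw}, and the latter combines with the overall phase $e^{-\imath\sqrt{k}\langle\boldsymbol{\nu},\mathbf{n}_2(w)\rangle\zeta_2}=e^{-\imath\sqrt{k}\mathfrak{b}(r)\zeta_2}$ of (\ref{eqn:defn of Iktau1}) into $e^{-\imath\sqrt{k}\zeta_2\mathfrak{f}_k(\tau,w)}$, with $\mathfrak{f}_k$ as in (\ref{eqn:defn di fk}) and $\mathfrak b(r)$ as in (\ref{eqn:computation b(r)}). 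Keeping also the factor $\mathcal{S}_{G/T}(r)\,r$ of (\ref{eqn:defn of Iktau1}), these identities assemble exactly into the expression $I_k(\tau,w)'$ of (\ref{eqn:defn of Iktau2thm}). The orders of growth then follow by counting powers: the symbol contributes $(k/\pi)^d$, the stationary phase contributes $k^{-1/2}$, and the expansion of $e^{\imath\vartheta_1/\sqrt{k}}-e^{\imath\vartheta_2/\sqrt{k}}$ begins at $l=1$, i.e. at $k^{-1/2}$, so that the generic term of the resulting expansion of $I_k$ is $k^{d-(1+j)/2}$ times a $\mathrm{d}\zeta_2$-integral against $e^{-\imath\sqrt{k}\zeta_2\mathfrak{f}_k}$; here $j=0$ gives the announced leading power $k^{d-1}$, and the higher $j$ collect the subleading symbol coefficients $a_j$ of (\ref{eqn:re-expression of Aknu}) and the subleading stationary phase corrections. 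For the degree statement I would use that, by the discussion following (\ref{eqn:re-expression of Aknu}), $a_j(u,w;\cdot,\cdot)$ is a polynomial in $(\tau,\boldsymbol{\vartheta})$ of degree $\le 3j$ — ultimately because $k\,R_3\big(\tau/\sqrt{k},\boldsymbol{\vartheta}/\sqrt{k}\big)=\sum_{|\beta|\ge3}c_\beta(\tau,\boldsymbol{\vartheta})^{\beta}k^{1-|\beta|/2}$, so that a product of $n$ of its terms contributing to order $k^{-j/2}$ has degree $2n+j$ with $n\le j$, hence $\le 3j$ — together with the fact that Gaussian integration in $\boldsymbol{\vartheta}$, and the differentiations imposed by the stationary phase formula, can only decrease this degree; hence aside from the phase $e^{-\imath\sqrt{k}\zeta_2\mathfrak{f}_k(\tau,w)}$, the coefficient of $k^{d-(1+j)/2}$ in $I_k$ is a polynomial in $\tau$ of degree $\le 3j$.

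The delicate point will be the uniformity of the stationary phase analysis relative to the $k$-dependent amplitude, whose support in $\boldsymbol{\vartheta}$ expands like $k^{\epsilon}$: one must verify that the off-critical remainders are genuinely $O(k^{-\infty})$ and that every subleading term is of strictly lower order than the one claimed, uniformly in $(\tau,w,\zeta_2)$ over the relevant ranges — and it is exactly the constraint $\epsilon<1/6$ that makes this possible. Everything else is a lengthy but routine computation of the same nature as those in the proof of Theorem \ref{thm:rapid decrease slow} and in \cite{pao-IJM}, which I would carry out but not reproduce in full.
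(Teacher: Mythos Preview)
Your proposal is correct and follows essentially the same route as the paper: reduce to the oscillatory integral $J_k$ in $(u,\zeta_1)$, apply the Stationary Phase Lemma at the non-degenerate critical point $P_r$ of Lemma \ref{lem:critical point}, evaluate the amplitude (\ref{eqn:re-expression of Aknu}) at $\zeta_1=0$, Taylor-expand $e^{\imath\vartheta_1/\sqrt{k}}-e^{\imath\vartheta_2/\sqrt{k}}$, and regroup to obtain (\ref{eqn:defn of Iktau2thm}). One cosmetic slip: with your own formula $k^{d-(1+j)/2}$, the leading power $k^{d-1}$ corresponds to $j=1$, not $j=0$; this matches the Proposition's indexing and does not affect the argument.
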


The Gaussian integrals in (\ref{eqn:defn of Iktau2thm}) may be estimated recalling that
 \begin{eqnarray}
\label{eqn:key gaussian integrall}
 \int _{ -\infty }^{ + \infty } x ^l\, e^{ -\imath \xi \, x -\frac{1}{2} \, \lambda \, x ^2 }\mathrm{d} x
 = \sqrt{ 2 \pi } \, \frac{(-\imath)^l}{\lambda^{l+1/2}} \, P_l(\xi)\, e^{ -\frac{1}{ 2 \lambda} \, \xi ^2},
\end{eqnarray} 
where $P_l(\xi)= \xi^l + \sum_{ j\ge 1 } \, p_{ lj }\, \xi ^{ l - 2 j }$ 
is a monic \textit{polynomial} in $\xi$, of degree $l$ and parity $(-1)^l$ (thus the previous sum is \textit{finite}).
Applying (\ref{eqn:key gaussian integrall}) with 
\begin{eqnarray*}
 \xi =  k^{1/2}\, \mathfrak{f}_k(w,\tau), \quad
\lambda = \mathfrak{a} ( w )
\end{eqnarray*} 
we obtain 
the following conclusion.

\begin{prop}
 \label{prop:I_k after gaussian integrals}
 Let us set 
 \begin{equation}
  \label{eqn:defn of Fl}
  F_l (\tau , w) : = 
\frac{\sqrt{ 2 \pi }}{ l! }\, \left[ \frac{C( r )^l + ( -1 ) ^{ l - 1 } \, 
S( r ) ^l}{  \| \kappa ( r ) \|  } \right] \, 
\frac{ P_l\left (\sqrt{k}\,\mathfrak{f} _k( \tau , w ) \right)}{  k^{l/2} \, \mathfrak{a} (  w )^{ l + 1/2}  }.
\end{equation}

Up to lower order terms, we can replace $I_k'$ in (\ref{eqn:defn of Iktau2thm}) by
\begin{eqnarray}
 \label{eqn:defn of Iktau3}
%
I_k(\tau, w )''
& 	:= &     - \left( \frac{ k }{ \pi } \right)^d \,
\left(\frac{2\pi}{\sqrt{k}}\right)\, \mathcal{S}_{G/T}(r)\,r  \cdot u ( w )^d \, \nonumber\\
&&\cdot e^{ -\frac{1}{ 2 } \,k \, \frac{ \mathfrak{f}_k ( \tau , w ) ^2}{\mathfrak{a} ( w )}}\, \sum_{l\ge 1} F_l ( \tau , w ).
\end{eqnarray}

\end{prop}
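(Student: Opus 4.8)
The plan is to obtain Proposition \ref{prop:I_k after gaussian integrals} as a direct consequence of Proposition \ref{prop:partial statement}, by performing the Gaussian integrals in $\mathrm{d}\zeta_2$ that appear in the formula (\ref{eqn:defn of Iktau2thm}) for $I_k(\tau,w)'$. The starting point is the observation that in (\ref{eqn:defn of Iktau2thm}) the $\zeta_2$-integral for each $l\ge 1$ is exactly of the form treated in the elementary identity (\ref{eqn:key gaussian integrall}), with $x=\zeta_2$, $\xi = \sqrt{k}\,\mathfrak{f}_k(\tau,w)$ and $\lambda = \mathfrak{a}(w)$; note that $\mathfrak{a}(w)>0$ since $E(w)$ is positive definite (Definition \ref{defn:def di Psi}) and $\widetilde\mu$ is free along $X^G_{\mathcal{O}}$, so the Gaussian is genuinely convergent and (\ref{eqn:key gaussian integrall}) applies verbatim.

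The first step is therefore to substitute $\xi = k^{1/2}\,\mathfrak{f}_k(\tau,w)$ and $\lambda = \mathfrak{a}(w)$ into (\ref{eqn:key gaussian integrall}), which replaces the integral $\int_{-\infty}^\infty \zeta_2^l\, e^{-\imath\sqrt{k}\zeta_2\mathfrak{f}_k(\tau,w)-\tfrac12\mathfrak{a}(w)\zeta_2^2}\,\mathrm{d}\zeta_2$ by
$$
\sqrt{2\pi}\,\frac{(-\imath)^l}{\mathfrak{a}(w)^{l+1/2}}\,P_l\!\left(\sqrt{k}\,\mathfrak{f}_k(\tau,w)\right)\,e^{-\tfrac{1}{2\mathfrak{a}(w)}\,k\,\mathfrak{f}_k(\tau,w)^2}.
$$
The exponential prefactor $e^{-\tfrac{1}{2}\,k\,\mathfrak{f}_k(\tau,w)^2/\mathfrak{a}(w)}$ does not depend on $l$, so it factors out of the sum over $l$; the remaining $l$-dependence is bundled into the functions $F_l(\tau,w)$ of (\ref{eqn:defn of Fl}). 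The second step is then simply to collect the $l$-dependent constants: the factor $D_l(r)/k^{l/2}$ in (\ref{eqn:defn of Iktau2thm}), namely $\tfrac{\imath^l}{l!\,\|\kappa(r)\|}[C(r)^l+(-1)^{l-1}S(r)^l]/k^{l/2}$, multiplied by the $(-\imath)^l/\mathfrak{a}(w)^{l+1/2}$ coming from (\ref{eqn:key gaussian integrall}) and the $\sqrt{2\pi}$, produces precisely $\tfrac{1}{l!}\,[C(r)^l+(-1)^{l-1}S(r)^l]/\|\kappa(r)\|$ times $\sqrt{2\pi}\,P_l(\sqrt{k}\,\mathfrak{f}_k(\tau,w))/(k^{l/2}\,\mathfrak{a}(w)^{l+1/2})$, i.e.\ $F_l(\tau,w)$, up to the sign $\imath^l\cdot(-\imath)^l=1$. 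This matches (\ref{eqn:defn of Iktau3}) on the nose, so $I_k(\tau,w)'$ becomes $I_k(\tau,w)''$ up to the error terms already discarded in passing from the full amplitude to its leading symbol in Proposition \ref{prop:partial statement}.

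The only subtlety, and the point that warrants a line of justification rather than a bare computation, is that replacing $I_k'$ by $I_k''$ is legitimate ``up to lower order terms'': one must check that the $l$-th summand in (\ref{eqn:defn of Iktau3}) is of order $k^{-l/2}$ relative to the $l=1$ term, uniformly in $(\tau,w)$ in the relevant range $|\tau|\le C k^\epsilon$, $|w|<\delta$. This follows because $P_l$ is a polynomial of degree $l$, $\mathfrak{f}_k(\tau,w)=\mathfrak{b}(r)-\tau k^{-1/2}\mathfrak{r}(w)$ is $O(r^2)+O(k^{\epsilon-1/2})$ by (\ref{eqn:computation b(r)}), and $\mathfrak{a}(w)$ is bounded away from $0$; hence $P_l(\sqrt{k}\,\mathfrak{f}_k(\tau,w))/k^{l/2}$ is bounded, and the Gaussian $e^{-\tfrac12 k\mathfrak{f}_k^2/\mathfrak{a}}$ controls the non-dominant contributions of the $r$-integration near the locus $\mathfrak{b}(r)=0$ where the stationary point lies. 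I do not expect any genuine obstacle here; the whole statement is a bookkeeping corollary of the exact Gaussian identity (\ref{eqn:key gaussian integrall}), and the main care needed is simply to track the constants $C(r)$, $S(r)$, $\|\kappa(r)\|$, $\mathfrak{a}(w)$ correctly so that the $\imath$-powers cancel and one lands exactly on the stated form of $F_l$.
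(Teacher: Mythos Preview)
Your proposal is correct and follows exactly the same approach as the paper: the paper's argument is simply to apply the Gaussian identity (\ref{eqn:key gaussian integrall}) with $\xi = k^{1/2}\,\mathfrak{f}_k(\tau,w)$ and $\lambda = \mathfrak{a}(w)$ to each $\zeta_2$-integral in (\ref{eqn:defn of Iktau2thm}), and you have carried out that substitution and the accompanying bookkeeping (in particular the cancellation $\imath^l\cdot(-\imath)^l=1$) explicitly and correctly.
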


 Thus the leading order asymptotics of $\Pi_{k\boldsymbol{\nu}}(x_{ \tau , k } , x_{ \tau , k } )$ are obtained by replacing
$I_k(\tau, r , \theta )$ in (\ref{eqn:projection composedk5}) by $I_k(\tau, w )''$ given by (\ref{eqn:defn of Iktau3}).

\subsection{Proof of Theorem \ref{thm:border pointwise asymptotics}}

We shall set $\tau = 0$ in (\ref{eqn:projection composedk5}) and obtain an asymptotic estimate for
$\Pi_{k\boldsymbol{\nu}}(x , x )$ when $x\in X^G_{\mathcal{O}}$ and $k \rightarrow +\infty$.

\begin{proof}
 [Proof of Theorem \ref{thm:border pointwise asymptotics}]
It follows from the definitions that
\begin{equation}
 \label{eqn:exponent for tau=0}
\frac{ \mathfrak{f}_k ( 0 , w ) ^2}{\mathfrak{a} ( w )} = \frac{ \mathfrak{b} ( r ) ^2}{\mathfrak{a} ( w )}
= \lambda_{ \boldsymbol{ \nu } } ( m ) \,
D(\boldsymbol{ \nu } ) \,r^4 \, \mathcal{S} ( r , \theta) ,
\end{equation}
where $\mathcal{S} (r, \theta) = 1 +\sum_{j\ge 1} r^j\,d_j( \theta )$, and 
\begin{equation}
 \label{eqn:defn of Dnu}
D(\boldsymbol{ \nu } ) : =
\frac{ ( \nu_1 - \nu_2 ) ^2 \, (\nu_1+\nu_2)^2 }{\| \mathrm{Ad}_{ h_m } (\boldsymbol{ \nu _\perp })_M (m) \|^2_m}.
\end{equation}

Similarly, 
\begin{eqnarray}
 \label{eqn:general summand for tau=0}
 \frac{ P_l\left (\sqrt{k}\,\mathfrak{f} _k( 0 , w ) \right)}{  k^{l/2} \, \mathfrak{a} (  w )^{ l + 1/2}  }
 &=& \frac{ P_l\left (\sqrt{k}\,\mathfrak{b} ( r )\right)}{  k^{l/2} \, \mathfrak{a} (  w )^{ l + 1/2}  }  \\
 & = &  \frac{  1 }{  \mathfrak{a} (  w )^{ l + 1/2}  } \, \left[ \mathfrak{b} ( r ) ^l 
 + \sum_{ j\ge 1 } ^{ \lfloor l/2 \rfloor } \, p_{ lj }\, k ^{- j }\,\mathfrak{b} ( r )^{ l - 2j } \right]\nonumber \\
 &=& \sum_{j = 0} ^{ \lfloor l/2 \rfloor } \frac{ 1 }{ k^j } \, r^{ 2l - 4j } \, \mathcal{ S }_{ l j } ( r , \theta),
\nonumber
\end{eqnarray}
where $\mathcal{S}_{ l j } (r, \theta) $ is a convergent power series in $r$. 
The resulting series may be integrated term by term.
The $l$-th summand in (\ref{eqn:defn of Iktau3}) then gives rise to a convergent series of 
summands of the form 
\begin{eqnarray}
 \label{eqn:key integral1}
B_{\boldsymbol{ \nu }, l, j } ( m , \theta )\,\frac{1}{ k ^j } \,\int_0^{+\infty} \widetilde{r}^{2l - 4 j + a}\,e^{ 
 -\frac{1}{ 2 } \,k \, \lambda _{ \boldsymbol{\nu} } ( m )\,
  D ( \boldsymbol{\nu} ) 
\cdot   \widetilde{ r } ^4}\, \widetilde{r}\, \mathrm{d}\widetilde{r}
= O\left(\frac{1}{ k^{\frac{ l + 1 }{ 2 } + \frac{ a }{ 4 } } }\right).
\end{eqnarray}
with $j\le \lfloor l/2 \rfloor$ and $a=0,1,2,\ldots$.

The previous discussion shows that $\Pi_{k\boldsymbol{\nu}}(x , x )$ 
is given by an asymptotic expansion in descending powers of $k^{1/4}$,
and that the leading order term occurs for $l=1$ and $a=0$.

By Lemma \ref{eqn:key gaussian integrall}, $P_1 ( \xi ) = \xi$; by Lemma \ref{lem:rotational invariance}, 
$\| \kappa ( r ) \| = \lambda _{ \boldsymbol{ \nu } } ( m ) \, \| \boldsymbol{ \nu } \|
\cdot \mathcal{S}'_{\kappa} (r) $, where $\mathcal{S}'_{\kappa} (r)$ is a convergent power series in $r^2$
with $\mathcal{S}'_{\kappa} (0) = 1$.

In view of (\ref{eqn:computation b(r)}) and (\ref{eqn:defn of Fl}), we obtain
$$
F_1 (0 , w ) = - \sqrt{2 \pi} \cdot 
\frac{ (\nu_1 - \nu_2 ) \, (\nu _1 + \nu _2)^2}{\| \mathrm{Ad}_{ h_m } (\boldsymbol{ \nu _\perp })_M (m) \|^{3}}\,
\lambda_{ \boldsymbol{ \nu } } ( m ) ^{ 1/2 }\, r^2 \,\mathcal{S}_{F_1} (r, \theta),
$$
where $\mathcal{S}_{F_1}$ is real-analytic and $\mathcal{S}'' (0,\theta) \equiv 1$. 

Hence 
the leading order term of the asymptotic expansion of $\Pi_{k\boldsymbol{\nu}}(x , x )$ is given by 
\begin{equation}
 \label{eqn:leading order Pikxx}
 D_{G/T}\,\frac{k\,(\nu_1-\nu_2)}{(2\,\pi) ^ 2}\,\int_{-\pi} ^{\pi} \,\mathrm{d} \theta \,
\int_0 ^{ +\infty } \mathrm{d}\,r \,\left[
L_k(r ,\theta )\right],
\end{equation}
where 
\begin{eqnarray}
 \label{eqn:defn of L_k}
%
L_k ( r )
& 	:= &     
2^{ 3/2 } \, \frac{ k ^{d -1 /2 } }{ \pi ^{ d - 3/2 } } \, \lambda_{ \boldsymbol{ \nu } } ( m ) ^{ -(d-1/2) }
\\
&&\cdot  
\left[\frac{ (\nu_1 - \nu_2 ) \, (\nu _1 + \nu _2)^2}{\| \mathrm{Ad}_{ h_m } (\boldsymbol{ \nu _\perp })_M (m) \|^{3}}\right]\, 
e^{ -\frac{1}{ 2 } \,k \, \lambda_{ \boldsymbol{ \nu } } ( m ) \,
D(\boldsymbol{ \nu } ) \,r^4\, \mathcal{S} ( r , \theta)}\,r^3\,\widetilde{\mathcal{S}} (r, \theta),\, \nonumber
\end{eqnarray}
where again $\widetilde{\mathcal{S}} $ is real-analytic and $\widetilde{\mathcal{S}} (0,\theta) \equiv 1$.

We need to integrate in $\mathrm{ d } r$ the product of the last two factors in (\ref{eqn:defn of L_k}).
Let us perform the coordinate change $s = \sqrt{k} \, r ^2\, \mathcal{S} ( r , \theta)^{1/2}$, 
and argue as above. To leading order, we are
reduced to computing 
$$
\frac{1}{ 2\, k }\, \int_0 ^{ + \infty } \mathrm{ d } s \left[  e^{ -\frac{1}{ 2 } \,\lambda_{ \boldsymbol{ \nu } } ( m ) \,
D(\boldsymbol{ \nu } ) \,s^2 }\,s      \right] = \frac{1}{2 \,k } \cdot \frac{1}{\lambda_{ \boldsymbol{ \nu } } ( m ) \,
D(\boldsymbol{ \nu } ) } .
$$
Inserting this in (\ref{eqn:leading order Pikxx}), we conclude that the leading order term in the asymptotic expansion of
$\Pi_k(x,x)$ is
$$
\frac{D_{ G/T }}{\sqrt{ 2 }} \, \frac{1}{ \| \Phi_G ( m ) \|^{ d +1/2 } }
\, \left( \frac{ k \, \| \boldsymbol{ \nu } \| }{ \pi } \right)^{ d -1/2 } \cdot 
\frac{ \| \boldsymbol{ \nu } \| }{ \| \mathrm{Ad}_{ h_m } (\boldsymbol{ \nu _\perp })_M (m) \| }.
$$

The proof of Theorem \ref{thm:border pointwise asymptotics} is complete.

\end{proof}

\section{Proof of Theorem \ref{thm:border rescaled asymptotics}}

The proof is a modification of the one of Theorem \ref{thm:border pointwise asymptotics}, so the discussion
will be sketchy. We shall set
$$
x_{j,k} : = x +\frac{1}{ \sqrt{k} } \, \mathbf{v}_j, \quad j=1,2.
$$

\begin{defn}
 \label{defn:Gamma di vj}
 With the previous notation, let us set
 \begin{eqnarray*}
\lefteqn{
\Gamma (\boldsymbol{\vartheta} , g\,T, \mathbf{v}_j)
} \\
& := & -\frac{ 1 }{ 2 } \, \left[  
\Big \langle \mathrm{diag}\big(\mathrm{Ad}_{g^{-1}}  (\Phi_G' (m))\big) , \boldsymbol{\vartheta} \Big \rangle ^2 
+\Big\|
\mathbf{v}_1 - \mathbf{v}_2 + 
\mathrm{Ad}_g (\imath\, D_{\boldsymbol{\vartheta}}) _M(m) 
\Big\| _m ^2\right]\\
&&+ \imath \, 
\Big[- \omega_m (\mathbf{v}_1 , \mathbf{v}_2) 
+\omega_m\big( \mathrm{Ad}_g (\imath\, D_{\boldsymbol{\vartheta}}) _M(m),\mathbf{v}_1 + \mathbf{v}_2\big)   \Big].
\end{eqnarray*}
\end{defn}

Then, the same computations leading to Proposition \ref{prop:phase border} yield the following.

\begin{prop}
\label{prop:phase border rescaled} 
\begin{eqnarray*}
\lefteqn{
\imath \, k \, \left [ 
u \, \psi \left( \widetilde{\mu}_{ g \, e^{-\imath\boldsymbol{\vartheta}/\sqrt{k}}\,g^{-1}} (x_{1,k}) , x_{2,k} \right)
- \frac{1}{\sqrt{k}}\, \langle \boldsymbol{\nu} , \boldsymbol{\vartheta} \rangle  \right]}\\
&=& 
\imath\, \sqrt{k}\, \, 
\Psi(u, g\,T , \boldsymbol{\vartheta} )  + u \, \Gamma (\boldsymbol{\vartheta} , g\,T, \mathbf{v}_j)   
+ k \, R_3 \left( \frac{\mathbf{ v }_j }{\sqrt{k}},\,\frac { \boldsymbol{\vartheta} } {\sqrt{k}} \right). \nonumber
\end{eqnarray*}
\end{prop}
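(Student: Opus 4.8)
The plan is to repeat, essentially verbatim, the computation that proves Proposition \ref{prop:phase border}, the only structural change being that the common normal displacement $\frac{\tau}{\sqrt{k}}\,\Upsilon_{\boldsymbol{\nu}}(m)$ of the two arguments of $\psi$ is replaced by the two independent displacements $\frac{1}{\sqrt{k}}\,\mathbf{v}_1$ and $\frac{1}{\sqrt{k}}\,\mathbf{v}_2$. First I would fix a system of Heisenberg local coordinates centered at $x$ and recall the facts about the FIO phase $\psi$ that are used: $\psi$ vanishes on the diagonal; $\mathrm{d}_{(x,x)}\psi=(\alpha_x,-\alpha_x)$ (cf.\ (\ref{eqn:differential of psi diagonal})); and, by the structure theory of \cite{boutet-sjostraend} and \cite{sz}, in these coordinates the second-order Taylor part of $\psi$ is organized by the quadratic invariant $\psi_2$ of Definition \ref{defn:psi2} and the vertical variables, all remaining terms being smooth and vanishing to third order in the displacement. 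Since in the regime of interest the displacements are $O\!\left(k^{\epsilon-1/2}\right)$ with $\epsilon<1/6$, such third-order remainders, once multiplied by $k$, constitute exactly the term $k\,R_3\!\left(\mathbf{v}_j/\sqrt{k},\,\boldsymbol{\vartheta}/\sqrt{k}\right)$ in the statement.

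Next I would expand the displacement produced by the group element. Writing $g\,e^{-\imath\boldsymbol{\vartheta}/\sqrt{k}}\,g^{-1}=\exp\!\left(-\tfrac{1}{\sqrt{k}}\,\mathrm{Ad}_g(\imath D_{\boldsymbol{\vartheta}})\right)$ and applying it to $x_{1,k}=x+\frac{1}{\sqrt{k}}\,\mathbf{v}_1$, I would use (\ref{eqn:contact lift xi}) and the identity $\langle\alpha_x,\xi_X\rangle=-\langle\Phi_G(m),\xi\rangle$ to see that the image point is, in the chosen coordinates, $x$ displaced by a horizontal part $\frac{1}{\sqrt{k}}\!\left(\mathbf{v}_1-\mathrm{Ad}_g(\imath D_{\boldsymbol{\vartheta}})_M(m)\right)+O(k^{-1})$ and by a vertical ($\theta$-)shift whose leading term is $\frac{1}{\sqrt{k}}\,\big\langle\Phi_G(m),\mathrm{Ad}_g(\imath D_{\boldsymbol{\vartheta}})\big\rangle$; the second argument $x_{2,k}=x+\frac{1}{\sqrt{k}}\,\mathbf{v}_2$ carries the purely horizontal displacement $\frac{1}{\sqrt{k}}\,\mathbf{v}_2$. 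Substituting into $\psi$, invoking the Taylor expansion above, multiplying by $\imath\,k$, and regrouping by powers of $k^{-1/2}$ should then produce the three terms on the right-hand side.

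More precisely: the $O(k^{1/2})$ part comes only from the linear term $(\alpha_x,-\alpha_x)$ of $\psi$ paired against the vertical shift (horizontal displacements are annihilated by $\alpha_x$), together with the explicit summand $-\frac{1}{\sqrt{k}}\langle\boldsymbol{\nu},\boldsymbol{\vartheta}\rangle$; using the equivariance of $\Phi_G$, i.e.\ $\big\langle\Phi_G(m),\mathrm{Ad}_g(\imath D_{\boldsymbol{\vartheta}})\big\rangle=\big\langle\mathrm{diag}\big(\mathrm{Ad}_{g^{-1}}\Phi_G'(m)\big),\boldsymbol{\vartheta}\big\rangle$, this collapses precisely to $\imath\sqrt{k}\,\big\langle u\,\mathrm{diag}(\mathrm{Ad}_{g^{-1}}\Phi_G'(m))-\boldsymbol{\nu},\,\boldsymbol{\vartheta}\big\rangle=\imath\sqrt{k}\,\Psi(u,gT,\boldsymbol{\vartheta})$ of Definition \ref{defn:def di Psi}, and in particular is independent of $\mathbf{v}_1,\mathbf{v}_2$. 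The $O(1)$ part is $u$ times the quadratic Taylor contribution of $\psi$, evaluated at the combined displacements; recombining it — turning $\|\mathrm{Ad}_g(\imath D_{\boldsymbol{\vartheta}})_M(m)\|_m^2+\big\langle\mathrm{diag}(\mathrm{Ad}_{g^{-1}}\Phi_G'(m)),\boldsymbol{\vartheta}\big\rangle^2$ back into a single Gaussian via the orthogonal splitting (\ref{eqn:vertical horizontal tbs}) and the normalization of the metric on $X$ — I expect to land exactly on $u\,\Gamma(\boldsymbol{\vartheta},gT,\mathbf{v}_j)$ of Definition \ref{defn:Gamma di vj}; as a consistency check, at $\boldsymbol{\vartheta}=\mathbf{0}$ and $g=h_m$ one has $\Gamma(\mathbf{0},h_mT,\mathbf{v}_j)=\psi_2(\mathbf{v}_1,\mathbf{v}_2)$. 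Everything left over is $O(k^{-1/2})$ or smaller, hence part of $k\,R_3\!\left(\mathbf{v}_j/\sqrt{k},\boldsymbol{\vartheta}/\sqrt{k}\right)$.

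The hard part will be the bookkeeping — which is precisely why the paper defers it, as it does for Proposition \ref{prop:phase border}: one must verify that the second-order mixed contributions between the $\mathbf{v}_j$-displacements and the group-induced displacement reproduce exactly the cross term $\omega_m\big(\mathrm{Ad}_g(\imath D_{\boldsymbol{\vartheta}})_M(m),\mathbf{v}_1+\mathbf{v}_2\big)$ and the term $\|\mathbf{v}_1-\mathbf{v}_2+\mathrm{Ad}_g(\imath D_{\boldsymbol{\vartheta}})_M(m)\|_m^2$, with no spurious contributions, and that the cubic and higher-order terms are genuinely absorbed into $k\,R_3(\mathbf{v}_j/\sqrt{k},\boldsymbol{\vartheta}/\sqrt{k})$ uniformly for $\|\mathbf{v}_j\|\le C\,k^\epsilon$ and $\boldsymbol{\vartheta}=O(k^\epsilon)$. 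This is the computation behind Proposition \ref{prop:phase border}, itself modeled on the abelian case \cite{pao-IJM}, which I would follow line by line rather than reproduce here.
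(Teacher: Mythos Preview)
Your proposal is correct and follows exactly the approach the paper indicates: the paper itself offers no proof beyond the sentence ``the same computations leading to Proposition \ref{prop:phase border} yield the following,'' and Proposition \ref{prop:phase border} in turn is deferred to ``a rather lengthy computation, along the lines of those in the proof of Theorem \ref{thm:rapid decrease slow} and in \cite{pao-IJM}.'' Your outline---Heisenberg coordinates, Taylor expansion of $\psi$ using $\mathrm{d}_{(x,x)}\psi=(\alpha_x,-\alpha_x)$ and the second-order term governed by $\psi_2$, the contact-lift formula (\ref{eqn:contact lift xi}) to separate vertical and horizontal displacements, and the equivariance identity reducing the $O(\sqrt{k})$ part to $\Psi$---is precisely this computation, and your remainder bookkeeping matches the paper's conventions (note incidentally that your consistency check $\Gamma(\mathbf{0},gT,\mathbf{v}_j)=\psi_2(\mathbf{v}_1,\mathbf{v}_2)$ holds for any $g$, not just $g=h_m$, as one sees directly from Definition \ref{defn:Gamma di vj}).
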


\begin{rem}
 \label{rem:phase border rescaled}
 Assuming $\mathbf{v}_1, \,\mathbf{v}_2
\in \mathfrak{g}_M(m_x)^{\perp_h}$, recalling Definition \ref{defn:def di Psi} we have
\begin{eqnarray*}
\Gamma (\boldsymbol{\vartheta} , g\,T, \mathbf{v}_j)
= \psi_2 ( \mathbf{v}_1, \mathbf{v}_2 )-\frac{ 1 }{ 2 } \, \boldsymbol{\vartheta}^t \,E( g\,T )\, \boldsymbol{\vartheta}.
\end{eqnarray*}
\end{rem}

In place of Corollary \ref{cor:projection composedk oscillatory}, we then obtain the following:
 \begin{eqnarray}
 \label{eqn:projection composedk1 rescaled}
\lefteqn{
\Pi_{k\boldsymbol{\nu}}(x_{1 , k},x_{2 , k})
}\\
&\sim&
\dfrac{k\,(\nu_1-\nu_2)}{(2\pi)^2}\,
\int_{G/T}\,\mathrm{d}V_{G/T}(gT)\,\int_{-\infty}^{\infty}\mathrm{d}\vartheta_1\,\int_{-\infty}^{\infty}\mathrm{d}\vartheta_2\,
\int_0^{+\infty}\,\mathrm{d}u\nonumber\\
&&\left[
e^{\imath\, \sqrt{k}\, \, 
\Psi(u, g\,T , \boldsymbol{\vartheta} )}\,
\mathcal{A}'_{k,\boldsymbol{\nu}}(u, g\,T , \boldsymbol{\vartheta} , \mathbf{v}_j )
\right],\nonumber
\end{eqnarray}
with the new amplitude
\begin{eqnarray}
 \label{eqn:defn di Aknu rescaled}
\mathcal{A}'_{k,\boldsymbol{\nu}}(u, g\,T , \boldsymbol{\vartheta} , \mathbf{v}_j  ) 
&:= &e^{u \,  \psi_2 ( \mathbf{v}_1, \mathbf{v}_2 ) - \frac{ u }{ 2 } \, 
\boldsymbol{\vartheta} ^t \, E( g\,T )\, \boldsymbol{\vartheta} 
+ k \, R_3 \left(\frac{\tau}{\sqrt{k}}, \frac { \boldsymbol{\vartheta} } {\sqrt{k}} \right)}\,
\Delta\left(e^{\imath\boldsymbol{\vartheta}/\sqrt{k}}\right)\nonumber\\
&& \cdot s\left(\widetilde{\mu}_{g\,e^{-\imath\boldsymbol{\vartheta}/\sqrt{k}}\,g^{-1}}(x_{1,k}),x_{2,k},k\,u\right).
\end{eqnarray}
Similarly, in place of (\ref{eqn:re-expression of Aknu}) we now have the following expansion:
\begin{eqnarray}
 \label{eqn:re-expression of Aknu rescaled}
\lefteqn{ \mathcal{A}'_{k,\boldsymbol{\nu}}(u, g\,T , \boldsymbol{\vartheta} , \mathbf{v}_j  ) }\\
&\sim &e^{u \,  \psi_2 ( \mathbf{v}_1, \mathbf{v}_2 ) - \frac{ u }{ 2 } \, 
\boldsymbol{\vartheta} ^t \, E( g\,T )\, \boldsymbol{\vartheta} 
+ k \, R_3 \left(\frac{\tau}{\sqrt{k}}, \frac { \boldsymbol{\vartheta} } {\sqrt{k}} \right)}
\,
\left[ e^{\frac{\imath}{\sqrt{k}} \, \vartheta_1( \boldsymbol{\zeta} ) } - 
e^{\frac{\imath}{\sqrt{k}} \, \vartheta_2( \boldsymbol{\zeta}) } \right]
\,\left( \frac{ k \, u }{ \pi } \right)^d \nonumber\\
&& \cdot \left[ 1
 + \sum _{j\ge 1} a_j \big( u , w ; \mathbf{v}_1, \mathbf{v}_2  , 
 \boldsymbol{\vartheta} ( \boldsymbol{\zeta} ) \big) \, k^{ - j/2 } \right] ,\nonumber
 \nonumber
\end{eqnarray}
where $a_j$ is, as a function of $\mathbf{v}_1$ and $\mathbf{v}_2$, a polynomial of degree
$\le 3j$. 

With these changes, Theorem \ref{thm:border rescaled asymptotics} 
can be proved by applying the arguments in the proof of Theorem \ref{thm:border pointwise asymptotics} with minor modifications.

\section{Proof of Theorem \ref{thm:outer dimension estimate}}

\begin{proof}
Let $A'\subset X$ be a one-sided \lq outer\rq \,  tubular neighborhood
of $X^G_{ \mathcal{ O }}$, that is, the intersection of 
$A$ with a tubular neighborhood of $X^G_{ \mathcal{ O }}$ in $X$.

By Theorem \ref{thm:rapid decrease fixed}, we have 
\begin{eqnarray}
 \label{eqn:integral for Hout}
 \lefteqn{ \dim_{out} H(X)_{ k\,\boldsymbol{ \nu } } } \\
 & = & \int _A \Pi_{ k\, \boldsymbol{ \nu } } ( x, x ) \, \mathrm{d}V_X (x) \sim 
 \int_{A'}\Pi_{ k\, \boldsymbol{ \nu } } ( x, x ) \, \mathrm{d}V_X (x). \nonumber
\end{eqnarray}
Let us denote by $\sigma  ( \boldsymbol{ \nu} ) $ the sign of $\nu_1 + \nu_2$. Then, locally along
$X^G_{ \mathcal{ O }}$, for some
sufficiently small $\delta>0$ we can parametrize $A'$ by a diffeomorphism 
$$
\Gamma :X^G_{ \mathcal{ O } } \times [0, \delta ) \rightarrow A', \quad
(x,\tau) \mapsto x +\tau \, \sigma ( \boldsymbol{ \nu} ) \, \Upsilon _{ \boldsymbol{ \nu } } ( m_x ),
$$
where $m_x = \pi (x)$.
The latter expression is meant in terms of a collection of smoothly varying systems of Heisenberg local
coordinates centered at $x\in X^G_{ \mathcal{ O } }$, locally defined along
$X^G_{ \mathcal{ O }}$
(to be precise, one ought to work locally on $X^G_{ \mathcal{ O } }$, 
introduce an appropriate open cover of $X^G_{ \mathcal{ O } }$, and a subordinate
partition of unity; however for the sake of exposition we shall omit details on this). 

We shall set $x_{ \tau} : = \Gamma ( x, \tau)$, and 
write 
$$
\Gamma^* (\mathrm{d}V _X ) = \mathcal{V} _X ( x , \tau ) \, \mathrm{d}V _{ X ^G_{ \mathcal{ O } }} ( x ) \,\mathrm{d}\tau,
$$
where $\mathcal{V} _X : X^G_{ \mathcal{ O } } \times [0, \delta ) \rightarrow (0,+\infty)$ is $\mathcal{C}^\infty$ and 
$\mathcal{V} _X ( x , 0 ) = \big\| \Upsilon _{ \boldsymbol{ \nu } } ( m_x )\big\|$.

Hence we obtain
\begin{eqnarray}
 \label{eqn:integral for Hout tubular}
 \lefteqn{ \dim_{out} H(X)_{ k\,\boldsymbol{ \nu } } } \\
 & \sim & 
 \int_{X ^G_{ \mathcal{ O } }} \, \mathrm{d}V _{ X ^G_{ \mathcal{ O } }} ( x )\,
 \int _0^{\delta} \, \mathrm{ d } \tau \, \left[\mathcal{V} _X ( x , \tau ) \, \Pi_{ k\, \boldsymbol{ \nu } } ( x_{ \tau} , x_{ \tau} ) \right]. \nonumber
\end{eqnarray}

By Theorem \ref{thm:rapid decrease slow}, only a rapidly decreasing contribution to (\ref{eqn:integral for Hout tubular}) 
is lost, if
integration in (\ref{eqn:integral for Hout tubular}) is restricted to the locus where $\tau \le C \, k^{\epsilon - 1/2 }$.
Thus the asymptotics of $\dim_{out} H(X)_{ k\,\boldsymbol{ \nu } }$ are unchanged, if the integrand is multiplied by 
a rescaled cut-off function $\varrho \left (k ^{ 1/2 - \epsilon } \, \tau \right)$, where 
$\varrho$ is identically one sufficiently near the origin in $\mathbb{R}$, 
and vanishes outside a slightly larger neighborhood.

With the rescaling $\tau \mapsto \tau / \sqrt{ k }$, we obtain
\begin{eqnarray}
 \label{eqn:integral for Hout tubular rescaled}
  \dim_{out} H(X)_{ k\,\boldsymbol{ \nu } } 
 \sim
 \frac{1}{ \sqrt{ k } } \, 
 \int_{X ^G_{ \mathcal{ O } }} \, \mathrm{d}V _{ X ^G_{ \mathcal{ O } }} ( x )\,\Big[ \mathcal{H} _{ k } ( x )  \Big],
 \nonumber
\end{eqnarray}
where with 
$x_{ \tau , k } : = \Gamma \left( x, k^{ -1/2 } \, \tau \right)$ 
we have set 
\begin{equation} 
\label{eqn: defn Pk x}
\mathcal{H} _{ k } ( x ) : = \int _0^{ +\infty } \, \mathrm{ d } \tau \, 
 \left[\varrho \left (k ^{  - \epsilon } \, \tau \right) \, \mathcal{V} _X \left( x , \frac{\tau}{ \sqrt{ k } } \right) \, 
 \Pi_{ k\, \boldsymbol{ \nu } } ( x_{ \tau , k } , x_{ \tau , k} ) \right].
\end{equation}
Integration in $\mathrm{ d } \tau$ is 
now over an expanding interval of the form $\left[ 0 , C' \, k^{ \epsilon } \right)$.

Let us consider the asymptotics of (\ref{eqn: defn Pk x}).
Having in mind (\ref{eqn:defn of Iktau3}), and inserting the 
Taylor expansion of $\mathcal{V} _X $, we are led to considering double integrals of the form
\begin{eqnarray}
 \label{eqn:typical double integral}
\lefteqn{ \frac{1}{ k^{(l+j)/2} }\, \int _0^{ +\infty } \, \mathrm{ d } \tau \,\int _0^{ +\infty } \, \mathrm{ d } r } \\
&& \left[r \, C( r ) ^l  \, \tau^j \, \mathcal{S}' (r )\frac{ P_l\left (\sqrt{k}\,\mathfrak{f} _k( \tau , w ) \right)}{ \mathfrak{a} (  w )^{ l + 1/2}  }
\, \cdot e^{ -\frac{1}{ 2 } \,k \, \frac{ \mathfrak{f}_k ( \tau , w ) ^2}{\mathfrak{a} ( w )}} \right], \nonumber
\end{eqnarray}
with $l \ge 1$ and $j \ge 0$, and their analogues with $S ( r )$ in place of $ C ( r ) $; $\mathcal{S}'$ is some
real-analytic function (dependence on $\theta$ and $x$ is implicit).

In view of (\ref{eqn:defn di fk}),
we have
\begin{eqnarray*} 
 \frac{\mathfrak{ f } _k ( \sigma ( \boldsymbol{ \nu} ) \, \tau , w  )}{\sqrt{ \mathfrak{ a } ( w )}} 
 =  - \sigma ( \boldsymbol{ \nu} ) \, \left [\frac{ ( \nu _1 -\nu _2 ) \, | \nu_1 + \nu_2 |}{\| \boldsymbol{ \nu } \|\, \sqrt{ \mathfrak{ a } ( 0 )} } \, 
r ^ 2 \, S _1( r ) + \frac{\tau}{k^{1/2}}\,  \, \frac{\mathfrak{r} ( 0 ) }{ \sqrt{ \mathfrak{ a } ( 0 ) } }\, S _2 ( r ,\theta)\right] ,
\end{eqnarray*}
where again $S _2 ( 0 ,\theta)=1$. Therefore, with the change of variables 
$$ 
s := k^{ 1 / 4 } \, r\, \sqrt{ S _1( r  ) }, \quad 
\widetilde{ \tau  } : = \tau \, S _2 ( r ,\theta) 
$$
we obtain
$$
\frac{\mathfrak{ f } _k (  \sigma ( \boldsymbol{ \nu} ) \, \tau , w  )}{\sqrt{ \mathfrak{ a } ( w )}} 
= -\frac{ \sigma ( \boldsymbol{ \nu} ) \,  }{ \sqrt{ k } } \,
\left[ \frac{ ( \nu _1 -\nu _2)  \, |\nu_1 + \nu_2 |}{\| \boldsymbol{ \nu } \| \, \sqrt{ \mathfrak{ a } ( 0 )} } \, s^2 +
\frac{\mathfrak{r} ( 0 ) }{ \sqrt{ \mathfrak{ a } ( 0 ) } }  \, \widetilde{ \tau  } \right].
$$

Therefore, we also have
$$
\mathfrak{ f } _k ( \sigma ( \boldsymbol{ \nu} ) \,  \tau , w  ) = -\frac{ \sigma ( \boldsymbol{ \nu} )   }{ \sqrt{ k } } \,
\left[ \frac{ ( \nu _1 -\nu _2)  \, | \nu_1 + \nu_2 | }{\| \boldsymbol{ \nu } \|  } \, s^2 +
\mathfrak{r} ( 0 )  \,  \widetilde{ \tau  } \right]\cdot 
\left[ 1 + R_1\ \left ( \frac{s}{\sqrt[4]{k}} \right) \right].
$$
 
With the substitution $a = s^2$,
(\ref{eqn:typical double integral}) may be rewritten as a linear combination 
of summands of the form
\begin{eqnarray}
 \label{eqn:typical double integral rewritten}
\lefteqn{ \frac{1}{ k^{(l+j+1)/2} }\, \int _0^{ +\infty } \, \mathrm{ d } \tau \,\int _0^{ +\infty } \, \mathrm{ d } a } \\
&& \left[C\left (\frac{\sqrt{a}}{\sqrt[4]{k}} \right)^l \,
\left ( A_1 \, a + B_1 \, \tau \right) ^{b} 
\, \tau^j\cdot \left[ 1 + R_1 \left ( \frac{ \sqrt{a} }{\sqrt[4]{k}} \right) \right]
\cdot e^{ -\frac{1}{ 2 } \, \left ( A_1 \, a + B_1 \, \tau \right) ^2} \right]\nonumber \\
& = & O \left( \frac{1}{ k^{(l+j+1)/2} } \right) . \nonumber
\end{eqnarray}

Hence the leading contribution occurs for $l=1$, $j = 0$, and dropping the term 
$R_1 \left ( k^{ - 1/4 } \, \sqrt{a}  \right)$.
The conclusion of Theorem \ref{thm:outer dimension estimate} then follows by a fairly simple computation.

\end{proof}

\end{document}